\newtheorem{theorem}{Theorem}[section]
\newtheorem{proposition}[theorem]{Proposition}
\newtheorem{remark}[theorem]{Remark}
\newtheorem{definition}[theorem]{Definition}
\newtheorem{example}[theorem]{Example}
\newtheorem{corollary}[theorem]{Corollary}
\newcommand{\bc}{\mathbb{C}}
\newcommand{\br}{\mathbb{R}}
\newcommand{\bs}{\mathbb{S}}
\def\Span{\operatorname{span}}
\renewcommand{\hom}{\textrm{Hom}}
\newcommand{\wt}{\widetilde}
\newcommand{\rank}{\textrm{rank}}
\begin{document}
\baselineskip=15.5pt
\title{The vector field problem for homogeneous spaces}
\author[P. Sankaran]{Parameswaran Sankaran}
\address{The Institute of Mathematical Sciences (HBNI) \\
CIT Campus, Taramani, Chennai 600113.}
\email{sankaran@imsc.res.in}
\date{}
\dedicatory{Dedicated to Professor Peter Zvengrowski with admiration and respect.}
\subjclass[2010]{57R25}
\keywords{Vector fields, span, stable span, parallelizability, stable parallelizability, homogeneous spaces}
\thanks{This research was partially supported by the Department of Atomic Energy, through a 
XII Plan Project.}
\maketitle

\begin{abstract}
Let $M$ be a smooth connected 
manifold of dimension $n\geq 1$.  A vector field 
on $M$ is an association $p\to v(p)$ of a tangent 
vector $v(p)\in T_pM$ for each $p\in M$ which varies continuously with $p$. In more technical language it is a 
(continuous) cross-section of the tangent bundle $\tau(M)$.  The vector field problem asks: Given $M$, what is 
the largest possible number $r$ such that there exist 
vector fields $v_1,\ldots, v_r$ which are everywhere linearly independent, that is, $v_1(x),\ldots,v_r(x)\in T_xM$ are linearly independent for every $x\in M$.  The number $r$ is called the span of $M$, written $\Span(M)$.  It is clear that $0\leq \Span(M)\leq \dim (M)$.       
The vector field problem is an important and 
classical problem in differential topology.  
In this survey we shall consider the vector field problem focussing mainly on the class of compact homogeneous spaces.

\end{abstract}

\section{Introduction}
Let $M$ be a smooth connected manifold of dimension $n\ge 1$.  All manifolds we consider will be 
assumed to be paracompact and Hausdorff. If $p\in M$, the tangent space to $M$ at $p$ will be denoted 
$T_pM$.  We denote the tangent bundle of $M$ by $\tau M$ and its total space $\bigcup _{p\in M}T_pM$ 
by $TM$.  The projection of the bundle is denoted $\pi:TM\to M$; thus $\pi$ maps $T_pM$ to $p$.

A {\it vector field} $v$ on $M$ is an assignment $p\mapsto  v(p)\in T_pM$ of a tangent vector at $p$ for each $p\in M$ 
which varies continuously with $p$; thus $v:M\to TM$ is continuous and $\pi\circ v=id_M$.  In other words, $v$ is a 
continuous cross-section of the tangent bundle.

We are concerned with the following problem:

\noindent
{\bf The vector field problem:}  Let $M$ be a smooth 
manifold.  Determine the maximum number $r$ 
of everywhere linearly independent vector fields on $M$. 
Thus $r$ is the largest non-negative integer---denoted $\Span(M)$--- such that 
there exist (continuous) vector fields $v_1,\ldots, v_r$ on $M$ such that $v_1(p),\ldots , v_r(p)\in T_pM$ are linearly independent for every $p\in M$.  

It turns out that, in the vector field problem, if we require 
the vector fields to be smooth, then the resulting number 
$r$ is unaltered.  This is a consequence of the basic fact that the space of all 
smooth functions on a (smooth) manifold is dense in the space of 
all continuous functions.   
So, we may work with smooth vector fields throughout.  
Observe that $0\leq \Span(M)\leq \dim M$. 

In this largely expository article, we address the above problem for an important class of manifolds, namely,  
homogeneous spaces.  
After discussing some basic examples, we 
consider the problem for the spheres $\mathbb S^{n-1}$ whose solution at various stages 
brought along with it many new ideas and developments in algebraic topology.    Next we survey 
general results which are applicable to any compact connected smooth manifolds starting with 
Hopf's theorem, criterion for existence of a $2$-field, the result of Bredon and Kosi\'nski and of Thomas on the 
possible span of a stably parallelizable manifold, Koschorke's results on when span and stable span 
are equal, etc.  In \S 2, we consider the vector field problem for homogeneous spaces for a compact 
connected Lie group.  After elucidating the general results, mainly due to Singhof and Wemmer, 
for simply connected compact homogeneous spaces,  we consider certain special classes of 
homogeneous spaces (which are not necessarily simply connected) including projective Stiefel manifolds, 
Grassmann manifolds, flag manifolds, etc.  The only new result in this section is Theorem \ref{quotsofsun}, due to 
Sankaran.
In \S 3, we consider homogeneous spaces for non-compact Lie groups.  More precisely, we consider the class of 
solvmanifolds and compact locally symmetric spaces $\Gamma\backslash G/K$ where $G$ is a real semisimple 
linear Lie group without compact factors, $K$ a maximal compact subgroup of $G$ and $\Gamma$ a uniform lattice 
in $G$.   
Theorems \ref{torusbundle},  \ref{stronglypolygps} and \ref{locallysymmetric} are due to Sankaran (unpublished).

There are already at least two survey articles on the vector field problem. The paper by E. Thomas \cite{thomas2}, published in 1968, 
gives lower bounds for span in a general setup, whereas the main focus of the paper  
by J. Korba\v s and P. Zvengrowski \cite{kz}, published in 1994, was mostly on flag manifolds and projective Stiefel manifolds.  
See also \cite{kz96}, \cite[S4]{korbas}.   
While certain amount of overlap with these papers is unavoidable, 
the present survey emphasises the vector field problem 
for homogeneous spaces.  

 It seems that, in spite of much activity in this area, the determination 
of span of many families of homogeneous spaces (such as real Grassmann manifolds) remains a wide 
open problem. 
 I hope it would be useful to young researchers and new entrants to the field.

\subsection{First examples.}
We begin by giving some basic examples of vector fields on manifolds.  

If $M$ is an open subspace of $\mathbb{R}^n$ then $\Span (M)=\dim(M)=n$.  To see this, let $x_1,\ldots, x_n:M\to \mathbb{R}$ be the usual 
coordinate functions on $\mathbb{R}^n$ restricted to $M$.  Then $v_j(p):=\frac{\partial}{\partial x_j}|_p$, $1\le j\le n$ 
are linearly independent tangent vectors to $M$ at $p\in M$.   

\begin{example}\label{basicexamples}
{\em 
(i) Let $M=\bs^1$.  Then $\bs^1\ni (x,y)\mapsto -y\frac{\partial} {\partial x}+x\frac{\partial}{\partial y}\in T_{(x,y)}\mathbb{S}^1$ is a (smooth) tangent 
vector field on $\bs^1$.  So $\Span (\bs^1)=1=\dim \bs^1$.

(ii)  Consider the $n$-dimensional sphere $\mathbb{S}^{n}$ consisting of unit vectors in the Euclidean space 
$\mathbb{R}^{n+1}$.  We regard the tangent space to $\mathbb{S}^n$ at any point $x=(x_0,\cdots, x_n)$ as the vector 
subspace $\{x\}^\perp\subset \mathbb{R}^{n+1}$.  When $n=3$, we may regard $\mathbb{R}^4$ as the division algebra 
of quaternions over $\mathbb R$, generated by $i,j$ where $i^2=-1=j^2, k:=ij=-ji$.   The sphere $\mathbb{S}^3$ is the space 
of unit quaternions.  Multiplication (on the left) by 
the quaternion units $ i,j,k$ yields vector fields $v_1,v_2,v_3$ on $\mathbb S^3$: 
\[v_1(q)=(-q_1,q_0,-q_3,q_2)=iq, ~
v_2(q)=(-q_2, q_3, q_0,-q_1)=jq, ~
v_3(q)=(-q_3,-q_2, q_1,q_0)=kq.
\] for $q=q_0+q_1i+q_2j+q_3k=(q_0,q_1,q_2,q_3)\in \mathbb{S}^3$. 
 Then it is readily checked that $v_r(q)\perp q, r=1,2,3$, so that 
 $v_j$ are indeed vector fields on $\bs^3$.  Moreover, 
 $v_r(q)\perp v_s(q), r\neq s,$ and $||v_r(q)||=1$ for all $q\in \bs^3$.  
 Thus  $v_1, v_2, v_3$ are everywhere linearly independent vector fields on $\bs^3$ and we 
 conclude that 
 $\Span(\bs^3)=3=\dim \bs^3$. 

Using the multiplication in the 
octonions, one can write down explicitly seven everywhere linearly 
independent vector fields on $\bs^7$, as we shall now explain.   
The algebra of octonions, denoted $\mathbb{O}\cong \mathbb{R}^8$, was first discovered by 
Graves and shortly thereafter independently by Cayley and is also known as the 
Cayley algebra. 
The algebra $\mathbb{O}\cong \mathbb{R}^8$ is a non-commutative, 
non-associative division algebra generated over $\mathbb{R}$ by $e_i, 1\le i\le 7$, with multiplication 
defined by $e_ie_{i+1}=e_{i+3}, e_{i+1}e_{i+3}=e_i, e_{i+3}e_i=e_{i+1}, e_i^2=-1$, $e_ie_j=-e_je_i$ for $1\le i\ne j\le 7$ 
where the indices are read mod $7$.  Denote by $e_0$ the multiplicative 
identity $1\in\mathbb{R}\subset\mathbb{O}$.  Multiplication by $e_j$ preserves the Euclidean norm on $\mathbb{O}$ 
where the standard inner product is understood to be with respect to the basis $e_j, 0\le j\le 7$.  (To see this, 
we need only observe that left multiplication by $e_j$ permutes the basis elements up to a sign $\pm 1$.)

Now define $v_j:\mathbb{S}^7\to \mathbb{S}^7$ by $v_j(x)=e_j\cdot x, 0\le i\le 7.$  
Then $v_0(x)=x$, $||v_j(x)||=||x||=1$ and by straightforward verification $v_i(x)\perp v_j(x), 0\le i<j\le 7,$ for all $x\in \mathbb{O}$.  Thus $v_j, 1\le j\le 7,$ are vector fields on $\mathbb{S}^7$ which are everywhere linearly independent.  
Thus $\Span(\mathbb{S}^7)=7$.

(iii)  Suppose that $G$ is a Lie group and let $v\in T_eG$, where $e$ denotes the identity 
element.  Then we obtain a vector field, again denoted $v$ on $G$ by setting $v(g):=T\lambda_g(v)\in T_gG$ where $\lambda_g:G\to G$ is the left multiplication by $g$, sending $ x$ to $gx$. Note that $T\lambda_h(v(g))=T\lambda_h\circ 
T\lambda_g(v)=T(\lambda_h\circ\lambda_g)(v)=T\lambda_{hg}(v)=v(hg)$.  Thus $v$ is a {\it left-invariant 
vector field} on $G$. Conversely, every left-invariant vector field on $G$ is determined by its value at the identity.
Thus $T_eG$ is identified with the vector space of all left vector fields on $G$.   
If $v_1,\ldots, v_n$ form a basis for $T_eG$, then the left invariant vector fields $v_1,\ldots, v_n $ 
are everywhere linearly independent.  In particular $\Span(G)=\dim G$. 

The 
Lie bracket of two left invariant vector fields is again left invariant, making $T_eG$ a Lie algebra; it is the {\it Lie 
algebra} of $G$ and is denoted $\frak{g}$.

(iv) The above example can be generalized to principal $G$-bundles as we shall now explain.
Let $\pi: P\to M$ be the projection of a smooth principal bundle 
over a smooth manifold $M$ with fibre and structure group a Lie group $G$.  Let $v\in \mathfrak{g}$ and let $p\in P$. 
Identifying $G$ with the orbit $Gp\subset P$ through $p$, we obtain a tangent vector $\wt{v}_q\in T_qP$ that corresponds 
to $v_g$ where $g.p=q\in Gp\subset P$.  Since $v$ is a 
left invariant vector field, and since the $G$ action on $P$ corresponds to left multiplication in the Lie group $G$, $\wt{v}_q$ 
does not depend on the choice of $p$ and so yields a vector field $\wt v$ on $P$.  A choice of a basis $v_1,\cdots, v_n$ for $\mathfrak{g}$ yields everywhere 
linearly independent vector fields $\wt v_1,\ldots,\wt v_n$.  So we see that 
$\Span(P)\ge \dim(G)$.

(v)  Suppose that $p:M\to N$ is a covering projection where $M,N$ are smooth manifolds and $p$ is smooth. 
Let $\Gamma$ be the deck transformation group (which acts on the left of $M$).  Then action of $\Gamma$ on $M$ is via diffeomorphisms and so we have an induced action of $\Gamma$ on $TM$: if $\gamma\in \Gamma$ and 
$e\in T_xM$ then $\gamma(e):=T\gamma_x (e)\in T_{\gamma.x}M$.  We shall write $\gamma_*e$ instead of $T_x\gamma(e)$. 
Also if $x\in M$, 
then $Tp_x:T_xM\to T_{p(x)}N$ is an isomorphism of vector spaces.  
Since $p=p\circ \gamma$ for any $\gamma\in \Gamma$,  $Tp_x(e)=Tp_{\gamma.x}(\gamma_*e)$.
Therefore the tangent vector at $Tp_x(e)\in T_{p(x)}N$ may be identified with the set $
[x,e]:=\{(\gamma.x,\gamma_*(e))\mid \gamma\in \Gamma\}.$
If $v$ is a smooth vector field on $N$, then 
we can `lift' it to a smooth vector field $\wt{v}$ on $M$ defined by $p_*(\wt{v}(x))=v(p(x))$ where we have written $p_*$ for $Tp$. 
If $v_1,\ldots, v_k$ is a $k$-field on $N$, then so is $\wt v_1,\ldots, \wt v_k$.  Hence $\Span(M)\ge \Span(N)$.  

We observe 
that the vector field $\wt v$, obtained as a lift of a vector field $v$ on $N$, is $\Gamma$-invariant, that is, $\wt v_{\gamma(x)}=\gamma_*(\wt v(x))$ for all $\gamma\in \Gamma$.   Conversely, if $u$ is a $\Gamma$-invariant vector field on $M$ it is the lift of a unique vector field $v$ on $N$.  
}
\end{example}

\begin{definition}
We say that a manifold $M$ is parallelizable  if 
$\Span(M)=\dim (M)$.  
\end{definition}

We have seen already that any Lie group is parallelizable as also the spheres $\mathbb{S}^1,\mathbb{S}^3, \mathbb{S}^7$.  
Bott and Milnor \cite{bm}  
and independently Kervaire \cite{kervaire} showed that these are the only parallelizable spheres (besides $\mathbb{S}^0$). 

\subsection{Span of spheres and projective spaces} \label{spanofspheres}

Radon \cite{radon} and Hurwitz \cite{hurwitz} independently obtained the following algebraic result which yields 
a {\it lower bound} for the span of spheres.

A bilinear map $\mu: \mathbb R^k\times \mathbb R^n\to \mathbb R^n$ 
 is called an {\it orthogonal multiplication} if $||\mu(u,v)||=||u||.||v||$ for all $u\in \mathbb R^k, v\in \mathbb R^n$. 
 Given an orthogonal multiplication $\mu$ and an orthogonal transformation $\phi$ of $\mathbb R^n$ we 
 see that the bilinear map $\phi\circ \mu $ is again an orthogonal multiplication. 
Also if $u\in \mathbb R^k$ is a unit vector, then $\mu_u:\mathbb R^n\to \mathbb R^n$ defined as $v\mapsto \mu(u,v)$ is an 
orthogonal transformation.  Using these observations, one may 
normalize $\mu$ so that $\mu(e_1,y)=y,~ \forall y\in \mathbb R^n$.  
 The proposition below relates the existence of an orthogonal multiplication to the span of $\mathbb S^{n-1}$. 

\begin{proposition} \label{orthog} \label{radonhurwitz}
If there exists an orthogonal multiplication $\mu:\br^k\times \br^n\to \br^n$, 
then $\Span(\bs^{n-1})\geq k-1$.  
\end{proposition}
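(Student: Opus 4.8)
The plan is to use the $k$ standard basis vectors of $\br^k$ to produce $k$ everywhere orthonormal vectors along $\bs^{n-1}$, of which $k-1$ will turn out to be tangent. First I would invoke the normalization already established in the discussion preceding the statement, so that $\mu(e_1,y)=y$ for all $y\in\br^n$, where $e_1,\dots,e_k$ denotes the standard basis of $\br^k$. I would then define the candidate vector fields by $v_i(x):=\mu(e_i,x)$ for $2\le i\le k$ and $x\in\bs^{n-1}$. Since $\mu$ is bilinear it is smooth, so each $v_i$ is a smooth map $\bs^{n-1}\to\br^n$; the two things left to check are that each $v_i(x)$ is tangent to $\bs^{n-1}$ at $x$ and that $v_2(x),\dots,v_k(x)$ are linearly independent.

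The key observation is that for each fixed $x\in\bs^{n-1}$ the linear map $L_x:\br^k\to\br^n$, $u\mapsto\mu(u,x)$, is norm preserving: $\|L_x(u)\|=\|\mu(u,x)\|=\|u\|\,\|x\|=\|u\|$. By the polarization identity a norm-preserving linear map preserves inner products, so $\langle\mu(u,x),\mu(u',x)\rangle=\langle u,u'\rangle$ for all $u,u'\in\br^k$. Taking $u=e_i$ and $u'=e_j$ gives $\langle v_i(x),v_j(x)\rangle=\delta_{ij}$ whenever $i,j\ge 2$, while taking $u=e_i$ with $i\ge 2$ and $u'=e_1$ gives $\langle v_i(x),\mu(e_1,x)\rangle=\langle v_i(x),x\rangle=0$.

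Thus for every $x$ the vectors $v_2(x),\dots,v_k(x)$ are orthonormal and each is orthogonal to $x$. Regarding $T_x\bs^{n-1}$ as $\{x\}^\perp\subset\br^n$ as in Example~\ref{basicexamples}(ii), this says precisely that $v_2,\dots,v_k$ are $k-1$ everywhere linearly independent (indeed orthonormal) tangent vector fields on $\bs^{n-1}$, whence $\Span(\bs^{n-1})\ge k-1$.

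I expect no serious obstacle here: the entire content is the single identity $\langle\mu(u,x),\mu(u',x)\rangle=\langle u,u'\rangle$, which simultaneously encodes the tangency ($v_i(x)\perp x$) and the mutual orthogonality of the $v_i(x)$. The only step requiring any care is the passage from norm preservation to inner-product preservation, but this is exactly the standard polarization identity and is immediate.
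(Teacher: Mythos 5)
Your proof is correct and is essentially the paper's own argument: it uses the same normalization $\mu(e_1,y)=y$, the same candidate fields $v_i(x)=\mu(e_i,x)$ for $2\le i\le k$, and it extracts both tangency and independence from inner-product identities forced by bilinearity and the norm condition. The only difference is packaging: where you apply polarization once to the linear isometry $u\mapsto \mu(u,x)$ from $\br^k$ to $\br^n$, the paper carries out the polarization by hand (expanding $\|\mu(e_i+e_j,v)\|^2$) and, separately, proves linear independence by the unit-vector trick with $\mu_a$ --- a step your orthonormality observation renders unnecessary.
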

\begin{proof} Without loss of generality we assume that $\mu(e_1,y)=y~\forall y\in \mathbb R^n$.
Let $x\in \br^k$ be a unit vector and let $\mu_x(y)=\mu(x,y)$.   As observed already, $\mu_x$ is an orthogonal operator.  
Wring $\mu_i$ for $\mu_{e_i}, 1\leq i\leq k$, we claim that if $y\neq 0$, then  
$\mu_i(y), 1\leq i\leq k,$ are linearly independent.  
Let, if possible, $\sum a_i\mu_i(y)=0$ with some $a_i\neq 0$.  Multiplying by a scalar if necessary, 
we assume without loss of generality that $\sum_{1\leq i\leq k}a_i^2=1$ so that $a:=\sum_{1\leq i\leq k}a_ie_i$
is a unit vector.  Hence $0=\sum a_i\mu_i(y)=\mu_a(y)$ implies that $y=0$ as $\mu_a$ is an orthogonal transformation.  This establishes our claim.  

Next we claim that for any non-zero $v\in \mathbb{R}^{n}$, $\mu_i(v)\perp \mu_j(v)$ whenever $i\neq j$.  
Set $a:=e_i+e_j\in \br^k$.
Then, using bilinearity,  for 
$2||v||^2=||\mu(a,v)||^2=||\mu_i(v)+\mu_j(v)||^2=||\mu_i(v)||^2+||\mu_j(v)||^2+2\langle \mu_i(v),\mu_j(v)\rangle$.
Since $||\mu_i(v)||=||\mu_j(v)||=||v||$, we see that $\mu_i(v)\perp \mu_j(v)$.

Since $\mu_1=id$, we have shown that the $v\mapsto \mu_j(v)$ are 
vector fields on $\bs^{n-1}$ which are everywhere linearly independent.  Hence $\Span(\bs^{n-1})\geq k-1$. 
\end{proof}

When $\mu_1=id$, it is not difficult to show that 
the $\mu_i=\mu_{e_i}, 2\le i\le k,$ are skew symmetric orthogonal transformations of $\mathbb{R}^n$ so that $\mu_i^2=-id$, and moreover they  
satisfy the relations $\mu_i\mu_j=-\mu_j\mu_i, ~i\neq j, 2\leq i,j\leq k$.   
Conversely, 
if there exist skew symmetric orthogonal transformations $\mu_i$, $2\leq i\leq k,$ satisfying the above relations, then there 
exists an 
orthogonal multiplication $\mu:\br^k\times \br^n\to \br^n$ such that $\mu_i=\mu_{e_i}, i\ge 2,$ with $\mu_{e_1}=id$. 
The transformations $\mu_2,\ldots, \mu_k$ are known as the {\it Radon-Hurwitz 
transformations.}

A well-known and classical theorem of Hurwitz and Radon gives the maximum value of $k$ as in the above 
proposition for any given $n$.   Write $n=2^{4a+b}\times (2c+1)$ where $0\leq b\leq 3, a\geq 0, c\geq 0$ are integers. 
Then the maximum value of $k$ as in the above proposition is $k=\rho(n)$ where  
$\rho(n)=8a+2^b$, the {\it Radon-Hurwitz 
number} of $n$.  See also Eckmann \cite{eckmann}.   

\begin{theorem}  {\em (Radon \cite{radon}, Hurwitz \cite{hurwitz})}  Let $n\ge 2$ and let $\rho(n)$ denote the Radon-Hurwitz number defined above.  Then 
$\Span(\mathbb{S}^{n-1})\ge \rho(n)-1$.
\end{theorem}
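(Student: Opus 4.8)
The plan is to deduce the theorem from Proposition~\ref{radonhurwitz}. By that proposition it is enough to produce an orthogonal multiplication $\br^{\rho(n)}\times\br^n\to\br^n$, and by the remark following it this is the same as producing skew-symmetric orthogonal transformations $\mu_2,\ldots,\mu_{\rho(n)}$ of $\br^n$ that pairwise anticommute and satisfy $\mu_i^2=-\mathrm{id}$. I would first reformulate this requirement algebraically: such a system is precisely an action on $\br^n$ of the Clifford algebra $C_{\rho(n)-1}$ generated by anticommuting elements $e_2,\ldots,e_{\rho(n)}$ with $e_i^2=-1$. Orthogonality costs nothing extra: starting from any $C_{\rho(n)-1}$-module structure on $\br^n$, one averages an arbitrary inner product over the finite group generated by the images of the $e_i$ to obtain an invariant inner product; with respect to it each $\mu_i$ is orthogonal, and since $\mu_i^2=-\mathrm{id}$ gives $\mu_i^{-1}=-\mu_i$, each $\mu_i$ is then automatically skew-symmetric. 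Thus the theorem reduces to the algebraic assertion that $\br^n$ carries a $C_{\rho(n)-1}$-module structure.

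Next I would reduce to the case where $n$ is a power of $2$. Write $n=2^s m$ with $m$ odd and $s=4a+b$, $0\le b\le3$; then $\rho(n)=8a+2^b=\rho(2^s)$ depends only on $s$. Moreover any transformations $J_2,\ldots,J_k$ of $\br^{2^s}$ obeying the Clifford relations give rise to the transformations $J_i\otimes I_m$ of $\br^n=\br^{2^s}\otimes\br^m$ obeying the same relations. Hence it suffices to realise $\rho(2^s)-1$ pairwise anticommuting skew orthogonal transformations squaring to $-\mathrm{id}$ on $\br^{2^s}$.

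For the construction on $\br^{2^s}$ I would induct on $a$, mirroring the mod-$8$ periodicity of real Clifford algebras. The base cases are the four spaces $\br^{2^b}$, $b=0,1,2,3$: on $\br^1$ there is nothing to do, while the rotation of $\br^2$, the three quaternion-unit multiplications on $\br^4$, and the seven octonion-unit multiplications on $\br^8$---all displayed in Example~\ref{basicexamples}---provide $2^b-1=\rho(2^b)-1$ transformations of the desired type. The inductive engine is the periodicity isomorphism $C_{k+8}\cong C_k\otimes M_{16}(\br)$. Concretely, fix once and for all eight pairwise anticommuting skew orthogonal transformations $B_1,\ldots,B_8$ of $\br^{16}$ squaring to $-\mathrm{id}$ (these exist because $C_8\cong M_{16}(\br)$, and may be written explicitly as iterated tensor products of the $2\times2$ matrices $\mathrm{diag}(1,-1)$, $\bigl(\begin{smallmatrix}0&-1\\1&0\end{smallmatrix}\bigr)$ and $\bigl(\begin{smallmatrix}0&1\\1&0\end{smallmatrix}\bigr)$), and let $\omega=B_1\cdots B_8$ be their product, a symmetric orthogonal involution anticommuting with each $B_j$. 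Given a $C_k$-family $A_1,\ldots,A_k$ on $\br^N$ (that is, $k$ transformations satisfying the Clifford relations), the transformations $A_i\otimes\omega$ $(1\le i\le k)$ together with $I_N\otimes B_j$ $(1\le j\le8)$ form a $C_{k+8}$-family on $\br^{16N}$. Applying this step $a$ times to the base case $\br^{2^b}$ yields $(\rho(2^b)-1)+8a=8a+2^b-1=\rho(2^s)-1$ transformations on $\br^{2^b\cdot16^a}=\br^{2^s}$, as required.

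The step I expect to be the main obstacle is the bookkeeping of the count---verifying that the construction really delivers $\rho(n)-1$ transformations, so that the numbers emerging from the base cases and the periodicity steps reassemble into $8a+2^b-1$. Phrased invariantly: if $d_k$ denotes the minimal dimension of a real $C_k$-module, then $\br^n$ carries a $C_k$-module, and hence admits $k$ transformations of the required type, as soon as $d_k\mid n$; so everything comes down to the divisibility $d_{\rho(n)-1}\mid n$. This in turn rests on dovetailing the irregular sequence $d_0,d_1,\ldots=1,2,4,4,8,8,8,8,16,\ldots$, with its defining property $d_{k+8}=16\,d_k$, against the definition $\rho(n)=8a+2^b$: the jumps $1,2,4,8$ concealed in the term $2^b$ must be matched with the behaviour of $d_k$ for $k\equiv0,1,2,3\pmod4$, and the uniform increment $+8$ carried by each factor of $16$ must be matched with the periodicity of $d_k$. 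Once this is checked the residual verifications---that the tensor products above are orthogonal, skew-symmetric, and anticommute in the stated pattern---are routine. Only the lower bound is at stake here; that $\rho(n)-1$ is in fact the exact value of $\Span(\bs^{n-1})$, a theorem of Adams, lies far deeper and is not claimed in the statement.
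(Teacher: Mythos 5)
Your proposal is correct, but it does genuinely more than the paper, which in fact offers no proof of this theorem: having established Proposition \ref{orthog}, the paper simply quotes the classical algebraic result of Radon and Hurwitz (citing \cite{radon}, \cite{hurwitz}, \cite{eckmann}) that an orthogonal multiplication $\br^{\rho(n)}\times\br^n\to\br^n$ exists, and the span bound follows. You share the paper's reduction---via Proposition \ref{radonhurwitz} and the remark following it, the theorem amounts to producing $\rho(n)-1$ pairwise anticommuting skew-symmetric orthogonal transformations squaring to $-\mathrm{id}$---but you then actually prove that existence statement: you identify such systems with $C_{\rho(n)-1}$-module structures (the averaging trick disposes of orthogonality; strictly one should also transport the invariant inner product back to the standard one by a linear change of coordinates, which preserves the Clifford relations), reduce to $n=2^s$ by tensoring with $I_m$, take the base cases $2^0,2^1,2^2,2^3$ from the rotation, quaternion and octonion multiplications of Example \ref{basicexamples}(ii), and climb by the mod-$8$ periodicity step. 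I checked that step: $\omega=B_1\cdots B_8$ is indeed a symmetric orthogonal involution anticommuting with each $B_j$, so the operators $A_i\otimes\omega$ and $I_N\otimes B_j$ satisfy all the Clifford relations on $\br^{16N}$, and the count $(2^b-1)+8a=\rho(n)-1$ matches the definition of $\rho$. What your route buys is a self-contained proof of the lower bound; what the paper's buys is brevity appropriate to a survey.

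The one place to be more careful is the input $B_1,\ldots,B_8$ on $\br^{16}$. That $C_8\cong M_{16}(\br)$ is standard, but your parenthetical recipe---iterated tensor products of $\mathrm{diag}(1,-1)$, the rotation and the swap---fails if applied naively: stacking such factors can produce pairs that \emph{commute}, because two sign flips cancel, so the pattern of factors must be chosen with care. A painless fix that stays inside the paper's toolkit: let $L_1,\ldots,L_7$ be the octonionic left multiplications on $\br^8$ from Example \ref{basicexamples}(ii), and on $\br^{16}=\br^8\oplus\br^8$ set $B_i(x,y)=(L_ix,\,-L_iy)$ for $1\le i\le 7$ and $B_8(x,y)=(-y,\,x)$. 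These eight maps are skew-symmetric, orthogonal, square to $-\mathrm{id}$ and anticommute pairwise, which supplies the missing ingredient and completes your induction.
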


We now state the celebrated theorem of Adams who showed that the Radon-Hurwitz lower bound 
is also the upper bound, thereby 
determining the span of the spheres. 

\begin{theorem}{\em  (Adams \cite{adams})} Let $n\ge 2$.  Then 
$\Span( \bs^{n-1})=\rho(n)-1$. \hfill $\Box$
\end{theorem}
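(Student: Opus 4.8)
The inequality $\Span(\bs^{n-1})\geq \rho(n)-1$ is precisely the Radon--Hurwitz theorem recorded above, so the entire content of Adams' theorem is the reverse bound $\Span(\bs^{n-1})\leq \rho(n)-1$: one must show that $\bs^{n-1}$ does \emph{not} admit $\rho(n)$ everywhere independent vector fields. The plan is to convert this nonexistence assertion into a homotopy-theoretic statement about stunted projective spaces and then to obstruct it using real $K$-theory together with the Adams operations.

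First I would recall the geometric reduction. Writing $V_{n,k}$ for the Stiefel manifold of orthonormal $k$-frames in $\br^n$, a system of $r$ everywhere independent tangent vector fields on $\bs^{n-1}$ is the same thing as a cross-section of the bundle $V_{n,r+1}\to \bs^{n-1}$ that records the first frame vector; one passes between the two viewpoints by orthonormalising the fields against the position vector. By the work of James and Atiyah this sectioning problem is equivalent to a (co)reducibility question for a suitable stunted real projective space of the form $\mathbb{RP}^{N}/\mathbb{RP}^{N-r-1}$, namely whether an extreme cell splits off stably. Thus it is enough to prove that for $r=\rho(n)$ the relevant stunted projective space is \emph{not} (co)reducible.

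The heart of the matter is the $K$-theoretic obstruction. I would use the computation that $\widetilde{KO}(\mathbb{RP}^m)$ is cyclic of order $2^{\phi(m)}$, where $\phi(m)$ is the number of integers $s$ with $0<s\leq m$ and $s\equiv 0,1,2,4\pmod 8$; this exponent is arranged precisely so that $\phi$ keeps pace with the Radon--Hurwitz function. Coreducibility would split off an extreme cell and hence impose a relation on the cyclic generator. I would then bring in the Adams operation $\psi^3$: on the free $K$-theory of the top-cell sphere $\psi^3$ acts as multiplication by a power of $3$, while on the projective classes it acts on the cyclic group $\widetilde{KO}(\mathbb{RP}^m)$, and naturality of $\psi^3$ under the collapse and inclusion maps forces a congruence modulo $2^{\phi}$. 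The decisive step is the $2$-adic arithmetic of $3^{t}-1$: the congruence demanded by coreducibility is satisfied when $r\leq \rho(n)-1$ but fails exactly at $r=\rho(n)$, which is what makes $\rho(n)$ the critical value.

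I expect the principal obstacle to lie in this final marriage of the $KO^{*}$ computation with the precise eigenvalue behaviour of the Adams operations on stunted projective spaces; establishing the order of $\widetilde{KO}(\mathbb{RP}^m)$ and controlling $\psi^3-1$ $2$-adically is exactly Adams' deep input, and the special case $r=1$ already subsumes the Hopf invariant one theorem. Once these inputs are in hand the contradiction with a hypothetical $\rho(n)$-field is immediate, and combining it with the Radon--Hurwitz lower bound yields $\Span(\bs^{n-1})=\rho(n)-1$.
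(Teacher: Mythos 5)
Your proposal cannot be compared against a proof in the paper, because the paper deliberately gives none: it states Adams' theorem with a $\Box$, remarks that ``the proof of this theorem uses $K$-theory and Adams operations and is beyond the scope of these notes,'' and refers the reader to Husemoller \cite{husemoller}. What you have written is a correct outline of precisely the proof the paper is alluding to: James' reduction of the existence of $r$ independent fields on $\bs^{n-1}$ to a sectioning problem for $V_{n,r+1}\to\bs^{n-1}$ and thence to (co)reducibility of a stunted projective space $\mathbb{R}P^{N}/\mathbb{R}P^{N-r-1}$; the computation that $\widetilde{KO}(\mathbb{R}P^m)$ is cyclic of order $2^{\phi(m)}$ (a fact the paper itself quotes later, in the proof of Theorem \ref{parallel-projectivestiefel}, when discussing the order of the Hopf bundle); and the $\psi^3$ argument, where naturality under the collapse and inclusion maps plus the $2$-adic behaviour of $3^t-1$ rules out coreducibility exactly at $r=\rho(n)$. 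So the approach is the right one and the signposts are accurate. Be aware, though, that your text is a plan rather than a proof: all three pillars (James' equivalence, the $KO$-computation, and the Adams-operations obstruction) are invoked rather than established, and each is a substantial theorem in its own right --- which is exactly why the survey declines to prove the result and why your sketch, like the paper, ultimately rests on the authority of \cite{adams}.
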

The proof of this theorem uses $K$-theory and  Adams operations and is beyond the scope of these notes. 
The reader may refer to 
Husemoller's book \cite{husemoller} for a complete proof.

Note that with notations as in the proof of Proposition \ref{orthog}, the vector fields $\mu_j$ on the sphere $\mathbb{S}^{n-1}$ 
are {\it odd}, that is, $\mu_j(-v)=-\mu_j(v),~\forall v\in \mathbb S^{n-1}$.
Since $\mathbb{S}^{n-1}\to \mathbb{R}P^{n-1}$ is a covering projection 
with deck transformation group $\mathbb{Z}_2$ generated by the antipodal map, 
we see that the $\mu_j$ define vector fields $\bar{\mu}_j$ on the quotient 
space $\mathbb{S}^{n-1}/\mathbb{Z}_2=\mathbb{R}P^{n-1}$ the $(n-1)$-dimensional real projective space. 
(See Example \ref{basicexamples} (v).)  
Thus we have the lower bound $\Span(\mathbb{R}P^{n-1})
\ge \rho(n)-1$.  On the other hand, 
$\Span(\mathbb{S}^{n-1})\ge \Span(\mathbb{R}P^{n-1})$ again by the same Example.  
Hence Adams' theorem yields the following.

\begin{corollary}
$\Span(\mathbb{R}P^{n-1})=\rho(n)-1$. \hfill $\Box$
\end{corollary}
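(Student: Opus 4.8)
The plan is to prove the two inequalities $\Span(\mathbb{R}P^{n-1})\ge \rho(n)-1$ and $\Span(\mathbb{R}P^{n-1})\le \rho(n)-1$ separately, combining the Radon--Hurwitz construction from the proof of Proposition~\ref{orthog}, Adams' theorem, and the elementary properties of the double cover $p:\mathbb{S}^{n-1}\to\mathbb{R}P^{n-1}$ recorded in Example~\ref{basicexamples}(v). Throughout I assume $n\ge 2$, so that Adams' theorem applies.

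For the lower bound I would start from the $\rho(n)-1$ everywhere linearly independent Radon--Hurwitz vector fields $\mu_2,\dots,\mu_{\rho(n)}$ on $\mathbb{S}^{n-1}$ produced in the proof of Proposition~\ref{orthog} (using that the maximal $k$ equals $\rho(n)$), together with the observation that each $\mu_j$ is \emph{odd}, i.e. $\mu_j(-v)=-\mu_j(v)$. The key point I would verify is that oddness is exactly $\Gamma$-invariance for $\Gamma=\mathbb{Z}_2=\langle a\rangle$, where $a(v)=-v$ is the antipodal map: since $T_v\mathbb{S}^{n-1}$ is identified with $\{v\}^\perp\subset\mathbb{R}^n$ and $a$ is the restriction of the linear map $-\mathrm{id}$, its differential $Ta_v$ is again $-\mathrm{id}$, so that $\mu_j(a(v))=-\mu_j(v)=Ta_v(\mu_j(v))$. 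Hence by Example~\ref{basicexamples}(v) each $\mu_j$ descends to a well-defined vector field $\bar\mu_j$ on the quotient $\mathbb{R}P^{n-1}$.

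Still for the lower bound, I must check that the descended fields remain everywhere linearly independent. This is immediate: the defining relation $Tp_v(\mu_j(v))=\bar\mu_j(p(v))$ exploits the fact that $Tp_v:T_v\mathbb{S}^{n-1}\to T_{p(v)}\mathbb{R}P^{n-1}$ is a linear isomorphism, a covering being a local diffeomorphism. Linear independence of $\mu_2(v),\dots,\mu_{\rho(n)}(v)$ therefore transports to linear independence of $\bar\mu_2(p(v)),\dots,\bar\mu_{\rho(n)}(p(v))$ at every point, yielding $\rho(n)-1$ everywhere linearly independent vector fields on $\mathbb{R}P^{n-1}$ and hence $\Span(\mathbb{R}P^{n-1})\ge\rho(n)-1$.

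For the upper bound I would invoke the other half of Example~\ref{basicexamples}(v): because $p$ is a covering, any $k$-field on $\mathbb{R}P^{n-1}$ lifts to a $k$-field on $\mathbb{S}^{n-1}$, giving $\Span(\mathbb{S}^{n-1})\ge\Span(\mathbb{R}P^{n-1})$. Adams' theorem identifies the left-hand side as $\rho(n)-1$, whence $\Span(\mathbb{R}P^{n-1})\le\rho(n)-1$; combining the two bounds gives the asserted equality. I expect no real obstacle beyond the two bookkeeping verifications above, namely that oddness coincides with $\mathbb{Z}_2$-invariance and that descended fields stay independent. The entire analytic depth of the statement is concentrated in Adams' theorem, which supplies the matching upper bound for the sphere and which I treat as a black box.
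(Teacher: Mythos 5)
Your proposal is correct and follows essentially the same route as the paper: descend the odd Radon--Hurwitz fields through the double cover via Example~\ref{basicexamples}(v) to get the lower bound, and combine the lifting inequality $\Span(\mathbb{S}^{n-1})\ge\Span(\mathbb{R}P^{n-1})$ with Adams' theorem for the upper bound. The only difference is that you spell out the two bookkeeping steps (oddness equals $\mathbb{Z}_2$-equivariance, and descended fields stay independent because $Tp$ is a fibrewise isomorphism) that the paper leaves implicit.
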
 

We will see that the Radon-Hurwitz number arises as the lower bound for span of certain other 
homogeneous spaces as well.  

\subsection{Span and characteristic classes}  
The determination of the span of a manifold is in general a difficult 
problem.  However, techniques and tools of algebraic topology have been 
successfully applied to obtain invariants (or obstructions) whose vanishing (or non-vanishing) 
would lead to lower (or upper) bounds for the span.  It is generally the case 
that obtaining lower bound for span is much harder than finding invariants 
whose non-vanishing leads to upper bounds.  
The following result which gives a necessary and sufficient condition 
for span to be at least one is due to Hopf.

\begin{theorem}{\em (H. Hopf \cite{hopf})} \label{hopf}
Let $M$ be a compact connected smooth manifold. Then $\Span(M)\ge 1$ if and only if 
the Euler-Poincar\'e characteristic $\chi(M)$ of $M$ is zero.  \hfill $\Box$
\end{theorem}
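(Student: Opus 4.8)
The plan is to derive both implications from the Poincar\'e--Hopf index theorem, which I take as known: for a smooth vector field $v$ on a compact manifold $M$ whose zeros are all isolated, the sum $\sum_p \mathrm{ind}_p(v)$ of the local indices over the (necessarily finite) zero set equals $\chi(M)$. With this tool in hand, the forward implication is immediate. If $\Span(M)\ge 1$, then there is a nowhere-vanishing vector field $v$ on $M$; it has no zeros, so the index sum is the empty sum $0$, and Poincar\'e--Hopf forces $\chi(M)=0$.

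For the converse, which is the substantive direction, I would begin with \emph{any} smooth vector field $v$ on $M$ having only isolated zeros. A generic $v$ has this property, since the section $v\colon M\to TM$ can be made transverse to the zero section of $\tau M$ by a small perturbation, and transversality makes the zeros nondegenerate, hence isolated; compactness of $M$ then makes them finite in number. By Poincar\'e--Hopf the algebraic count of these zeros equals $\chi(M)=0$. The strategy is now to cancel them all. Using that $M$ is connected, I would drag each zero along a path and gather all of them inside a single smoothly embedded closed ball $D^n\subset M$, carrying $v$ along by a compactly supported ambient isotopy so that $v$ is left unchanged outside a slightly larger ball and has all its zeros inside $D^n$.

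Restricting to this ball, $v$ is nowhere zero on the boundary sphere $\partial D^n\cong\bs^{n-1}$, so after normalising and trivialising $\tau M|_{D^n}$ (possible since $D^n$ is contractible) it defines a map $\partial D^n\to\bs^{n-1}$ whose degree equals the total index of the enclosed zeros, namely $\chi(M)=0$. A self-map of $\bs^{n-1}$ of degree zero is null-homotopic, hence extends over the disc to a map $D^n\to\bs^{n-1}$, that is, to a nowhere-vanishing section of $\tau M$ over $D^n$ agreeing with $v/\|v\|$ on $\partial D^n$. Replacing $v$ by this extension inside $D^n$ and keeping it unchanged outside yields a globally nowhere-vanishing vector field, so $\Span(M)\ge 1$.

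The hard part will be making the gathering of the zeros rigorous: one must check that several isolated zeros can be slid into one ball by a compactly supported isotopy without creating new zeros along the way, and that the local degree on $\partial D^n$ correctly accumulates the sum of the individual indices. This is precisely the step where connectedness of $M$ and the additivity of the degree under cobounding spheres are both essential, and it is where I would expect to spend most of the care; the subsequent null-homotopy argument, resting on $\pi_{n-1}(\bs^{n-1})\cong\bz$ via the degree, is then routine.
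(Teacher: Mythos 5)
Your proof is correct, but it follows a different route from the paper's in the substantive direction. Both arguments share the same first half: produce a vector field with finitely many isolated zeros and invoke the Poincar\'e--Hopf identity (total index $=\chi(M)$) --- though the paper actually sketches a proof of that identity via the gradient field of a Morse function, whereas you take it as a black box. The divergence is in how vanishing of $\chi(M)$ is converted into a nowhere-zero field. You use the classical argument going back to Hopf himself: herd all the zeros into a single embedded ball $D^n$, observe that the normalized field on $\partial D^n$ has degree equal to the total index $=0$, and use $\pi_{n-1}(\mathbb{S}^{n-1})\cong\mathbb{Z}$ (degree zero implies null-homotopic) to extend it without zeros over the ball. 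The paper instead phrases the converse in obstruction-theoretic terms: the unique obstruction to sectioning the unit sphere bundle $S(\tau M)\to M$ is the Euler class $\mathfrak{e}(M)\in H^d(M;\underline{\mathbb{Z}})$ (twisted coefficients handling the non-orientable case), and under Poincar\'e duality this class is exactly the index, i.e.\ $\chi(M)$, so its vanishing yields the section. Your route is more elementary and self-contained --- it needs only degree theory in a single chart, and sidesteps local-coefficient cohomology and orientability issues entirely --- at the cost of the genuinely fiddly step you correctly flag, namely gathering the zeros into one ball (in fact you can avoid the ambient isotopy altogether: connect the finitely many zeros by a tree of embedded arcs and take a regular neighbourhood, which is already a ball containing them). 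The paper's route buys uniformity with the rest of the survey: the Euler-class formulation is the $r=1$ case of the obstruction framework (Koschorke's classes, the criteria for $\Span\ge 2$ and $\ge 3$) that the paper builds on, and it generalizes immediately to sections of arbitrary sphere bundles.
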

We merely give an outline of the proof. 

First one shows that $M$ admits a smooth vector field $v$ which has only finitely many singularities---points 
where $v$ vanishes.  In fact, put a Riemannian metric on $M$.  Then 
$\textrm{grad}(f) $, the gradient vector field 
associated to a Morse function $f:M\to \mathbb R$, has only finitely many singularities. 
To each singular point $p\in M$ one associates an integer called the index of $v$ at $p$ and denoted 
$\textrm{index}_p(v)$ obtained as follows.  Choose a coordinate chart $(U,\phi)$ 
around $p$ such that $v_x\ne 0~\forall x\in U\setminus \{p\}$.  Take a small sphere $S\cong \mathbb S^{d-1}$ contained in $U$ 
centred at $p$ where $d=\dim M$.  Then $\phi$ induces an orientation on $U$ and hence on $S$ from the standard orientation 
on $\phi(U)\subset\mathbb R^d$.
The degree of the 
map $S\to \mathbb S^{d-1}$ defined as $x\mapsto v_x/||v_x||$ 
is defined to be the index of $v$ at $p$.  
Set $\textrm{index}(v):=\sum_p \textrm{index}_p(v)$ (where the sum is over the (finite) set of all  singular points of $v$); it is understood 
that if $v$ has no singularities, the $\textrm{index}(v)$ is zero.    It turns out that $\textrm{index}(v)$ is  
independent of the choice of the vector field $v$.

When $f$ is a Morse function on $M$, the singularities of $\textrm{grad}(f)$ 
are precisely the critical points of $f$ and, moreover, the index of $\textrm{grad}(f)$ at a critical point $p$ 
is either $+1$ or $-1$ depending on the parity of the index of the function $f$ at $p$.  (See \cite{milnor-mt}.)  Therefore we see that 
$\textrm{index(grad}(f))$ equals  $\sum_{0\le q\le d} (-1)^q c_q$ where $c_q$ is the number of 
$q$-dimensional cells in the CW structure on $M$ obtained from the Morse function $f$.  
As is well-known $\sum (-1)^q c_q=\chi(M)$.  

Denote by $\underline \pi_{d-1}\mathbb S^{d-1}=\underline{\mathbb Z}$ the local coefficient system associated to the 
unit tangent bundle $S(\tau M)\to M$.   If $M$ is orientable it is the constant coefficient system $\mathbb Z$; otherwise 
it is given by the homomorphism $\pi_1(M)\to Aut(\mathbb Z)\cong \mathbb Z_2$ with kernel the index $2$ subgroup 
corresponding to the orientation double cover of $M$.   In any case one has the Poincar\'e duality isomorphism 
$H^d(M;\underline {\mathbb Z})\cong H_0(M;\mathbb Z)=\mathbb Z$.  The obstruction to the existence of a cross-section 
of $S(\tau M)\to M$ is 
the {\it Euler class} $\mathfrak{e}(M)\in H^d(M;\underline {\mathbb Z})$.  The class $\mathfrak{e}(M)$ corresponds, under Poincar\'e 
duality, to the index of a vector field $v$ on $M$ with isolated singularities.  
Since $\textrm{index}(v)=\chi(M)$, vanishing of $\chi(M)$ implies the existence of a nowhere vanishing 
vector field.
We refer the reader to \cite{steenrod}, \cite{ms}, \cite{milnor-uvlect} for further details. 

When $\dim M=d$  is odd, the Euler-Poincar\'e characteristic of $M$ vanishes (by Poincar\'e duality) and so we have 

\begin{corollary} \label{hopf-odd-dim}
Suppose that $\dim (M)$ is odd. Then $\Span (M)\geq 1$. \hfill $\Box$
\end{corollary}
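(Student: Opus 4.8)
The plan is to deduce this corollary directly from Hopf's theorem (Theorem~\ref{hopf}), so the only real content is the classical fact that a compact manifold of odd dimension has vanishing Euler characteristic. First I would recall that by Theorem~\ref{hopf}, for a compact connected smooth manifold $M$ we have $\Span(M)\ge 1$ precisely when $\chi(M)=0$; thus it suffices to show $\chi(M)=0$ whenever $d=\dim M$ is odd. Note that the corollary as stated does not explicitly repeat the compactness and connectedness hypotheses, but these are in force from the standing conventions of the section, so I would treat $M$ as compact connected and invoke Hopf's theorem without further comment.

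To show $\chi(M)=0$ in odd dimension, I would argue separately according to whether $M$ is orientable. The cleanest uniform approach is to pass to rational (or real) cohomology and use Poincar\'e duality with the appropriate coefficients. If $M$ is orientable, then Poincar\'e duality over $\bq$ gives $\dim_\bq H^q(M;\bq)=\dim_\bq H^{d-q}(M;\bq)$ for each $q$, so the Betti numbers $b_q$ satisfy $b_q=b_{d-q}$. Hence in the alternating sum $\chi(M)=\sum_{q=0}^d (-1)^q b_q$ the term indexed by $q$ and the term indexed by $d-q$ carry opposite signs, because $(-1)^q=-(-1)^{d-q}$ when $d$ is odd; pairing them off shows the sum cancels in pairs and $\chi(M)=0$. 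Since $d$ is odd there is no fixed middle term $q=d-q$ to worry about, so the cancellation is complete.

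For the non-orientable case I would reduce to the orientable one by passing to the orientation double cover $\pi:\wt M\to M$. This is a compact connected smooth orientable manifold of the same odd dimension $d$, and it is a two-sheeted covering, so $\chi(\wt M)=2\,\chi(M)$ by multiplicativity of the Euler characteristic under finite coverings. By the orientable case already treated, $\chi(\wt M)=0$, whence $2\chi(M)=0$ and so $\chi(M)=0$. Alternatively one can avoid the covering argument entirely by invoking Poincar\'e duality with twisted coefficients $\underline{\bq}$ in the nonorientable case, exactly the local coefficient system $\underline{\pi_{d-1}\bs^{d-1}}$ appearing in the sketch of Hopf's theorem above; but the double-cover reduction is the most transparent.

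Finally, combining the two cases gives $\chi(M)=0$ for every compact manifold of odd dimension, and applying Theorem~\ref{hopf} yields $\Span(M)\ge 1$, completing the proof. I do not anticipate a genuine obstacle here: the entire statement is a one-line consequence of Hopf's theorem together with the vanishing of $\chi$ in odd dimensions, and the latter is a standard Poincar\'e duality computation. The only point requiring a little care is the non-orientable case, where one must either use twisted coefficients or pass to the double cover rather than naively applying ordinary Poincar\'e duality, which requires orientability.
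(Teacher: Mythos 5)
Your proposal is correct and follows exactly the paper's route: the paper derives this corollary from Hopf's theorem (Theorem \ref{hopf}) together with the remark that $\chi(M)=0$ in odd dimensions ``by Poincar\'e duality,'' which is precisely your argument, merely with the orientable and non-orientable cases spelled out. (As a small simplification, Poincar\'e duality with $\mathbb{Z}_2$ coefficients handles both cases at once, since $\chi(M)$ can be computed from the mod $2$ Betti numbers; but your double-cover reduction is equally valid.)
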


The notion of span extends in a natural way to any vector bundle over an arbitrary 
topological space.  The span of a real vector bundle $\xi$ over $X$ with projection $p:E(\xi)\to X$ is the maximum 
number $r$, denoted $\Span(\xi)$, such that there exist everywhere linearly independent cross sections 
$s_1,\ldots, s_r:X\to E(\xi)$.   The number $\rank(\xi)-\Span(\xi)$ is called the {\it geometric dimension} of $\xi$. 
Note that $0\le \Span(\xi)\le d$ where $d$ denotes the rank of $\xi$.   

If $X$ is a $d$-dimensional CW-complex (where $d$ is finite) or has the homotopy type of such a space, and if 
$\xi$ is a real vector bundle over $X$ such that $\rank(\xi)\ge d$, then $\Span(\xi)\ge \rank(\xi)-d;$  also 
the geometric dimension of 
$\xi\oplus k\epsilon$ is independent of the choice of $k\ge 1$.  See \cite{husemoller}.  Here, and in what follows,
$\epsilon$ denotes a trivial line bundle and $k\eta$ denotes the $k$-fold Whitney sum $\eta\oplus \cdots\oplus\eta$ 
of $\eta$ with itself.   
The notion of {\it stable span} of a manifold is defined as follows: 

\begin{definition}
Let $M$ be a connected smooth manifold.  The stable span of $M$, denoted $\Span^0(M),$ is defined to be the largest 
integer $r$ such that $\tau M\oplus k\epsilon =\eta\oplus (k+r)\epsilon$ where $k\ge 1$.  We say that $M$ is 
stably trivial if $\tau M\oplus k\epsilon$ is trivial for some $k\ge 1$.
\end{definition}

In view of the observation preceding the definition, we may take always $k=1$ to obtain $\Span^0(M)$.  In 
particular $M$ is stably trivial if and only if $\tau M\oplus \epsilon$ is trivial.  
Stably parallelizable manifolds are also known as $\pi$-manifolds. 

We shall assume familiarity with the definitions and properties of characteristic classes associated to 
vector bundles such as Stiefel-Whitney classes, Pontrjagin classes, etc. 
The standard reference for these is the book by Milnor and Stasheff \cite{ms}.   

The Stiefel-Whitney classes of a smooth manifold $M$, denoted $w_j(M)\in H^j(M;\mathbb Z_2)$,  
are by definition, the Stiefel-Whitney classes $w_j(\tau M)$ of the 
tangent bundle of $M$.  
Similar convention holds for Pontrjagin classes.  It turns out that the 
Stiefel-Whitney classes of a compact connected smooth manifold are independent of the {\it smoothness structure} and depend only 
on the underlying topological manifold.  This is because the total Stiefel-Whitney class $w(M)=\sum w_j(M)$ of $M$ can be 
described purely in terms of the cohomology algebra $H^*(M;\mathbb Z_2)$ and the action of the Steenrod algebra $\mathcal A_2$ 
on it.  So $w_j(M)$ are even homotopy invariants; see \cite[Ch. 11]{ms}.   In contrast, it is known that 
the Pontrjagin classes are not homotopy invariants.   (Note the Stiefel-Whitney classes are {\it not} homotopy invariants 
when the manifold is not compact.   For example, $w_1(M)\ne 0$ when $M$ is the M\"obius strip as it is not orientable whereas the 
cylinder $\mathbb S^1\times \mathbb R$ is parallelizable and so 
$w_1(\mathbb S^1\times \mathbb R)=0$.)

Recall that Stiefel-Whitney classes are `stable' classes: $w_j(\xi\oplus r\epsilon)=w_j(\xi)$ for all $j\ge 0, ~\forall r\ge 1,$ and that $w_k(\xi)=0$ if 
$k> \rank(\xi)$ for any vector 
bundle $\xi$.  It follows that,  
$\Span^0(M)\le r$ if $w_{d-r}(M)\ne 0$ for some $r\ge 0$.    Likewise, the Pontrjagin classes $p_j(\xi)$ are also stable 
classes, and, $p_j(\xi)=0$ if $j>\rank(\xi)$.  
So the non-vanishing of $p_{j}(M)$ implies that $\Span^0(M)\le \dim M-j$.

All spheres are stably parallelizable and so $\Span^0(\mathbb{S}^n)=\Span(\mathbb{S}^n)$ if and only 
if $n=1,3,7.$ On the other hand we have the following result, which is a special case of a more 
general result due to James and Thomas \cite{jt}. 

\begin{theorem} {\em (Cf. \cite[Corollary 1.10]{jt})}\label{proj-stablespan}
For any $n\ge 1$, $\Span^0(\mathbb{R}P^n)=\Span(\mathbb{R}P^n)$.
\end{theorem}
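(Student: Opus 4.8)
The plan is to prove the nontrivial inequality $\Span^0(\mathbb{R}P^n)\le \Span(\mathbb{R}P^n)$, since the reverse $\Span(\mathbb{R}P^n)\le\Span^0(\mathbb{R}P^n)$ holds for every manifold: an everywhere independent $r$-frame in $\tau M$ is in particular a trivial rank-$r$ subbundle of $\tau M\oplus\epsilon$. Thus the entire content is a destabilization statement: if $\tau(\mathbb{R}P^n)\oplus\epsilon$ carries $r+1$ everywhere independent sections, then $\tau(\mathbb{R}P^n)$ itself carries $r$. I would first recast both sides as reduction-of-structure-group problems. Writing $n=\dim\mathbb{R}P^n=\rank\tau(\mathbb{R}P^n)$, the inequality $\Span(\mathbb{R}P^n)\ge r$ is equivalent to the existence of a lift of the classifying map $\mathbb{R}P^n\to BO(n)$ through $BO(n-r)\to BO(n)$, a fibration with fibre the Stiefel manifold $V_{n,r}=O(n)/O(n-r)$ of $r$-frames in $\br^n$; likewise $\Span^0(\mathbb{R}P^n)\ge r$ amounts to lifting $\mathbb{R}P^n\to BO(n+1)$ through $BO(n-r)\to BO(n+1)$, with fibre $V_{n+1,r+1}$.

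Next I would compare the two lifting problems through the map induced by the fibration $V_{n,r}\to V_{n+1,r+1}\to \bs^n$, whose projection records the first frame vector. Since $\bs^n$ is $(n-1)$-connected, the long exact sequence shows that $V_{n,r}\hookrightarrow V_{n+1,r+1}$ induces isomorphisms on $\pi_i$ for $i\le n-2$ and a surjection on $\pi_{n-1}$, the kernel of the latter being the image of the boundary map $\partial:\pi_n(\bs^n)=\bz\to\pi_{n-1}(V_{n,r})$. Consequently the obstructions to the unstable lift, living in $H^{i+1}(\mathbb{R}P^n;\pi_i(V_{n,r}))$ with $i\le n-2$, are carried isomorphically onto the corresponding obstructions for the stable lift, which all vanish by hypothesis; making the same choices cell by cell, one can build the desired $r$-frame on $\tau(\mathbb{R}P^n)$ over the $(n-1)$-skeleton so as to agree with the given stable frame. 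Here the coefficient systems must be taken twisted by $w_1(\mathbb{R}P^n)$ when $n$ is even, since $\mathbb{R}P^n$ is then non-orientable.

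Everything therefore concentrates in a single top obstruction $\mathfrak{o}\in H^n(\mathbb{R}P^n;\pi_{n-1}(V_{n,r}))$ (twisted coefficients), whose vanishing completes the destabilization. Its image under the surjection $\pi_{n-1}(V_{n,r})\twoheadrightarrow\pi_{n-1}(V_{n+1,r+1})$ is the top obstruction to the stable lift, which is zero; hence $\mathfrak{o}$ already lies in the image of $H^n(\mathbb{R}P^n;\operatorname{im}\partial)$, where $\operatorname{im}\partial$ is a cyclic quotient of $\bz$. The main obstacle is to show this residual class can be annihilated: as the frame over the $(n-1)$-skeleton is varied within its stable class, $\mathfrak{o}$ ranges over a coset of a definite subgroup of $H^n(\mathbb{R}P^n;\operatorname{im}\partial)$, and one must prove that $0$ lies in this coset. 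This is exactly where the special geometry of $\mathbb{R}P^n$ enters---through the precise (twisted) top cohomology $H^n(\mathbb{R}P^n;-)$, Poincar\'e duality, and the parity information carried by $\partial$---and it is the heart of the James--Thomas analysis \cite{jt}. I expect this top-obstruction computation, rather than the formal reductions above, to be the crux of the argument.

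As an independent check, once Adams' theorem and a sharp stable upper bound are granted, one may also argue by squeezing: the Radon--Hurwitz fields descend to give $\Span(\mathbb{R}P^n)\ge\rho(n+1)-1$ (the Corollary above), while a non-vanishing $KO$-theoretic obstruction yields $\Span^0(\mathbb{R}P^n)\le\rho(n+1)-1$, so that $\rho(n+1)-1\le\Span(\mathbb{R}P^n)\le\Span^0(\mathbb{R}P^n)\le\rho(n+1)-1$ forces equality. This route, however, presupposes the hard Adams-type upper bound on the stable span, so I would present the obstruction-theoretic destabilization as the primary proof.
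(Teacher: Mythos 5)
Your formal reductions are correct, and they are in fact the machinery that sits inside James--Thomas: recasting span and stable span as lifting problems through $BO(n-r)\to BO(n)$ and $BO(n-r)\to BO(n+1)$, comparing fibres via $V_{n,r}\to V_{n+1,r+1}\to \mathbb{S}^n$, matching the obstructions below the top dimension, and isolating a single residual class coming from $H^n(\mathbb{R}P^n;\operatorname{im}\partial)$. But the proposal then stops exactly where the theorem lives. You give no argument that the coset of possible top obstructions contains zero; you only predict that this ``is the heart of the James--Thomas analysis'' and ``expect'' it to be the crux. That is a genuine gap: everything you prove is formal and would apply verbatim to any $n$-manifold, whereas the statement being proved is special to $\mathbb{R}P^n$ (it fails, for instance, for $\mathbb{S}^n$ with $n$ even, where stable span is $n$ and span is $0$), so a proof must contain the step you omit. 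A symptom of the omission is that the dimension restrictions never appear in your write-up: the destabilization input that \cite{jt} actually supply is confined to $n$ odd, $n\neq 1,3,7$ (the Hopf-invariant-one dimensions are excluded), so any complete argument must dispose of $n$ even and $n=1,3,7$ separately, which you never do.

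The paper's own proof also rests on \cite{jt}, but it cites a precise, finished statement rather than gesturing at the method: for $n$ odd, $n\neq 1,3,7$, any $n$-plane bundle $\eta$ with $\eta\oplus\epsilon\cong(n+1)\xi\cong\tau\mathbb{R}P^n\oplus\epsilon$ is isomorphic to $\tau\mathbb{R}P^n$. From this the theorem is one line: if $\tau\oplus\epsilon\cong\sigma\oplus(r+1)\epsilon$, apply the uniqueness to $\eta=\sigma\oplus r\epsilon$ to get $\tau\cong\sigma\oplus r\epsilon$, so $\Span\ge r$. The remaining cases are trivial ($n$ even: $w_n(\mathbb{R}P^n)\neq 0$ forces both spans to vanish; $n=1,3,7$: $\mathbb{R}P^n$ is parallelizable). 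If you want your obstruction-theoretic outline to become a proof, you must either carry out the top-obstruction computation (essentially reproving James--Thomas) or quote their result in a form strong enough to close the argument, together with the case division above. Finally, your ``squeezing'' check is weaker than you suggest: the bound $\Span^0(\mathbb{R}P^n)\le\rho(n+1)-1$ does not follow from Adams' sphere theorem via the covering argument (that argument only controls span, and $\mathbb{S}^n$ has stable span $n$); in the literature this stable upper bound is usually \emph{deduced} from the very equality you are trying to prove together with Adams, so invoking it here risks circularity, as you partly acknowledge.
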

\begin{proof}
If $n$ is even, the Stiefel-Whitney class $w_n(\mathbb{R}P^n)\ne 0$ which shows that the stable span of $\mathbb{R}P^n$ vanishes. 
Also, trivially, the statement is valid if $n=1,3,7$. 

So assume that $n$ is odd and that $n\ne 1,3,7$.
James and Thomas \cite{jt} have shown that, for such an $n$, 
if $\eta$ is an $n$-plane bundle such that $\eta\oplus \epsilon\cong (n+1)\xi\cong \tau \mathbb{R}P^n\oplus \epsilon$, 
then $\eta$ is isomorphic to $\tau \mathbb{R}P^n$.   
This readily implies that the geometric dimension of $\tau \mathbb{R}P^n$ and of $\tau \mathbb{R}P^n\oplus \epsilon$ 
are equal---equivalently $\Span^0(\mathbb{R}P^m)=\Span(\mathbb{R}P^n).$
\end{proof}

\begin{definition}\label{semichar}
Let $M$ be a closed connected orientable manifold of dimension $n$ where $n=2m+1$ is odd.  The {\em  Kervaire mod $2$ semi-characteristic} of $M$ is defined as 
$\hat{\chi}_2(M):=\sum_{0\leq j\leq m} \dim_{\mathbb Z_2} H^{2j}(M;\mathbb{Z}_2)\mod 2.$  Likewise the {\em Kervaire real 
semi-characteristic} of $M$ is defined as 
$\kappa(M) =:\sum_{0\le j\le m} b_{2j}(M) \mod 2$ where $b_k(M)$ denotes the $k$-th Betti 
number of $M$.  
\end{definition}

Suppose that $M$ is not orientable but satisfies the weaker condition that $w_1(M)^2=0$.  Assume that $n\equiv 1\mod 4$.   Atiyah and 
Dupont  \cite{atiyahdupont} defined the {\it twisted Kervaire semi-characteristic}, denoted $R_L(M),$ using cohomology with coefficient in 
a local system $L$ of the field of complex numbers.   One may view $L$ as the complex line bundle  
associated to a covering projection $\wt{M}\to M$ with deck transformation group $\mathbb{Z}_4$.  (Thus the total space 
of $L$ is $\wt{M}\times_{\mathbb Z_4} \mathbb C$ where the action of $\mathbb Z_4$ on $\mathbb C$ is generated by 
multiplication by $i\in \mathbb S^1$.)    
Such a cover 
corresponds to a homomorphism $\pi_1(M)\to \mathbb{Z}_4$ or equivalently an element $u\in H^1(M;\mathbb{Z}_4)$.  The element $u$ is chosen so that $w_1(M)=u \mod 2$.  Such an element exists since $w_1(M)^2=0$.   
 The cohomology $H^*(M;L)$, which is the same as the de Rham cohomology with 
coefficients in $L$, admits a non-degenerate Poincar\'e pairing $H^{n-p}(M;L)\times H^p(M;L)\to H^n(M;\Omega^n\otimes \mathbb{C})\cong \mathbb{C}$ in view of the isomorphism $L\otimes L\cong \Omega^n\otimes \mathbb{C}$.  
(Here $\Omega^n$ is the determinant of the cotangent bundle of $M$.)
The twisted semi-characteristic is defined as 
$R_L(M)=(1/2)(\sum_{0\le k\le n}\dim_\mathbb{C}(H^k(M;L))) \mod 2$.   When $w_1(M)=0$, that is, when $M$ is orientable, 
then $L$ and $\Omega^n$ are trivial and we have $R_L=\kappa(M)$.     

We now state a result which gives necessary and sufficient conditions for the span to be 
at least $2$ (under mild restrictions on the manifold), similar in spirit to Hopf's theorem \ref{hopf}.
We refer the reader to \cite{kz} and \cite[\S 2]{thomas2} for a detailed discussion and relevant references. 

Recall that the signature $\sigma(M)$ of a compact connected oriented manifold of dimension $4m$ is the signature of 
the symmetric bilinear pairing $H^{2m}(M;\mathbb{R})\times H^{2m}(M;\mathbb{R})\to \mathbb{R}$ given by $(\alpha, \beta)\mapsto \langle \alpha\cup\beta,\mu_M\rangle$ where $\mu_M\in H_{4m}(M;\mathbb{Z})\hookrightarrow 
H_{4m}(M;\mathbb{R})\cong \mathbb{R}$ denotes the fundamental class of $M$. 

\begin{theorem} \label{span2} {\em (See \cite[\S2]{thomas2})}
Suppose that $M$ is a compact connected oriented smooth manifold of dimension $d\ge 5$.  Then 
$\Span(M)\ge 2$ if and only if one of the following holds (depending on the value of $d\mod 4$):\\
 (a) $d\equiv 1\mod 4$ and $w_{d-1}(M)=0, \kappa(M)=0$; \\(b) $d\equiv  2 \mod 4$ and $\chi(M)=0$; \\
 (c) $d\equiv 3\mod4$;\\
  (d) $d\equiv 0\mod 4, $ and $\chi(M)=0, \sigma(M)\equiv 0\mod 4$.  
\end{theorem}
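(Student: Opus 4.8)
The plan is to interpret $\Span(M)\ge 2$ as the existence of a cross-section of the bundle $V_2(\tau M)\to M$ of orthonormal tangent $2$-frames, whose fibre is the Stiefel manifold $V_2(\br^d)$, and to run the corresponding obstruction theory. The fibre is $(d-3)$-connected, so I would first record, from the sphere fibration $\bs^{d-2}\to V_2(\br^d)\to\bs^{d-1}$ (whose connecting map is multiplication by $\chi(\bs^{d-1})=1+(-1)^{d-1}$), that $\pi_{d-2}(V_2(\br^d))\cong\bz$ when $d$ is even and $\cong\bz_2$ when $d$ is odd. Because $\dim M=d$, only two obstructions survive: a primary class $o_1\in H^{d-1}(M;\pi_{d-2}(V_2(\br^d)))$ and, once $o_1=0$, a secondary and final class $o_2\in H^{d}(M;\pi_{d-1}(V_2(\br^d)))$. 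As $M$ is oriented these coefficient systems are untwisted, so the relevant groups are $\bz$, $\bz_2$, or $\bz\oplus\bz_2$ as dictated by the homotopy of $V_2(\br^d)$.

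Next I would analyse the primary obstruction. The forgetful map $V_2(\br^d)\to\bs^{d-1}$ shows that a $2$-field forces a nowhere-zero vector field, so by Hopf's theorem (Theorem \ref{hopf}) the vanishing $\chi(M)=0$ is necessary; for $d$ odd it is automatic (Corollary \ref{hopf-odd-dim}). For $d$ odd the primary class is the $\bz_2$-class $w_{d-1}(M)$, and a Wu-class computation shows $w_{d-1}(M)=v_{(d-1)/2}(M)^2$; when $d\equiv 1\pmod 4$ this can be non-zero (e.g.\ for the Wu manifold) and is the genuine condition of (a), whereas when $d\equiv 3\pmod 4$ it vanishes automatically, accounting for the absence of a primary condition in (c). For $d$ even one has $w_{d-1}(M)=0$ as well, so after fixing a nowhere-zero vector field (possible once $\chi(M)=0$) and splitting $\tau M\cong\epsilon\oplus\xi$ with $\xi$ of odd rank $d-1$, the Euler class of $\xi$ contributes no primary obstruction and only the top class $o_2$ remains.

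The crux is the secondary obstruction $o_2\in H^{d}(M;\pi_{d-1}(V_2(\br^d)))\cong\bz_2$, and identifying it with a semi-characteristic or a signature is the whole content of the theorem. This is the hard part, and rather than attempt a bare-hands obstruction computation I would import the index-theoretic results of Atiyah and Atiyah--Dupont \cite{atiyahdupont}: for $d\equiv 1\pmod 4$ the class $o_2$ is realised as the mod $2$ index of a skew-adjoint elliptic operator and equals the real semi-characteristic $\kappa(M)$, giving (a); for $d\equiv 3\pmod 4$ the same operator has even-dimensional kernel by a duality symmetry, so $o_2$ vanishes identically and, together with the vanishing primary class, yields the unconditional statement (c). For $d$ even the relevant computation of $\pi_{d-1}(V_2(\br^d))$ (which depends on $d/2\bmod 2$) shows that the residual $\bz_2$ is present exactly when $d\equiv 0\pmod 4$: thus for $d\equiv 2\pmod 4$ there is no secondary obstruction and $\chi(M)=0$ alone suffices, giving (b).

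Finally, for $d\equiv 0\pmod 4$ I would identify $o_2$ with a signature invariant. Here $\chi(M)=0$ gives, via Poincar\'e duality $b_i=b_{d-i}$ and the congruence $\chi(M)\equiv\sigma(M)\pmod 2$, that $\sigma(M)$ is even, so $o_2=\tfrac12\sigma(M)\bmod 2$ is a well-defined $\bz_2$-class; its vanishing is exactly $\sigma(M)\equiv 0\pmod 4$, which is (d). Assembling the four residue classes, the vanishing of the surviving obstruction(s) matches the stated conditions in each case, and since these are all the obstructions their vanishing is equivalent to $\Span(M)\ge 2$. The main obstacle throughout is the passage from the homotopy-theoretic class $o_2$ to the computable invariants $\kappa(M)$ and $\sigma(M)\bmod 4$, precisely the point at which the Atiyah--Dupont index theorem is indispensable.
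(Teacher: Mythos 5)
The paper itself gives no proof of Theorem \ref{span2}: it is stated as a compilation of known results, with the reader referred to \cite{kz} and \cite[\S 2]{thomas2}. So your proposal can only be measured against that literature, and there your architecture is indeed the standard one: obstruction theory for the bundle of tangent $2$-frames with fibre $V_{d,2}$, the computation of $\pi_{d-2}(V_{d,2})$ and $\pi_{d-1}(V_{d,2})$ from the fibration $\mathbb{S}^{d-2}\to V_{d,2}\to \mathbb{S}^{d-1}$, Massey's theorem that $w_{d-1}(M)=0$ for oriented $M$ unless $d\equiv 1\bmod 4$, and the identification of the final obstruction with $\kappa(M)$ (Atiyah--Dupont \cite{atiyahdupont}) when $d\equiv 1\bmod 4$ and with $\tfrac{1}{2}\sigma(M)\bmod 2$ (Atiyah) when $d\equiv 0\bmod 4$. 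Importing the index-theoretic identifications wholesale is legitimate; that is exactly how the cited sources proceed, and no elementary substitute is known.

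Two points, however, are genuine gaps rather than permissible citations. First, in the even-dimensional cases you assert that, after splitting $\tau M\cong \epsilon\oplus\xi$ using a nowhere-zero field, the Euler class of $\xi$ ``contributes no primary obstruction.'' That is not automatic. The relevant class --- $e(\xi)\in H^{d-1}(M;\mathbb{Z})$, or in the frame-bundle picture the canonical primary obstruction, which is the integral class $W_{d-1}(M)=\beta w_{d-2}(M)$ ($\beta$ the integral Bockstein) --- is $2$-torsion and has vanishing mod $2$ reduction by Massey, but a $2$-torsion integral class reducing to zero mod $2$ need not vanish: it can be twice an element of order $4$ in the torsion of $H^{d-1}(M;\mathbb{Z})\cong H_1(M;\mathbb{Z})$. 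Proving that it can be killed (for a suitable choice of the first field) is one of the delicate steps that E. Thomas's work actually carries out. Relatedly, your closing claim that ``since these are all the obstructions their vanishing is equivalent to $\Span(M)\ge 2$'' ignores that the top obstruction $o_2$ is defined only up to the indeterminacy coming from the choice of section over the $(d-1)$-skeleton; the theorems you invoke are precisely the statements that, under the given hypotheses, a well-defined invariant ($\kappa(M)$, resp.\ $\tfrac{1}{2}\sigma(M)\bmod 2$) decides whether some admissible choice kills $o_2$. Second, a smaller factual slip: the Wu manifold $SU(3)/SO(3)$ is not an example of an oriented $5$-manifold with $w_4\neq 0$, since $H^4(SU(3)/SO(3);\mathbb{Z}_2)=0$; the Dold manifold $P(1,2)=(\mathbb{S}^1\times \mathbb{C}P^2)/\mathbb{Z}_2$, whose total Stiefel--Whitney class is $1+d+cd+d^2$, is a correct example.
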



We shall now explain the approach of Koschorke \cite{koschorke} who regarded a sequence of $r$ vector fields on $M$ as 
a vector bundle homomorphism $r\epsilon \to \tau M$ and constructed  
obstruction classes $\omega_{r}$ which live in the normal bordism group $\Omega_{r-1}(\mathbb RP^{r-1}\times M;\phi_M)$ 
for a suitable virtual vector bundle $\phi_M=\phi^+-\phi^-$ over $\mathbb RP^{r-1}\times M$. 

Let $r<n/2$ and let $\underline X=X_1,\ldots, X_r$ be a sequence of vector fields on a smooth manifold $M$.  
\footnote{Such a sequence is referred to as an $r$-field in \cite{koschorke}, but in the literature it is also often used  
to mean one which is everywhere linearly independent. So we avoid this terminology altogether.}
A point $p\in M$ is a {\it singularity} of $\underline X$ if $X_{1,p},\ldots, X_{r,p}\in T_pM$ is linearly dependent.  
Denote by $S=S(X)$ the singularity set, that is, 
$S:=\{p\in M\mid X_{1,p},\ldots, X_{r,p}~\textrm{is linearly dependent}\}$.  We say that $\underline X$ 
is {\it non-degenerate} if the following conditions hold: (a) $\forall p\in S$, the vectors
$X_{1,p},\ldots, X_{r,p}$ span a subspace of $T_pM$ of dimension $r-1$, (b)  $S$ is a compact smooth submanifold of $M$ 
of dimension $(r-1)$, 
(c) the map $M\ni p\mapsto (X_{1,p},\ldots, X_{r,p})\in E(r\tau M)$ is transverse to the (closed)
subspace $D_{r-1}:=\cup_{p\in M} D_p^{r-1}$ where $D_{p}^{r-1}=\{(u_1,\ldots,u_r)\mid u_j\in T_pM, 
1\le j\le r,~\textrm{span a linear space of dimension~}\le r-1\}$.  It turns out that when $2r<n$, there always exists a 
non-degenerate sequence $\underline{X}$. 
Note that $S$ meets $D_{r-1}$ along the {\it submanifold} $A_{r-1}=D_{r-1}\setminus D_{r-2}$ of $M$.    
Non-degeneracy guarantees a well-defined embedding $g:S\to \mathbb RP^{r-1}\times M$ obtained as 
$p\mapsto ([a_1,\ldots, a_r], p)$ where $\sum a_jX_{j,p}=0$ where not all $a_j$ are zero.   
Consider the virtual bundle $\phi_M:=\phi^+-\phi^-$ over $\mathbb RP^{r-1}\times M$ where $\phi^+=\xi\otimes \tau M, 
\phi^-=r\xi\oplus \tau M$. (Here $\xi$ denotes the Hopf bundle over the projective space $\mathbb RP^{r-1}$.) 
Then $g^*(\phi_M)$ is a stable normal bundle over $S$; more precisely, there is a vector bundle isomorphism 
$\bar{g}: s\epsilon\oplus \tau S\oplus g^*(\phi^+)\cong t\epsilon\oplus g^*(\phi^-)$ that covers $g$ where 
$\phi_M=\phi^+-\phi^-$.  This leads to a well-defined 
obstruction class $\omega_{r}(M):=[S,g,\bar{g}]\in \Omega_{r-1}(\mathbb RP^{r-1}\times M,\phi_M)$ in the 
normal bordism ring  $\Omega_*(\mathbb RP^{r-1}\times M,\phi_M).$   (If $S$ is empty, it is understood that $[S,g,\bar g]=0$.)
The element $\omega_{r}(M)$ is independent 
of the choice of $\underline{X}$.  Koschorke \cite[Theorem 13.3]{koschorke} showed that 
$\Span M\ge r$ if and only if $\omega_{r}(M)=0$.   We point out some important applications 
to span and stable span.  

The theorem below gives criterion for span to be at least $3$.  Koschorke considers 
{\it all} values of $\dim M\ge 7$, but we confine ourselves to the case when $\dim M\equiv 2\mod 4$.

\begin{theorem}\label{span3} {\em (U. Koschorke \cite[\S 14]{koschorke})}
Let $M$ be a $d$-dimensional manifold where $d\ge 10$.  Suppose that $\chi(M)=0, w_{d-2}(M)=0$ 
and that $d\equiv 2\mod 4$.  Then $\Span(M)\ge 3$.
\end{theorem}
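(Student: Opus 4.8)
The plan is to verify Koschorke's criterion $\omega_3(M)=0$ in the normal bordism group $\Omega_2(\mathbb{R}P^2\times M;\phi_M)$, exploiting the fact that a $2$-field is already available. First I would invoke Theorem \ref{span2}(b): since $d\equiv 2\bmod 4$ and $\chi(M)=0$, there are everywhere linearly independent vector fields $X_1,X_2$ (in the orientable case; the general case is handled by the same normal-bordism bookkeeping below). Fixing a Riemannian metric, set $\eta:=\langle X_1,X_2\rangle^\perp\subset\tau M$, a rank-$(d-2)$ subbundle with $\tau M\cong\eta\oplus 2\epsilon$. A third field independent of $X_1,X_2$ is exactly a nowhere-zero section of $\eta$, so $\Span(M)\ge 3$ if and only if $\Span(\eta)\ge 1$. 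Because $d\ge 10$ we have $2r=6<d$, so Koschorke's non-degenerate setup applies.

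Next I would build the singularity manifold. Take a generic section $s$ of $\eta$ with corresponding field $X_3$; then $S:=s^{-1}(0)$ is a closed surface whose normal bundle in $M$ is $\eta|_S$. From $\tau M\cong\eta\oplus 2\epsilon$ and $\tau M|_S\cong TS\oplus\eta|_S$ one gets, after stable cancellation, $TS\cong 2\epsilon$ stably, so $S$ is stably parallelizable, hence a stably framed (in particular oriented) surface. Since $X_1,X_2$ are everywhere independent, the only relation among $X_1,X_2,X_3$ on $S$ is the one recording $s=0$ along the $X_3$-direction, so the classifying map $g\colon S\to\mathbb{R}P^2$ lands in a single affine chart and is null-homotopic. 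Hence $\omega_3(M)$ is the image of a class in $\Omega_2(M;\phi_M|_{\{\mathrm{pt}\}\times M})$; restricting $\phi^+=\xi\otimes\tau M$ and $\phi^-=3\xi\oplus\tau M$ to a point gives $\phi_M|_{\{\mathrm{pt}\}\times M}=\tau M-(3\epsilon\oplus\tau M)=-3\epsilon$, so the target is the framed bordism group $\Omega_2^{\mathrm{fr}}(M)\cong\pi_2^s(M_+)$, in which $\omega_3(M)$ is represented by $(S,g,\text{framing})$.

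The remaining step is to annihilate this class. I would filter $\pi_2^s(M_+)$ by the Atiyah--Hirzebruch spectral sequence, whose associated quotients in total degree $2$ are $H_2(M;\mathbb{Z})$, $H_1(M;\mathbb{Z}_2)$ and $H_0(M;\mathbb{Z}_2)=\mathbb{Z}_2$, and then match each filtration piece of $\omega_3(M)$ with characteristic data. The leading (Hurewicz) term is $g_*[S]\in H_2(M;\mathbb{Z})$, which is Poincar\'e dual to the Euler class $\mathfrak{e}(\eta)$, whose mod-$2$ reduction is $w_{d-2}(\eta)=w_{d-2}(M)=0$ by hypothesis. The two lower pieces are mod-$2$ invariants of the framed surface $S$ and its map into $M$: the hypothesis $d\equiv 2\bmod 4$ ensures that no semi-characteristic obstruction of the kind appearing in case (a) of Theorem \ref{span2} intervenes, while $\chi(M)=0$ disposes of the bottom $\mathbb{Z}_2$-term. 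Combining these gives $\omega_3(M)=0$ and therefore $\Span(M)\ge 3$.

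The hard part will be the leading term, where the hypotheses and the naive computation appear mismatched: $w_{d-2}(M)=0$ controls $\mathfrak{e}(\eta)$ only modulo $2$, whereas the framed-bordism Hurewicz image lives in the free-plus-torsion group $H_2(M;\mathbb{Z})$. The resolution I would make precise is that this apparent integral ambiguity disappears once the genuine twisting by the Hopf bundle $\xi$ is \emph{retained} rather than collapsed to a point: in the full group $\Omega_2(\mathbb{R}P^2\times M;\phi_M)$ the class $\omega_3(M)$ is $2$-torsion, so only the mod-$2$ reduction of $\mathfrak{e}(\eta)$ is detected, and equivalently the integral part can be absorbed by varying $s$, which moves $S$ within a fixed bordism class. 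Carrying out this comparison between the collapsed framed group and the twisted normal bordism group—computing the pertinent differentials and extensions so that the surviving invariants are exactly $w_{d-2}(M)$, $\chi(M)$ and the congruence $d\equiv 2\bmod 4$—is the technical heart of the argument.
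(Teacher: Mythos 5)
A preliminary remark: the paper contains no proof of this theorem; it is stated as a quotation from Koschorke's monograph \cite[\S 14]{koschorke}, and the surrounding text only sets up the general framework (the obstruction $\omega_r(M)\in\Omega_{r-1}(\mathbb{R}P^{r-1}\times M;\phi_M)$ and the criterion that $\Span(M)\ge r$ if and only if $\omega_r(M)=0$). So your proposal is competing with Koschorke's argument, not with anything written in the paper. Within that framework, several of your structural steps are correct and genuinely insightful: extending a $2$-field $(X_1,X_2)$ (supplied, in the oriented case, by Theorem \ref{span2}(b)) by a generic section $s$ of $\eta=\langle X_1,X_2\rangle^{\perp}$ gives a nondegenerate triple whose singular set $S=s^{-1}(0)$ maps to $\mathbb{R}P^2\times M$ with constant first coordinate; $\phi_M$ restricted to $\{\mathrm{pt}\}\times M$ is indeed $-3\epsilon$, so this representative comes from framed bordism $\Omega_2^{fr}(M)$; and your key claim about the twisting is right: since $w_1(\phi_M)=(d-3)\,w_1(\xi)\ne 0$ when $d$ is even, the coefficient system of the ambient normal bordism group is twisted along the $\mathbb{R}P^2$ factor, $H_0(\mathbb{R}P^2)$ with twisted integer coefficients is $\mathbb{Z}_2$, and hence the inclusion $i:\{\mathrm{pt}\}\times M\hookrightarrow\mathbb{R}P^2\times M$ sends the Hurewicz term $g_*[S]$ (the Poincar\'e dual of $\mathfrak{e}(\eta)$) to its mod $2$ reduction, i.e.\ to the Poincar\'e dual of $w_{d-2}(M)=0$. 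This also makes your false-as-stated equivalence ``$\Span(M)\ge 3$ iff $\Span(\eta)\ge 1$'' harmless, since only the implication from $\omega_3(M)=0$ is ever used.

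The genuine gap is everything below the top filtration level, and your own last sentence concedes it. Once the leading term dies, $\omega_3(M)=i_*\beta$ is detected by subquotients of $H_1(\mathbb{R}P^2\times M;\mathbb{Z}_2)$ and $H_0(\mathbb{R}P^2\times M;\mathbb{Z}_2)\cong\mathbb{Z}_2$ (the latter seeing, for instance, the Arf invariant of the framed surface $S$), and your two dismissals of these pieces are assertions, not arguments. ``$\chi(M)=0$ disposes of the bottom $\mathbb{Z}_2$-term'' cannot be right as stated: $\chi(M)=0$ has already been spent to produce $(X_1,X_2)$ and has no further direct bearing on the framed bordism class of $S$. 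And ``$d\equiv 2\bmod 4$ ensures that no semi-characteristic obstruction intervenes'' is precisely the statement to be proved; it is exactly in these low filtration pieces that Koschorke's case analysis produces the extra obstructions in the other residues (the signature condition for $d\equiv 0$, semicharacteristics for $d\equiv 1$), so one must actually compute the relevant differentials and the indeterminacy, and show the surviving invariants vanish for $d\equiv 2$ --- possibly after re-choosing $(X_1,X_2)$ and $s$, on which $\beta$ (though not $i_*\beta$) depends. Note, too, that your phrase ``the integral part can be absorbed by varying $s$, which moves $S$ within a fixed bordism class'' is self-contradictory: if the bordism class is fixed, varying $s$ absorbs nothing. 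Finally, the non-orientable case is not ``the same bookkeeping'': Theorem \ref{span2}(b) is stated only for oriented manifolds, so even the initial $2$-field needs a separate justification there. As it stands, the proposal establishes only that the top obstruction vanishes, not that $\omega_3(M)=0$.
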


\begin{theorem} \label{stabspan}{\em (U. Koschorke \cite[\S 20]{koschorke}, V. Eagle \cite{eagle}.)}
Let $M$ be a smooth compact connected manifold of dimension $d$. \\(a)  If $d\equiv 0\mod 2, $ and $\chi(M)=0$, 
then $\Span^0(M)=\Span(M)$. \\ 
(b) If $d\equiv 1\mod 4$ and if $w_1(M)^2=0$, then $\Span^0(M)=\Span(M)$ if 
the twisted Kervaire semi-characteristic $R_L(M)$ vanishes; if $R_L\ne 0$, then $\Span(M)=1$.
\\ (c) If $d\equiv 3\mod 8$ and $w_1(M)=w_2(M)=0$, then 
$\Span^0(M)=\Span(M)$ if $\hat{\chi}_2(M)=0$; if $\hat{\chi}_2(M)\ne 0$, then $\Span(M)=3$.
\end{theorem}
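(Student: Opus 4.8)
The plan is to deduce all three parts from Koschorke's criterion recorded above: $\Span(M)\ge r$ if and only if the obstruction class $\omega_r(M)\in\Omega_{r-1}(\mathbb{RP}^{r-1}\times M;\phi_M)$ vanishes. Since a genuine $r$-field is in particular a stable $r$-field, one always has $\Span(M)\le\Span^0(M)$, so in each case it suffices to prove the reverse inequality together with the identification of the defect when it fails. First I would set $r=\Span^0(M)$ and compare $\omega_r(M)$ with its stable counterpart: there is a natural stabilization homomorphism $\mathrm{st}$ from the normal bordism group into a group that detects the existence of a \emph{stable} $r$-field, obtained by passing from the transverse geometric datum $[S,g,\bar g]$ to its image under the Hurewicz-type map into stable cohomotopy, equivalently by recording only the underlying characteristic-class information. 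By the definition of stable span, $\Span^0(M)\ge r$ forces $\mathrm{st}(\omega_r(M))=0$, so $\omega_r(M)$ lies in the residual subgroup $R:=\ker(\mathrm{st})$.

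The heart of the argument is the computation of $R$ and of the invariant that detects it. Here one filters $\Omega_{r-1}(\mathbb{RP}^{r-1}\times M;\phi_M)$ by the skeleta of $\mathbb{RP}^{r-1}$ and exploits the twisting bundle $\phi_M=\phi^+-\phi^-$; the kernel of $\mathrm{st}$ collapses onto a single bottom-cell summand, reducing the problem to a framed/normal bordism computation over $M$ with the local coefficient system determined by $w_1(M)$. The outcome of this bookkeeping — which is exactly Koschorke's and Eagle's Clifford-algebra and $KO$-periodicity analysis — is that $R$ is cyclic: it is $\bz$ when $d$ is even and $\bz_2$ when $d\equiv1\bmod 4$ or $d\equiv 3\bmod 8$, and in each case $R$ is detected by a single semi-characteristic-type homomorphism concentrated at one level of the tower. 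The arithmetic $\bmod 8$ in (c), and the need for $w_1(M)^2=0$ in (b) and $w_1(M)=w_2(M)=0$ in (c), are precisely what make the relevant bottom-cell contribution a well-defined $\bz_2$ and single out the twisted versus untwisted coefficients.

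It then remains to identify the image of $\omega_r(M)$ in $R$ with the stated invariant. For (a), $d$ even, the residual invariant is the index of a vector field, which by Hopf's Theorem \ref{hopf} equals $\chi(M)$; hence $\chi(M)=0$ kills the residual obstruction and yields $\Span(M)\ge r$, giving equality. For (b), $d\equiv1\bmod4$, the residual $\bz_2$-invariant is the twisted Kervaire semi-characteristic $R_L(M)$ of Atiyah--Dupont \cite{atiyahdupont}; it is the obstruction distinguishing $\Span\ge 2$ from $\Span\ge 1$ (compare Theorem \ref{span2}(a)), so when $R_L(M)=0$ the residual obstruction vanishes at the only level where span can differ from stable span and $\Span(M)=\Span^0(M)$, whereas when $R_L(M)\ne 0$ the obstruction is already nonzero at $r=2$, forcing $\Span(M)\le 1$; since $d$ is odd, Corollary \ref{hopf-odd-dim} gives $\Span(M)\ge 1$, whence $\Span(M)=1$. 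For (c), $d\equiv 3\bmod 8$, the hypotheses $w_1(M)=w_2(M)=0$ together with the lower-bound theory already guarantee $\Span(M)\ge 3$, and the residual $\bz_2$-invariant, now living at the level $r=4$, is the mod $2$ semi-characteristic $\hat\chi_2(M)$; vanishing gives $\Span(M)=\Span^0(M)$ and non-vanishing forces $\Span(M)=3$.

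The step I expect to be the main obstacle is the middle one: isolating $\ker(\mathrm{st})$ inside the normal bordism group and proving that its detecting homomorphism is the appropriate semi-characteristic. This is where the delicate local-coefficient and Clifford-module bookkeeping of \cite{koschorke} and \cite{eagle} is indispensable; in particular, establishing that the residual group is exactly $\bz_2$ (and not larger) in the odd-dimensional cases, that the semi-characteristic is a well-defined bordism invariant modulo $2$, and that $\bmod 8$ periodicity governs the passage from (b) to (c), all require substantially more than the formal obstruction-theoretic framework and constitute the technical core of the theorem.
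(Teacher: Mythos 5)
First, a point of comparison: the paper does not prove Theorem \ref{stabspan} at all. It is stated as a citation to Koschorke \cite[\S 20]{koschorke} and Eagle \cite{eagle}, with only the general framework (the singularity class $\omega_r(M)$ and the criterion that $\Span(M)\ge r$ if and only if $\omega_r(M)=0$) recalled beforehand, followed by the remark that Eagle obtained the same results by entirely different methods. So the only honest question is whether your text constitutes a proof on its own, and it does not. The decisive middle step of your outline --- that $\ker(\mathrm{st})$ is cyclic ($\mathbb{Z}$ for $d$ even, $\mathbb{Z}_2$ in cases (b) and (c)), that it collapses onto a single bottom-cell summand, and that it is detected precisely by $\chi$, $R_L$, or $\hat{\chi}_2$ respectively --- is asserted, not proved, and you yourself flag it as requiring the Clifford-algebra and $KO$-periodicity analysis of \cite{koschorke} and \cite{eagle}. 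But that step \emph{is} the theorem: everything else in your write-up (the trivial inequality $\Span(M)\le \Span^0(M)$, Corollary \ref{hopf-odd-dim} for the lower bound in (b), the spin lower bound in (c)) is routine. A proof that defers exactly its core to the sources being cited is a restatement of the theorem in obstruction-theoretic language, not a proof of it.

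Second, two of your identifications are wrong or circular as written. In (a) you identify the residual invariant at level $r=\Span^0(M)$ with ``the index of a vector field,'' invoking Hopf's Theorem \ref{hopf}; but the Euler characteristic is the complete obstruction only at the level $r=1$, and the claim that the destabilization obstruction at an arbitrary level $r$ is again detected by $\chi(M)$ is precisely what the singularity analysis of \cite[\S 20]{koschorke} has to establish --- it is not a formal consequence of Theorem \ref{hopf}. In (b), the phrase ``the residual obstruction vanishes at the only level where span can differ from stable span'' presupposes that $R_L(M)$ is the unique possible discrepancy between span and stable span; that uniqueness is the statement under proof, so the argument is circular there. Finally, in (c) the inequality $\Span(M)\ge 3$ under $w_1(M)=w_2(M)=0$, $d\equiv 3 \mod 8$ is itself a nontrivial theorem (it does not follow from Theorem \ref{span2}(c), which only gives $\Span(M)\ge 2$, nor from Theorem \ref{span3}, which concerns $d\equiv 2\mod 4$); attributing it to ``the lower-bound theory'' conceals yet another external input that your proposal neither proves nor precisely locates.
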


Koschorke had noted that the above results were obtained by Eagle in his PhD thesis using entirely different 
methods. 

We now state without proof the following theorem which determines the span of a 
stably parallelizable but non-parallelizable manifold.  

\begin{theorem}{\em (G. Bredon and A. Kosi\'nski \cite{bk}, E. Thomas \cite{thomas})} \label{b-k}
Let $M$ be a compact connected manifold of dimension $d$.  Suppose that 
$M$ is stably parallelizable. Then either $M$ is parallelizable or $\Span(M)=\Span(\bs^d)=\rho(d+1)-1$. 
If $d$ is odd and $d\notin\{ 1,3,7\}$, then $M$ is parallelizable if and only if the Kervaire semi-characteristic 
$\hat{\chi}_2(M)=0$. If $d$ is even, $M$ is parallelizable if and only if $\chi(M)=0$.  \hfill $\Box$
\end{theorem}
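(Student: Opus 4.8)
The plan is to translate stable parallelizability into a single map to a sphere and then run obstruction theory whose coefficients are homotopy groups of Stiefel manifolds, so that Adams' solution for $\bs^d$ can be imported almost verbatim. First, since $\tau M$ is stably trivial all its Stiefel--Whitney classes vanish, so $M$ is orientable, and by the definition of stable triviality $\tau M\oplus\epsilon\cong (d+1)\epsilon$. Regarding $\bs^d=SO(d+1)/SO(d)$ as the fibre of $BSO(d)\to BSO(d+1)$, a choice of such an isomorphism is precisely a null-homotopy of the map $M\to BSO(d+1)$ classifying $(d+1)\epsilon$, i.e. a lift $f\colon M\to\bs^d$, and for this $f$ one has $\tau M\cong f^*\tau\bs^d$. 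Pulling back the Radon--Hurwitz vector fields on $\bs^d$ (Proposition \ref{radonhurwitz} together with Adams' theorem) along the bundle map covering $f$ gives $\Span(M)\ge\Span(\bs^d)=\rho(d+1)-1$. When $d\in\{1,3,7\}$ this already reads $\Span(M)\ge d$, so $M$ is parallelizable and nothing remains; hence I may assume $\rho(d+1)-1<d$.

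Next I would dispose of even $d$, where $\rho(d+1)-1=0$ and no ``jump'' is needed. Here $e(\tau M)=f^*e(\tau\bs^d)=2\deg(f)\,o_M$ with $o_M$ the fundamental cohomology class, so $\chi(M)=2\deg(f)$. If $\chi(M)=0$ then $\deg(f)=0$; since $[M,\bs^d]\cong H^d(M;\bz)$ is detected by degree (Hopf--Whitney), $f$ is null-homotopic and $\tau M\cong f^*\tau\bs^d$ is trivial, so $M$ is parallelizable. If $\chi(M)\ne0$, then Hopf's Theorem \ref{hopf} forces $\Span(M)=0=\rho(d+1)-1$. This settles the even-dimensional case.

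For odd $d\notin\{1,3,7\}$ the heart is a ``no intermediate values'' statement. I would use that $\Span(\xi^d)\ge k$ over a complex $X$ is equivalent to lifting the classifying map $X\to BSO(d)$ through the fibration with fibre the $(d-k-1)$-connected Stiefel manifold $V_k(\br^d)$. Applied to the composite $M\xrightarrow{f}\bs^d\xrightarrow{\tau\bs^d}BSO(d)$, and exploiting that $\bs^d$ has reduced cohomology only in degree $d$, every obstruction below the top degree vanishes automatically, so for each $k$ the obstruction to $\Span(M)\ge k$ is the single class $f^*\theta_k$, where $\theta_k\in H^d(\bs^d;\pi_{d-1}V_k(\br^d))$ is the universal obstruction; by Adams' theorem $\theta_k=0$ exactly when $k\le\rho(d+1)-1$. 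As $M$ is oriented and $\bs^d$ simply connected, $f^*$ is multiplication by $\deg(f)$, so if each $\theta_k$ with $\rho(d+1)-1<k\le d$ has order two in $\pi_{d-1}V_k(\br^d)$, the classes $f^*\theta_k$ vanish simultaneously for all such $k$ precisely when $\deg(f)$ is even. Thus $\deg(f)$ even yields $\Span(M)=d$ and $\deg(f)$ odd yields $\Span(M)=\rho(d+1)-1$, which is the asserted dichotomy. The main obstacle here is the order-two claim: it rests on the James--Adams computation of the relevant Stiefel homotopy groups and is where the genuine homotopy-theoretic work lies.

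It remains to identify the parity of $\deg(f)$ with the Kervaire semi-characteristic. Equivalently, the mod $2$ top obstruction to trivialising the stably trivial bundle $\tau M$, which lives in $H^d(M;\bz_2)\cong\bz_2$, must be shown to equal $\hat{\chi}_2(M)$. I would prove this by evaluating that obstruction as a $\bz_2$-valued invariant of $M$ and matching it against the semi-characteristic through the index-theoretic description of $\hat{\chi}_2$ due to Atiyah and Dupont \cite{atiyahdupont}. This identification, rather than any formal manipulation, is the deepest input of the odd-dimensional case; once it is in hand, combining it with the dichotomy above gives ``$M$ parallelizable $\iff\hat{\chi}_2(M)=0$'' and completes the proof.
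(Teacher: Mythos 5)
First, note that the paper does not prove this theorem: it is stated explicitly ``without proof'' and attributed to Bredon--Kosi\'nski and Thomas, so your proposal can only be measured against the original arguments, not against anything in the text. Your preliminary reductions are correct and standard: interpreting a stable trivialization as a lift $f\colon M\to\bs^d$ of the classifying map through the fibre inclusion $\bs^d\to BSO(d)$ of $BSO(d)\to BSO(d+1)$ does yield $\tau M\cong f^*\tau\bs^d$; this gives $\Span(M)\ge\Span(\bs^d)=\rho(d+1)-1$, disposes of $d\in\{1,3,7\}$, and your even-dimensional argument (Euler class plus the Hopf degree theorem) is complete and correct. Incidentally, the order-two property of $\theta_k$ that you single out as the main homotopy-theoretic obstacle is not where the difficulty lies: $\theta_k$ is the image in $\pi_{d-1}(V_k(\br^d))$ of $[\tau\bs^d]=\partial\iota_d\in\pi_{d-1}(SO(d))$, and $2\,\partial\iota_d=0$ for odd $d$ is classical, so order two follows from Adams' theorem together with facts already available in Steenrod's book.

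The genuine gap is the sentence asserting that ``the obstruction to $\Span(M)\ge k$ is the single class $f^*\theta_k$.'' Your argument justifies only one implication: homotoping $f$ to a cellular map and pulling back a lift over the point $(\bs^d)^{(d-1)}$ produces one particular partial lift over $M^{(d-1)}$, whose top obstruction is $\deg(f)\,\theta_k$; hence $\deg(f)\,\theta_k=0$ forces $\Span(M)\ge k$. The dichotomy, however, needs the converse: if $\deg(f)$ is odd then $\Span(M)\le\rho(d+1)-1$. A $k$-field on $M$ corresponds to an arbitrary lift of the classifying map to $BSO(d-k)$, and its restriction to $M^{(d-1)}$ need not agree with the pulled-back one. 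Unlike $\bs^d$, the manifold $M$ has cells in all intermediate dimensions, so the top obstruction is not a well-defined natural class: as the partial lift over $M^{(d-1)}$ varies, the obstruction sweeps out a coset of an indeterminacy subset inside $H^d(M;\pi_{d-1}V_k(\br^d))$, and naturality under $f$ says nothing about that indeterminacy. For the same reason you have not shown that the parity of $\deg(f)$ is independent of the chosen stable trivialization, which your stated criterion presupposes. Proving that $0$ never lies in this coset when $\deg(f)$ is odd---equivalently, that the residual mod $2$ obstruction is a well-defined invariant of $M$---is precisely the hard content of Bredon--Kosi\'nski's theorem, and it is inseparable from the second half, the identification of that invariant with $\hat{\chi}_2(M)$, which you defer wholesale to Atiyah--Dupont. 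As written, then, the proposal establishes the easy half (the lower bound, $d\in\{1,3,7\}$, and even $d$) and reduces the hard half to one openly deferred deep identification plus one claim that is not a valid general principle of obstruction theory and is exactly the crux to be proved.
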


Note that $\mathbb{S}^d$ is parallelizable when $d=1,3,7$ although $\hat{\chi}_2(\mathbb S^d)\ne 0$.


\begin{remark} \label{lmp}{\em 
In view of Theorems \ref{b-k} and \ref{stabspan}, it is important to have criteria for the vanishing of the Kervaire    
semicharacteristics $\hat \chi_2(M)$ and $\kappa(M)$ of a compact connected orientable 
smooth manifold $M$ of dimension $d=2m+1$.   Note that the orientability assumption implies that the twisted 
semicharacteristic $R_L(M)$ equals $\kappa(M)$. 
Lusztig, Milnor and Peterson \cite{lmp} showed that $\hat \chi_2(M)-\kappa(M)$ equals the Stiefel-Whitney number 
$w_2w_{d-2}[M]\in \mathbb{Z}_2$.  In particular $\kappa(M)=\hat \chi_2(M)$ if $M$ is a spin manifold or if $M$ is 
null-cobordant.    Stong \cite{stong-cm} proved that if  $M$ admits a free smooth 
$\mathbb Z_2\times \mathbb Z_2$-action on $M$, then $\hat \chi_2(M)=0$.  
}
\end{remark}

We shall now give several examples, starting from elementary ones.


\begin{example}{\em 
(i) Let  $S$ be a compact orientable connected surface
of genus $g$. Its Euler-Poincar\'e characteristic 
is $\chi(S)=2-2g$.  Thus when $g\neq 1$, span of $S$ 
equals zero.  When $g=1$, $S$ equals the torus $\bs^1\times \bs^1$ 
which is parallelizable.  When $g=0,$ the surface $S=\bs^2$. 
Fix an imbedding $j:S\to \mathbb{R}^3$ and 
denote by $\nu$ the normal bundle (over $S$) with respect to $j$. Then
we obtain that $3\epsilon= j^*(\tau \mathbb{R}^3)=\tau S\oplus \nu$.  Since $S$ 
is orientable, the normal bundle $\nu$ is trivial and we conclude that 
$S$ is stably parallelizable.

(ii) Suppose that $M$ is a non-orientable surface.  Then it 
has an orientable double covering $p:S\to M$.  One has 
$\chi( M)=(1/2)\chi(S)$.  It follows that 
$\Span(M)=0$ except when $S$ is a torus $\bs^1\times \bs^1$.  
When $S$ is a torus,  $M$ is the Klein bottle and we have $\chi(M)=0$.  
By Hopf's theorem \ref{hopf}, we have $\Span(M)\geq 1$. Since $M$ is {\it not} orientable, $\Span(M)<\dim (M)=2$ and 
hence $\Span(M)=1$.  Since $M$ is non-orientable, it is not stably parallelizable. 

(iii) Any {\it orientable} compact connected manifold 
$M$ of dimension $3$ is parallelizable.   This was first observed by 
Stiefel. The proof involves obstruction theory and uses the fact that $\pi_2(SO(3))=\pi_2(\br P^3)=0$. See \cite[Problem 12-B]{ms}.

(iv) Let $M=S\times \mathbb{S}^1$, where $S$ is a non-orientable surface.  Then $M$ is non-orientable 
and hence not parallelizable.  So $1\le \Span(M)\le 2$.  If $S=K,$ the Klein bottle, we have $\Span(M)=2$.  
This follows from the isomorphism of vector bundles $\tau (M_1\times M_2)\cong \tau M_1\times \tau M_2
\cong pr_1^*(\tau M_1)\oplus pr_2^*(\tau M_2)$, where $pr_j:M\to M_j$ is the $j$-th projection.  
If $S=\mathbb{R}P^2$, or more generally if $S$ has odd Euler-Poincar\'e 
characteristic, then the Stiefel-Whitney class $w_2(S)\in H^2(S;\mathbb{Z}_2)\cong \mathbb{Z}_2$ is non-zero.  
This implies that $w_2(M)\ne 0$.  It follows that $\Span(M)=1$.

(v)  Becker \cite{becker} has determined the 
span of quotient spaces $M:=\Sigma^{n-1}/G$ when $n\ne 8,16$, where $G$ is a finite group that acts freely and smoothly 
on a homotopy sphere $\Sigma^{n-1}.$ 
In the special case when $\Sigma^{n-1}$ is the standard sphere and $G$ is a group of odd order that 
acts orthogonally, it was shown by Yoshida \cite{yoshida} that $\Span(M)=\Span(\mathbb{S}^{n-1})=\rho(n)-1$ for 
$n\ne 8$, settling a conjecture of Sjerve \cite{sjerve}.   The following general result which is of independent interest 
and also due to Becker, is a crucial step in the determination of span of $M$.
}  
\end{example}

\begin{theorem} {\em (Becker)}
Suppose that $N$ is a compact connected orientable smooth $d$-dimensional manifold and $\wt{N}\to N$ is covering projection 
of {\em odd} degree.  Let $k\le (d-1)/2$ is a positive integer.  Then $\Span(N)\ge k$ if and only if $\Span(\wt{N})\ge k$.  
\end{theorem}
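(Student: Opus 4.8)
The plan is to prove the two implications separately, with only the reverse one requiring genuine work. The forward implication $\Span(N)\ge k\Rightarrow \Span(\wt N)\ge k$ is immediate from Example \ref{basicexamples}(v): $k$ everywhere independent vector fields on $N$ lift to $k$ everywhere independent ($\Gamma$-invariant) vector fields on $\wt N$. It remains to show that $\Span(\wt N)\ge k$ forces $\Span(N)\ge k$, and this is where both the oddness of the degree and the metastable bound $k\le (d-1)/2$ enter.

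First I would dispose of a trivial case. Write $p:\wt N\to N$ for the covering and $m:=\deg p$ (odd). Orientability of $N$ ensures $\wt N$ is orientable and $\chi(\wt N)=m\,\chi(N)$. Hence if $\chi(N)\ne 0$ then $\chi(\wt N)\ne 0$, so by Hopf's Theorem \ref{hopf} both $\Span(N)$ and $\Span(\wt N)$ vanish and the asserted equivalence holds trivially, as $k\ge 1$. I may therefore assume $\chi(N)=0$ from now on.

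The core of the argument uses Koschorke's obstruction. Since $k\le (d-1)/2$ is exactly the range $2k<d$ in which Koschorke's theory applies, $\Span(M)\ge k$ is equivalent to the vanishing of $\omega_k(M)\in\Omega_{k-1}(\mathbb{RP}^{k-1}\times M;\phi_M)$. Put $q:=\mathrm{id}\times p:\mathbb{RP}^{k-1}\times\wt N\to \mathbb{RP}^{k-1}\times N$, an $m$-fold cover. Since $\tau\wt N=p^*\tau N$ one checks $\phi_{\wt N}=q^*\phi_N$, and a lift of a non-degenerate sequence $\underline X$ on $N$ is again non-degenerate with singularity manifold $p^{-1}(S)$; thus the triple $(p^{-1}(S),\wt g,\bar{\wt g})$ defining $\omega_k(\wt N)$ is the pullback along the covering $q$ of the triple defining $\omega_k(N)$. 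In other words $\omega_k(\wt N)=q^{!}\,\omega_k(N)$, where $q^{!}$ is the covering transfer in twisted normal bordism. Using the push-forward $q_*$ and the standard relation $q_*\circ q^{!}=m\cdot\mathrm{id}$, the hypothesis $\omega_k(\wt N)=0$ gives $m\,\omega_k(N)=q_*q^{!}\omega_k(N)=0$.

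It remains to conclude from $m\,\omega_k(N)=0$ with $m$ odd that $\omega_k(N)=0$, and this is the step I expect to be the main obstacle, since it amounts to showing $\omega_k(N)$ has no odd torsion. I would establish this by localizing Koschorke's obstruction away from the prime $2$: in the metastable range the odd-primary obstructions to a $k$-field over an orientable base are carried by integral Euler-class data, the essential one being $e(\tau N)=\chi(N)$. Having reduced to $\chi(N)=0$, the surviving obstructions are of Stiefel--Whitney and Kervaire semi-characteristic type (compare Theorems \ref{span2}, \ref{stabspan} and \ref{b-k}), all of which are $2$-primary; hence the odd-primary component of $\omega_k(N)$ vanishes and multiplication by the odd integer $m$ is injective on the subgroup containing $\omega_k(N)$. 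Therefore $\omega_k(N)=0$, so $\Span(N)\ge k$, completing the proof. The two genuinely technical points are the construction of the transfer $q^{!}$ compatible with Koschorke's bordism-theoretic obstruction and the odd-primary vanishing; both are statements localized away from $2$ and, notably, neither uses that $N$ is itself a sphere quotient.
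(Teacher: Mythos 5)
The paper itself offers no proof of this theorem---it is quoted from Becker \cite{becker}---so your proposal has to be judged on its own merits rather than against the text. Your skeleton is the right one: the forward implication by lifting fields (Example \ref{basicexamples}(v)), the reduction to $\chi(N)=0$ via Hopf's Theorem \ref{hopf}, the identification $\omega_k(\wt{N})=q^{!}\omega_k(N)$ for the geometric covering transfer (which is correct, since $\tau\wt{N}=p^*\tau N$, non-degeneracy is local, and the singular set of the lifted sequence is $p^{-1}(S)$), and then a $2$-local argument exploiting oddness of the degree. But two steps are not right as written. First, the ``standard relation'' $q_*\circ q^{!}=m\cdot\mathrm{id}$ is \emph{false} in normal bordism: it holds on ordinary homology, but for a generalized homology theory $q_*q^{!}[S,g,\bar g]$ is represented by the induced $m$-fold cover $\wt{S}\to S$ mapped down to $\mathbb{R}P^{k-1}\times N$, which is in general not bordant to $m[S,g,\bar g]$. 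This gap is repairable: the composite of transfer and projection induces multiplication by $m$ on the integral homology of the (finite) Thom spectrum computing $\Omega_{k-1}(\mathbb{R}P^{k-1}\times N;\phi_N)$, hence is a $2$-local equivalence, so $q^{!}$ is $2$-locally split injective and $\omega_k(\wt{N})=0$ forces $\omega_k(N)$ to be annihilated by some odd integer. So the correct intermediate conclusion is ``$\omega_k(N)$ is odd torsion,'' which is what you wanted anyway.

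The second gap is the essential one and is not repairable by citation. Your proof needs: \emph{if $N$ is orientable, $\chi(N)=0$ and $2k<d$, then the odd-primary component of $\omega_k(N)$ vanishes.} You justify this by saying the odd-primary obstructions are ``carried by integral Euler-class data'' and that the surviving obstructions are ``of Stiefel--Whitney and Kervaire semi-characteristic type, compare Theorems \ref{span2}, \ref{stabspan} and \ref{b-k}.'' Those theorems only treat $k\le 3$ or stably parallelizable $N$; they say nothing about general $k\le (d-1)/2$, and the group $\Omega_{k-1}(\mathbb{R}P^{k-1}\times N;\phi_N)$ certainly can contain odd torsion (it receives framed-bordism contributions from $N$). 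The statement that, away from the prime $2$, the sectioning problem for the Stiefel bundle $V_k(\tau N)\to N$ is controlled by the Euler class alone is precisely the deep content here---it rests on the odd-primary analysis (splitting) of Stiefel fibrations and is, in substance, Becker's theorem itself. As written, your argument assumes at this point what it sets out to prove; an honest proof must either carry out this odd-primary localization of Koschorke's obstruction theory or follow Becker's original fibrewise/$J$-theoretic route.
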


\subsection{Span of products of manifolds}
If $M,N$ are compact connected smooth manifolds, then it is clear that $\Span(M)+\Span(N) \le 
\Span(M\times N)$ and that equality holds when $\Span(M)=0=\Span (N)$ by Hopf's Theorem \ref{hopf}.
If $M,N$ are stably parallelizable, then so is $M\times N$.   The converse is also valid;
this is because 
the normal bundle to the inclusion of each factor into the product $M\times N$ is trivial.  
On the other hand, if $M\times N$ is parallelizable, we cannot conclude that $M$ and $N$ are 
parallelizable.   See Theorem \ref{product-sphere} below.
There is no general `formula' that expresses 
$\Span(M\times N)$ in terms of the span of $M,N$.  In this section we shall obtain some 
bounds for the span of $M\times N$, when one of the factors is stably parallelizable.  

We begin with the following result whose proof, due to E. B. Staples \cite{staples}, is surprisingly simple, considering that the solution to the vector field 
problem for spheres is highly non-trivial.  

\begin{theorem} \label{product-sphere} {\em (Staples \cite{staples}.)} 
The manifold $\bs^m\times \bs^n$ is parallelizable 
if at least one of the numbers $m,n\geq 1$ is odd.  
If both $m,n$ are even, then $\Span(\bs^m\times \bs^n)=0$.
\end{theorem}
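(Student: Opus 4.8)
The plan is to handle the two assertions separately: the ``both even'' case falls immediately to Hopf's theorem, while the ``one odd'' case requires assembling an explicit trivialization of the tangent bundle out of the stable trivializations of the two sphere factors.

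Suppose first that $m$ and $n$ are both even. Then $\chi(\bs^m)=\chi(\bs^n)=2$, and by multiplicativity of the Euler characteristic $\chi(\bs^m\times\bs^n)=\chi(\bs^m)\chi(\bs^n)=4\neq0$. Since $\bs^m\times\bs^n$ is a compact connected smooth manifold, Hopf's Theorem \ref{hopf} says that its span is at least $1$ precisely when its Euler characteristic vanishes; as the latter fails, $\Span(\bs^m\times\bs^n)=0$.

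For the first assertion I would assume, by symmetry of the product, that $m$ is odd. The two ingredients are the splitting $\tau(\bs^m\times\bs^n)\cong pr_1^*(\tau\bs^m)\oplus pr_2^*(\tau\bs^n)$ of Example \ref{basicexamples}(iv), and the standard stable trivializations $\tau\bs^k\oplus\epsilon\cong(k+1)\epsilon$ arising from the trivial outward normal bundle of $\bs^k\subset\br^{k+1}$. Because $m$ is odd, $\bs^m$ carries a nowhere-zero vector field (Corollary \ref{hopf-odd-dim}, or explicitly $x\mapsto Jx$ for a complex structure $J$ on $\br^{m+1}$), so $\tau\bs^m\cong\epsilon\oplus\zeta$ for a bundle $\zeta$ of rank $m-1$; combining this with $\tau\bs^m\oplus\epsilon\cong(m+1)\epsilon$ gives $\zeta\oplus2\epsilon\cong(m+1)\epsilon$. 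The decisive step is to transfer the trivial line split off from the first factor onto the second, where it completes a fully trivial summand:
\begin{align*}
\tau(\bs^m\times\bs^n)&\cong pr_1^*(\epsilon\oplus\zeta)\oplus pr_2^*(\tau\bs^n)\\
&\cong pr_1^*\zeta\oplus pr_2^*(\tau\bs^n\oplus\epsilon)\cong pr_1^*\zeta\oplus(n+1)\epsilon.
\end{align*}
Since $n\geq1$, I would then feed two of these $n+1$ trivial factors into $pr_1^*\zeta\oplus2\epsilon\cong(m+1)\epsilon$ to obtain $\tau(\bs^m\times\bs^n)\cong(m+1)\epsilon\oplus(n-1)\epsilon\cong(m+n)\epsilon$, so the product is parallelizable.

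The step to watch is exactly this final regrouping. From the two stable trivializations one sees at once that $\tau(\bs^m\times\bs^n)\oplus2\epsilon$ is trivial, but one \emph{cannot} simply cancel the two trivial summands: the tangent bundle has rank $m+n$, equal to the dimension of the base, which is precisely the borderline situation where the cancellation law fails. The function of the odd factor is to escape this borderline: splitting off its nowhere-zero field liberates a trivial line which, routed through the always-trivial bundle $\tau\bs^n\oplus\epsilon$, lets one build a genuine trivialization without ever cancelling at the critical rank. I would emphasize that the argument needs only $n\geq1$ and a single nowhere-zero field on one factor, confirming that ``both even'' is the only case in which the span drops.
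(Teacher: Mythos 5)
Your proof is correct and follows essentially the same route as the paper's: split $\tau\bs^m\cong\epsilon\oplus\zeta$ using a nowhere-zero field on the odd-dimensional factor, transfer the trivial line to complete $\tau\bs^n\oplus\epsilon\cong(n+1)\epsilon$, then absorb two trivial summands back into $\zeta\oplus2\epsilon\cong(m+1)\epsilon$ (the paper's ``boot-strapping''), with the even case dispatched by Hopf's theorem exactly as you do. Your added remark on why naive cancellation at the critical rank is forbidden is a sound observation, but the mathematics is the same.
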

\begin{proof}
Assume that $m$ is odd. Then $\tau(\bs^m)=\epsilon\oplus \eta$ for some subbundle $\eta\subset \tau(\bs^m)$.  Let $p_i$ denote the projection to the $i$-th factor 
of $\bs^m\times \bs^n$. Using $\epsilon\oplus \tau(\bs^n)=(n+1)\epsilon$, we have 
$\tau(\bs^m\times \bs^n)= p_1^*(\tau(\bs^m))\oplus 
p_2^*(\tau(\bs^n))=p_1^*(\eta)\oplus \epsilon\oplus \tau(\bs^n)=p_1^*(\eta)\oplus (n+1)\epsilon=\\p_1^*(\eta\oplus 2\epsilon)\oplus (n-1)\epsilon=(m+1)\epsilon\oplus (n-1)\epsilon=(m+n)\epsilon$.

If both $m,n$ are even, then $\chi(\bs^m\times \bs^n)=\chi(\bs^m)\times \chi(\bs^n)=4$ and so by Hopf's Theorem \ref{hopf} $\Span(\bs^m\times \bs^n)=0$.
\end{proof}

Note that in the above proof we exchanged, repeatedly, $\tau \mathbb{S}^n\oplus \epsilon$ for $(n+1)\epsilon$.  This 
is often referred to as {\it boot-strapping}.  We will have several occasions in the sequel to 
use it.  The next theorem, essentially due to Staples, is a generalization of the above.    

\begin{theorem} \label{product-span} {\em (Staples \cite{staples}.)}
Suppose that $M,N$ are smooth compact connected positive dimensional manifolds.  
Assume that $\chi(N)=0$ and that $\Span^0(M)\ge 1$.  Then 
$\Span^0(M)+\Span^0(N)\le \Span(M\times N)\le \Span^0(M\times N)
\le \min\{ \Span^0(M)+\dim N, \dim M+\Span^0(N)\}.$
Moreover, if $M, N$ are stably parallelizable, then $M\times N$ is parallelizable.  
\end{theorem}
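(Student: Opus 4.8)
The plan is to establish the three inequalities of the displayed chain separately and then deduce the final assertion. The middle inequality $\Span(M\times N)\le \Span^0(M\times N)$ is immediate: any genuine splitting $\tau(M\times N)\cong \eta\oplus r\epsilon$ remains a splitting after adding a trivial summand, so span never exceeds stable span. Writing $a=\Span^0(M)$, $b=\Span^0(N)$, $m=\dim M$, $n=\dim N$ throughout, the two outer inequalities are where the hypotheses $\chi(N)=0$ and $a\ge 1$ enter, and the genuine lower bound is the real heart of the argument.

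For the lower bound I would use Hopf's Theorem \ref{hopf} together with bootstrapping, exactly in the spirit of the proof of Theorem \ref{product-sphere}. Since $\chi(N)=0$, $N$ carries a nowhere-zero vector field, giving a \emph{genuine} splitting $\tau N\cong \nu\oplus\epsilon$ with $\rank\nu=n-1$; using $\tau(M\times N)=p_1^*\tau M\oplus p_2^*\tau N$ this produces one honest trivial summand over $M\times N$. I would then feed this summand into the stable relation $\tau M\oplus\epsilon\cong \mu_M\oplus (a+1)\epsilon$ coming from $\Span^0(M)=a$: pulling back along $p_1$ and absorbing the honest $\epsilon$ converts the \emph{stable} span of $M$ into $a+1$ \emph{genuine} trivial summands over the product, so that $\tau(M\times N)\cong p_1^*\mu_M\oplus p_2^*\nu\oplus (a+1)\epsilon$. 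Because $a\ge 1$, there are now at least two genuine trivial summands available, and I would spend two of them on the stable relation for $N$, namely $\nu\oplus 2\epsilon\cong \mu_N\oplus (b+1)\epsilon$ (which is just $\tau N\oplus\epsilon\cong\mu_N\oplus(b+1)\epsilon$ rewritten via the Hopf splitting). The net effect is a genuine isomorphism $\tau(M\times N)\cong p_1^*\mu_M\oplus p_2^*\mu_N\oplus (a+b)\epsilon$, whence $\Span(M\times N)\ge a+b$. The crucial point---and the main obstacle---is precisely this promotion of stable data to honest sections; it is what forces the hypothesis $\Span^0(M)\ge 1$, since without a spare trivial summand the final exchange for $N$ cannot be carried out.

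For the upper bound I would argue by naturality of stable span under restriction to a slice. Fixing $q\in N$ and letting $i\colon M=M\times\{q\}\hookrightarrow M\times N$ be the inclusion, the normal bundle of the slice is trivial of rank $n$, so $i^*\tau(M\times N)\cong \tau M\oplus n\epsilon$. Restricting a stable splitting $\tau(M\times N)\oplus\epsilon\cong \zeta\oplus (s+1)\epsilon$ realizing $s=\Span^0(M\times N)$ yields $\tau M\oplus (n+1)\epsilon\cong i^*\zeta\oplus (s+1)\epsilon$; reading this against the definition of stable span with $k=n+1$ gives $\Span^0(M)\ge s-n$, that is $s\le \Span^0(M)+\dim N$. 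The symmetric argument with $\{p\}\times N$ gives $s\le \dim M+\Span^0(N)$, and together they yield the claimed minimum.

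Finally, the moreover clause follows at once from the lower bound. A positive-dimensional stably parallelizable manifold has $\Span^0=\dim$, so under the standing hypothesis $\chi(N)=0$ the lower bound gives $\Span(M\times N)\ge \dim M+\dim N=\dim(M\times N)$, i.e. $M\times N$ is parallelizable. Note that the hypothesis $\chi(N)=0$ is genuinely needed here: $\bs^2\times\bs^2$ is a product of stably parallelizable manifolds that is \emph{not} parallelizable, consistent with $\chi(\bs^2)\ne 0$.
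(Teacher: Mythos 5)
Your proposal is correct and follows essentially the same route as the paper's own proof: Hopf's theorem supplies the one genuine trivial summand in $\tau N$, boot-strapping then promotes the stable splittings of $M$ and of $N$ (in that order, using $\Span^0(M)\ge 1$ for the second exchange) into an honest splitting of $\tau(M\times N)$ with $\Span^0(M)+\Span^0(N)$ trivial summands, and the upper bound comes from restricting to a slice $M\times\{q\}$ with trivial normal bundle. The only cosmetic difference is that the paper writes $\tau N=\theta\oplus r\epsilon$ with $r=\Span(N)$ while you use a single trivial summand, which changes nothing in the argument.
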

\begin{proof}
By Hopf's theorem \ref{hopf}, $r:=\Span(N)\ge 1$.  Write $\tau N=\theta\oplus r\epsilon$ and $\tau(N)\oplus \epsilon=\eta\oplus 
(s+1)\epsilon$ where $s:=\Span^0(N)$.  Similarly write $\tau M\oplus \epsilon=\xi \oplus (p+1)\epsilon, $
where $p=\Span^0(M)\ge 1$.  
To simplify notations, we will denote the pull-back bundle $pr_1^*(\tau M)$ also by the same symbol $\tau M $ 
where $pr_1:M\times N\to M$ is the first projection.  Similar notational conventions will be followed for 
$pr_2^*(\theta), pr_2^*(\tau N)$, etc. 

We have the chain of bundle isomorphisms by boot-strapping:
\[\begin{array}{rcl}
\tau (M\times N) &= &\tau M \oplus \tau N \\
&=& \tau M\oplus \theta\oplus r\epsilon  \\
&=& (\tau M\oplus \epsilon)\oplus \theta \oplus (r-1) \epsilon \\
& =& \xi\oplus (p+1)\epsilon\oplus \theta\oplus (r-1)\epsilon  \\
&= & \xi \oplus (p-1)\epsilon \oplus \theta \oplus (r+1)\epsilon  \\
&= & \xi \oplus (p-1)\epsilon\oplus \tau N\oplus \epsilon  \\ 
& = &\xi\oplus (p-1)\epsilon \oplus \eta\oplus (s+1) \epsilon\\
&=&\xi\oplus \eta\oplus (p+s)\epsilon.\\
\end{array} \]

Hence $\Span(M\times N)\ge p+s=\Span^0(M)+\Span^0(N)$. 

Fix a point $q\in N$. The natural inclusion $M\subset M\times N$ pulls back the tangent bundle of $M\times N$ to 
$\tau M\oplus (\dim N)\epsilon$.  This implies that $\Span^0(M\times N)$ cannot exceed $\Span^0(M)+\dim N$.   
Similarly $\Span^0(M\times N)\le \Span^0(N)+\dim M$.
\end{proof}

\begin{example}{\em 
(i)  Let $M=\mathbb RP^m\times \mathbb S^n$.   
 If $m$ is odd, $\Span(M)=\Span^0(\mathbb RP^m)+n=\rho(m+1)+n-1.$ For the last equality, see Theorem \ref{proj-stablespan}. If $m,n$ are both 
even, $\Span(M)=0, \Span^0(M)=n$, since $w_m(M)\ne 0$.  If $m$ is even and $n$ odd, then $\Span (\mathbb S^n)\le \Span(M)\le \Span^0(M)= n$. But the exact value of span seems to be unknown in general.   
When $m=2$ and $n\equiv 1 \mod 8$, $n\ge 9$, it turns out that $\Span (M)=3$ whereas $\Span^0(M)=n$; see   
\cite[Exercise 20.18]{koschorke}. 

(ii) Suppose that $M$ is the boundary of a parallelizable manifold-with-boundary $W$.  Then $M$ is stably parallelizable. 
This is because $W$ is necessarily orientable and the normal bundle $\nu$ to the inclusion $M\hookrightarrow W$ 
is a trivial line bundle. (One may take `outward pointing'  unit normal at each point of $M$ with respect to a Riemannian 
metric on $W$.)   Hence $\tau W|_M\cong \tau M\oplus \epsilon$.  Since $\tau W$ is trivial, so is 
 $\tau M\oplus \epsilon$, that is, $M$ is stably parallelizable.  
 
(iii) A well-known result of Kervaire and Milnor \cite[Theorem 3.1]{km} says that any smooth homotopy sphere is stably 
parallelizable. 
An immediate corollary is that a product of two or more smooth homotopy spheres is parallelizable 
if and only if at least one of them is odd-dimensional.  If all the homotopy spheres are of even dimension, then 
their span is zero (in view of Hopf's theorem \ref{hopf}). 

(iv) J. Roitberg \cite{roitberg} has constructed smooth $(4k-2)$-connected 
manifolds $M_1, M_2$ of dimension $d=8k+1$ for each  
$k\ge 2,$ having the following properties: (a)  $M_1$ and $M_2$ are homeomorphic (in fact $M_1,M_2$ admit 
PL-structures and are PL-homeomorphic) with $\Span(M_1)=1=\Span(M_2)$, 
(b) $M_2$ is stably parallelizable, but $M_1$ is not, (c)   $\Span(M_2\times N)> \Span(M_1\times N)$ for any 
stably parallelizable manifold $N$ of dimension $n\ge 1$.   The construction of the manifolds $M_1,M_2$ 
involves deep machinery which goes far beyond the scope of these notes.   We shall be content with some 
remarks.  It turns out that $M_2$ has the same $\mathbb{Z}_2$-homology groups as the sphere $\mathbb{S}^m$.
It follows that the Betti numbers $b_j(M_2)$ of $M_2$ vanish for $1\le j\le d-1=8k$.  Hence the real Kervaire 
semi-characteristic $\kappa(M_2)=\sum_{0\le j\le 4k}b_j(M_2)=1$. (See Remark \ref{lmp}.)  By Theorem \ref{span2}(a), $\Span(M_2)\le 1$. Since 
$M_2$ is odd-dimensional, equality must hold.  Since $M_1$ is homeomorphic to $M_2$, the same argument 
applies to $M_1$ as well and so $\Span(M_1)=1$.  
For the assertion (c), note that $M_2\times N$ is parallelizable 
(by Theorem \ref{product-span}) but $M_1\times N$ is not stably parallelizable since $M_1$ is not.

(v) Crowley and Zvengrowski \cite{cz} have extended the results of Roitberg to dimensions $\ge 9$.  More precisely, 
for each $d\ge 9$, they have shown the existence of manifolds $M_1, M_2$ which are PL-homeomorphic but 
$\Span(M_1)\ne \Span(M_2)$.  They also showed 
that there can be no such examples in dimensions up to $8$.
}
\end{example}

In contrast to the case of spheres, the 
span of the product $M=\br P^{m-1}\times \br P^{n-1}$ of real projective spaces for general $m,n$ is unknown.  
Of course, the span is zero when both $m,n$ are odd since in that case $\chi(M)=1$.  Using the formula for 
Stiefel-Whitney classes of projective spaces, one obtains 
that $w_{m+n-k-l}(M)=w_{m-k}(\mathbb{R}P^{m-1})\times w_{n-l}(\mathbb{R}P^{n-1})\ne 0$ where $k=2^r,l= 2^s$ are
highest powers of $2$ which divide $m, n$ respectively.   
 It follows that $\Span(M)\le k+l-2$.  This is uninteresting when $m,n$ are both powers of $2$ and is 
 strong when $k, l\le 2$. 
 On the other hand one has the lower bound $\Span(M)\ge \Span^0(\mathbb RP^{m-1})+\Span^0(\mathbb RP^{n-1})=
 \rho(m)+\rho(n)-2$.  (See Theorem \ref{proj-stablespan}.)

The following result is due to Davis.
\begin{theorem} {\em (Davis \cite{davis}) }  Suppose that $16\not\!|n$ and $16\not \!|m$, or, $m=2,4,8.$  Then
$\Span (\mathbb{R}P^{m-1}\times \mathbb{R}P^{n-1})=\rho(m)+\rho(n)-2$.  
\end{theorem}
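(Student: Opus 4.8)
The plan is to combine the lower bound $\Span(M)\ge \rho(m)+\rho(n)-2$ already established above (which is valid for all $m,n$) with a matching upper bound, obtaining the upper bound by two quite different arguments according to which of the two hypotheses holds. Throughout write $M=\mathbb{R}P^{m-1}\times\mathbb{R}P^{n-1}$, $d=\dim M=m+n-2$, and let $k=2^r,l=2^s$ be the largest powers of $2$ dividing $m$ and $n$. The two hypotheses single out precisely the situations in which one of the two generic upper bounds --- the Stiefel--Whitney bound and a ``restriction to a slice'' bound --- becomes sharp.

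First I would dispose of the case $16\nmid m$ and $16\nmid n$ using Stiefel--Whitney classes. As recorded above, $w_{m+n-k-l}(M)\ne 0$, whence $\Span(M)\le\Span^0(M)\le k+l-2$. It then suffices to check the purely arithmetic fact that $16\nmid m$ forces $\rho(m)=k$: writing the $2$-adic valuation $r=v_2(m)\le 3$ in the form $r=4a+b$ with $0\le b\le 3$ gives $a=0$, so $\rho(m)=8a+2^b=2^r=k$, and likewise $\rho(n)=l$. Hence the Stiefel--Whitney bound reads $\Span(M)\le\rho(m)+\rho(n)-2$, matching the lower bound and giving equality.

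For the case $m\in\{2,4,8\}$ I would argue by restricting the tangent bundle to a slice. Here $\rho(m)=m$, so the target value is $m+\rho(n)-2$. Fix $x_0\in\mathbb{R}P^{m-1}$ and let $j\colon\mathbb{R}P^{n-1}\to M$, $j(y)=(x_0,y)$, be the slice inclusion. Since $\tau M=pr_1^*\tau(\mathbb{R}P^{m-1})\oplus pr_2^*\tau(\mathbb{R}P^{n-1})$ and $pr_1\circ j$ is constant, pulling back along $j$ yields $j^*\tau M\cong (m-1)\epsilon\oplus\tau(\mathbb{R}P^{n-1})$ over $\mathbb{R}P^{n-1}$. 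A trivial subbundle of $\tau M$ restricts to a trivial subbundle of $j^*\tau M$, so $\Span(j^*\tau M)\ge\Span(M)$, and it remains to evaluate the span of $(m-1)\epsilon\oplus\tau(\mathbb{R}P^{n-1})$. Because $\rank\tau(\mathbb{R}P^{n-1})=n-1=\dim\mathbb{R}P^{n-1}$ and $m-1\ge 1$, the stable-range invariance of geometric dimension quoted above identifies this span with $(m-1)+\Span^0(\mathbb{R}P^{n-1})=m+\rho(n)-2$, using $\Span^0(\mathbb{R}P^{n-1})=\rho(n)-1$ from Theorem \ref{proj-stablespan} and Adams' theorem. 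Hence $\Span(M)\le m+\rho(n)-2=\rho(m)+\rho(n)-2$, and equality follows as before.

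The routine part is the $2$-adic arithmetic of the first case; the substantive step is the slice argument of the second. The main obstacle I anticipate is justifying that the span of $(m-1)\epsilon\oplus\tau(\mathbb{R}P^{n-1})$ is exactly $m+\rho(n)-2$ and no larger: this is where Adams' solution of the vector field problem enters, guaranteeing that the stable geometric dimension of $\tau(\mathbb{R}P^{n-1})$ does not drop below $n-\rho(n)$, and one must be careful that adjoining the summands $(m-1)\epsilon$ keeps us in the stable range so that no extra independent sections materialise. It is worth emphasising in the write-up that two separate arguments are genuinely needed: the slice bound gives only $\Span(M)\le\min\{m+\rho(n),\,n+\rho(m)\}-2$, which is sharp exactly when $m$ or $n$ lies in $\{1,2,4,8\}$, while the Stiefel--Whitney bound is what handles the remaining case $16\nmid m,n$.
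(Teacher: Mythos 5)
Your proposal is correct and takes essentially the same route as the paper: for $16\nmid m,n$ you use the identical Stiefel--Whitney upper bound together with the arithmetic observation that $\rho(m)=2^{v_2(m)}$ when $v_2(m)\le 3$, and for $m=2,4,8$ your slice-restriction argument is precisely the upper-bound half of Staples' Theorem \ref{product-span}, which the paper invokes in the packaged form $\Span(\mathbb{R}P^{m-1}\times N)=m-1+\Span^0(N)$. In both cases you close the argument, as the paper does, with the Radon--Hurwitz lower bound $\rho(m)+\rho(n)-2$ recorded before the theorem, so the two proofs coincide in substance.
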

\begin{proof}
Let $m=2,4,$ or $8$.   Then $\rho(m)=m$ and for {\it any} manifold $N$ we have $\Span(\mathbb{R}P^{m-1}\times N)=m-1+
 \Span^0(N)$. Taking $N=\mathbb{R}P^{n-1}$, we get $\Span^0(N)=\Span(N)=\rho(n)-1$ which proves the 
 assertion in this case.
 
 Suppose that $16$ divides neither $m$ nor $n$.  In this case, $\rho(m)=2^r, \rho(n)=2^s$ where $m=2^r\cdot m', n=2^s\cdot n'$ 
 with $m',n'$ being odd.  Using the formula $w_j(\mathbb{R}P^{m-1})={m\choose j}a^j\in H^j(\mathbb{R}P^{m-1};\mathbb{Z}_2)=\mathbb{Z}_2a^j$, we obtain that $w_{ m-2^r}(M)\ne 0$, and $w_j(M)=0$ for $j>m-2^r$.  Similar statement 
 holds for $N$ and so we obtain that $w_{m+n-2^r-2^s}(M\times N)=w_{m-2^r}(M)\times w_{n-2^s}(N)\ne 0$. 
 Hence $\Span(M\times N)\le 2^r+2^s-2$. Since $\Span(M\times N)\ge \Span(M)+\Span(N)=\rho(m)-1+\rho(n)-1=
 2^r+2^s-2$, the assertion follows.
 \end{proof}
 
When $16|m$, the Stiefel-Whitney upper bound is rather weak.  Using BP-cohomology, Davis \cite{davis} has obtained an upper bound for the span of $\mathbb{R}P^{m-1}\times \mathbb RP^{n-1}$  
which is sharper than the previously known ones.     
No example of a pair of numbers $(m,n)$ seems to be known where the span of $\mathbb{R}P^{m-1}\times \mathbb{R}P^{n-1}$ is {\it strictly} bigger 
than the Radon-Hurwitz lower bound $\rho(m)+\rho(n)-2$.    

\section{Vector fields on homogeneous spaces}
 In this section 
we shall consider the vector field problem for 
homogeneous spaces, mostly focusing on Stiefel manifolds, Grassmann manifolds, and related 
spaces.  We will assume familiarity with Lie groups and representation theory 
of compact Lie groups. As we proceed further, acquaintance with (topological) K-theory will also 
be assumed.  

Let $G$ be any Lie group and let $H$ be a closed subgroup.  We consider the natural differentiable structure 
on the homogeneous space $M=G/H$. Thus the quotient map $G\to G/H$ is smooth.   

We begin with the following well-known result.  (Compare Example \ref{basicexamples}(v).)

\begin{theorem} \label{bh}  {\em (Borel-Hirzebruch \cite{bh}.)}
Let $\Gamma\subset G$ be a discrete subgroup of a Lie group $G$.  Then $G/\Gamma$ is parallelizable.    
\end{theorem}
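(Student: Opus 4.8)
The plan is to produce a global frame on $G/\Gamma$ by pushing down a carefully chosen frame from $G$, using that $q\colon G\to G/\Gamma$ is a covering projection. Since $\Gamma$ is discrete it is closed in $G$, and it acts on $G$ by the right translations $\rho_\gamma\colon g\mapsto g\gamma$; this action is free and properly discontinuous, so $q$ is a smooth covering map whose fibres are exactly the $\Gamma$-orbits, and $\dim(G/\Gamma)=\dim G=:n$ because $\Gamma$ is $0$-dimensional. By the converse assertion in Example \ref{basicexamples}(v), any vector field on $G$ that is invariant under this $\Gamma$-action is the lift of a unique vector field on $G/\Gamma$, and since $q$ is a local diffeomorphism it carries a frame at $g$ to a frame at $q(g)$. (Example \ref{basicexamples}(v) was phrased for a deck group acting on the left, but the identical reasoning applies to any free properly discontinuous action of $\Gamma$.)

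The real content is to exhibit $n$ everywhere linearly independent $\Gamma$-invariant vector fields on $G$, and here one must be careful: the left-invariant fields of Example \ref{basicexamples}(iii) trivialize $\tau G$ but are \emph{not} invariant under right translation by $\Gamma$ in general. The point is to use instead the \emph{right-invariant} vector fields. Fixing a basis $v_1,\dots,v_n$ of $\mathfrak{g}=T_eG$, I would set $w_i(g):=T\rho_g(v_i)$; these are everywhere linearly independent since each $T\rho_g$ is an isomorphism. The key elementary computation is that right translations compose as $\rho_\gamma\circ\rho_g=\rho_{g\gamma}$, whence
\[
T\rho_\gamma\bigl(w_i(g)\bigr)=T\rho_\gamma\circ T\rho_g(v_i)=T\rho_{g\gamma}(v_i)=w_i(g\gamma),
\]
so each $w_i$ is precisely $\Gamma$-invariant for the right action that defines $G/\Gamma$.

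Combining these steps, the $w_i$ descend to vector fields $\bar w_1,\dots,\bar w_n$ on $G/\Gamma$ which remain everywhere linearly independent, since $q$ is a local diffeomorphism sending $w_i(g)$ to $\bar w_i(q(g))$. Hence $G/\Gamma$ admits a global frame, that is, $\Span(G/\Gamma)=\dim(G/\Gamma)$, which is exactly parallelizability. I expect the only genuine subtlety to be the choice of right- rather than left-invariant fields, dictated by $\Gamma$ acting on the right; everything else follows directly from covering-space theory together with the identity $\rho_\gamma\circ\rho_g=\rho_{g\gamma}$. (Had the homogeneous space been $\Gamma\backslash G$, the roles would reverse and one would push down left-invariant fields instead.)
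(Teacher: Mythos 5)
Your proof is correct and is essentially the paper's own argument in mirror image: the paper passes to the right-coset space $\Gamma\backslash G$ and descends the left-invariant frame (which is invariant under left translation by $\Gamma$), while you stay with $G/\Gamma$ and descend the right-invariant frame. In both cases the content is identical --- an invariant global frame on $G$ is preserved by the deck action and therefore pushes down through the covering projection to a global frame on the quotient.
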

\begin{proof}  We shall work with the space of right 
cosets $\Gamma\backslash G$ instead of $G/\Gamma$.
The tangent bundle $\tau (\Gamma\backslash G)$ has the following 
description: $T(\Gamma\backslash G)=G\times \frak{g}/\sim$ where 
$(x,v)\sim(hx, d\lambda_h(v)), x\in G, v\in \frak{g}, h\in \Gamma$.  Let $v_1,\ldots, v_n$ be everywhere linearly independent $G$-invariant vector fields on $G$ where $n=\dim G$. 
Since $d\lambda_h(v_j(x))=v_j(hx)$ for all $h\in H$, we see 
that $\bar{v}_j(\Gamma x)=[x,v_j(x)]\in T_{Hx}(\Gamma\backslash G)  $ is a well-defined 
(smooth) vector field on $\Gamma\backslash G$ for $1\leq j\leq n$.   See Example \ref{basicexamples}(v).
\end{proof}

\subsection{Homogeneous spaces of compact Lie group.}
In this section we consider 
homogeneous spaces $G/H$ where $G$ is a compact 
connected Lie group and $H$ a closed subgroup.

First suppose that $T$ is a maximal torus of a compact connected Lie group $G$.   
That is, $T\subset G$ is isomorphic to $(\bs^1)^r$ with $r$ largest. The number $r$ is called the rank of $G$. 
It is well-known that $G$ is a union of its maximal tori and 
 that any two maximal tori in $G$ are conjugates in $G$.
 Let $N_G(T)$ denote the normalizer of $H$ in $G$.  Then $W(G,T):=N_G(T)/T$ is a finite group known as 
the Weyl group of $G$ with respect to $T$.  It is known 
that the Euler-Poincar\'e characteristic of $G/T$ equals $|W(G,T)|$, the cardinality of the Weyl group.   
To see this, first note that an element $gT$ is a $T$-fixed point for the action of $T$ on 
$G/T$ if and only if $g\in N(T)$.  Since $g_0T=g_1T$ if and only if $g_0^{-1}g_1\in T$, we have a 
bijection between $T$-fixed points of $G/T$ and $W(G,T)$.  
Applying \cite[Theorem 10.9]{bredon} we see that 
$\chi(G/T)=|W(G,T)|$.  If $H$ is a closed connected subgroup of $G$ such that $T\subset H\subset G$, then 
$W(H,T)$ is a subgroup of $W(G,T)$ and the coset space $W(G,T)/W(H,T)$ will be denoted $W(G,H)$. 

Let $H\subset G$ be any connected subgroup having the {\it same} 
rank as $G$.  If $T\subset H$ is a maximal torus of $H$, then 
\[\chi(G/H)=|W(G,H)|. \eqno(2)\]  
To see this, observe that one has a fibre bundle with fibre space $H/T$ and projection $G/T\to G/H$.  The required result then 
follows from the multiplicative property of the Euler-Poincar\'e characteristic and the formula for $\chi(G/T)$. 

Suppose that $S\cong (\bs^1)^s$ is a toral subgroup of $G$ where $s<r=\rank(G)$.  Then $S$ is properly contained 
in a maximal torus $T$ of $G$. Considering the 
fibre bundle with fibre $T/S\cong (\bs^1)^{r-s}$ and projection $G/S\to G/T$, we see that $\chi(G/S)=\chi(G/T)\cdot \chi(T/S)=0$ since 
$\chi(T/S)=(\chi(\bs^1)^{r-s})=0$.  
It follows that if the rank of $H$ is less than the rank of $G$, 
then, taking $S$ to be a maximal torus of $H$ and using the $H/S$-bundle with projection $G/S\to G/H$,  
we have $\chi(G/H)=\chi(G/S)/\chi(H/S)=0.$   If $H\subset G$ is not connected, denoting the identity 
component $H_0$, the natural map $G/H_0\to G/H$ is a covering projection of degree $|H/H_0|$ 
and so $\chi(G/H)=\chi(G/H_0)/|H/H_0|$.  
The following result is an immediate consequence of Hopf's Theorem \ref{hopf}.

\begin{theorem} \label{spanvanishing}
Let $G$ be a compact connected Lie group and let $H$ be a closed subgroup of $G$.  Let $H_0$ denote the 
identity component of $H$.  
If $\rank(H_0)=\rank(G)$, then $\Span(G/H)=0$.  If $\rank(H_0)<\rank(G),$ 
then $\Span(G/H)>0.$ \hfill $\Box$
\end{theorem}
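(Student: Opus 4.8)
The plan is to deduce both assertions directly from Hopf's Theorem \ref{hopf}, which says that a compact connected smooth manifold $M$ satisfies $\Span(M)\ge 1$ if and only if $\chi(M)=0$. The entire content of the statement therefore reduces to deciding when $\chi(G/H)$ vanishes, and every Euler-characteristic formula needed for this has already been assembled in the discussion immediately preceding the statement. First I would note that $G/H$ is a compact connected smooth manifold, so that Hopf's theorem applies: compactness is inherited from $G$, and connectedness follows from the connectedness of $G$ together with the surjectivity of the quotient map $G\to G/H$.

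For the first assertion, assume $\rank(H_0)=\rank(G)$. Choosing a maximal torus $T\subset H_0$, the equality of ranks makes $T$ a maximal torus of $G$ as well, so formula $(2)$ applies to the connected equal-rank subgroup $H_0$ and gives $\chi(G/H_0)=|W(G,H_0)|$. Since $W(G,H_0)=W(G,T)/W(H_0,T)$ is a nonempty coset space, its cardinality is at least $1$, whence $\chi(G/H_0)>0$. Passing from $H_0$ to $H$ via the covering projection $G/H_0\to G/H$ of degree $|H/H_0|$ yields $\chi(G/H)=\chi(G/H_0)/|H/H_0|>0$. As $\chi(G/H)\ne 0$, Hopf's theorem forces $\Span(G/H)=0$.

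For the second assertion, assume $\rank(H_0)<\rank(G)$. Taking $S$ to be a maximal torus of $H_0$, it is a proper toral subgroup of a maximal torus $T$ of $G$, so the circle factors of the fibre of $G/S\to G/T$ make $\chi(G/S)=0$; combining this with the multiplicativity of the Euler characteristic along the fibre bundle $G/S\to G/H_0$ (with fibre $H_0/S$) gives $\chi(G/H_0)=0$, and hence $\chi(G/H)=\chi(G/H_0)/|H/H_0|=0$ as well. By Hopf's theorem, $\Span(G/H)\ge 1>0$.

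Because all the substantive ingredients---the formula $\chi(G/T)=|W(G,T)|$, its consequence $(2)$, the vanishing of $\chi(G/S)$ for a proper toral subgroup $S$, and the behaviour of $\chi$ under the finite covering $G/H_0\to G/H$---are already in hand, no genuinely difficult step remains; the proof is simply a matter of invoking Hopf's criterion on the two computed values of $\chi(G/H)$. The only point requiring a modicum of care is the positivity bookkeeping in the equal-rank case, namely that $|W(G,H_0)|\ge 1$ and that division by the nonzero integer $|H/H_0|$ preserves the sign, so that $\chi(G/H)$ is strictly positive rather than merely nonzero.
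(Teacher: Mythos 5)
Your proof is correct and follows essentially the same route as the paper: the paper states this theorem as an immediate consequence of Hopf's theorem together with the Euler-characteristic computations $\chi(G/T)=|W(G,T)|$, formula $(2)$, the vanishing of $\chi(G/S)$ for a lower-rank toral subgroup $S$, and the covering relation $\chi(G/H)=\chi(G/H_0)/|H/H_0|$, all of which you invoke in the same way. Your write-up merely makes explicit the bookkeeping (e.g.\ $|W(G,H_0)|\ge 1$, and implicitly $\chi(H_0/S)=|W(H_0,S)|\ne 0$ when dividing) that the paper leaves to the reader.
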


Next we describe the tangent bundle of $G/H$ in terms 
of the adjoint representation.  
We do not assume that $G$ is compact.  Also $H$ is not assumed to be connected.

The conjugation $g\mapsto \iota_g$ defined as $\iota_g(x)=gxg^{-1}, x\in G,$  defines an action of $G$ on itself. 
Clearly the identity element $e\in G$ is fixed under this action.  Hence, we obtain a representation $G\to 
GL(\frak{g})$ defined as $g\mapsto d\iota_g|_e$.  This 
is referred to as the adjoint representation, denoted $Ad_G.$  
By restricting the action to the subgroup $H\subset G$ we 
obtain a representation of $Ad_G|_H$. Note that since $H\subset G$, the adjoint representation of $H$ on $T_eH=\frak{h}$ is a subrepresentation of $Ad_G|_H$ and moreover we obtain a 
representation of $H$ on $\frak{g}/\frak{h}$.  Further, the tangent space to $G/H$ at the identity 
coset $H$ may be identified with $\frak{g}/\frak{h}$, as 
can be seen by considering the differential $d\pi|_e:\frak{g}=T_eG\to T_{\pi(e)}(G/H)$ of the projection of the $H$-bundle 
$\pi:G\to G/H$, whose kernel is $\frak{h}=T_eH$.  It turns out that the tangent bundle $\tau(G/H)$ has the following description: 
\[T(G/H)=G\times_H \frak{g}/\frak{h} \eqno(3)\]
 where the right hand side denotes the 
 quotient of $G\times\frak{g}/\frak{h}$ by the relation 
 $(g,v+\frak{h})\sim(gh^{-1}, Ad(h)(v)+\frak{h})$.  The projection $G\times _H\frak{g}/\frak{h}\to G/H$ defined 
 as $[g, v+\frak{h}]\mapsto gH\in G/H$ is the projection 
 of a vector bundle with fibre $\frak{g}/\frak{h}$ which is isomorphic to the tangent bundle of $G/H$.

 The exact sequence of $H$-representations 
\[0\to \frak{h}\to \frak{g}\to\frak{g}/\frak{h}\to 0\]
induces an exact sequence of vector bundles over $G/H$: 
\[0\to \nu\to \mathcal{E}\to \tau_{G/H}\to 0\eqno(4)\]
where $\mathcal{E}=G\times_H\frak{g}$.  The bundle $\mathcal{E}$ is isomorphic 
to the trivial bundle $d\epsilon$ of rank $d:=\dim G$ since the action of $H$ on $\frak{g}$ 
extends to an action of $G$ on $\frak{g}$ (namely 
the adjoint action).

 \begin{proposition} \label{torus}
 Let $H\subset G$ be a toral subgroup of a compact connected 
 Lie group $G$.  Then $G/H$ is stably parallelizable; it is parallelizable if and only if $rank(G)>\dim H$. 
 \end{proposition}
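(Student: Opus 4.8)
The plan is to read off both assertions from the exact sequence (4) of bundles over $G/H$ together with the Bredon--Kosi\'nski--Thomas theorem (Theorem \ref{b-k}). Stable parallelizability comes first. In $0\to\nu\to\mathcal E\to\tau_{G/H}\to0$ the middle bundle $\mathcal E=G\times_H\frak{g}$ is trivial of rank $\dim G$, while $\nu=G\times_H\frak{h}$. Because $H$ is toral, hence abelian, conjugation by $H$ is trivial on $H$, so the adjoint action of $H$ on $\frak{h}$ is trivial; therefore $\nu\cong s\epsilon$ is trivial, where $s:=\dim H$. Splitting the sequence with an invariant metric gives $\tau_{G/H}\oplus s\epsilon\cong(\dim G)\epsilon$, so $\tau_{G/H}$ is stably trivial and $G/H$ is stably parallelizable; in particular $w_1(G/H)=0$, so $G/H$ is orientable.

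For the equivalence, if $\dim H=\rank(G)$ then $H$ is a maximal torus, so by (2) we have $\chi(G/H)=|W(G,H)|\ge2$ (assuming $\dim G/H\ge1$, so that $G$ is non-abelian), and Hopf's theorem \ref{hopf} forces $\Span(G/H)=0$: not parallelizable. Conversely assume $\dim H<\rank(G)$, and let $T\supset H$ be a maximal torus. Since $T$ is abelian it centralises $H$, so right translation gives a well-defined action of the positive-dimensional torus $T/H$ on $G/H$, and $gtH=gH$ forces $t\in H$, so the action is free. Its fundamental vector fields are $r-s$ everywhere independent vector fields ($r:=\rank G$), whence $\chi(G/H)=0$. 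I now invoke Theorem \ref{b-k}: a stably parallelizable $M$ is either parallelizable or has $\Span(M)=\rho(d+1)-1$, $d=\dim M$. When $d$ is even, $M$ is parallelizable iff $\chi=0$, which holds. When $d\in\{1,3,7\}$ we have $\rho(d+1)-1=d=\dim(G/H)$, so the second alternative also yields parallelizability. This leaves $d$ odd with $d\notin\{1,3,7\}$, where Theorem \ref{b-k} reduces everything to showing that the Kervaire semi-characteristic $\hat\chi_2(G/H)$ vanishes.

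This vanishing is the crux. Note that $d=\dim G-\dim H$ odd forces $r-s$ to be odd, since $\dim G-\rank G$ is even. If $r-s\ge3$, the torus $T/H$ contains a subgroup $\bz_2\times\bz_2$ acting freely on $G/H$, and Stong's theorem (Remark \ref{lmp}) gives $\hat\chi_2(G/H)=0$. The remaining, genuinely hard, case is $r-s=1$: then $G/H\to G/T$ is a principal $\bs^1$-bundle, the free circle action realises $G/H$ as the boundary of the associated $2$-disk bundle, so $G/H$ is null-cobordant, and hence by Lusztig--Milnor--Peterson (Remark \ref{lmp}) $\hat\chi_2(G/H)=\kappa(G/H)$.

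It remains to prove $\kappa(G/H)=0$ for this circle bundle, and this is the step I expect to be the main obstacle. The plan is to run the Gysin sequence of $G/H\to G/T$ with Euler class $e\in H^2(G/T)$. Since the flag manifold $G/T$ has cohomology concentrated in even degrees with $\sum_j\dim H^{2j}(G/T)=|W|$, the total even Betti number of $G/H$ equals $|W|-\rank(\cup e)$, whence $\kappa(G/H)\equiv|W|-\rank(\cup e)\pmod2$. Poincar\'e duality on $G/T$ identifies $\cup e\colon H^{2i}\to H^{2i+2}$ with the dual of $\cup e\colon H^{2p-2i-2}\to H^{2p-2i}$ (where $2p=\dim G/T$), so these ranks cancel in pairs; and as $G$ is non-abelian, $|W|$ is even. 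When $p$ is even, i.e. $d\equiv1\pmod4$, no self-dual middle term survives and $\kappa(G/H)\equiv|W|\equiv0$ at once; the difficulty is concentrated in $p$ odd, i.e. $d\equiv3\pmod4$, where $\kappa(G/H)$ is congruent mod $2$ to the rank of the middle symmetric form $\cup e\colon H^{p-1}(G/T)\to H^{p+1}(G/T)$. Showing this rank is even is the heart of the matter; I would attack it through the representation theory of the character $\chi$ of $T$ defining $H$: when $e=c_1(L_\chi)$ is regular, hard Lefschetz makes the middle map an isomorphism and reduces the parity to a Weyl-length count, while the singular case calls for a finer analysis of $\cup e$ in the Schubert basis.
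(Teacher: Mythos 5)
Your first half (triviality of $\nu=G\times_H\mathfrak{h}$ because $H$ is abelian, hence stable parallelizability) and your ``only if'' direction agree with the paper. The problem is the ``if'' direction. The even-dimensional case, the case $d\in\{1,3,7\}$, the case $r-s\ge 3$ (Stong), and even the case $r-s=1$, $d\equiv 1\pmod 4$ of your reduction are all fine; but the case $r-s=1$, $d\equiv 3\pmod 4$ is not just unfinished --- the strategy cannot be completed. First, the step ``$G/H$ is null-cobordant, hence $\hat\chi_2(G/H)=\kappa(G/H)$ by Lusztig--Milnor--Peterson'' is invalid: the LMP identity $\hat\chi_2(M)-\kappa(M)=w_2w_{d-2}[M]$ is a theorem about manifolds of dimension $\equiv 1\pmod 4$, and it fails in dimensions $\equiv 3\pmod 4$ (simplest counterexample: $\mathbb{R}P^3$ is parallelizable, so $w_2w_1[\mathbb{R}P^3]=0$, yet $\kappa=1$ while $\hat\chi_2=0$); you may have been misled by Remark \ref{lmp}, which omits the dimension hypothesis. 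What Theorem \ref{b-k} requires to vanish is $\hat\chi_2$, computed with $\mathbb{Z}_2$ coefficients, and your rational Gysin sequence does not compute it (the integral cohomology of $G/H$ has $2$-torsion in general). Second, and worse, the statement you set out to prove, $\kappa(G/H)=0$, is simply false in this case. Take $G=SU(2)^7$, $T=T^7$, and $H=\ker\bigl(\,(t_1,\ldots,t_7)\mapsto t_1t_2\cdots t_7\,\bigr)\cong T^6$, so $r-s=1$ and $d=15\equiv 3\pmod 4$. Here $G/T=(\mathbb{C}P^1)^7$ and the Euler class $e=x_1+\cdots+x_7$ is a K\"ahler class, so hard Lefschetz makes the middle map $\cup e\colon H^{6}(G/T)\to H^{8}(G/T)$ an isomorphism, of rank $\binom{7}{3}=35$, which is odd; your own parity formula then gives $\kappa(G/H)\equiv |W|+35\equiv 1\pmod 2$. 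Nevertheless this $G/H$ \emph{is} parallelizable (it is an instance of the proposition), so here $\hat\chi_2(G/H)=0\ne\kappa(G/H)$: the regular case of your plan, far from finishing the proof, produces a nonvanishing $\kappa$.

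The irony is that you already built the object that finishes the proof in a few lines, with no semi-characteristics at all. You observed that $T/H$ acts freely on $G/H$; in fact $\pi\colon G/H\to G/T$ is a \emph{principal} $T/H$-bundle, so its vertical bundle is trivial (Example \ref{basicexamples}(iv)) and $\tau(G/H)\cong\pi^*(\tau(G/T))\oplus(r-s)\epsilon$. By your first part applied to the pair $(G,T)$, $\tau(G/T)\oplus r\epsilon$ is trivial; since $\tau(G/T)\oplus\epsilon$ is thus a stably trivial bundle whose rank exceeds $\dim(G/T)$, it is itself trivial. Pulling back and using $r-s\ge 1$, we get $\tau(G/H)\cong\pi^*\bigl(\tau(G/T)\oplus\epsilon\bigr)\oplus(r-s-1)\epsilon$, a trivial bundle. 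This is the paper's argument: it bypasses Hopf, Bredon--Kosi\'nski, Stong, LMP and the Gysin/Lefschetz analysis entirely, and it has no case distinctions.
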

\begin{proof}
The bundle $\nu$ with total space 
$G\times _H\frak{h}$ is trivial since the adjoint representation of $H$ is trivial.  (This is because 
the group $H\cong (\bs^1)^s$ is abelian.) 
The above sequence of vector bundles splits (after choosing a Euclidean metric on $\mathcal{E}$), and so we have
$d\epsilon\cong\tau(G/H)\oplus \nu=\tau(G/H)\oplus s\epsilon$ where $s=\dim H$.  This proves our first assertion.   
To prove the last assertion, note that if $\dim H=\rank(G)$, then 
$\chi(G/H)=|W(G,H)|\neq 0$ and so $\Span(G/H)=0$. 
If $s=\dim H<\rank (G)=:r$, choose a maximal torus $T\supset H$.  Consider the $T/H$-bundle with projection $\pi:G/H\to G/T$.  
This is a {\it principal} bundle with fibre and structure group $T/H$.  Hence the vertical bundle is trivial (see Example \ref{basicexamples}(iv)) and we have 
 $\pi^*(\tau(G/T))\oplus (r-s)\epsilon\cong \tau(G/H)$.
 Since $\tau(G/T)$ is stably trivial and $r>s$, it follows that $\tau G/H$ is trivial, i.e., $G/H$ 
 is parallelizable.
\end{proof}

Let $\psi:H\to GL(V)$ be a representation of a Lie group $H$ on a real vector space $V$. Suppose that $H$ is a closed 
subgroup of a Lie group $G$.  Denote by $\alpha(\psi)$ the associated vector bundle $G\times_HV\to G/H$.  (Here $G\times_HV=G\times V/\!\sim$ where $(gh, v)=(g, \psi(h)(v)), ~\forall (g,v)\in G\times V, h\in H$.)  
Bundles over $G/H$ associated to representations of $H$ are referred to as {\it homogeneous vector bundles}.  
For example, as we have seen already, the tangent bundle 
$\tau (G/H)$ is a homogeneous vector bundle associated to the representation on $\mathfrak{g/h}$ induced by $Ad_G|_H$, 
the adjoint representation of $G$ 
(on $\mathfrak{g}$) restricted to $H$ and the adjoint representation $Ad_H$ (on $\mathfrak{h}\subset \mathfrak{g}$). 
This so-called $\alpha$-construction defines a ring homomorphism $\alpha:RO(H)\to KO(G/H)$ from the real representation ring of 
$G$ to the $KO$-theory of $G/H$.  Analogously, one has the $\alpha$-construction on complex 
representations leading to $\alpha_\mathbb{C}:R(H)\to K(G/H).$   The kernel of $\alpha$ (resp. 
$\alpha_\mathbb{C}$) contains the ideal of $RO(H)$ (resp. $R(H)$) generated by the elements of the form $[E]-\dim [E]$ 
where $E$ is the restriction to $H$ of a real (resp. complex) representation of $G$. 

Denoting the complexification 
homomorphisms $RO(H)\to R(H)$ and  $KO(G/H)\to K(G/H)$ by the same symbol $c$, one has $c\circ \alpha=\alpha_\mathbb{C}\circ c$.  Similarly we have the `realification' homomorphisms $r:R(H) \to RO(H)$ and $r:K(G/H)\to KO(G/H)$ which forgets the complex structure.  Note that $c$ is a ring homomorphism whereas $r$ is only a homomorphism 
of abelian groups.  One has $r\circ c=2$ and $c\circ r=1+\bar{},$ where the notation $~\bar~$ stands for the complex conjugation.  These relations hold on the real and complex representation rings and also on the real and complex K-theoretic rings.  
We refer the reader to \cite{ah} for detailed discussion and further results on the relation between representation 
rings $G, H$ and the K-theory $G/H$.

Singhof and Wemmer \cite{sw} established Theorem \ref{adjoint} given below.  
The sufficiency part is immediate from the exact sequence (4) 
of vector bundles and has been noted earlier (see \cite[p. 103]{singhof}.)   The proof of the 
necessity part involves verification using the classification of compact simple Lie groups.   Recall that a connected Lie 
group $G$ is said to be {\it simple} if $G$ is not abelian and has no proper connected normal subgroups.  For example, 
$SU(n)$ is simple, although its centre is a cyclic group of order $n$.  
One says that $G$ is {\it semisimple} if its universal cover is a product of simple Lie groups.   
A compact connected Lie group is semisimple if and only if its centre is finite. 

One has also the Grothendieck group $RSp(G)$ of (virtual) $G$-representations of left $\mathbb{H}$-vector spaces.  The restriction 
homomorphisms $RO(G)\to RO(H), R(G)\to R(H), RSp(G)\to RSp(H)$ will all be denoted by the same symbol $\rho$. 
Note that $\rho$ is a ring homomorphism in the case of real and complex representation rings.  
Although $RSp(G)$ 
is only an abelian group, one can form the tensor product of a right and a left 
$\mathbb H$-representation to obtain a real representation.   If $W$ (resp. $U$) is a left (resp. right) $\mathbb H$-vector space, then 
$U\otimes_{\mathbb H} W$ has only the structure of a real vector space of dimension $ 4\dim_{\mathbb H}U\dim_{\mathbb H}W$. 
If $H$ acts on $U,W$ ~ $\mathbb H$-linearly, 
then $U \otimes_{\mathbb H} W$ is naturally a real representation of $H$.  
Its isomorphism class determines an element, denoted  
$[U\otimes_{\mathbb H} W]$, in $RO(H)$. 
If  $V$ is a left $\mathbb H$-vector space, denote by $V^*$ the right $\mathbb H$-vector 
space where $v\cdot q=\bar{q}v, v\in V, q\in\mathbb H$.   
We have a $\mathbb Z$-bilinear map $\beta: RSp(H)\times RSp(H) \to RO(H)$ 
defined as $([V],[W])\mapsto [V^*\otimes_{\mathbb H} W] $.  If $V=\mathbb H$ is a trivial $H$-representation, 
then $\beta([V],[W])= [W_\mathbb{R}]$ where $W_\mathbb{R}$ stands for the same $H$-representation 
$W$ with scalar multiplication restricted to $\mathbb R\subset \mathbb H$. 

We denote by $J=J(G,H)$ the ideal of $RO(H)$ 
generated by elements of the form (i) $\rho(x)-\dim x, x\in RO(G)$, (ii) $\beta(\rho(x-\dim_{\mathbb H}(x)[\mathbb H]), y), x\in RSp(G), 
y\in RSp(H)$.  It is easy to see that, if $x\in RO(G)$, then $\rho(x)-\dim x$ is contained in the kernel of 
$\alpha:RO(H)\to KO(G/H)$.  In fact we have $J(G,H)\subset \ker(\alpha)$.

\begin{theorem} {\em (See \cite{sw}, \cite{sw-err}) } \label{adjoint}
(i) Let $G$ be a simply connected compact connected Lie group and $H$ a closed connected subgroup.  
Then $G/H$ is stably parallelizable if $[Ad_H]$ is in the image of the restriction homomorphism 
$\rho:RO(G) \to RO(H)$.  \\
(ii) Conversely, suppose that $G/H$ is stably parallelizable and that $G$ is simple. (a) If $G\ne Sp(n)$, then 
$[Ad_H]$ is in the image of $\rho$.  
(b) If $G=Sp(n)$, then $[Ad_H]-\dim H$ is in the ideal $J(Sp(n), H)$ of $RO(H)$. \hfill $\Box$ 
\end{theorem}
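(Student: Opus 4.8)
The plan is to prove the two implications separately: (i) is immediate from the exact sequence (4), whereas (ii) carries all the difficulty and forces an appeal to the classification of compact simple Lie groups. For the sufficiency in (i), recall that the subbundle $\nu=G\times_H\mathfrak h$ appearing in (4) is exactly $\alpha([Ad_H])$. If $[Ad_H]=\rho(x)$ for some $x\in RO(G)$, then comparing virtual dimensions gives $\dim x=\dim H$, and since $\rho(x)-\dim x\in\ker(\alpha)$ we get $[\nu]=\alpha(\rho(x))=\dim H$ in $KO(G/H)$, i.e. $\nu$ is stably trivial. Splitting (4) by an invariant inner product on $\mathcal E$ yields $\tau(G/H)\oplus\nu\cong d\epsilon$ with $d=\dim G$, so $[\tau(G/H)]=d-[\nu]=\dim(\mathfrak g/\mathfrak h)$ in $KO(G/H)$; hence $\tau(G/H)$ is stably trivial and $G/H$ is stably parallelizable.

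For the converse in (ii) I would first run this computation backwards: stable parallelizability means $[\tau(G/H)]=\dim(\mathfrak g/\mathfrak h)$, and then (4) forces $[Ad_H]-\dim H\in\ker(\alpha)$. The real content is the \emph{reverse} containment, namely that for $G$ simple this single membership already forces $[Ad_H]\in\text{image}(\rho)$ (and, for $G=Sp(n)$, the weaker $[Ad_H]-\dim H\in J(Sp(n),H)$). Two observations set up the attack. First, the Lie-algebra sequence gives $\rho([Ad_G])=[Ad_H]+[\mathfrak g/\mathfrak h]$ in $RO(H)$, so since $\rho([Ad_G])\in\text{image}(\rho)$, the conclusion $[Ad_H]\in\text{image}(\rho)$ is equivalent to $[\mathfrak g/\mathfrak h]\in\text{image}(\rho)$. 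Second, to understand $\ker(\alpha)$ I would pass to complex K-theory, where $\alpha_\mathbb C:R(H)\to K(G/H)$ factors through the quotient $R(H)\otimes_{R(G)}\mathbb Z=R(H)/(\rho(I(G))R(H))$ by the ideal generated by the restricted augmentation ideal $I(G)$; since $G$ is simply connected, $\pi_1(G)$ is torsion-free and the Hodgkin spectral sequence, together with the freeness results of Pittie and Steinberg for $R(H)$ over $R(G)$, identifies (where it collapses) $\ker(\alpha_\mathbb C)$ with $\rho(I(G))\cdot R(H)$ (see \cite{ah}). One transports this back to $RO(H)$ through the complexification and realification maps $c,r$, using $r\circ c=2$ and $c\circ r=1+\bar{\phantom{x}}$; this is where the real-versus-complex bookkeeping lives.

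The hard part -- and the reason the classification is unavoidable -- is the gap between the \emph{ideal} $\rho(I(G))\cdot RO(H)$, which is manifestly inside $\ker(\alpha)$, and the \emph{image} $\rho(RO(G))$ demanded by the conclusion. The hypothesis only places $[\mathfrak g/\mathfrak h]-\dim(\mathfrak g/\mathfrak h)$ in that ideal, and one must show that for the distinguished self-conjugate isotropy class no proper ideal multiple can intrude, thereby forcing it into the image. I expect this to be verified family by family ($A_n,B_n,C_n,D_n$ and the exceptional groups), by matching the weights of $\mathfrak g/\mathfrak h$ against those of restrictions of $G$-representations. The genuinely exceptional case is $G=Sp(n)$, whose fundamental representations are quaternionic: realification then loses information invisible to $\rho:RO(G)\to RO(H)$ but recorded by the pairing $\beta:RSp(H)\times RSp(H)\to RO(H)$. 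This is exactly why the conclusion in (b) is weakened to membership in $J(Sp(n),H)$, whose second family of generators $\beta(\rho(x-\dim_{\mathbb H}(x)[\mathbb H]),y)$ is built to absorb precisely these quaternionic contributions. Showing that $J$ captures all of them, and that no such enlargement is needed for any other simple $G$, is the crux of the Singhof--Wemmer argument.
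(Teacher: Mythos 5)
Your part (i) is correct and is exactly the paper's argument: the survey declares the sufficiency ``immediate from the exact sequence (4)'', and your computation --- $\nu=\alpha([Ad_H])$, the fact that $\rho(x)-\dim x\in\ker(\alpha)$ (recorded in the paper just before the theorem), hence $[\tau(G/H)]=\dim G-\dim H$ in $KO(G/H)$ --- fills in that remark correctly. If anything you are more careful than the paper's own elaboration, which treats an honest $G$-representation $W$ extending the $H$-action on $\mathfrak{h}\oplus\mathbb{R}^k$, whereas your kernel argument handles an arbitrary virtual class $\rho(x)$, which is what the hypothesis actually provides.

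For part (ii), however, you have not given a proof --- and neither does the paper: the converse is stated with a $\Box$, citing \cite{sw}, \cite{sw-err}, accompanied only by the remark that the necessity ``involves verification using the classification of compact simple Lie groups.'' Your preliminary reductions are sound: stable parallelizability is equivalent to $[Ad_H]-\dim H\in\ker(\alpha)$ via the split exact sequence (4); $[Ad_H]\in\mathrm{im}(\rho)$ is equivalent to $[\mathfrak{g}/\mathfrak{h}]\in\mathrm{im}(\rho)$ since $\rho([Ad_G])=[Ad_H]+[\mathfrak{g}/\mathfrak{h}]$ and $\mathrm{im}(\rho)$ is a subgroup; and the Hodgkin/Pittie--Steinberg description of $\ker(\alpha_{\mathbb{C}})$ is indeed the standard entry point (with the caveat you yourself flag: the collapse needs flatness of $R(H)$ over $R(G)$, which is not automatic for subgroups of non-maximal rank). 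But the actual mathematical content of (ii) --- the case-by-case weight analysis showing that for simple $G\ne Sp(n)$ membership of $[Ad_H]-\dim H$ in $\ker(\alpha)$ forces $[Ad_H]$ into the subring $\rho(RO(G))$, and that for $Sp(n)$ the quaternionic corrections are absorbed precisely by $J(Sp(n),H)$ --- is deferred by you (``I expect this to be verified family by family'') exactly as it is deferred by the paper. So relative to the paper's own treatment your proposal is at parity and contains nothing false; relative to the theorem itself, part (ii) remains an outline whose crux, the classification-based verification carried out in \cite{sw} and corrected in \cite{sw-err}, is missing.
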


The first part of the above theorem holds for any connected Lie group.  If $G$ acts linearly on a real 
vector space $W$, then the associated vector bundle $\alpha(W)$ on $G/H$ with projection $G\times_H W\to G/H$ is 
trivial, without any condition on $G$.  If the $H$ action on $\mathfrak h\oplus \mathbb R^k=W$ (where the adjoint action on 
the first summand and the trivial action on $\mathbb R^k$ is understood) extends to a linear action of $G$, then 
$\nu\oplus k\epsilon\cong \alpha(W)$ is a trivial vector bundle on $G/H$ where $\nu$ is as in the exact sequence (4).  
It follows that $\tau G/H$ is trivial.

We state, without proofs, the following results of Singhof \cite{singhof}.

\begin{theorem} {\em (Singhof \cite{singhof}.)}\label{torussimple}
Let $G$ be a connected compact simple Lie group and let $H$ be a closed connected subgroup of $G$ such that 
$H$ is neither a torus nor semisimple.  Then the first Pontrjagin class $p_1(G/H)$ is non-zero.  
In particular, $G/H$ is not stably 
parallelizable.  \hfill $\Box$
\end{theorem}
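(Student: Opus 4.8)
The plan is to reduce the computation of $p_1(G/H)$ to a comparison of the Killing forms of $G$ and of $H$, the non-semisimplicity and non-torality of $H$ each supplying one half of the inequality. I would begin from the exact sequence (4) of bundles over $G/H$, namely $0\to\nu\to\mathcal E\to\tau_{G/H}\to 0$, in which $\mathcal E=G\times_H\mathfrak g$ is trivial and $\nu=G\times_H\mathfrak h=\alpha(\mathrm{Ad}_H)$ is the homogeneous bundle of the adjoint representation of $H$. Choosing an invariant metric splits the sequence, so $\tau_{G/H}\oplus\nu$ is trivial; since the Whitney formula for Pontrjagin classes is exact over $\mathbb Q$, this gives $p_1(G/H)=-p_1(\nu)$ in $H^4(G/H;\mathbb Q)$. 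It therefore suffices to prove $p_1(\nu)\neq 0$, and then rational non-vanishing forces the integral class $p_1(G/H)$ to be non-zero; the final ``in particular'' follows because a stably parallelizable manifold has vanishing rational Pontrjagin classes.

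Next I would realise $\nu$ as $f^*$ of the universal adjoint bundle, where $f\colon G/H\to BH$ classifies the principal $H$-bundle $G\to G/H$, so that $p_1(\nu)=f^*(p_1^{\mathrm{univ}})$ with $p_1^{\mathrm{univ}}\in H^4(BH;\mathbb Q)=S^2(\mathfrak t_H^*)^{W_H}$. Because the first Pontrjagin class of a real representation is, up to a nonzero rational scalar, the sum of the squares of its weights, and the weights of $\mathrm{Ad}_H$ restricted to a maximal torus $\mathfrak t_H$ are exactly the roots of $H$, the class $p_1^{\mathrm{univ}}$ is a nonzero rational multiple of the Killing form $B_H$ of $H$ (which on $\mathfrak t_H$ is $\sum_\alpha\alpha^2$). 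Here the two hypotheses enter: writing $\mathfrak h=\mathfrak z\oplus\mathfrak h'$ with $\mathfrak z$ the centre and $\mathfrak h'=[\mathfrak h,\mathfrak h]$, the form $B_H$ is non-zero since $H$ is not a torus ($\mathfrak h'\neq 0$), while $B_H$ vanishes on $\mathfrak z$, which is non-zero since $H$ is not semisimple.

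I would then identify $\ker f^*$ in degree four using the Serre spectral sequence of the fibration $G/H\to BH\xrightarrow{Bi}BG$. Since $G$ is semisimple, $H^{\mathrm{odd}}(BG;\mathbb Q)=0$ and $H^2(BG;\mathbb Q)=0$, so in total degree $4$ only the corners $E_2^{0,4}=H^4(G/H)$ and $E_2^{4,0}=H^4(BG)$ are non-zero; a short check of the low-degree differentials shows the fibre-restriction edge map $f^*$ has kernel exactly the image of the base edge map $(Bi)^*\colon H^4(BG;\mathbb Q)\to H^4(BH;\mathbb Q)$. As $G$ is \emph{simple}, $H^4(BG;\mathbb Q)$ is one-dimensional, spanned by the Killing form $B_G$ of $G$; hence $\ker f^*=\mathbb Q\cdot(B_G|_{\mathfrak h})$.

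Finally I would conclude: $p_1(\nu)=f^*(p_1^{\mathrm{univ}})$ vanishes if and only if $B_H$ is a scalar multiple of $B_G|_{\mathfrak h}$. But $B_G$ is negative definite (as $G$ is compact simple), so $B_G|_{\mathfrak h}$ is definite and in particular non-zero on $\mathfrak z\neq 0$, whereas $B_H|_{\mathfrak z}=0$; the only scalar reconciling the two on $\mathfrak z$ is $0$, which would force $B_H=0$ and contradict $\mathfrak h'\neq 0$. Thus $B_H\notin\mathbb Q\cdot B_G|_{\mathfrak h}$, so $p_1(G/H)=-f^*(p_1^{\mathrm{univ}})\neq 0$, and $G/H$ is not stably parallelizable. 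The main obstacle is the third step: one must verify carefully that in degree four the kernel of $f^*$ is precisely $\mathbb Q\,B_G|_{\mathfrak h}$, i.e.\ that the relevant low-degree differentials in the spectral sequence vanish (equivalently, that $B_G|_{\mathfrak h}$ survives to a non-zero class in $H^4(BH)$), which is where simplicity of $G$ and definiteness of $B_G$ are used decisively.
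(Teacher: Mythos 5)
Your proposal is correct, but be aware that the paper offers no proof of Theorem \ref{torussimple} to compare against: it is stated without proof, with a citation to Singhof, and the only argument given in the paper is for the special case of the complex projective Stiefel manifolds $PW_{n,k}$ (inside the proof of Theorem \ref{parallel-projectivestiefel}(ii)), where the explicit stable description (12) of the tangent bundle gives $p_1(PW_{n,k})=nk\,c_1(\zeta_{n,k})^2$, and the Gysin sequence of $W_{n,k}\to PW_{n,k}$ together with the $4$-connectivity of $W_{n,k}$ shows this class is non-zero. Your argument is the general Lie-theoretic one, essentially the Borel--Hirzebruch-style computation underlying Singhof's original proof: split the sequence (4) to get $p_1(G/H)=-p_1(\nu)$ rationally; identify $p_1$ of the universal adjoint bundle in $H^4(BH;\mathbb Q)\cong S^2(\mathfrak t_H^*)^{W_H}$ with $\sum_{\alpha>0}\alpha^2$, i.e.\ (up to a non-zero scalar) the Killing form of $\mathfrak h$ on $\mathfrak t_H$; and identify $\ker\bigl(f^*\colon H^4(BH;\mathbb Q)\to H^4(G/H;\mathbb Q)\bigr)$ with the line spanned by $B_G|_{\mathfrak t_H}$. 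Each step checks out, and in fact the step you single out as the main obstacle needs no analysis of differentials at all: since $H^i(BG;\mathbb Q)=0$ for $i=1,2,3$, the columns $p=1,2,3$ of the $E_2$-page vanish in total degree $4$, so $\ker f^*=F^1H^4(BH)=F^4H^4(BH)=\operatorname{im}(Bi)^*$ automatically, and $(Bi)^*$ is restriction of Weyl-invariant polynomials from $\mathfrak t_G$ to $\mathfrak t_H$. Your use of the hypotheses is exactly right: simplicity of $G$ gives $\dim_{\mathbb Q}H^4(BG;\mathbb Q)=1$ with definite generator $B_G$; non-semisimplicity of $H$ gives $\mathfrak z\neq 0$, on which $\sum\alpha^2$ vanishes while $B_G$ is definite; and $H$ not being a torus gives $\sum\alpha^2\neq 0$. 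Two cosmetic points only: the quadratic forms should consistently be compared on $\mathfrak t_H$ (you occasionally write $B_G|_{\mathfrak h}$), and the final restriction-to-$\mathfrak z$ step tacitly uses $\mathfrak z\subset\mathfrak t_H$, which holds because the centre of $\mathfrak h$ lies in every maximal abelian subalgebra of $\mathfrak h$.
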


\begin{theorem}{\em (Singhof \cite{singhof})} \label{unitarygroupquotients}
Let $H\cong SU(k_1)\times \cdots\times SU(k_r)$ be a closed subgroup of $G=SU(n)$.  Then the following are equivalent:\\
\indent (i) $G/H$ is stably parallelizable.\\
\indent (ii) $H$ equals one of the following subgroups: 
(a) $k_j=2, 1\le j\le r\le n/2$ and $H$ is embedded block diagonally, 
(b) $n=4, H=SU(2)$ is the diagonal copy of $SU(2)\times SU(2)\subset SU(4)$, 
(c) $H=SU(k)$, with standard embedding.  
Moreover, if $SU(n)/H$ is stably parallelizable and is not a sphere, then it is parallelizable. \hfill $\Box$ 
\end{theorem}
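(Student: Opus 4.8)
The plan is to reduce the statement to a computation in representation rings via the Singhof--Wemmer criterion and then carry out a case analysis. Since $G = SU(n)$ is compact, connected, simple and simply connected, and (for $n \ge 3$) is not isomorphic to any symplectic group, Theorem \ref{adjoint} applies in both directions (the case $n = 2$, where the quotient is a point, being trivial) and gives
\[ SU(n)/H \text{ is stably parallelizable} \iff [Ad_H] \in \operatorname{im}\bigl(\rho : RO(SU(n)) \to RO(H)\bigr). \]
To analyze this I would fix notation: write $H = SU(k_1) \times \cdots \times SU(k_r)$ (discarding $SU(1)$ factors, so each $k_j \ge 2$), let $V = \mathbb{C}^n$ be the standard representation of $SU(n)$, and let $V_j$ be the standard representation of the $j$-th factor pulled back to $H$. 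The embedding is entirely encoded in the faithful $n$-dimensional class $\psi := \rho(V) \in R(H)$, and $\operatorname{im}(\rho)$ is the subring generated by the restricted exterior powers $\Lambda^i \psi$. Complexifying, $[Ad_{SU(n)}|_H]_\mathbb{C} = \psi \otimes \bar\psi - 1$ lies in $\operatorname{im}(\rho)$ automatically, while $[Ad_H]_\mathbb{C} = \sum_j (V_j \otimes \bar V_j - 1)$; hence the criterion reduces to whether the off-diagonal difference
\[ \Delta := [Ad_{SU(n)}|_H] - [Ad_H], \]
built (complexified) from the mixed terms $V_i \otimes \bar V_j$ with $i \ne j$ together with boundary terms from the trivial blocks, lies in $\operatorname{im}(\rho)$. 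Throughout I would track the real/complex/quaternionic type of each irreducible, noting that the standard representation of an $SU(2)$ factor is quaternionic with $\bar V_j \cong V_j$ and $\mathfrak{su}(2)_\mathbb{C} \cong \operatorname{Sym}^2 V_j$ is of real type --- the self-conjugacy that ultimately singles out the $SU(2)$ factors.

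For the implication (ii) $\Rightarrow$ (i) I would, in each of the three listed cases, exhibit an honest (not merely virtual) real $SU(n)$-representation $W$ together with an isomorphism of real $H$-representations $W|_H \cong \mathfrak{h} \oplus \mathbb{R}^k$. By the remark following Theorem \ref{adjoint}, such an isomorphism forces $\tau(SU(n)/H)$ to be trivial outright, so this step simultaneously establishes sufficiency and most of the ``moreover'' assertion. For $H = SU(2)^r$ one uses that $\operatorname{Sym}^2$ of the standard representation supplies the $\mathfrak{su}(2)$ summands; the diagonal $SU(2) \subset SU(4)$ of case (b) is handled by the exceptional decomposition $\mathbb{C}^4|_H \cong 2V_2$; and for the single block $H = SU(k)$ of case (c) the construction is classical --- it is precisely here that, when $k = n-1$, the quotient is a sphere and only stable triviality survives.

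The necessity, (i) $\Rightarrow$ (ii), is where the real work lies and is the step I expect to be the main obstacle. Assuming $\Delta \in \operatorname{im}(\rho)$, one must decompose everything into irreducible $H$-representations and show that the mixed terms $V_i \otimes \bar V_j$ can be assembled from products of the generators $\Lambda^i \psi$ only under the stated constraints. The governing dichotomy is that for a factor $SU(k_j)$ with $k_j \ge 3$ the standard representation is genuinely complex ($V_j \not\cong \bar V_j$), so the corresponding mixed terms are too rigid to appear in the image unless there is a single factor (case (c)); whereas $SU(2)$ factors, being self-conjugate, can be accommodated subject to $2r \le n$ (case (a)), with the sole low-rank coincidence at $n = 4$ producing the diagonal embedding (case (b)). Carrying this out requires a careful dimension count together with an analysis of multiplicities in $\operatorname{im}(\rho)$, and is where the admissible decompositions $\psi = \rho(V)$ are forced into the three listed families.

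Finally, to complete the ``moreover'' statement I would appeal to Theorem \ref{b-k}. Whenever $\operatorname{rank}(H) < \operatorname{rank}(SU(n))$ we have $\chi(SU(n)/H) = 0$ by Theorem \ref{spanvanishing}, so in even dimensions stable parallelizability already forces parallelizability. In odd dimensions the Bredon--Kosi\'nski dichotomy reduces the claim to the vanishing of the Kervaire semi-characteristic $\hat{\chi}_2(SU(n)/H)$, which I would verify using the criteria of Remark \ref{lmp} (these simply connected homogeneous spaces are spin, and typically carry free $\mathbb{Z}_2 \times \mathbb{Z}_2$-actions). The only configurations escaping the argument are exactly the spheres $SU(n)/SU(n-1) \cong \mathbb{S}^{2n-1}$ with $2n-1 \notin \{1,3,7\}$, precisely the excluded case.
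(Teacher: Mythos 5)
The paper itself states this theorem \emph{without} proof (it is quoted from Singhof \cite{singhof}), so your proposal can only be judged on its own terms, and on those terms it has genuine gaps. The decisive one is the necessity direction (i) $\Rightarrow$ (ii): this is the actual content of the theorem, and you explicitly defer it (``careful dimension count together with an analysis of multiplicities''). Moreover, the ``governing dichotomy'' you propose to organize that analysis --- self-conjugate $SU(2)$ factors can be accommodated, factors with $k_j\ge 3$ cannot unless there is a single factor --- cannot be the right organizing principle, because condition (ii) constrains the \emph{embedding}, not merely the isomorphism type of $H$. Concrete test case: embed $H=SU(2)$ in $SU(4)$ via $\operatorname{Sym}^3$ of the standard representation (faithful, determinant one). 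Here $r=1\le n/2$ and every representation of $H$ is self-conjugate, so your dichotomy admits it; but this $H$ is neither block diagonal, nor the diagonal copy of case (b), nor standardly embedded, so the theorem asserts $SU(4)/H$ is \emph{not} stably parallelizable. A correct proof must eliminate all such non-standard faithful $n$-dimensional classes $\psi$ (symmetric powers, adjoints, mixed tensors), and nothing in your outline engages with this.

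Your sufficiency strategy --- exhibiting an \emph{honest} $G$-representation $W$ with $W|_H\cong\mathfrak{h}\oplus\mathbb{R}^k$ --- also fails in cases (a) and (c); it works only in case (b), where indeed $\Lambda^2\mathbb{C}^4|_H\cong\mathfrak{su}(2)\oplus\mathbb{R}^3$. In case (a), $\operatorname{Sym}^2\mathbb{C}^n|_H$ contains, besides the wanted summands $\operatorname{Sym}^2V_j$, the unwanted terms $V_i\otimes V_j$ ($i\ne j$) and $(n-2r)V_j$; in fact for $n\ge 3$ \emph{no} nontrivial $SU(n)$-representation restricts to a sum of factor-adjoints and trivials: by Weyl ($S_n$) invariance every weight of such a $W$ would need all coordinate differences in $\{0,\pm2\}$ with at most one difference $c_{2j-1}-c_{2j}$ nonzero, while closure of weights under root strings (from a weight of type $(v+2,v,\dots,v)$ one reaches $(v+1,v+1,v,\dots,v)$) contradicts this. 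The same phenomenon kills case (c) for every $2\le k\le n-1$, not just $k=n-1$ where you concede it (your own concession already contradicts your claim that ``the construction is classical'' for the single block). What is actually available is only the virtual identity $c[Ad_H]=\rho_{\mathbb{C}}([\operatorname{Sym}^2\mathbb{C}^n]-[\Lambda^2\mathbb{C}^n])-(n-2r)$, exploiting $\bar V_j\cong V_j$, which via Theorem \ref{adjoint}(i) yields only \emph{stable} parallelizability. Consequently the ``moreover'' clause cannot be absorbed into sufficiency as you claim: it needs a genuine separate argument --- Bredon--Kosi\'nski (Theorem \ref{b-k}) together with an actual verification that $\chi=0$, respectively $\hat\chi_2=0$, or the bundle-level bootstrapping the paper uses for Stiefel manifolds in \S\ref{stiefelmanifolds} --- and ``typically carry free $\mathbb{Z}_2\times\mathbb{Z}_2$-actions'' is not such a verification.
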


Singhof and Wemmer \cite{sw} completely determined all pairs $(G,H)$ where $H$ is a closed connected subgroup of a 
compact simply connected simple Lie group $G$ such that $G/H$ is (stably) parallelizable.

Let $H=SU(k_1)\times \cdots\times SU(k_r)\subset SU(n), r\ge 2,$ with $k:=\sum k_j< n$.  Set $M:=SU(n)/H, N:=SU(k)/H$ and 
$B:=SU(n)/SU(k)$.  
One has a fibre bundle with fibre space $N$ and projection  $p:M\to B$ since $H\subset SU(k)$.  
Since $k<n$, the base 
space $B$ is the complex Stiefel manifold if $k\le n-2$ and is the sphere $\mathbb S^{2n-1}$ if $k=n-1$.   
In any case $B$ is stably parallelizable.  (The parallelizability results for Stiefel manifolds will be discussed 
in detail in \S \ref{stiefelmanifolds}.)
 Denote by $F$ the complex flag manifold $SU(k)/K$ where $K:=S(U(k_1)\times \cdots\times U(k_r))$.  
We have the following result due to Sankaran (unpublished).

\begin{theorem}  \label{quotsofsun}
With notations as above, let $r\ge 2$ and let $k=\sum_{1\le j\le r}k_r$.  Then: \\
(i) $\Span(N)\ge r-1$.\\
(ii) If $k<n$, then 
$r-1+n^2-k^2\le \Span(M)\le r-1+n^2-k^2+\Span^0(F)$;\\
 in particular, if $\chi(F)=k!/(k_1!\cdots k_r!) $ is odd, then $\Span(M)=r-1+n^2-k^2$. 
\end{theorem}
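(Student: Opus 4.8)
The plan is to extract everything from the isotropy representation and then boot-strap as in Theorem \ref{product-span}. Fixing an $\mathrm{Ad}(SU(n))$-invariant inner product on $\mathfrak g=\mathfrak{su}(n)$, I would split it as a sum of $H$-modules
\[\mathfrak{su}(n)=\mathfrak h\oplus\mathfrak t'\oplus\mathfrak f\oplus\mathfrak p,\]
where $\mathfrak h=\bigoplus_j\mathfrak{su}(k_j)$, where $\mathfrak t'=\mathfrak k\ominus\mathfrak h$ is the $(r-1)$-dimensional space of trace-zero block-scalar matrices (on which $H$ acts trivially, being block diagonal), where $\mathfrak f=\mathfrak{su}(k)\ominus\mathfrak k$ is the isotropy representation of $F=SU(k)/K$ restricted to $H$, and where $\mathfrak p=\mathfrak{su}(n)\ominus\mathfrak{su}(k)$ satisfies $SU(n)\times_H\mathfrak p=p^*(\tau B)$. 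By the description (3) of the tangent bundle this yields a genuine splitting
\[\tau M=(r-1)\epsilon\oplus\theta'\oplus p^*(\tau B),\qquad\theta':=SU(n)\times_H\mathfrak f,\]
with $\rank\theta'=\dim F=k^2-\sum_jk_j^2$ and $\rank p^*(\tau B)=n^2-k^2$. Over the fibre $N$ the summand $(r-1)\epsilon$ is exactly the vertical bundle of the principal $(K/H)$-bundle $\pi'\colon N\to F$ (here $H\lhd K$ and $K/H\cong(\bs^1)^{r-1}$), so Example \ref{basicexamples}(iv) already yields (i).

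For the lower bound in (ii) I would boot-strap: since $B$ is stably parallelizable, $p^*(\tau B)\oplus\epsilon=(n^2-k^2+1)\epsilon$, and because $r\ge 2$ there is a spare trivial line to absorb this,
\[\tau M=\theta'\oplus(r-2)\epsilon\oplus\big(p^*(\tau B)\oplus\epsilon\big)=\theta'\oplus(n^2-k^2+r-1)\epsilon.\]
This is an honest, not merely stable, isomorphism whose trivial summand has positive rank, so at once $\Span(M)\ge n^2-k^2+r-1$; moreover, since $\tau M$ is $\theta'$ plus a positive number of trivial lines, geometric dimension is stable here and $\Span(M)=\Span^0(M)=(n^2-k^2+r-1)+\Span^0(\theta')$.

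Thus the upper bound is equivalent to $\Span^0(\theta')\le\Span^0(F)$, that is, $\mathrm{sgd}(\theta')\ge\mathrm{sgd}(\tau F)$ for stable geometric dimensions. I would organize this through the tower $M\xrightarrow{q}M'\to B$ with $M'=SU(n)/K$, observing that $\theta'=q^*(\tau^{\mathrm{fib}}M')$ and that $\tau^{\mathrm{fib}}M'$ restricts to $\tau F$ on each fibre $F\hookrightarrow M'$ of $M'\to B$. Restricting $\tau M'$ to such a fibre gives, cleanly, $\Span^0(M')\le(n^2-k^2)+\Span^0(F)$; so it would remain to prove $\Span^0(M)\le(r-1)+\Span^0(M')$ for the principal torus bundle $q$, equivalently that $q^*$ does not decrease the stable geometric dimension of $\tau M'$.

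This last step is the crux and the main obstacle: pulling a bundle back along a submersion can only raise the stable span, so the inequality I need runs against the formal direction and cannot be obtained by naive restriction. In fact restricting $\theta'$ to the fibre $N$ is useless, since the mod $2$ Chern classes of $\pi'$ span $H^2(F;\bz_2)$ while $H^*(F;\bz_2)$ is generated in degree two, so $\pi'^*$ vanishes on $\wt H^*(F;\bz_2)$ and kills the top class of $\theta'|_N=\pi'^*(\tau F)$. Instead I would control the tangential obstructions of $M$ intrinsically, using the explicit $KO$-theories of $SU(n)/H$, $M'$ and $F$ furnished by the $\alpha$-construction and \cite{ah}, to show that every stable splitting of a trivial summand off $\theta'$ descends to one of $\tau F$. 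The displayed special case is cleaner: if $\chi(F)=k!/(k_1!\cdots k_r!)$ is odd then $w_{\dim F}(F)\ne 0$, so $\Span^0(F)=0$, and since $w(M)=w(\theta')$ it suffices to verify the single nonvanishing $w_{\dim F}(M)\ne0$ by a mod $2$ computation in $H^*(SU(n)/H;\bz_2)$, whereupon the two bounds collapse to $\Span(M)=n^2-k^2+r-1$.
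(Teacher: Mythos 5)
Your decomposition, your proof of (i), and your proof of the lower bound in (ii) are correct, and they are essentially the paper's own argument in slightly different clothing: the paper obtains the same splitting $\tau M\cong(r-1)\epsilon\oplus\pi^*q^*(\tau B)\oplus\pi^*(\eta)$ (your $\theta'$ is $\pi^*(\eta)$) from the two fibrations $M\to SU(n)/K$ and $SU(n)/K\to B$ rather than from the isotropy representation, and then boot-straps exactly as you do, using $r\ge2$ and the stable parallelizability of $B$; part (i), which the paper leaves to the reader (``we shall only obtain the bounds for span of $M$''), is proved by precisely your vertical-bundle argument. Your reduction of the upper bound to the single inequality $\Span^0(\theta')\le\Span^0(F)$, i.e.\ to the assertion that $\theta'$ has stable geometric dimension at least that of $\tau F$, is also correct.

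The genuine gap is that you never establish that inequality: the $KO$-theoretic plan in your last paragraph is a hope, not an argument, and the mod $2$ computation you propose for the case $\chi(F)$ odd (showing $w_{\dim F}(M)\ne 0$) cannot succeed. But you should know that the step you refused to fudge is exactly the step the paper's proof elides --- it passes from the displayed splitting to the upper bound with a bare ``Therefore'' --- and in fact no argument can exist, because the upper bound is false as stated. Take $r=2$, $(k_1,k_2)=(2,1)$, $n\ge4$: then $H\cong SU(2)$ block diagonal, so $M=SU(n)/SU(2)=W_{n,n-2}$ is parallelizable by the paper's own theorem on complex Stiefel manifolds (or by Theorem \ref{unitarygroupquotients}), whence $\Span(M)=n^2-4$; yet $F=\mathbb{C}P^2$ has $\chi(F)=3$ odd, so Theorem \ref{quotsofsun} would force $\Span(M)=n^2-8$. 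If one objects to $k_j=1$, take $(k_1,k_2)=(2,2)$, $n\ge5$: $M=SU(n)/(SU(2)\times SU(2))$ is parallelizable by Theorem \ref{unitarygroupquotients}, so $\Span(M)=n^2-7$, whereas $F=\mathbb{C}G_{4,2}$ has $p_1(F)\ne0$ (Theorem \ref{torussimple}), hence $\Span^0(F)\le\dim F-1=7$ and the claimed upper bound is at most $n^2-15+7=n^2-8$. In both examples $\theta'$ is stably trivial while $\tau F$ is not: pulling back along the torus bundle strictly decreases stable geometric dimension, which is precisely the phenomenon you warned about. (One incidental error in your write-up: $H^*(F;\mathbb{Z}_2)$ is generated in degree two only when every $k_j=1$; the correct reason $\pi'^*$ kills the top Stiefel-Whitney class is that the Euler classes of the torus bundle span $H^2(F;\mathbb{Z}_2)$, and by Poincar\'e duality the top class lies in the ideal generated by $H^2(F;\mathbb{Z}_2)$ whenever this group is nonzero.) In short, your proposal is incomplete, but it is incomplete at exactly the point where the theorem itself breaks down; only part (i) and the lower bound in part (ii) admit proofs.
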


\begin{proof}  We shall only obtain the bounds for span of $M=SU(n)/H$.   
Let $V:=SU(n)/K$ where  $K=S(U(k_1)\times 
\cdots\times U(k_r))$.
One has a principal fibre bundle $\pi: M \to V$ with fibre and structure group the torus 
$K/H\cong (\mathbb S^1)^{r-1}$.   Hence we see that $\tau M=(r-1)\epsilon \oplus 
\pi^*(\tau SU(n)/K)$.   
Again $q:V\to B$ is a fibre bundle projection with fibre $F:=SU(k)/K$ and so, by Example \ref{basicexamples}(iv) we obtain 
a splitting $\tau(SU(n)/K)\cong q^*(\tau B)\oplus \eta$ where $\eta$ restricts to the tangent bundle of 
$F$ along any fibre of $q$. 
Hence 
\[\begin{array}{rcl}
\tau (M)&=&(r-1)\epsilon\oplus \pi^*(q^*(\tau B))\oplus \pi^*(\eta)\\
&\cong &(r-1+\dim B)\epsilon \oplus \pi^*(\eta)\\
\end{array}\]
 since $B$ is stably parallelizable and $r\ge 2$.   
Therefore $r-1+n^2-k^2\le  \Span(M)\le 
r-1+n^2-k^2+\Span^0(F)$ as $\dim B=n^2-k^2$.  Finally, if $\chi(F)$ is odd, then $\Span(F)=\Span^0(F)=0$ as the top Stiefel-Whitney class of $F$ is non-zero and so the last assertion follows.  
Note that the equality $\chi(F)=k!/(k_1!\cdots k_r!)$ follows from (2) and the fact that  the Weyl group of $SU(k)$ is the permutation 
group $S_k$.
\end{proof}

Next we shall discuss some important special cases of compact homogeneous spaces. 

\subsection{Stiefel manifolds} \label{stiefelmanifolds}
Let $1\leq k<n$.  Recall that the Stiefel manifold $V_{n,k}$ is the space of all ordered $k$-tuples $(v_1,\ldots,v_k)$ 
of unit vectors in $\mathbb{R}^n$ which are pairwise orthogonal (with respect to the standard inner product).  
When $k=1$, $V_{n,1}$ is the 
sphere $\bs^{n-1}$.  The group $SO(n)$ acts transitively 
on $V_{n,k}$ with isotropy at $(e_1,\ldots,e_k)$ being 
$I_k\times SO(n-k)=SO(n-k)$. Hence $V_{n,k}\cong SO(n)/SO(n-k)$.  
The complex and quaternionic Stiefel manifolds are defined analogously using the standard Hermitian product 
on $\mathbb{C}^n$ and the standard `quaternionic' product $\mathbb{H}^n$ defined as $q\cdot q'=\sum_{1\le r\le n} \bar{q}_rq'_r, q,q'\in \mathbb{H}^n$. 
We have the following description of $W_{n,k}, Z_{n,k}$ 
as coset spaces: $W_{n,k}\cong U(n)/U(n-k)=SU(n)/SU(n-k)$ and $Z_{n,k}\cong Sp(n)/Sp(n-k)$.  Note that $V_{n,1}=\mathbb{S}^{n-1}, W_{n,1}=\mathbb{S}^{2n-1}, Z_{n,1}=\mathbb{S}^{4n-1}$.  
We call an element of $V_{n,k}, W_{n,k}, Z_{n,k}$ an orthonormal, hermitian, quaternionic {\it $k$-frame} (or more briefly 
a $k$-frame) respectively.

Let $\beta_{n,k}$ (or more briefly $\beta$) denote the real vector bundle over $V_{n,k}$ whose fibre over any 
$k$-frame  $v=(v_1,\ldots, v_k)\in V_{n,k}$ is the real vector space $\{v_1,\ldots, v_k\}^\perp\subset \mathbb{R}^n$. 
The complex vector bundle of rank $n-k$ over $W_{n,k}$ and the quaternionic (left) vector bundle of rank $n-k$ 
over $Z_{n,k}$ are defined similarly.  One has the $\mathbb{F}$-vector bundle isomorphism 
\[k\epsilon_\mathbb{F}\oplus \beta_{n,k}\cong n\epsilon_\mathbb{F} \eqno(5) \] 
where $\mathbb{F}=\mathbb{R},\mathbb{C},\mathbb{H}$ according as the base space is 
$V_{n,k}, W_{n,k}, Z_{n,k}$; here $\epsilon_\mathbb{F}$ denotes the trivial $\mathbb{F}$-vector 
bundle.  As always, $\epsilon$ would denote the trivial {\it real} line bundle.

\begin{theorem} {\em (W. Sutherland \cite{ws}, K. Y. Lam \cite{lam}, D. Handel \cite{handel}.)}
The real, complex, and quaternionic Stiefel manifolds $V_{n,k}, W_{n,k}, Z_{n,k}$ are parallelizable when $k\ge 2$.  
\end{theorem}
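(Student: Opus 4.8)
The plan is to reduce all three families to the Bredon--Kosi\'nski--Thomas dichotomy of Theorem \ref{b-k}, whose two inputs are stable parallelizability together with the vanishing of the relevant Euler-type invariant. First I would pin down the tangent bundle. At a frame $(v_1,\dots,v_k)\in V_{n,k}$ a tangent vector is a tuple $(\dot v_1,\dots,\dot v_k)$ subject to $\langle v_i,\dot v_j\rangle+\langle v_j,\dot v_i\rangle=0$; splitting each $\dot v_i$ into its component along $\Span(v_1,\dots,v_k)$, recorded by a skew-symmetric $k\times k$ matrix, and its component in $\{v_1,\dots,v_k\}^\perp$, one reads off $\tau V_{n,k}\cong\binom{k}{2}\epsilon\oplus k\beta_{n,k}$, and similarly $\tau W_{n,k}\cong k^2\epsilon\oplus k\beta^{\mathbb{C}}_{n,k}$ and $\tau Z_{n,k}\cong k(2k+1)\epsilon\oplus k\beta^{\mathbb{H}}_{n,k}$ as real bundles. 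Combining with the stabilization relation (5), one gets $\tau V_{n,k}\oplus k^2\epsilon\cong\binom{k}{2}\epsilon\oplus k(\beta_{n,k}\oplus k\epsilon)\cong(\binom{k}{2}+kn)\epsilon$, which is trivial, and likewise in the complex and quaternionic cases after realification; hence each of these manifolds is stably parallelizable.

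To upgrade stable parallelizability to parallelizability I would split on the parity of the dimension. In even dimensions it suffices, by Theorem \ref{b-k}, to check $\chi=0$. Since $k\ge 2$ forces $\rank(SO(n-k))<\rank(SO(n))$ (and likewise $\rank(SU(n-k))<\rank(SU(n))$ and $\rank(Sp(n-k))<\rank(Sp(n))$), the isotropy subgroup has strictly smaller rank than the ambient group, so $\chi=0$ by the rank criterion underlying Theorem \ref{spanvanishing}; Theorem \ref{b-k} then yields parallelizability.

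In odd dimensions the relevant invariant is the Kervaire mod $2$ semi-characteristic $\hat\chi_2$, and here I would exploit $k\ge 2$ directly. Negating the first frame vector and negating the second frame vector are two commuting, fixed-point-free involutions, so they generate a free smooth $\bz_2\times\bz_2$-action on $V_{n,k}$, and the same construction works on $W_{n,k}$ and $Z_{n,k}$. As each of these is a homogeneous space of a connected compact group with connected isotropy it is orientable, so Stong's theorem (Remark \ref{lmp}) applies and gives $\hat\chi_2=0$. By Theorem \ref{b-k} the manifold is therefore parallelizable; the exceptional dimensions $d\in\{3,7\}$ need no special treatment, since for them the two alternatives of that theorem coincide and both deliver full span.

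The main obstacle is the odd-dimensional case, i.e.\ the vanishing of the semi-characteristic, which is precisely where the hypothesis $k\ge 2$ is indispensable; the free $\bz_2\times\bz_2$-action disposes of it uniformly across the real, complex, and quaternionic families, so the subtlety is in recognizing that this action exists and in checking orientability and the low-dimensional bookkeeping. An alternative, and historically the original, route would bypass Theorem \ref{b-k} and instead trivialize $k\beta_{n,k}$ by hand, producing $k(n-k)$ everywhere independent sections from nonsingular bilinear maps in the spirit of the Radon--Hurwitz construction of Proposition \ref{radonhurwitz}; that approach makes the role of $k\ge 2$ equally transparent but requires the delicate explicit algebra of Sutherland, Lam, and Handel.
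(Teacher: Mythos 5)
Your proposal is correct, but it follows a genuinely different route from the paper's. The paper quotes Lam's tangent bundle formulas and then argues by \emph{boot-strapping}: using $k\epsilon\oplus\beta\cong n\epsilon$ to repeatedly trade copies of $\beta$ for trivial summands, which trivializes $\tau V_{n,k}$ outright when $k\ge 3$, and likewise $\tau W_{n,2}$ and $\tau Z_{n,2}$; but this fails for $V_{n,2}$ (only ${2\choose 2}=1$ trivial summand is available), where the paper can only conclude stable parallelizability and must cite Sutherland's obstruction-theoretic work for the full statement. Your argument instead stops the bundle manipulation at stable parallelizability and invokes the Bredon--Kosi\'nski--Thomas dichotomy (Theorem \ref{b-k}): in even dimensions $\chi=0$ follows from the rank drop $\mathrm{rank}(H)<\mathrm{rank}(G)$ forced by $k\ge 2$, and in odd dimensions the free $\mathbb{Z}_2\times\mathbb{Z}_2$-action obtained by negating the first two frame vectors (which exists precisely because $k\ge 2$) kills $\hat\chi_2$ by Stong's theorem, with dimensions $3$ and $7$ handled by noting both alternatives of the dichotomy give full span. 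This is exactly the strategy the paper itself deploys later for the oriented flag manifolds $\wt{G}(\mu)$, and its advantage is uniformity and completeness: it disposes of the troublesome case $V_{n,2}$, $n$ odd, without any appeal to external obstruction theory, at the cost of resting on the heavier inputs \ref{b-k} and Stong's semi-characteristic theorem, whereas the paper's boot-strapping, where it applies, is entirely elementary and explicit. Two small remarks: your tangent bundle formulas $\tau W_{n,k}\cong k^2\epsilon\oplus k\beta_{\mathbb R}$ and $\tau Z_{n,k}\cong (2k^2+k)\epsilon\oplus k\beta_{\mathbb R}$ are the dimensionally consistent ones (the coefficients $2k$ and $4k$ in the paper's display (6) do not match $\dim W_{n,k}$ and $\dim Z_{n,k}$, and the paper's own computation for $W_{n,2}$ in fact uses $2\beta=k\beta$); and orientability, which you justify via connected isotropy, already follows from the stable parallelizability you established, so that step can be shortened.
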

\begin{proof}
We shall only consider the case of the real Stiefel manifolds;
The following description of the tangent bundle is due to Lam: 
\[\tau V_{n,k}\cong k\beta\oplus {k\choose 2}\epsilon, ~~\tau W_{n,k}\cong k^2\epsilon\oplus 2k\beta, ~~\tau Z_{n,k}\cong 
(2k^2+k)\epsilon\oplus 4k\beta, \eqno(6)\]
where the isomorphisms are, of course, of real vector bundles; by abuse of notation,  
$\beta$ stands for the underlying real vector bundle (in the complex and quaternionic cases).  
If $k\ge 3$, then ${k\choose 2}\ge k$.  Using the isomorphism $\beta\oplus k\epsilon\cong n\epsilon$ on $V_{n,k}$, 
we obtain\\
\[ 
\begin{array}{rcl} 
\tau V_{n,k}&\cong &k\beta\oplus {k\choose 2}\epsilon \\ 
              &\cong & \beta\oplus 
k\epsilon\oplus (k-1)\beta\oplus ({k-1\choose 2}-1)\epsilon \\
&\cong & (k-1)\beta\oplus ({k-1\choose 2}+n-1)\epsilon.\\
\end{array}   \]
A boot-strapping argument leads to the triviality of 
$\tau V_{n,k}$.   The case of the complex and quaternionic Stiefel manifolds 
can be handled in an analogues manner.  In fact,
in the case of $W_{n,2}$ and $Z_{n,2}$, boot-strapping is still possible.  Since $4\epsilon\oplus \beta\cong 2n\epsilon$ 
we have 
\[
\begin{array} {rcl} \tau W_{n,2} &=& 4\epsilon\oplus 2\beta \\ &=& 2n\epsilon\oplus\beta\\ &=& (2n-4)\epsilon\oplus 2n\epsilon\\ &=&
(4n-4)\epsilon.
\end{array}\]
The proof in the case of $Z_{n,2}$ is similar and hence omitted.
 
When $k=2$, boot-strapping fails for $V_{n,2}$.   However, it allows us to show that $\tau V_{n,k}\oplus \epsilon$ is trivial.  Thus $V_{n,2}$ is stably parallelizable.  

There does not seem to be 
any easy argument to show the parallelizability of $V_{n,2}$ although boot-strap proof is still possible  
when $n$ is even using the isomorphism $\tau \mathbb{S}^{n-1}\cong \xi\oplus \epsilon$.  
The general case requires obstruction theory. 
We refer the reader to \cite{ws} for details, where the more general case of the total space of a
sphere bundle over sphere is considered.
\end{proof}

The stable parallelizability of the Stiefel manifolds also follows from the sufficiency part of Theorem \ref{adjoint} as noted by 
Singhof \cite[p.103]{singhof}.

\subsection{The projective Stiefel manifolds}  
We begin by recalling the definition of projective Stiefel manifolds.  Although one has the 
notion of quaternionic projective Stiefel manifolds, not much is known about their span. 
(See \cite{lam}.)   For this reason we shall be contend with defining them, but 
discuss the vector field problem only for     
real and complex projective Stiefel manifolds.

The real projective Stiefel manifold $PV_{n,k}$ is defined 
as the quotient of $V_{n,k}$ under the antipodal identification: $v\sim - v$.   
Note that $PV_{n,1}$ is the real projective space  $\mathbb{R}P^{n-1}$.  The manifold $PV_{n,k}$ is the homogenous space $O(n)/(\mathbb{Z}_2\times O(n-k))$ 
where the factor $\mathbb{Z}_2$ is the subgroup $\{I_k,-I_k\}\subset O(k)\subset O(n)$.   Evidently, the 
quotient map $V_{n,k}\to PV_{n,k}$ is the double covering map which is universal except when $k=n-1$ as  
$V_{n,n-1}\cong SO(n)$. 

The complex projective Stiefel manifolds are defined similarly as $PW_{n,k}:=U(n)/(\mathbb{S}^1
\times U(n-k))$
where the factor $\mathbb{S}^1\subset U(n)$ is the 
subgroup $\{zI_k\mid |z|=1\}\subset U(k)$.  
Evidently $PW_{n,k}$ is the 
quotient of $W_{n,k}$ by the action of $\mathbb{S}^1$ where $z\cdot (w_1,\ldots, w_k)=(zw_1,\ldots, zw_k)$ and 
in fact the quotient map $W_{n,k}\to PW_{n,k}$ is the projection of a principal $\mathbb{S}^1$-bundle.

Analogously, the quaternionic projective Stiefel manifold $PZ_{n,k}$ is the homogeneous space $Sp(n)/Sp(1)\times Sp(n-k)$ 
where the factor $Sp(1)$ is subgroup $\{qI_k\mid q\in \mathbb H, ~||q||=1\}\subset Sp(k)$.  It is the quotient of $Z_{n,k}$ under the 
action of $Sp(1)$ where $q\cdot (v_1,\ldots,v_k)=(v_1\bar{q}_1,\ldots, v_k\bar{q}_k)$, $(v_1,\ldots, v_k)\in Z_{n,k}, q\in Sp(1)$. 
The quotient map $Z_{n,k}\to PZ_{n,k}$ is evidently the projection of a principal $Sp(1)$-bundle.

We denote by $\zeta_{n,k}$, or more briefly $\zeta$, the real (resp. complex) line bundle over 
$PV_{n,k}$ (resp. $PW_{n,k}$) associated to the double cover $V_{n,k}\to PV_{n,k}$ (resp. the principal $U(1)$-bundle 
$W_{n,k}\to PW_{n,k}$).   We shall denote by $\beta_{n,k} $ (more briefly $\beta$)  the bundle over $PV_{n,k}$ whose fibre over a point $[v_1,\ldots,v_k]\in PV_{n,k}$ is the orthogonal complement of $\mathbb{R}v_1+\cdots+\mathbb{R}v_k$ 
in $\mathbb{R}^n$.  The similarly defined complex vector bundle of rank $n-k$ over $PW_{n,k}$ 
will also be denoted by the same symbol $\beta_{n,k}$  (or $\beta$). 

The projection onto the $j$th coordinate $p_j:PV_{n,k}\to PV_{n,1}=\mathbb{R}P^{n-1}$ is covered by a bundle map 
of $\zeta $ on $PV_{n,k}$ and the Hopf line bundle $\xi$ on $\mathbb{R}P^{n-1}$.  Hence $p_j^*(\xi)\cong \zeta$ for $1\le j\le k$.  Using this 
one obtains the following isomorphism of real (resp. complex) vector bundles over $PV_{n,k}$ (resp. $PW_{n,k}$): 
\[k\zeta_{n,k}\oplus \beta_{n,k}\cong n \epsilon_\mathbb{F}, \eqno(7)\] 
where $\mathbb{F}=\mathbb{R}, \mathbb{C}$ as appropriate.
Equivalently, upon tensoring with $\bar \zeta$ 
and using the isomorphism $\zeta\otimes_{\mathbb F}\bar 
\zeta\cong \epsilon_{\mathbb F}$ we obtain 
\[k\epsilon_\mathbb{F} \oplus \beta_{n,k}\otimes_{\mathbb F} \bar \zeta_{n,k}\cong n\bar\zeta_{n,k}.  \eqno(8)\]
When $\mathbb F=\mathbb R$, we have 
$\bar{\zeta}\cong \zeta$.

The Hopf line bundles over the real and complex projective Stiefel manifolds have the following 
universal property.  This has been observed by S. Gitler and D. Handel \cite[p.40]{gh} and also by L. Smith \cite{smith}
for $PV_{n,k}$ where the universal property is established for real line bundles over finite complexes.  
The paper \cite{bhlsz} removed the restriction on the base space.   
We note that the formulation and proof also works for complex line bundles.   We merely state the 
result and omit its proof.

\begin{theorem} \label{universal}
Let  $\xi$ be any real (resp. complex) line bundle over a topological space $X$.   Then there exist a positive integer $n$ 
and a real (resp. complex) vector bundle $\eta$ such that $n\xi\cong \eta\oplus k\epsilon$ as real (resp. complex) 
vector bundles if and only if there exists a continuous map $f:X\to PV_{n,k} $ (resp. $X\to PW_{n,k}$) 
such that $f^*(\zeta_{n,k})\cong \xi$.  \hfill $\Box$
\end{theorem}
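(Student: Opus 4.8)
The plan is to realize $(PV_{n,k},\zeta_{n,k})$ (resp. $(PW_{n,k},\zeta_{n,k})$) as the universal example of a line bundle $\xi$ for which $n\xi$ contains a trivial rank-$k$ subbundle, which is precisely the geometric content of the isomorphism (8). I would write out the real case in full and treat the complex case by the identical argument with $\mathbb{Z}_2$ replaced by $S^1$.

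For the \emph{necessity} direction, suppose $f:X\to PV_{n,k}$ satisfies $f^*\zeta_{n,k}\cong\xi$. Pulling back the conjugate of (8), namely $n\zeta_{n,k}\cong k\epsilon\oplus(\bar\beta_{n,k}\otimes\zeta_{n,k})$, along $f$ yields $n\xi\cong k\epsilon\oplus\eta$ with $\eta=f^*(\bar\beta_{n,k}\otimes\zeta_{n,k})$; in the real case $\bar\beta=\beta$ and $\bar\zeta=\zeta$, so one may simply pull back (8) itself. This disposes of the easy half.

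For \emph{sufficiency}, I would first rephrase the hypothesis: an isomorphism $n\xi\cong\eta\oplus k\epsilon$ is the same as a choice of $k$ everywhere linearly independent sections $\sigma_1,\dots,\sigma_k$ of $n\xi=\xi\otimes n\epsilon$. Fixing a Euclidean (resp. Hermitian) metric on $\xi$, let $\pi:S(\xi)\to X$ be its unit sphere bundle; this is the double cover (resp. principal $S^1$-bundle) whose associated line bundle is $\xi$. Over $S(\xi)$ the pullback $\pi^*\xi$ is canonically trivialized by the tautological unit section, so $\pi^*(n\xi)\cong n\epsilon$ canonically and the $\sigma_j$ become everywhere independent maps $w_1,\dots,w_k:S(\xi)\to\mathbb{R}^n$ (resp. $\to\mathbb{C}^n$). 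Since the tautological section is multiplied by the scalar $z\in\mathbb{Z}_2$ (resp. $z\in S^1$) under the deck/structure action, each $w_j$ is multiplied by the same $z$. Applying the fibrewise Gram-Schmidt retraction $G$ onto $V_{n,k}$ (resp. $W_{n,k}$), I obtain $\tilde f:=G\circ(w_1,\dots,w_k):S(\xi)\to V_{n,k}$. A short computation shows $G$ commutes with simultaneous multiplication by $z$, so $\tilde f$ is equivariant and descends to a continuous map $f:X\to PV_{n,k}$.

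Finally I would identify $f^*\zeta_{n,k}$. The equivariant lift $\tilde f$ gives a fibrewise, equivariant map of double covers (resp. $S^1$-bundles) $S(\xi)\to f^*(V_{n,k})$ over $X$; being equivariant and fibrewise it is an isomorphism of covers, and passing to associated line bundles gives $\xi\cong f^*\zeta_{n,k}$, as required. The step I expect to require the most care is this last identification in the complex case: one must check that Hermitian Gram-Schmidt commutes with scaling by $z\in S^1$ (immediate from $|z|=1$) and, more delicately, fix the weight convention defining $\zeta_{n,k}$ so that the $S^1$-equivariant lift yields $\xi$ rather than $\bar\xi$, consistently with (8). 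Everything else is routine bundle bookkeeping.
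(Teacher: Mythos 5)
The paper does not actually prove this theorem: it states the result with proof omitted, attributing it to Gitler--Handel \cite{gh} and Smith \cite{smith} for finite complexes and to \cite{bhlsz} for general base spaces. So there is no in-paper argument to compare against; your strategy (trivialize $\pi^*\xi$ tautologically over the sphere bundle, apply fibrewise Gram--Schmidt, descend an equivariant map, and read off the line bundle from the resulting map of principal bundles) is the classical one. Your necessity direction, pulling back the conjugate of (8), is correct, and your sufficiency argument in the real case is correct as written for paracompact (more precisely, numerable) base spaces.

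Two concrete issues remain in the sufficiency direction. First, in the complex case your equivariance claim is backwards: if the tautological basis vector $v$ is replaced by $zv$, the \emph{coordinates} of a fixed vector with respect to it are multiplied by $z^{-1}=\bar z$, not by $z$; hence $w_j(zv)=\bar z\,w_j(v)$ and the lift $\tilde f$ is \emph{anti}-equivariant. It still descends to a map $f$, but with the paper's convention for $\zeta_{n,k}$ --- which is not free to be chosen, being pinned down by (7)--(8) as the tautological-type bundle --- one obtains $f^*\zeta_{n,k}\cong\bar\xi$ rather than $\xi$ (test this on $X=PW_{n,k}$, $\xi=\zeta_{n,k}$). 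You flag exactly this $\xi$-versus-$\bar\xi$ ambiguity but leave it unresolved, and diagnosing it as a choice of convention is not quite right; the actual fix is a one-liner: since $n\xi\cong\eta\oplus k\epsilon$ if and only if $n\bar\xi\cong\bar\eta\oplus k\epsilon$, run your construction on $\bar\xi$, or equivalently compose $\tilde f$ with the conjugation $W_{n,k}\to W_{n,k}$, $w\mapsto\bar w$, which converts anti-equivariance into equivariance. Second, fixing a Euclidean or Hermitian metric on $\xi$ (and identifying $\xi$ with the line bundle associated to $S(\xi)$) requires numerability of $\xi$, e.g.\ paracompactness of $X$; the theorem is asserted for an arbitrary topological space, and that extra generality --- precisely the contribution of \cite{bhlsz} --- is not reached by your argument. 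One way to repair this is to replace $S(\xi)$ by the complement of the zero section modulo positive scalars and to run the construction on the dual bundle $\xi^*$, the canonical isomorphism $\xi^{**}\cong\xi$ then giving the stated conclusion; alternatively, restrict to paracompact $X$, which suffices for every application made of the theorem in this paper.
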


A description of the mod $2$ cohomology algebra was obtained by Gitler and Handel \cite{gh} which we shall 
now recall.
Let $N:=\min_{1\le j\le k}\{n-k+j\mid {n\choose n-k+j }\equiv 1\mod 2\}$.   Denote by $V=V(x_1, \ldots,x_m)$ 
a $\mathbb{Z}_2$-algebra generated by homogeneous elements $x_j, 1\le j\le m,$ such that 
$\{x_1^{\varepsilon_1}\ldots x_m^{\varepsilon_m}\mid \varepsilon_j\in \{0,1\}\}$ form a basis for the 
$\mathbb{Z}_2$ vector space $V(x_1,\ldots,x_m)$.

\begin{theorem} {\em (Gitler and Handel \cite{gh})}  With notations as above, 
the mod $2$-cohomology algebra of $PV_{n,k}$ is isomorphic to 
$\mathbb{Z}_2[y]/\langle y^N\rangle\otimes V(y_{n-k},\ldots, {y}_{N-2},y_{N}, \ldots, y_{n-1}),$ 
where $\deg(y)=1, \deg(y_j)=j, n-k\le j\le n-1, (j\ne N-1)$ 
for a suitable algebra $V$.   Furthermore, $w_1(\zeta)=y$.
\end{theorem}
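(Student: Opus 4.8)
The plan is to compute $H^*(PV_{n,k};\mathbb{Z}_2)$ from the known cohomology of the Stiefel manifold by exploiting the double cover $p\colon V_{n,k}\to PV_{n,k}$, whose associated real line bundle is $\zeta=\zeta_{n,k}$. I take as classical input that $H^*(V_{n,k};\mathbb{Z}_2)$ is the free $\mathbb{Z}_2$-module on the square-free monomials in a simple system of generators $x_{n-k},\dots,x_{n-1}$ with $\deg x_j=j$, so that its Poincar\'e series is $\prod_{j=n-k}^{n-1}(1+t^j)$. Setting $y:=w_1(\zeta)\in H^1(PV_{n,k};\mathbb{Z}_2)$ makes the final assertion $w_1(\zeta)=y$ true by definition, so the real work is to locate the truncated polynomial tower $\mathbb{Z}_2[y]/\langle y^N\rangle$ and the exterior-type generators $y_j$ lifting the $x_j$.

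The clean step is the relation $y^N=0$. Since $\zeta$ is a real line bundle, $\bar\zeta\cong\zeta$, and the isomorphism (8) gives $n\zeta\cong k\epsilon\oplus(\beta\otimes\zeta)$. Taking total Stiefel--Whitney classes yields $w(\beta\otimes\zeta)=(1+y)^n$, whose degree-$m$ component is $\binom{n}{m}y^m$. As $\beta\otimes\zeta$ has rank $n-k$, the classes $w_m(\beta\otimes\zeta)$ vanish for $m>n-k$, forcing $\binom{n}{m}y^m=0$ there. The least $m>n-k$ with $\binom{n}{m}$ odd is exactly $N$ (note $\binom{n}{n}=1$, so $N\le n$ exists), whence $y^N=0$.

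To obtain the generators and the precise truncation I would run the mod $2$ Gysin sequence of the line bundle $\zeta$, with Euler class $y$:
\[ \cdots \to H^{i}(PV_{n,k}) \xrightarrow{\cup y} H^{i+1}(PV_{n,k}) \xrightarrow{p^*} H^{i+1}(V_{n,k}) \xrightarrow{\partial} H^{i+1}(PV_{n,k}) \xrightarrow{\cup y} \cdots \]
where $\partial$ preserves degree. The projection formula $\partial(p^*(\alpha)\cup\xi)=\alpha\cup\partial(\xi)$ shows that $\partial$ is determined by its values on the generators $x_j$. Proceeding upward in degree, for $j<N-1$ one checks $\partial(x_j)=0$, so each such $x_j$ lifts to a class $y_j\in H^j(PV_{n,k})$ with $p^*(y_j)=x_j$; consequently every product of liftable generators lies in $\operatorname{im} p^*=\ker\partial$. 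At degree $N-1$ the truncation forces a collision: since $y^N=0$, exactness gives $y^{N-1}\in\ker(\cup y)=\operatorname{im}\partial$, and because all decomposable classes in that degree already lie in $\operatorname{im} p^*$, the only class that can hit $y^{N-1}$ is the new generator, i.e.\ $\partial(x_{N-1})=y^{N-1}$. This single identity simultaneously shows $y^{N-1}\neq0$ (completing the truncation claim $\mathbb{Z}_2[y]/\langle y^N\rangle$) and explains why no generator $y_{N-1}$ survives, the class $x_{N-1}$ having been \emph{spent} on $y^{N-1}$. All remaining generators $x_j$, $j>N-1$, again satisfy $\partial(x_j)=0$ and lift to $y_j$.

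The argument closes with a Poincar\'e-series computation: splitting the Gysin sequence into short exact sequences and using $H^*(V_{n,k})$ together with the ranks of $\cup y$ just determined, one checks that $\sum_i\dim H^i(PV_{n,k})\,t^i$ equals $\tfrac{1-t^N}{1-t}\prod_{j\in\{n-k,\dots,n-1\}\setminus\{N-1\}}(1+t^j)$. Hence the map sending $y\mapsto w_1(\zeta)$ and $y_j\mapsto$ the chosen lifts, from $\mathbb{Z}_2[y]/\langle y^N\rangle\otimes V(y_{n-k},\dots,y_{N-2},y_N,\dots,y_{n-1})$, is a surjection between graded $\mathbb{Z}_2$-vector spaces of equal (finite) dimension, hence an isomorphism, with $V$ the subalgebra generated by the $y_j$. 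The main obstacle is the middle step: rigorously controlling the connecting map $\partial$---proving that the only interaction between the $y$-tower and the exterior generators occurs at $x_{N-1}\leftrightarrow y^{N-1}$ and that no further collisions arise in higher degrees. This is precisely where the numerical definition of $N$ via the first odd binomial coefficient $\binom{n}{\,\cdot\,}$ must be fed back into the homological bookkeeping, and it is the heart of the Gitler--Handel computation.
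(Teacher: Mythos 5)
First, a point of order: the survey does not prove this theorem at all — it is stated with a citation to Gitler and Handel \cite{gh} — so your proposal has to be measured against their original argument rather than against anything in the paper. Within it, your opening step is correct and is indeed the standard one: from the isomorphism (8) with $\mathbb{F}=\mathbb{R}$ one gets $w(\beta\otimes\zeta)=(1+y)^n$, and since $\beta\otimes\zeta$ has rank $n-k$ this forces $\binom{n}{m}y^m=0$ for all $m>n-k$, hence $y^N=0$. The Smith--Gysin sequence of the double cover, with $\partial=p_*$ the transfer satisfying the projection formula, is also a legitimate framework, and your description of the final answer (including the Poincar\'e series) is accurate.

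The genuine gap is the middle of the argument, and it is not a detail: it is the entire content of the theorem. The assertions that $\partial(x_j)=0$ for $j\neq N-1$ and that $\partial(x_{N-1})=y^{N-1}$ are never established; ``one checks'' hides everything, because the Gysin sequence by itself provides no mechanism for evaluating $\partial$ on the generators. Notice where the arithmetic enters your write-up: the binomial coefficients defining $N$ appear only through the relation $y^N=0$, which is an \emph{upper} bound on the height of $y$, and an upper bound can neither force the lifting pattern of the $x_j$ nor show $y^{N-1}\neq 0$. Indeed your argument for $y^{N-1}\neq0$ is circular: if $y^{N-1}$ were zero it would lie in $\operatorname{im}\partial$ vacuously, $\partial(x_{N-1})$ could equally well vanish, and nothing you wrote excludes this. (A smaller issue: the projection formula determines $p_*$ on module generators of $H^*(V_{n,k})$ over $\operatorname{im}p^*$, and $\operatorname{im}p^*$ is precisely what is being computed, so even the reduction of $\partial$ to the $x_j$ is part of the induction rather than a prior fact.) The missing positive input is the transgression computation. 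Since the $\mathbb{Z}_2$-action on $V_{n,k}$ is free, $PV_{n,k}$ is homotopy equivalent to the Borel construction $V_{n,k}\times_{\mathbb{Z}_2}S^\infty$, which is the bundle of orthonormal $k$-frames of $n\lambda$ over $\mathbb{R}P^\infty$, where $\lambda$ is the universal real line bundle; in the mod $2$ Serre spectral sequence of this fibration the fibre generators $x_j$ are transgressive and transgress to the obstruction classes $w_{j+1}(n\lambda)=\binom{n}{j+1}y^{j+1}$. By the definition of $N$, the first nonzero transgression is $d_N(x_{N-1})=y^N$, and all later ones hit classes already dead; this one formula simultaneously shows that $y^m$ survives for $m<N$ (so the height of $y$ is exactly $N-1$), that precisely the $x_j$ with $j\neq N-1$ survive to give the lifts $y_j$, and it computes your transfer $\partial$. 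This spectral-sequence (or equivalent Steenrod-operation) argument is what Gitler and Handel actually do, and it is exactly the step your proposal defers; as it stands, you have a correct description of the answer and of the surrounding bookkeeping, not a proof.
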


Gitler and Handel also determined, almost completely, the action of the Steenrod algebra on $H^*(PV_{n,k};\mathbb{Z}_2)$.  See also \cite{bb} and \cite{antoniano}.

The following descriptions of the tangent bundle of real and complex projective Stiefel manifolds was obtained by 
Lam \cite{lam}.  
\[\tau PV_{n,k}\cong {k\choose 2}\epsilon\oplus k\zeta\otimes \beta, \eqno(9)\]
\[\tau PW_{n,k}\cong (k^2-1)\epsilon_\mathbb{R}\oplus k\bar{\zeta}\otimes _\mathbb{C}\beta,\eqno(10)\]
where we have denoted by the same symbol $\bar{\zeta}\otimes_\mathbb{C}\beta$ 
to denote its underlying real vector bundle. 

Using the isomorphism (8) one obtains the following description for the stable tangent bundle:
\[ \tau PV_{n,k}\oplus {k+1\choose 2}\epsilon \cong nk\zeta, \eqno(11)\]
\[ \tau PW_{n,k}\oplus (k^2+1) \epsilon_\mathbb{R} \cong kn\bar{\zeta}\cong nk\zeta,\eqno(12)\] 
where, again in (12), we have used $\zeta$ also to denote its underlying real vector bundle; note that 
$\bar{\zeta}\cong \zeta$ as real vector bundles. 

\begin{theorem} \label{parallel-projectivestiefel}
(i). {\em  (Zvengrowski \cite{zven-eth}, Antoniano, Gitler, Ucci, Zvengrowski \cite{aguz}.)}\\
(a) $PV_{n,k}$ is parallelizable in the following cases: 
$n=2,4,8; ~k=n-1;  ~k=2m-2, n=2m; ~ (n,k)=(16,8).$  \\
(b) $PV_{n,k}$ is not stably parallelizable in all the other cases, except possibly when $(n,k)=(12,8)$;  $PV_{12,8}$ is 
parallelizable if it is stably parallelizable.  \\
(ii) {\em (Singhof \cite{singhof}, Astey, Gitler, Micha, and Pastor \cite{agmp}.)} The complex Stiefel manifolds $PW_{n,k}$, $1\le k<n$, are not stably parallelizable except when $k=n-1.$
$PW_{n,n-1}$ is parallelizable  if $n\ge 3$; $PW_{2,1}\cong \mathbb{C}P^1=\mathbb{S}^2$ is not parallelizable.
\end{theorem}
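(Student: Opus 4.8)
The plan is to reduce every assertion to a statement about the tautological line bundle $\zeta$ and the complementary bundle $\beta$, via the tangent bundle formulas (9)--(12), and then to split the work into a ``negative'' half (non-(stable-)parallelizability, proved by exhibiting a non-vanishing characteristic class) and a ``positive'' half (parallelizability, proved by trivializing the relevant bundle either Lie-theoretically or by a division-algebra construction followed by boot-strapping).

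For the negative direction in the real case I would read the total Stiefel--Whitney class off (11): since $\tau PV_{n,k}\oplus\binom{k+1}{2}\epsilon\cong nk\zeta$ and $w_1(\zeta)=y$, one gets $w(PV_{n,k})=(1+y)^{nk}$, so $w_j(PV_{n,k})=\binom{nk}{j}y^{\,j}$. By the Gitler--Handel description of $H^*(PV_{n,k};\mathbb{Z}_2)$ the power $y^{\,j}$ is non-zero exactly for $j<N$, so $PV_{n,k}$ fails to be stably parallelizable as soon as there is an index $1\le j<N$ with $\binom{nk}{j}$ odd. A binomial-coefficient (Lucas) analysis then shows such a $j$ exists for all $(n,k)$ outside a short list of borderline pairs; on this list (which includes the parallelizable family of (a) as well as the sporadic pairs $(16,8)$ and $(12,8)$) the Stiefel--Whitney classes all vanish and a finer analysis is needed. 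In the complex case the same scheme runs with Pontrjagin classes: formula (12) gives $p(PW_{n,k})=(1+x^2)^{nk}$ with $x=c_1(\bar\zeta)$, so $p_1(PW_{n,k})=nk\,x^2$, and computing the integral cohomology of $PW_{n,k}$ from the Gysin sequence of the principal $\mathbb{S}^1$-bundle $W_{n,k}\to PW_{n,k}$ shows $x^2\neq0$ and $nk\,x^2\neq0$ precisely when $k\le n-2$, yielding non-stable-parallelizability for $k<n-1$.

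For the positive direction I would treat the genuinely different cases separately. When $k=n-1$ the identification $V_{n,n-1}\cong SO(n)$ exhibits $PV_{n,n-1}$ as the quotient of $SO(n)$ by right translation by the free order-two element $h=\mathrm{diag}(-1,\dots,-1,(-1)^{n-1})$, so Borel--Hirzebruch (Theorem \ref{bh}) gives parallelizability at once; the complex analogue $PW_{n,n-1}$, parallelizable for $n\ge3$, is Singhof's theorem and rests on trivializing the single complex line bundle $\bar\zeta\otimes_{\mathbb{C}}\beta$ of rank one appearing in (10), while $PW_{2,1}\cong\mathbb{S}^2$ has $\chi=2\neq0$ and so is not parallelizable by Hopf (Theorem \ref{hopf}). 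When $n=2,4,8$ the space $\mathbb{R}P^{n-1}$ is parallelizable, so $n\xi$ is trivial over $\mathbb{R}P^{n-1}$; pulling back along the coordinate projection $p_j$ and using $p_j^*(\xi)\cong\zeta$ makes $n\zeta$ trivial, whence $nk\zeta$ is trivial and, after a boot-strapping argument as in the Stiefel-manifold case, so is $\tau PV_{n,k}$ for every admissible $k$. The codimension-two family $k=2m-2,\ n=2m$ is handled by noting that $\beta$ then has rank two and, since (7) gives $w(\beta)=(1+y)^{-k}$ with $w_1(\beta)=(n-2)y\equiv0$ for $n$ even, $\zeta\otimes\beta$ is an orientable plane bundle, i.e. a complex line bundle, whose multiples can be trivialized directly.

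The main obstacle, and the reason the statement is so intricate, lies exactly at the boundary where the primary characteristic classes vanish but (stable) parallelizability may still fail: there Stiefel--Whitney and Pontrjagin classes are powerless and one must invoke finer $KO$-theoretic invariants (Adams operations, $e$-invariants) together with secondary operations. This is precisely the content of Antoniano--Gitler--Ucci--Zvengrowski for the sporadic pair $(16,8)$, and it is what leaves $(12,8)$ undecided. A second, independent difficulty is the gap between stable and unstable triviality: throughout the positive direction one first trivializes $\tau PV_{n,k}$ stably and must then absorb the extra trivial summand, which happens automatically only when the rank of $k\zeta\otimes\beta$ exceeds $\dim PV_{n,k}$ and otherwise requires the delicate unstable obstruction-theoretic arguments of Zvengrowski and of Lam and Handel.
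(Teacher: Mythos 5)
Your reduction of everything to the line bundle $\zeta$ via (9)--(12), your treatment of the complex case (Gysin sequence plus $p_1(PW_{n,k})=nk\,c_1(\zeta)^2$), and your handling of $k=n-1$ via Borel--Hirzebruch (Theorem \ref{bh}) all match the paper. But the core of part (i)(b) is wrong as structured. You claim that the Stiefel--Whitney classes $w_j(PV_{n,k})=\binom{nk}{j}y^j$ (with $y^j\ne 0$ exactly for $j<N$) rule out stable parallelizability for all $(n,k)$ ``outside a short list of borderline pairs.'' This detection works only when $2^{\nu}<N$, where $2^{\nu}$ is the $2$-part of $nk$ (the smallest positive $j$ with $\binom{nk}{j}$ odd, by Lucas). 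Since $N\le n$ always (take $j=k$, as $\binom{n}{n}=1$), the method fails whenever $2^{\nu}\ge n$ --- for instance for \emph{every} $k$ whenever $n$ is a power of $2$ --- so the undetected set is infinite, not a short list. This is exactly why the paper's proof of (i)(b) does not rest on characteristic classes at all: it quotes the computation of $K(PV_{4q,k})$ from \cite{aguz} to pin down the additive order of $\zeta_{n,k}$ (up to a factor of $2$), uses the inclusions $PV_{4q,k}\subset PV_{4q+t,k}$ to cover general $n$, and settles the remaining critical cases $(7,4),(6,3),(6,2)$ by the Atiyah--Hirzebruch spectral sequence in $KO$-theory. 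That $K$-theoretic determination of the order of $\zeta_{n,k}$ is the missing idea; it cannot be replaced by a Lucas-type analysis plus a finite list of exceptions.

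There are two further gaps in your positive direction. First, for the family $k=2m-2$, $n=2m$: knowing that $\zeta\otimes\beta$ is an orientable $2$-plane bundle, hence complex, does not let you trivialize its multiples ``directly'' --- multiples of a complex line bundle are almost never trivial (e.g.\ $m$ copies of the Hopf bundle over $\mathbb{S}^2$ have $c_1\ne 0$ for all $m$). The paper's argument is Lie-theoretic: $PV_{n,n-2}=SO(n)/(Z\cdot SO(2))$ has \emph{abelian} isotropy, so the bundle $\nu$ in the exact sequence (4) is trivial and stable parallelizability follows, with no line-bundle computation. Second, your mechanism for passing from stable to honest parallelizability (boot-strapping, else ``delicate unstable obstruction theory'') is inadequate: boot-strapping needs at least $k$ trivial summands in (9), i.e.\ $\binom{k}{2}\ge k$, and fails for $k=2$ exactly as the paper notes it fails for $V_{n,2}$. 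The paper's tool here --- which you never invoke --- is the Bredon--Kosi\'nski Theorem \ref{b-k}: a stably parallelizable $d$-manifold is either parallelizable or has span exactly $\rho(d+1)-1$, and since (9) forces $\Span(PV_{n,k})\ge\binom{k}{2}>\rho(d+1)-1$ in the cases at hand, parallelizability follows. This is also how the paper handles $(16,8)$: since $\varphi(15)=7$ and $16\cdot 8=2^7$, Adams' theorem on the order of the Hopf bundle gives $nk\zeta$ trivial, and Bredon--Kosi\'nski finishes; no appeal to the heavy machinery of \cite{aguz} is needed for that sporadic positive case, contrary to what you suggest.
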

\begin{proof}  
(i) (a) The parallelizability of $PV_{n,n-1}=V_{n,n-1}/\{\pm I\}=SO(n)/\{\pm I\}$ is implied by the Borel-Hirzebruch Theorem \ref{bh}.  

We shall now show the parallelizability of $PV_{n,n-2}$ where $n=2m$.  Let $d=\dim PV_{n,n-2}={n\choose 2}-1$.  
It is easy to see that ${n-2\choose 2} >\rho(d)$ if $n\ge 4$.  In view of the bundle isomorphism (9) and Bredon-Kosi\'nski's 
theorem, 
we see that it suffices to show that $PV_{n,n-2}$ is stably parallelizable.  
Note that $PV_{n,n-2}=SO(n)/Z\cdot SO(2)$, where $Z=\{I_{n},-I_{n}\}\subset SO(n)$ is the centre of $SO(n)$ since 
$n=2m$ is even.  Let $H=Z\cdot SO(2)\cong Z\times SO(2).$  Then the adjoint representation of $H$ is trivial since $H$ is 
abelian.  
 It follows that, in the exact sequence (4), the bundle $\nu$ is trivial. So $PV_{n,n-2}$ is stably parallelizable, as was 
 to be shown.

In the remaining cases, consider the projection $q: PV_{n,k}\to\mathbb{R}P^{n-1}$ which pulls back the 
Hopf bundle $\xi$ over $\mathbb{R}P^{n-1}$ to $\zeta$.    
By a well-known result 
of Adams \cite{adams}, the order of $\xi=\zeta_{n,1}$ is known: $2^{\varphi(n-1)}\xi=2^{\varphi(n-1)}\epsilon$ where the function $\varphi$ is defined as $\varphi(n)$ is the number of numbers $r$ such that $1\le r\le n$ such that $r\equiv 0,1,2,4 ~\mod 8$.  
We have $2^{\varphi(n-1)}=n$ if and only if $n=2,4,8$.
Since $q^*(\zeta_{n,1})=\zeta_{n,k}$, we see that $2^{\varphi(n-1)} \zeta_{n,k}\cong 2^{\varphi(n-1)}\epsilon$ for all $k<n$.
Since $\varphi(15)=7$ we have $ 2^{7}\zeta_{16,8}=2^7\epsilon$.   Therefore $\tau V_{16,8}\oplus {9\choose 2}\epsilon 
\cong 16\cdot 8\zeta\cong 2^7\epsilon$ using the isomorphism  (11).  
Also 
 $\dim V_{16,8} =120-28=92,~\rho(93)=1$ whereas span of $PV_{16,8}$ is at least  ${8\choose 2}=28$.   So by 
 the Bredon-Kosi\'nski theorem, $PV_{16,8}$ is parallelizable.  The case when $n=4,8$ 
are similarly handled and in fact easier.

(i)(b).   In several cases, fairly elementary arguments can be used to decide 
whether $PV_{n,k}$ is stably parallelizable or not.  For example, if both $n,k$ are odd, then $nk\zeta_{n,k}$ 
is not orientable.  So, by (11), we conclude that $PV_{n,k}$ is also not orientable and hence not stably parallelizable. 
However, such simple arguments leave infinitely many cases unsettled.   In \cite{aguz}, the authors 
compute the complex K-theory of $PV_{4q,k}$ which leads to determination of the (additive) order of 
$[\zeta_{4q,k}\otimes_{\mathbb R}\mathbb{C}]-[\epsilon_{\mathbb C}]\in K(PV_{4q,k}).$  
This readily leads to the determination of the order of 
$\zeta_{4q,k}$ 
up to a factor of $2$.  (By the order of a real line bundle $\xi$ we mean the smallest positive integer 
$m$ (if it exists) such that $m\xi$ is trivial; 
if the base space is a finite CW complex, it is always finite and is a power of $2$.)  
Moreover, using the inclusion $PV_{4q,k}\to PV_{4q+t,k}, 1\le t\le 3$, leads to estimation 
of the order of $\zeta_{n,k}$ for any $n$.  This is then used to show that $nk\zeta_{n,k}$ is not 
trivial for almost all the manifolds not covered in (i), still leaving out $PV_{n,k}$ where 
$(n,k)=(10,4), (12,8)$ and a few others (when $5\le n\le 7$).   When $m$ is odd, $m\xi$ is non-orientable so we may assume 
that $nk$ is even.  Thus only the cases $(7,4), (7,2), (6,3), (6,2), (5,2)$ remain, leaving out the case $(12,8)$ 
which remains at this time unresolved.  Of these, only the cases $(n,k)=(7,4), (6,3), 6,2)$ are `critical' and were proven 
to be non-stably parallelizable by a computation of the order of $\zeta_{n,k}$, using the Atiyah-Hirzebruch spectral sequence for $KO$-theory.   

In the case of $PV_{12,8}$ it was shown that  $32\zeta_{12,8}\otimes \mathbb{C}=32\epsilon_\mathbb{C}$ which 
implies that $64\zeta_{12,8}\cong 64\epsilon_\mathbb{R}$ 
but it is unknown whether $32\zeta_{12,8}\cong 32\epsilon_{\mathbb{R}}$.  
Since $\tau PV_{12,8}\oplus 36\epsilon\cong 96 \epsilon$, it remains unknown 
whether it is parallelizable or not. 

Since (9) implies that $\Span PV_{12,8} \ge 28$, 
and since $\dim PV_{12,8}= 60, ~\rho( 61)=0,$  
by the Bredon-Kosi\'nski Theorem again we see that $PV_{12,8}$ is parallelizable if it is stably parallelizable.

(ii)  Suppose that $k<n-1$.   By Singhof's theorem (Theorem \ref{torussimple}), we know that $PW_{n,k}$ is not stably 
parallelizable.  Since we did not give proof that theorem, we now proceed to give a proof of it in the special 
case of complex Stiefel manifolds.  
Using (12) we compute the Pontrjagin class $p_1(PW_{n,k})$. 
Since $\tau PW_{n,k}\otimes\mathbb{C}$ is stably equivalent to $nk\zeta_{n,k}\otimes_\mathbb{R}\mathbb{C}\cong 
nk(\zeta_{n,k}\oplus \bar{\zeta}_{n,k})$, a straightforward computation yields 
$p_1(PW_{n,k})=nkc_1(\zeta_{n,k})^2$.  Using the Gysin sequence of the principal $\mathbb{S}^1$-bundle 
$W_{n,k}\to PW_{n,k}$ and the fact that $W_{n,k}=SU(n)/SU(n-k)$ is $4$-connected when $1\le k\le n-2$, 
it is easily seen that $c_1(\zeta_{n,k})^2$ generates $H^4(PW_{n,k};\mathbb{Z})\cong \mathbb{Z}$.  Hence 
$p_1(PW_{n,k})\ne 0$ and so $PW_{n,k}$ is not stably parallelizable.      

Note that $PW_{n,n-1}=U(n)/(Z. U(1))$ is the quotient of a compact connected Lie group modulo $S=Z.U(1)
\cong \mathbb{S}^1\times \mathbb{S}^1$, which is 
a torus of rank $2$. By Lemma \ref{torus} $PW_{n,n-1}$ is parallelizable if $n>2$.  The remaining part of (ii) 
is clearly valid. 
\end{proof}

Determination of the span of a real projective manifold $PV_{n,k}$, for general values of $n,k$, is largely an open problem.  
For certain infinite set of values of $(n,k)$ the span has been determined.   
When $k$ is in the so-called 
upper range (roughly $k>n/2$)  very good estimates for 
the span of $PV_{n,k}$ have been obtained by Korba\v{s} and Zvengrowski.   (See \cite{ksz}, \cite{kz96}, \cite{kz11}.)  
It turns out that the estimates are sharp whenever span and stable span are known to be equal. 
(See Theorem \ref{stabspan}.) Usually it is easier to obtain bounds for stable span since it is possible 
to approach this using the tools of homotopy theory and K-theory.
A major source of estimates for the  
lower bound for stable span of $\Span^0(PV_{n,k})$ is the known estimate for the solution to the 
{\it generalized vector field problem}.  The generalized vector 
field problem 
asks:  {\it What is the largest value $r$ so that $m\zeta_{n,1}$ is isomorphic as a vector bundle to} 
$r\epsilon\oplus \eta$?  That is, it asks for the determination of $\Span(m\zeta_{n,1}). $  
It appears that the best known estimate for the solution to this problem general values of $m,n$ 
is due to Lam \cite{lam72}. Note that if $m<n$, then $w_m(\zeta_{n,1})\ne 0$ and so $r=0$. When $m\ge n$, we 
have $r\ge m-n+1$ since {\it any} vector bundle $\xi$ of rank $m$ over any CW complex of dimension 
$d$ is isomorphic to $(m-d)\epsilon\oplus\eta$ for a suitable vector bundle $\eta$. 
Since $\zeta_{n,k}\cong q^*(\zeta_{n,1})$ where $q$ is the projection $PV_{n,k}\to PV_{n,1}=\mathbb{R}P^{n-1}$, 
we see that $\Span (m\zeta_{n,k})\ge \Span (m\zeta_{n,1})$.   This gives us, using (11), lower bounds 
for the stable span of $PV_{n,k}$.  Combining with Theorem \ref{stabspan} which provides sufficient conditions for 
span to equal stable span results in the following.

\begin{theorem} {\em (Korba\v s, Sankaran, Zvengrowski \cite{ksz},  Korba\v s, Zvengrowski \cite{kz96}.)}  One has 
$\Span (PV_{n,k})=\Span^0 (PV_{n,k})$ in the following cases: \\
(a) $n\equiv 0\mod 2$ and $k\equiv 0,2,3,4,7\mod 8$, \\
(b) $n\equiv 1\mod 2$ and $k\equiv 0,1,4,5,6\mod 8$,\\
(c) $(n,k)=(4m, 8l+5), (4m+2, 8l+1), (4m, 16l+6), (8m, 16l+9), (8m-1,16l+7)$.\hfill $\Box$
\end{theorem}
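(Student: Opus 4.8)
The plan is to deduce every case from the Koschorke--Eagle criterion of Theorem \ref{stabspan}; the real content is then to compute, for each listed pair $(n,k)$, the dimension $d=\dim PV_{n,k}$ modulo $8$, the classes $w_1,w_2$, and a semi-characteristic. We may assume $k\ge 2$, the case $k=1$ being $PV_{n,1}=\mathbb RP^{n-1}$, already covered by Theorem \ref{proj-stablespan}. I would first record the inputs common to all cases. From $PV_{n,k}=SO(n)/(\mathbb Z_2\times SO(n-k))$ one has $d=\dim V_{n,k}=nk-\binom{k+1}{2}$; and since $V_{n,k}$ is parallelizable for $k\ge 2$ --- hence $\chi(V_{n,k})=0$ by Theorem \ref{hopf} --- and $V_{n,k}\to PV_{n,k}$ is a double cover, $\chi(PV_{n,k})=\tfrac12\chi(V_{n,k})=0$. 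For the characteristic classes I would use the stable isomorphism (11), namely $\tau PV_{n,k}\oplus\binom{k+1}{2}\epsilon\cong nk\zeta$; stability of the Stiefel--Whitney classes gives $w(PV_{n,k})=(1+y)^{nk}$ with $y=w_1(\zeta)$, so $w_1=nk\,y$ and $w_2=\binom{nk}{2}y^{2}$. Thus $PV_{n,k}$ is orientable iff $nk$ is even, $w_2=0$ iff $nk\equiv 0,1\pmod{4}$, and every $w_j$ is a scalar multiple of a power of $y$.

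The even-dimensional cases are immediate: when $d$ is even, Theorem \ref{stabspan}(a) applies using only $\chi=0$. A short parity check on $d\equiv nk+\binom{k+1}{2}\pmod 2$ shows that, among the residues in (a) and (b), $d$ is odd in exactly one class each, namely $k\equiv 2\pmod 8$ with $n$ even and $k\equiv 6\pmod 8$ with $n$ odd; in both $d\equiv 1\pmod 4$ and $nk$ is even, so $PV_{n,k}$ is orientable. These two, together with the families $(4m,8l+5)$ and $(4m+2,8l+1)$ of (c) (for which I would likewise compute $d\equiv 1\pmod 4$), I would treat by Theorem \ref{stabspan}(b): orientability gives $w_1^2=0$ and $R_L=\kappa$, and it remains to verify $\kappa=0$. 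For the families $(4m,16l+6)$ and $(8m,16l+9)$ I would compute $d\equiv 3\pmod 8$ --- here the refinement of the hypotheses to congruences modulo $16$ is precisely what pins $d$ to $3$ rather than to the uncovered value $7\pmod 8$ --- together with $nk\equiv 0\pmod 8$, so that $w_1=w_2=0$; Theorem \ref{stabspan}(c) then applies provided $\hat\chi_2=0$.

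The crux is therefore the vanishing of the semi-characteristics, and here I would feed the Gitler--Handel description of $H^*(PV_{n,k};\mathbb Z_2)$ into the mod $2$ Poincar\'e series: the truncated polynomial factor $\mathbb Z_2[y]/\langle y^N\rangle$ contributes $N$ and the exterior factor on its $k-1$ generators contributes $2^{k-1}$, so the total dimension is $P(1)=N\cdot 2^{k-1}$. Since $d$ is odd, $\chi=P(-1)=0$, whence $\hat\chi_2=\tfrac12 P(1)=N\cdot 2^{k-2}\equiv 0\pmod 2$ for every $k\ge 3$; the only smaller value occurring is $k=2$ (with $n$ even), where $N=n$ is even so that again $\hat\chi_2=0$. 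To pass from $\hat\chi_2$ to $\kappa$ in the $d\equiv 1\pmod 4$ cases I would invoke Remark \ref{lmp}: $\hat\chi_2-\kappa=w_2w_{d-2}[M]$, and this Stiefel--Whitney number vanishes because $w_2w_{d-2}$ is a scalar multiple of $y^{d}$, which is zero since $d\ge n\ge N$ and $y^N=0$. Hence $\kappa=\hat\chi_2=0$ throughout, completing parts (a), (b) and the first four families of (c).

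The one genuinely recalcitrant case is the family $(8m-1,16l+7)$, and I expect it to be the main obstacle. Here $n$ and $k$ are both odd, so $nk$ is odd, $w_1=y\neq 0$, and the manifold is non-orientable; moreover $N\ge n-k+1\ge 9$, so $w_1^2=y^{2}\neq 0$. Consequently the twisted semi-characteristic $R_L$ is not even defined and Theorem \ref{stabspan}(b) does not apply as stated, although one still computes $d\equiv 5\equiv 1\pmod 4$. My plan would be to return to Koschorke's primary obstruction in the normal bordism group --- the invariant from which Theorem \ref{stabspan} is itself extracted --- in its full non-orientable form, and to show that for $d\equiv 5\pmod 8$ the gap between the span and stable-span obstructions is again governed by a semi-characteristic-type invariant that vanishes by the same computation $P(1)=N\cdot 2^{k-1}$; this is the point at which the detailed arguments of \cite{ksz} and \cite{kz96} are required. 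Everything else is routine modular bookkeeping once $d\bmod 8$, the classes $w_1,w_2$, and the semi-characteristics are in hand.
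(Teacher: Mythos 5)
The paper itself gives no proof of this theorem: it is quoted from \cite{ksz} and \cite{kz96}, with only the remark that it results from combining Theorem \ref{stabspan} with the known structure of $PV_{n,k}$. Measured against that intended route, your proposal is correct and complete for eight of the nine families. Your mod $8$ bookkeeping checks out: $d$ is even exactly in the cases of (a), (b) with $k\not\equiv 2$, resp.\ $k\not\equiv 6 \pmod 8$, so Theorem \ref{stabspan}(a) applies there; $d\equiv 1\pmod 4$ with $nk$ even (hence $w_1=0$) in the two remaining residues of (a), (b) and in $(4m,8l+5)$, $(4m+2,8l+1)$; and $d\equiv 3\pmod 8$ with $nk\equiv 0\pmod 8$ (hence $w_1=w_2=0$) in $(4m,16l+6)$, $(8m,16l+9)$. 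Your semicharacteristic argument is also the right one: by Gitler--Handel, $\dim_{\mathbb Z_2}H^*(PV_{n,k};\mathbb Z_2)=N\cdot 2^{k-1}$, so with $\chi=0$ one gets $\hat\chi_2=N\cdot 2^{k-2}\equiv 0$ for $k\ge 3$ (and $\hat\chi_2=N=n\equiv 0$ in the one occurring case $k=2$, $n$ even), and $\kappa=\hat\chi_2$ by Remark \ref{lmp} since $w_2w_{d-2}$ is a multiple of $y^d=0$ (as $d\ge n\ge N$).

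The genuine gap is the family $(8m-1,16l+7)$, and your plan for it is aimed at the wrong target. The missing ingredient is not some generalized ``semi-characteristic-type invariant'' of $PV_{n,k}$ whose vanishing must be computed; it is the \emph{unabridged} form of Koschorke's theorem, of which Theorem \ref{stabspan}(b) is only a weakened excerpt. Koschorke \cite{koschorke} proves that for a closed connected manifold of dimension $\equiv 1\pmod 4$ one has $\Span(M)<\Span^0(M)$ \emph{only if} $w_1(M)^2=0$ (and then only if $R_L(M)\ne 0$); equivalently, $w_1(M)^2\ne 0$ by itself forces $\Span(M)=\Span^0(M)$, because the relevant normal-bordism obstruction dies for reasons independent of the particular manifold. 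You already verified everything this requires: $d\equiv 5\pmod 8$, $w_1=y$, and $w_1^2=y^2\ne 0$ since $N\ge n-k+1\ge 3$ (both $n,k$ odd). Fed into the full statement, that computation \emph{is} the entire proof of this case; no further invariant exists to be computed (indeed, as you note, $R_L$ is not even defined here). So you diagnosed correctly that the survey's Theorem \ref{stabspan} is insufficient as stated, but the appeal you defer to \cite{ksz}, \cite{kz96} is a one-line citation of Koschorke's dichotomy, not a new vanishing argument.
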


For example, using 
(9) we saw in the course of the proof of Theorem \ref{parallel-projectivestiefel} that 
$\Span(PV_{12,8})\ge 28$.  From Koschorke's Theorem \ref{stabspan}, 
one knows that whenever 
$\chi(M)=0$ and $\dim(M)\equiv 0\mod 2$, the span of $M$ equals the stable span of $M$. 
Hence  we may use (11) to obtain 
$\Span(PV_{12,8})\ge  \Span(96\zeta_{12,8})-36 
\ge 85-36=49$.  Here we used the estimate $\Span(96\zeta_{12,8})\ge \Span(96\zeta_{12,1})\ge 96-\dim\mathbb{R}P^{11}=85$.  However, one can improve this lower bound using the work of Lam \cite{lam72} which implies that 
$\Span(96\zeta_{12,1})=91$ to obtain $\Span (PV_{12,8})\ge 55$.  See \cite{ksz} for similar estimates 
for the span of $PV_{n,k}$ for $n\le 16$. 

As for upper bounds for the stable span of $PV_{n,k}$, an obvious tool is the Stiefel-Whitney classes.  They generally work  
only for small values of $k$ and $n$ not a power of $2$.  In any case, the bounds so  
obtained are generally weak.   Another source of upper bounds uses the structure of 
$K$-ring of $PV_{n,k}$ using Theorem \ref{universal}.   Suppose that $\Span^0(\tau PV_{n,k})\ge r$.  Then 
we see that $\Span(nk\zeta_{n,k})\ge {k\choose 2}+r$.  This implies, by Theorem \ref{universal} the 
existence of a continuous map $f:PV_{n,k}\to PV_{nk,{k\choose 2}+r}$ such that $f^*(\zeta_{nk,{k\choose 2}+r})
\cong \zeta_{n,k}$.  By considering the map induced by $f$ between the $K$-rings of the spaces the following 
result was obtained. 
The structure of the ring $K(PV_{n,k})$ had been determined for $n\equiv 0\mod 4$ in \cite{aguz} and for 
all values of $n \mod 4$  in \cite{baru-hac}.

\begin{theorem} {\em (Sankaran and Zvengrowski \cite{sz5}.) } 
Let $2<k<\lfloor (n-1)/2\rfloor$.
Write $m:=\lfloor n/2\rfloor, s=\lfloor k/2\rfloor$, $d:=\dim PV_{n,k}=nk-{k+1\choose 2}$.  \\
(i) Suppose that $n\equiv 0\mod 2$.  Then $\Span^0(PV_{n,k})\le d-2q-2$ if $(-1)^q{mk-1\choose q}$ is not a 
quadratic residue modulo $2^{m-2q}$. \\
(ii) Suppose that $n\equiv 1\mod 2, k=2s$.  Then $\Span^0(PV_{n,k})\le d-2q-2$ if $(-1)^{s-q}{ns-1\choose q}$ 
is not a quadratic residue modulo $2^{m-2q}$.\\
(iii)  Suppose that $n\equiv 1\mod 2, k=2s+1$ and $1\le q<s-1, m\ge 3q$.  Then $\Span^0(PV_{n,k})\le d-2q$ if 
$(-1)^{r-q}{r\choose q}$ is not a quadratic residue modulo $2^{m-3q}$ where $r=(nk-1)/2$.
\end{theorem}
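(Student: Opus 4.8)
The plan is to argue by contraposition, starting from the reduction already set up just before the statement. Suppose $\Span^0(PV_{n,k})\ge b$, where $b=d-2q-1$ in cases (i) and (ii) and $b=d-2q+1$ in case (iii); I must then show that the corresponding binomial coefficient \emph{is} a quadratic residue modulo the indicated power of $2$. By the stable tangent bundle formula (11) together with the fact that $\rank(nk\zeta_{n,k})=nk>d=\dim PV_{n,k}$, the stable and geometric spans of $nk\zeta_{n,k}$ coincide, so $\Span(nk\zeta_{n,k})\ge \binom{k}{2}+b$. Theorem \ref{universal} then produces a map $f:PV_{n,k}\to PV_{N,K}$, with $N=nk$ and $K=\binom{k}{2}+b$, satisfying $f^*\zeta_{N,K}\cong \zeta_{n,k}$. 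The problem thereby becomes: for which pairs $(N,K)$ can such a bundle map exist? I would extract a numerical obstruction from the ring homomorphism induced by $f$ on complex $K$-theory, refined by the real (and, where present, quaternionic) structure, and show that its vanishing is precisely the sought quadratic-residue congruence.

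First I would record the algebra of the relevant generator. Since $\zeta_{n,k}=q^*\xi$ is pulled back from the Hopf bundle on $\mathbb{R}P^{n-1}$, its complexification satisfies $(\zeta_{n,k}\otimes\mathbb C)^{\otimes 2}\cong\epsilon_\mathbb{C}$, so the class $\theta:=[\zeta_{n,k}\otimes\mathbb C]-1\in \tilde K(PV_{n,k})$ obeys $\theta^2=-2\theta$, whence $\theta^{j+1}=(-2)^j\theta$ for all $j\ge 0$; the analogous identity holds for $\theta_{N,K}$ on $PV_{N,K}$, and naturality gives $f^*\theta_{N,K}=\theta$. I would then invoke the explicit description of the ring $K(PV_{n,k})$ obtained in \cite{aguz} (for $n\equiv 0\bmod 4$) and in \cite{baru-hac} (for all $n$), in which the splitting of $K$ trivial summands off $N\zeta_{N,K}$ is encoded by a distinguished relation among the generators of $K(PV_{N,K})$. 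Applying $f^*$ to this relation and expanding the powers of $\theta$ via $\theta^{j+1}=(-2)^j\theta$ against the published $\mathbb Z$-basis of $K(PV_{n,k})$ yields a relation that must hold there. Reading off the coefficient of the top surviving generator turns this into a congruence modulo the $2$-primary order of that generator: this order is the power $2^{m-2q}$ (resp.\ $2^{m-3q}$) in the statement, and the binomial coefficients $\binom{mk-1}{q}$, $\binom{ns-1}{q}$, $\binom{r}{q}$ are exactly the coefficients produced by the expansion.

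The appearance of a quadratic-residue condition, rather than a bare divisibility, is the reality-theoretic core of the argument and is what separates the three cases. The splitting is a statement about \emph{real} bundles, so the genuine obstruction lives in $KO(PV_{n,k})$; passing to complex $K$-theory discards information recovered only through the standard relations between realification $r$ and complexification $c$ (notably $r\circ c=2$) recorded earlier, together with the conjugation involution. A class lying in the image of $c$ that is moreover fixed by conjugation (a real structure), or that carries a symplectic structure, is forced to be a \emph{square} modulo the relevant $2$-power, which is exactly why the congruence must be solvable by a quadratic residue. The parity of $n$ selects whether $\zeta_{n,k}$ or $\zeta_{n,k}\otimes\beta_{n,k}$ is the self-conjugate class one tracks, separating case (i) from (ii) and (iii); the parity of $k$, that is $k=2s$ versus $k=2s+1$, decides whether a quaternionic (symplectic) structure is present, and this is what replaces $2^{m-2q}$ by $2^{m-3q}$ while simultaneously shifting the bound from $d-2q-2$ to $d-2q$ in case (iii). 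Each case is then closed by identifying the correct self-conjugate or quaternionic class and applying the matching $K$-theory computation.

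The step I expect to be hardest is the precise execution of the second paragraph: matching the abstract relation produced by $f^*$ against the published basis of $K(PV_{n,k})$, isolating exactly which binomial coefficient governs the top generator, and pinning down its $2$-primary order as $2^{m-2q}$ or $2^{m-3q}$. The identity $\theta^2=-2\theta$ renders the powers of $\theta$ transparent, but the products of $\theta$ with the remaining ($\beta$-derived) generators, the determination of the exact order of the surviving class, and the $2$-adic analysis of the binomial coefficients all rely on the fine structure theorems of \cite{aguz} and \cite{baru-hac}. Above all, upgrading a divisibility statement to the sharp quadratic-residue refinement in each of the three parities is the delicate point on which the theorem ultimately turns.
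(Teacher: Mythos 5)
Your opening reduction is exactly the route the paper itself outlines just before the statement (and attributes to \cite{sz5}): argue contrapositively, use (11) together with $\rank(nk\zeta_{n,k})=nk>d$ to turn a stable-span lower bound for $PV_{n,k}$ into a span lower bound for $nk\zeta_{n,k}$, invoke Theorem \ref{universal} to produce $f:PV_{n,k}\to PV_{N,K}$ with $f^*(\zeta_{N,K})\cong\zeta_{n,k}$, and then exploit the induced homomorphism of $K$-rings, whose source ring is known by \cite{aguz}, \cite{baru-hac}. Up to that point you are on track, with one numerical caveat: (11) supplies $\binom{k+1}{2}$ trivial summands, so the sharp consequence of $\Span^0(PV_{n,k})\ge b$ is $\Span(nk\zeta_{n,k})\ge\binom{k+1}{2}+b$, i.e. $K=nk-2q-1$ in cases (i)--(ii); the stated moduli $2^{m-2q}$, $2^{m-3q}$ and binomial coefficients indexed by $q$ reflect this codimension $2q+1$, and carrying your smaller $K=\binom{k}{2}+b$ through the computation would distort the numerology.

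The genuine gap is in everything after the construction of $f$, which is the actual content of the theorem. First, no manipulation of $\theta$ alone can ever produce a quadratic-residue condition: since $f^*(\theta_{N,K})=\theta_{n,k}$ and $\theta^2=-2\theta$, any relation $P(\theta_{N,K})=0$ with $P(0)=0$ pulls back to $c\,\theta_{n,k}=0$ with $c=\sum_{j\ge1}p_j(-2)^{j-1}$, i.e. to divisibility of $c$ by the $2$-power order of $\theta_{n,k}$ --- a linear congruence, never a quadratic-residue one. So ``reading off the coefficient of the top surviving generator,'' as you propose, cannot yield the statement. Second, the principle you invoke to bridge this --- that a conjugation-fixed class in the image of complexification, or one carrying a symplectic structure, ``is forced to be a square modulo the relevant $2$-power'' --- is not a theorem; you give no argument for it, and no such general principle exists. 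What actually produces the quadratic-residue conditions in \cite{sz5} is that $f^*$ is pinned down by $f^*(\zeta_{N,K})\cong\zeta_{n,k}$ only on $\theta_{N,K}$: the images of the remaining generators of $K(PV_{N,K})$ (those arising from $\beta_{N,K}$, restricting to the generators coming from the Stiefel manifold) are \emph{unknowns}, constrained by the quadratic relations, with binomial-coefficient coefficients, that hold in $K(PV_{N,K})$ by the structure theorems of \cite{aguz}, \cite{baru-hac}. Expanding such an unknown image against the additive generators of $K(PV_{n,k})$ and using $\theta^2=-2\theta$ converts each such relation into a quadratic congruence modulo the order of $\theta_{n,k}$, and solvability of a quadratic congruence modulo $2^e$ is precisely the condition that its discriminant be a quadratic residue; the three cases then record how these relations (and the order of $\theta$) change with the parity of $N=nk$. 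That step --- setting up the unknowns, the quadratic relations, and the discriminants in each parity case --- is the heart of the proof, and your proposal neither contains it nor would reach it along the path you describe.
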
 

Part (i) of the above result was stated without proof in \cite{ksz}.  

We point out here two conjectures of Korba\v{s} and Zvengrowski \cite[p. 100]{kz96}: \\
{\it Conjecture A:}  $\Span^0 (PV_{n,k})=\Span (PV_{n,k})$ for all $n,k$.\\
{\it Conjecture B:}  $\Span (PV_{n,k})\ge \kappa_{n,k}$ where $\kappa_{n,k}:=\Span (nk\zeta_{n,1})-{k+1\choose 2}$.\\

Note that Conjecture A is stronger than Conjecture B.  Indeed if conjecture A holds, then $\Span PV_{n,k}=\Span^0 (PV_{n,k})=\Span(nk\zeta_{n,k})-{k+1\choose 2}$.  Since $\zeta_{n,k}=p^*(\zeta_{n,1})$ where $p:PV_{n,k}\to PV_{n,1}=\mathbb{R}P^{n-1}$ pulls back $\zeta_{n,1} $ to $\zeta_{n,k}$, we see that $\Span(nk\zeta_{n,k})\ge \Span(nk\zeta_{n,1})$ and so 
we conclude that $\Span(PV_{n,k})\ge \kappa_{n,k}$.   Conjecture A has been verified in many cases 
by Korba\v{s} and Zvengrowski \cite{kz96} using the work of Koschorke \cite{koschorke} (Theorem \ref{stabspan} above).
They also verified Conjecture B in all cases except when $n$ is odd and $k=2$ by using a boot-strapping argument.

\subsection{Quotients of $W_{n,k}$ by cyclic groups.}
The complex Stiefel manifold $W_{n,k}=U(n)/U(n-k)$ is acted on by the circle group $\mathbb{S}^1=Z(U(n)).$ Therefore 
for any $m\ge 2$, one has a natural action of the cyclic group $\mathbb{Z}_m\subset \mathbb{S}^1$ which is 
free.
The quotient space is denoted $W_{n,k;m}$ and is called the $m$-projective (complex) Stiefel manifold. 
It is clear 
that one has a principal $\mathbb{S}^1$-bundle with projection $W_{n,k;m}\to PW_{n,k}$ and a covering projection 
$W_{n,k}\to W_{n,k;m}$ with deck transformation group 
$\mathbb{Z}_m$.  Let $\xi_{n,k;m}$ (or more briefly $\xi$) denote the complex line bundle 
which is the pull-back of the bundle $\zeta_{n,k}$ over $PW_{n,k}$.  
The smooth manifolds $W_{n,k;m}$ were studied in \cite{gs}.   We merely state here without proof the results 
obtained therein concerning 
the span and parallelizability of $W_{n,k;m}$.  We leave out the case $k=1$ which is the standard lens space
$L_m$ of dimension $2n-1$.

The tangent bundle of $W_{n,k;m}$ satisfies  the following  isomorphism of (real) vector bundles as can be seen 
using (12):
\[ \tau W_{n,k;m}\oplus k^2\epsilon=nk\xi_{n,k;m}\eqno(13).\]

We state without proof the following result due to Gondhali and Sankaran \cite{gs}.

\begin{theorem} 
Let $2\le k<n$ and $m\ge 2$.  Then\\
(i)  $\Span(W_{n,k;m})>\Span^0(PW_{n,k})\ge \dim (W_{n,k;m})-2n+1$; moreover, when $n$ is even 
$\Span (W_{n,k;m})> \dim(W_{n,k;m})-2n+3$.\\
(ii) $\Span(W_{n,k;m})>\Span^0 (W_{n,k-1;m}).$\\
(iii) $W_{n,n-1;m}$ is parallelizable.
\end{theorem}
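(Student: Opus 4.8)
All three parts rest on one mechanism: each $W_{n,k;m}$ is the total space of a fibration over a simpler manifold whose tangent bundle splits as a pull-back plus a vertical summand carrying a canonical nowhere-zero field, and this field lets one convert a \emph{stable} span estimate on the base into an \emph{honest} span estimate on the total space. Part (iii) is the quickest: $W_{n,n-1}\cong SU(n)$, and under this identification the generator of the $\mathbb{Z}_m$-action (scaling a frame by $\omega=e^{2\pi i/m}$) becomes right translation by $\mathrm{diag}(\omega,\dots,\omega,\omega^{-(n-1)})\in SU(n)$. Hence $W_{n,n-1;m}$ is the quotient of the Lie group $SU(n)$ by a finite, hence discrete, cyclic subgroup, and Borel--Hirzebruch (Theorem~\ref{bh}) gives parallelizability immediately.

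For the main estimate in (i), recall that $p\colon W_{n,k;m}\to PW_{n,k}$ is a principal $\mathbb{S}^1$-bundle, so its vertical tangent bundle is trivial and $\tau W_{n,k;m}\cong\epsilon\oplus p^*\tau PW_{n,k}$ (Example~\ref{basicexamples}(iv)). Setting $s:=\Span^0(PW_{n,k})$ and writing $\tau PW_{n,k}\oplus\epsilon\cong\eta\oplus(s+1)\epsilon$, I would pull back and use the vertical $\epsilon$ as the stabilizing summand to obtain $\tau W_{n,k;m}\cong p^*\eta\oplus(s+1)\epsilon$, so that $\Span(W_{n,k;m})\ge s+1>\Span^0(PW_{n,k})$. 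To bound $s$ from below I would use (12), $\tau PW_{n,k}\oplus(k^2+1)\epsilon\cong nk\zeta$, together with $\zeta=q^*H$ for the Hopf bundle $H$ over $\mathbb{C}P^{n-1}=PW_{n,1}$ and the coordinate projection $q$. A complex bundle of rank $nk$ over $\mathbb{C}P^{n-1}$ splits off $nk-(n-1)$ trivial complex lines, so $\Span_{\mathbb{R}}(nkH)\ge 2(nk-n+1)$; pulling these sections back gives the same bound for $nk\zeta$, whence the stable geometric dimension of $\tau PW_{n,k}$, which equals that of $nk\zeta$, is at most $2n-2$. Since $\dim PW_{n,k}=2nk-k^2-1$ this yields $\Span^0(PW_{n,k})\ge 2nk-k^2-2n+1=\dim W_{n,k;m}-2n+1$.

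For (ii), I would realize $W_{n,k;m}$ as the unit sphere bundle $S(E)$ of a complex rank-$(n-k+1)$ bundle $E$ over $W_{n,k-1;m}$: the forgetful map $W_{n,k}\to W_{n,k-1}$ is the unit sphere bundle of the complex complement bundle $\beta_{n,k-1}$, and the free $\mathbb{Z}_m$-action descends it to such an $E$. Then $\tau W_{n,k;m}\cong p^*\tau W_{n,k-1;m}\oplus T^v$, where the vertical bundle $T^v$ is the orthogonal complement of the tautological unit section $v$ of $p^*E$. Since $E$ is complex, $Jv$ is a nowhere-zero section of $T^v$, so $T^v\cong\epsilon\oplus T''$. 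Using this $\epsilon$ exactly as in (i) to absorb the stabilizing line bundle of the base gives $\tau W_{n,k;m}\cong p^*\eta\oplus T''\oplus(s'+1)\epsilon$ with $s'=\Span^0(W_{n,k-1;m})$, and hence $\Span(W_{n,k;m})\ge s'+1>\Span^0(W_{n,k-1;m})$.

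The genuinely hard part is the even-$n$ refinement of (i), which demands two extra independent fields, i.e. $\Span^0(PW_{n,k})\ge\dim W_{n,k;m}-2n+3$; equivalently, one must reduce $nk\zeta$ one further complex step, to real rank $2n-4$. This cannot be seen on $\mathbb{C}P^{n-1}$, where the obstruction to reducing $nkH$ below real rank $2n-2$ is the nonzero Euler class $\binom{nk}{n-1}c_1(H)^{n-1}$; the gain must be extracted on $PW_{n,k}$ itself, where $c_1(\zeta)^{n-1}$ is a torsion class once $k\ge 2$. The plan is to pin down the order of $c_1(\zeta)$ from the known ring $K(PW_{n,k})$ and the Adams order of the Hopf bundle, and then to show that for $n$ even the $2$-adic valuation of $\binom{nk}{n-1}$ exceeds the order of the obstruction, so that $q^*\!\big(\binom{nk}{n-1}c_1(H)^{n-1}\big)=0$ and the extra reduction goes through. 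This $K$-theoretic/obstruction computation---rather than the purely formal bundle bookkeeping of (i)--(iii)---is where the real difficulty lies, and it is exactly the parity of $n$ that governs the divisibility count.
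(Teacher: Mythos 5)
The paper itself does not prove this theorem: it is quoted verbatim from Gondhali--Sankaran \cite{gs} with the words ``we merely state here without proof,'' so your proposal can only be judged on its own merits. Most of it is correct. Part (iii) is right: under $W_{n,n-1}\cong SU(n)$ the generator of the $\mathbb{Z}_m$-action becomes right translation by $\mathrm{diag}(\omega,\ldots,\omega,\omega^{-(n-1)})\in SU(n)$, and Theorem \ref{bh} applies. Part (ii) and the first claim of (i) are also right, and for the right reason: the vertical trivial line of the principal $\mathbb{S}^1$-bundle $W_{n,k;m}\to PW_{n,k}$ (Example \ref{basicexamples}(iv)), respectively the section $iv$ of the vertical bundle of the sphere bundle $S(E)\to W_{n,k-1;m}$, buys exactly one destabilization, so the stabilized splitting $\tau B\oplus\epsilon\cong\eta\oplus(s+1)\epsilon$ of the base becomes an honest splitting upstairs and yields the strict inequalities; and combining (12) with $\zeta_{n,k}=q^*H$ and the fact that a complex bundle of rank $nk$ over $\mathbb{C}P^{n-1}$ splits off $nk-n+1$ trivial complex lines gives $(2nk-k^2-1)-(2n-2)=\dim W_{n,k;m}-2n+1$, as you claim.

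The genuine gap is the even-$n$ refinement, which you offer only as a plan, and the plan would fail as stated. Arranging $q^*\bigl(\binom{nk}{n-1}c_1(H)^{n-1}\bigr)=0$ kills only the \emph{primary} obstruction to a nowhere-zero section of the rank-$(2n-2)$ bundle $q^*\eta_{\mathbb{R}}$ over $PW_{n,k}$. Since $\dim PW_{n,k}=2nk-k^2-1$ vastly exceeds $2n-2$, there remain higher obstructions in $H^{j}\bigl(PW_{n,k};\pi_{j-1}(\mathbb{S}^{2n-3})\bigr)$ for every $j$ with $2n-2<j\le 2nk-k^2-1$, and nothing in your outline addresses them; ``the extra reduction goes through'' is exactly the unproved point. (In this literature, pinning down the order of $c_1(\zeta)$ via $K(PW_{n,k})$ is a device for \emph{upper} bounds on span, never for producing sections.)

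Moreover, the parity of $n$ enters through a different and far more elementary mechanism, which your proposal misses entirely. When $n$ is even, $\mathbb{C}^{n}\cong\mathbb{H}^{n/2}$ and right multiplication by $i,j,k$ commutes with left scalar multiplication; hence it commutes with the $\mathbb{S}^1$- and $\mathbb{Z}_m$-actions. One consequence is three explicit everywhere-independent invariant fields $v\mapsto(v_1q,\ldots,v_kq)$, $q=i,j,k$, on $W_{n,k;m}$. More importantly, $nk\xi_{n,k;m}$ is then the realification of a quaternionic bundle pulled back from the lens space $W_{n,1;m}=\mathbb{S}^{2n-1}/\mathbb{Z}_m$, and over that $(2n-1)$-dimensional base a quaternionic bundle of $\mathbb{H}$-rank $nk/2$ splits off trivial $\mathbb{H}$-lines down to $\mathbb{H}$-rank $n/2-1$, i.e.\ real rank $2n-4$. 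With (13) this gives $\Span(nk\xi_{n,k;m})\ge 2nk-2n+4$ and hence $\Span^0(W_{n,k;m})\ge\dim W_{n,k;m}-2n+4$ with no obstruction theory at all --- this is where ``$n$ even'' is really used, not in $2$-adic valuations of $\binom{nk}{n-1}$. What remains, and what is the genuinely hard content of the even-$n$ statement, is converting this \emph{stable} estimate into the asserted bound on \emph{span}: your single vertical $\epsilon$ destabilizes only once, and the span-equals-stable-span criteria (Theorem \ref{stabspan}, and the cases $k$ even, $n$ odd, $n\equiv2\bmod4$ quoted from \cite{gs}) do not cover $n\equiv0\bmod4$ with $k$ odd, where $\dim W_{n,k;m}\equiv7\bmod8$. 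Neither your plan nor formal bundle bookkeeping supplies that step.
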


Using Koschorke's Theorem \ref{stabspan} the following result was obtained in \cite{gs}. 

\begin{theorem}
Let $2\le k<n$ and $m\ge 2$.  Then $\Span(W_{n,k})=\Span^0(W_{n,k})$ in each of the following cases:
(i) $k$ is even, (ii)  $n$ is odd, (iii) $n\equiv 2 \mod 4$. \hfill $\Box$
\end{theorem}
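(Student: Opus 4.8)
The plan is to deduce each case from the appropriate part of Koschorke's Theorem \ref{stabspan}, according to the residue of $d:=\dim W_{n,k;m}=2nk-k^2$ modulo $4$ and $8$. Before separating cases I would record three facts valid for all $2\le k<n$, $m\ge 2$, writing $M:=W_{n,k;m}$. First, $M$ is orientable: $W_{n,k}=SU(n)/SU(n-k)$ is simply connected, and the deck group $\mathbb{Z}_m\subset\mathbb{S}^1$ acts by orientation-preserving diffeomorphisms since it lies in the connected group $\mathbb{S}^1$; hence $w_1(M)=0$. Second, $\chi(M)=0$: because $\rank U(n-k)<\rank U(n)$, Theorem \ref{spanvanishing} gives $\chi(W_{n,k})=0$, and $\chi(M)=\chi(W_{n,k})/m=0$ since $W_{n,k}\to M$ is an $m$-fold cover. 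Third, the real Kervaire semi-characteristic satisfies $\kappa(M)=0$, which is the crux of the argument.

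To compute $\kappa(M)$ I would pass to rational cohomology. As $\mathbb{Z}_m$ sits inside the connected group $\mathbb{S}^1$, it acts trivially on $H^*(W_{n,k};\mathbb{Q})$, so the transfer for the cover $W_{n,k}\to M$ yields a degree-preserving isomorphism $H^*(M;\mathbb{Q})\cong H^*(W_{n,k};\mathbb{Q})$. The latter is the exterior algebra on $k$ generators in the odd degrees $2(n-k)+1,2(n-k)+3,\ldots,2n-1$, with Poincar\'e polynomial $P(t)=\prod_{i=0}^{k-1}\bigl(1+t^{2(n-k)+1+2i}\bigr)$. The sum of the even Betti numbers is $\tfrac12(P(1)+P(-1))=\tfrac12(2^k+0)=2^{k-1}$, which is even once $k\ge 2$; hence $\kappa(M)\equiv 2^{k-1}\equiv 0\pmod 2$.

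With these in hand the three cases fall out by a congruence count. In (i), $k$ even makes $d=k(2n-k)$ even, so with $\chi(M)=0$ part (a) of Theorem \ref{stabspan} applies; the same disposes of the even-$k$ subcases of (ii) and (iii). For $k$ odd and $n$ odd one finds $d\equiv 1\pmod 4$; since $M$ is orientable the twisted semi-characteristic $R_L(M)$ reduces to $\kappa(M)=0$ by Remark \ref{lmp}, and part (b) gives the equality. For $k$ odd and $n\equiv 2\pmod 4$ one finds $d\equiv 3\pmod 8$; here I must also check $w_2(M)=0$. From (13) the total Stiefel-Whitney class is $w(\tau M)=(1+a)^{nk}$ with $a=c_1(\xi)\bmod 2$, so $w_2(M)=nk\,a$, which vanishes because $n$ is even. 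Thus $M$ is spin, and by the Lusztig--Milnor--Peterson identity of Remark \ref{lmp} we obtain $\hat\chi_2(M)=\kappa(M)=0$, so part (c) applies.

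The main obstacle is the third preliminary fact, the vanishing of $\kappa(M)$: one must justify the transfer isomorphism $H^*(M;\mathbb{Q})\cong H^*(W_{n,k};\mathbb{Q})$ from triviality of the $\mathbb{Z}_m$-action, after which the exterior-algebra shape makes the parity of $2^{k-1}$ immediate. A secondary subtlety, specific to case (iii), is that $\hat\chi_2(M)$ cannot be read off the mod-$2$ cohomology directly---which carries $m$-torsion---so one instead routes through $\kappa(M)$ using $w_2(M)=0$ and the Lusztig--Milnor--Peterson formula.
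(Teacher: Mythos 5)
Your proposal is correct and takes precisely the route the paper indicates: the paper states this theorem without proof, attributing it to Gondhali--Sankaran \cite{gs} as an application of Koschorke's Theorem \ref{stabspan}, and that is exactly the argument you reconstruct. Your supporting verifications are all sound --- orientability via the connectedness of $\mathbb{S}^1$, $\chi(W_{n,k;m})=0$ by covering-space multiplicativity, $\kappa(W_{n,k;m})=2^{k-1}\equiv 0 \pmod 2$ via the transfer isomorphism and the exterior-algebra form of $H^*(W_{n,k};\mathbb{Q})$, the congruences $d\equiv 1 \pmod 4$ (case (ii), $k$ odd) and $d\equiv 3\pmod 8$ (case (iii), $k$ odd), and $w_2=0$ from the bundle relation (13) together with the Lusztig--Milnor--Peterson identity of Remark \ref{lmp} --- so the argument goes through as written.
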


Let $2\le k<n$ and $m\ge 2, 1\le r<n$.
Define positive integers $m_r$ as follows:  $m_r:=m$ if $r<n-k$; $m_r:=\gcd\{m, {n\choose j}\mid n-k<j\le r\}.$

It is easily seen that $H^2(W_{n,k;m};\mathbb{Z})\cong \mathbb{Z}_m$ generated by the first Chern class of the complex 
line bundle associated to the $\mathbb{S}^1$-extension of the universal covering projection $W_{n,k}\to W_{n,k;m}$. Denoting this generator by $y_2$, it turns out that the order of $y_2^r\in H^{2r}(W_{n,k;m};\mathbb{Z})$ is $m_r$.  
In particular, the height of $y_2$ is the largest $r\le n$ such that $m_r>1$.  By computing the Pontrjagin 
class of $W_{n,k;m}$ one obtains the following result.

\begin{theorem} Let $2\le k<n$ and let $m\ge 2$. 
With notation as above, if there exists an $r\ge 1$ such that $m_{2r}$ does not divide ${nk\choose r}$, then 
$W_{n,k;m}$ is not stably parallelizable.   In particular, if $W_{n,k;m}, k<n-1,$ is stably parallelizable, then $m$ divides 
 $nk$.  The manifold $W_{n,n-1;m}$ is parallelizable for all  $m$.
\end{theorem}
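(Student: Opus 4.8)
The plan is to detect the non-triviality of the stable tangent bundle through a single Pontrjagin class. By the isomorphism (13), $\tau W_{n,k;m}\oplus k^2\epsilon$ is the realification $(nk\xi_{n,k;m})_\br$ of the complex bundle $nk\xi_{n,k;m}$, so if $W_{n,k;m}$ were stably parallelizable then every Pontrjagin class of $(nk\xi)_\br$ would have to vanish. First I would compute these classes explicitly. Writing $y_2=c_1(\xi_{n,k;m})$, one has $c(nk\xi)=(1+y_2)^{nk}$ and $c(\overline{nk\xi})=(1-y_2)^{nk}$, so the standard relation between the Pontrjagin classes of a realified complex bundle and its Chern classes gives $\sum_i(-1)^ip_i\big((nk\xi)_\br\big)=c(nk\xi)\,c(\overline{nk\xi})=(1-y_2^2)^{nk}$. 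Reading off the degree-$4r$ term and cancelling the sign yields $p_r(W_{n,k;m})=\binom{nk}{r}y_2^{2r}$, the identification with $p_r$ of the tangent bundle being legitimate because Pontrjagin classes are stable and hence unaffected by the trivial summand $k^2\epsilon$.

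Next I would invoke the stated fact that $y_2^{2r}\in H^{4r}(W_{n,k;m};\bz)$ has order exactly $m_{2r}$. Consequently $p_r(W_{n,k;m})=0$ if and only if $m_{2r}\mid\binom{nk}{r}$; hence the existence of an $r\ge 1$ with $m_{2r}\nmid\binom{nk}{r}$ forces $p_r\ne 0$ and obstructs stable parallelizability, which is the first assertion. For the ``in particular'' clause I would specialize to $r=1$, where $p_1(W_{n,k;m})=nk\,y_2^2$. When $k<n-1$, so that $n-k\ge 2$, the piecewise definition of $m_r$ gives $m_2=m$ (the index set $\{j:n-k<j\le 2\}$ is empty once $n-k\ge 2$, leaving $m_2=\gcd\{m\}=m$). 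Thus stable parallelizability of $W_{n,k;m}$ with $k<n-1$ compels $m\mid nk$.

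For the final assertion I would argue geometrically rather than through characteristic classes. The manifold $W_{n,n-1;m}$ is the total space of a principal $\bs^1$-bundle $\pi:W_{n,n-1;m}\to PW_{n,n-1}$, and since $k=n-1\ge 2$ forces $n\ge 3$, Theorem \ref{parallel-projectivestiefel}(ii) tells us $PW_{n,n-1}$ is parallelizable. By Example \ref{basicexamples}(iv) the vertical bundle of a principal $\bs^1$-bundle is trivial of rank one, so $\tau W_{n,n-1;m}\cong \pi^*\big(\tau PW_{n,n-1}\big)\oplus\epsilon$; pulling back a trivial bundle over a parallelizable base then makes this bundle trivial, whence $W_{n,n-1;m}$ is parallelizable.

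I expect the main obstacle to be purely bookkeeping rather than conceptual: carrying out the Chern-to-Pontrjagin conversion so that the coefficient comes out exactly as $\binom{nk}{r}$ (with the sign genuinely cancelling), and correctly extracting $m_2=m$ from the piecewise definition of $m_r$ in the boundary case $k=n-2$. Everything else rests on the stability of Pontrjagin classes and on the order computation for the powers of $y_2$, both of which are already available.
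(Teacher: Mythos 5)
Your proof is correct and follows exactly the route the paper indicates (the paper states this result without proof, saying only that it follows ``by computing the Pontrjagin class of $W_{n,k;m}$''): the computation $p_r(W_{n,k;m})=\binom{nk}{r}y_2^{2r}$ from the isomorphism (13) and the stability of Pontrjagin classes, combined with the stated fact that $y_2^{2r}$ has order $m_{2r}$, is precisely the intended obstruction, and your specialization to $r=1$ with $m_2=m$ when $k<n-1$ settles the ``in particular'' clause. Your circle-bundle argument for the parallelizability of $W_{n,n-1;m}$ --- triviality of the vertical bundle of the principal $\mathbb{S}^1$-bundle $W_{n,n-1;m}\to PW_{n,n-1}$ plus the parallelizability of $PW_{n,n-1}$ for $n\ge 3$ from Theorem \ref{parallel-projectivestiefel}(ii) --- likewise matches the technique the paper uses in analogous situations (e.g.\ Proposition \ref{torus} and Theorem \ref{quotsofsun}).
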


\begin{remark}{\em 
Gondhali and Subhash \cite{gondhali-subhash} introduced a generalization of complex projective Stiefel manifolds, which depend 
on a $k$-tuple $l:=(l_1,\ldots, l_k)$ of positive integers with $\gcd \{l_1,\ldots, l_k\}=1$.  These 
are homogeneous spaces $P_lW_{n,k}=U(n)/\mathbb S^1\times U(n)$ where the group $\mathbb S^1\subset U(k)$ consists of diagonal matrices $\textrm{diag}(z^{l_1},\ldots,z^{l_k}), |z|=1$.    
They obtained results 
on the (stable) parallelizability of these homogeneous spaces.  Basu and Subhash \cite{basu-subhash} obtained, among other things, upper bounds for the span of $P_l W_{n,k}$.
}
\end{remark}

\subsection{Grassmann manifolds, flag manifolds and related spaces} 

Let $\mathbb{F}G_{n,k}$ denote the space of $k$-dimensional $\mathbb{F}$-vector subspaces of 
$\mathbb{F}^n$, where $\mathbb{F}=\mathbb{R},\mathbb{C},$ or $\mathbb{H}$.   One has the following 
description of $\mathbb{F}G_{n,k}$ as a homogeneous 
space: $\mathbb{R}G_{n,k}=O(n)/(O(k)\times O(n-k))=SO(n)/S(O(k)\times O(n-k))$, $\bc G_{n,k}=U(n)/(U(k)\times U(n-k))$, and $\mathbb{H}G_{n,k}=Sp(n)/(Sp(k)\times Sp(n-k))$.   It is clear that $\mathbb{F}G_{n,1}$ is the $\mathbb{F}$-projective space $\mathbb{F}P^{n-1}$. 

More generally, one has the $\mathbb{F}$-flag manifold defined as follows:  Suppose that $\mu:=(n_1,\ldots, n_r)$ is a sequence of positive numbers with sum $|\mu|:=\sum_{1\le j\le r}n_j=:n$.  Then the real flag manifold of type $\mu$ is the coset space $O(n)/(O(n_1)\times \cdots \times O(n_r)):=\mathbb{R}G(\mu).$  The complex (resp. quaternionic) flag manifold are defined as $\mathbb{C}G(\mu)=U(n)/(U(n_1)\times \cdots\times U(n_r))$ (resp. $\mathbb{H}G(\mu):=
Sp(n)/(Sp(n_1)\times \cdots\times Sp(n_r))$) respectively.  Clearly $\mathbb{F}G(n_1,n_2)$ is just the Grassmann 
manifold 
$\mathbb{F}G_{n,n_1}$.  One may identify $\mathbb{F}G(\mu)$ with the space of flags $\underline{V}:=(V_1,\ldots, V_r)$ where 
$V_j\subset \mathbb{F}^n$ is a (left) $\mathbb{F}$-vector space of dimension $n_j$ and $V_i\perp V_j$ if $1\le i<j\le r$. 
(Note that $V_r$ is determined by the rest of the $V_j$.)  
It is clear that $\mathbb{F}G(\mu)\cong \mathbb{F}G(\lambda)$ if $\lambda$ is a permutation of $\mu$.  For this 
reason, one may assume that $n_1,\ldots, n_r$ is an increasing (or a decreasing) sequence.   It is readily verified that $\dim_\mathbb{R} \mathbb{F}
G(\mu)=(\dim_\mathbb{R}\mathbb{F})(\sum_{1\le i<j\le r} n_in_j)$.   It turns out that any complex flag manifold 
has the structure of a complex projective variety.    When $r>2$, one has an obvious fibre bundle projection 
$p_j:\mathbb{F}G(\mu)\to \mathbb{F}G(\mu(j)), 1\le j<r,$ where $\mu(j)$ is the sequence obtained from $\mu$ by 
replacing $n_j, n_{j+1}$ by $n_j+n_{j+1}.$   The fibre of this bundle is readily seen to be the Grassmann 
manifold $\mathbb{F}G(n_j,n_{j+1})$.  

The complex and quaternionic flag manifolds are simply connected.  However, this is not true of 
the real flag manifolds. Indeed, $\pi_1(\mathbb{R}G(\mu))\cong (\mathbb{Z}_2)^{r-1}$ 
(where $r$ is the length of $\mu$), except when $n=2, \mu=(1,1)$ which corresponds to the case 
of the circle, $\mathbb{R}P^1$.   The {\it oriented flag manifold} of type $\mu$, denoted $\wt{G}(\mu)$ 
is defined as the coset space 
$SO(n)/(SO(n_1)\times \cdots\times SO(n_r))$.  It may be identified with the space of all oriented 
flags $(V;\sigma), V\in \mathbb{F}G(\mu)$ and $\sigma=(\sigma_1,\ldots,\sigma_r)$ where $\sigma_j$ 
is an orientation on $V_j, 1\le j\le r$ with the restriction that these orientations induce the 
standard orientation on $\mathbb{R}^n=V_1\oplus\cdots\oplus V_r$.  (Thus $\sigma_r$ is determined by $\sigma_j, 1\le j<r$.) The natural projection 
$q:\wt{G}(\mu)\to \mathbb{R}G(\mu), (V,\sigma)\mapsto V$ which forgets the orientations on the 
flags is a covering map. It is universal except when 
$\mu=(1,1)$, in which case it is the double covering of the circle.    The deck transformation group is generated by 
the elements $t_j: \wt{G}(\mu)\to \wt{G}(\mu), 1\le j<r,$ which reverses the orientation on $V_j$ 
and on $V_r$.   

Let $\gamma_j(\mu)$ (more briefly $\gamma_j$) denote the canonical $n_j$-plane bundle over $\mathbb{F}G(\mu)$ whose 
fibre over a flag $\underline{V}=(V_1,\ldots, V_r)\in \mathbb{F}G(\mu)$ is the vector space $V_j$. Evidently 
we have the $\mathbb{F}$-bundle isomorphism
\[\gamma_1(\mu)\oplus\cdots\oplus \gamma_r(\mu)\cong n\epsilon_\mathbb{F}.\eqno(14)\]
The tangent bundle of $\mathbb{F}G(\mu)$ has the following description:
Recall that $\hom_\mathbb{F}(\xi, \eta)\cong \bar{\xi}\otimes \eta$ 
as $Z(\mathbb{F})$-vector bundles.  Here $\bar{\xi}$ denotes the same underlying real vector bundle 
but with conjugate $\mathbb{F}$-structure;  when $\mathbb{F}=\mathbb{R}$, $\bar{\xi}=\xi$.  
\[\tau \mathbb{F}G(\mu)\cong \oplus_{1\le i<j\le r}\hom_{\mathbb{F}}(\gamma_i,\gamma_j)\cong\oplus_{1\le i<j\le r}\bar{\gamma}_i\otimes_{\mathbb F} \gamma_j\eqno(15) \]
as $Z(\mathbb{F})$-vector bundles where $Z(\mathbb{F})$ denotes the centre of the division ring $\mathbb{F}$.
We refer the reader to \cite{lam} for a proof.  

We shall denote the bundle $q^*(\gamma_j)$ by $\wt{\gamma}_j$. Then, from (15), we see that tangent bundle $\tau \wt{G}(\mu)$ is isomorphic to $\oplus_{1\le i<j\le r} \wt{\gamma}_i\otimes \wt{\gamma}_j$.   The bundle $\wt{\gamma}_j$ is canonically oriented:  the orientation on the fibre of $\wt{\gamma}_j$ over $(\underline{V}, \sigma)\in \wt{G}(\mu)$ is the {\it 
oriented vector space} $(V_j,\sigma_j)$; it follows that the tangent bundle of $\wt{G}(\mu)$ is also canonically 
oriented.  The deck transformation $t_j$ induces a bundle isomorphism $Tt_j:\tau \wt{G}(\mu)\to \tau \wt{G}(\mu)$ which 
preserves the summands $\wt{\gamma}_k\otimes\wt{\gamma_l}.$  It preserves the orientation  on $\wt{\gamma}_k\otimes 
\wt{\gamma}_l$, $k<l$,  if and only if one of the following holds: 
(a) $(k,l)=(j,r), n_j\equiv n_r \mod 2$, (b) 
$k=j, l<r, n_l$ is even, (c) $k\ne j, l=r, n_k$ is even, or (d) $\{k,l\}\cap \{j,r\}=\emptyset$.   As $t_j$ preserves the orientation 
on $\tau \wt{G}(\mu)$ if and only if it reverses the orientation on an even number of summands $\wt{\gamma}_{k}\otimes 
\wt{\gamma}_{l}, 1\le k<l\le r$, it follows that $t_j$ is orientation preserving on $\wt{G}(\mu)$ if and only if 
$n_j\equiv n_r \mod 2$.   Hence   
it follows that $\mathbb{R}G(\mu)$ is orientable if and only if $n_j\equiv n_r $ for {\it every} $j, 1\le j<r.$  
This fact may also be verified by computing the first Stiefel-Whitney class.
As remarked already, the complex and quaternionic flag manifolds are simply connected. It follows that 
they are orientable.  

We have the following theorem concerning the (stable) parallelizability of $\mathbb{F}$-flag manifolds.  
The case of Grassmann manifolds $\mathbb{F}G(n_1,n_2)$ was settled in full generality by Trew and Zvengrowski \cite{tz}.  
See also \cite{hs}, \cite{y}, \cite{bako}.  The (stable) parallelizability of $\mathbb F$-flag manifolds 
was settled by Sankaran and Zvengrowski \cite{sz}.  It turns out that the proof in 
most of the cases $r\ge 3$ follows easily from the results on Grassmann manifolds.  The result for the class of complex flag manifolds is a special case of a more general result, namely Theorem \ref{unitarygroupquotients}, due to Singhof \cite{singhof}.   
See also \cite{sw}, \cite{sw-err} where the result for quaternionic flag manifolds was  obtained.  Korba\v{s} \cite{korbas85} obtained 
the results for real flag manifolds using Stiefel-Whitney classes.

\begin{theorem}  Let $\mu=(n_1,\ldots, n_r)$ where $n_1\ge \ldots \ge n_r\ge 1$, $r\ge 2,$ and let $n:=\sum_{1\le j\le r}n_j.$ Let $\mathbb{F}=\mathbb{R},\mathbb{C}, 
\mathbb{H}$.  Then $\mathbb{F}G(\mu)$ is stably parallelizable in the following cases:\\
(i) $\mathbb{F}=\mathbb{R}, \mathbb{C},\mathbb{H}$ and  $n_1=1$ for all $j$.  Moreover, when $n_1=1$,  
$\mathbb FG(\mu)$ is parallelizable only when $\mathbb F=\mathbb R$.   When $\mathbb F=\mathbb C,\mathbb H$, the remaining 
$\mathbb F$-flag manifolds are not stably parallelizable.\\ 
(ii) Let $\mathbb F=\mathbb R$ and $n_1>1, r\ge 2$.  Other than the projective spaces $\mathbb{R}P^{n_1}=\mathbb{R}G(n_1,1)$ when $n_1=3, 7$, 
none of the flag manifolds $\mathbb RG(\mu)$ are stably parallelizable.
\end{theorem}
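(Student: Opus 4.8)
The plan is to reduce the whole statement to the Grassmannian case $r=2$, which is settled in full generality by Trew--Zvengrowski \cite{tz}: for $\mathbb{F}=\mathbb{C},\mathbb{H}$ the Grassmannian $\mathbb{F}G(a,b)$ with $\max\{a,b\}\ge 2$ is never stably parallelizable (for $\mathbb{F}=\mathbb{C}$ this also follows from Singhof's Theorem \ref{torussimple}, the first Pontrjagin class being nonzero), while among the real Grassmannians with $a\ge b\ge 1$ only $\mathbb{R}G(a,1)=\mathbb{R}P^a$ with $a\in\{1,3,7\}$ is stably parallelizable. The reduction rests on the tangent bundle formula (15). Since $\mathbb{F}G(\mu)\cong\mathbb{F}G(\lambda)$ for any permutation $\lambda$ of $\mu$, I may reorder and single out the last two parts; fixing $V_1,\dots,V_{r-2}$ and letting $(V_{r-1},V_r)$ range over the decompositions of the fixed complement $W\cong\mathbb{F}^{\,n_{r-1}+n_r}$ realizes $F:=\mathbb{F}G(n_{r-1},n_r)$ as a submanifold $i\colon F\hookrightarrow\mathbb{F}G(\mu)$. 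First I would prove the reduction lemma $i^*\tau\mathbb{F}G(\mu)\cong\tau F\oplus(\text{trivial})$: in (15) the summand $\hom_{\mathbb F}(\gamma_{r-1},\gamma_r)$ restricts to $\tau F$, every summand with both indices $\le r-2$ restricts to a trivial bundle, and for each $c\le r-2$ the pair $\hom_{\mathbb F}(\gamma_c,\gamma_{r-1})\oplus\hom_{\mathbb F}(\gamma_c,\gamma_r)$ restricts to $n_c(\gamma_{r-1}^F\oplus\gamma_r^F)=n_c(n_{r-1}+n_r)\epsilon_{\mathbb F}$, trivial by (14) over $F$ (no conjugation intervenes, because $c$ is the smaller index, so the conjugation falls on the $\gamma_c$, which restricts to a trivial bundle). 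Hence stable parallelizability of $\mathbb{F}G(\mu)$ forces that of $\mathbb{F}G(n_a,n_b)$ for every pair of parts.

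Next I would settle the full flags $n_1=\cdots=n_r=1$. In the real case $\mathbb{R}G(1,\dots,1)=O(n)/O(1)^n$, and $O(1)^n$ is finite, hence discrete, so the space is parallelizable by Borel--Hirzebruch (Theorem \ref{bh}). In the complex case $\mathbb{C}G(1,\dots,1)=U(n)/T^n$ is a quotient by a maximal torus, hence stably parallelizable by Proposition \ref{torus}, but not parallelizable since $\dim T^n=n=\rank U(n)$. In the quaternionic case $\mathbb{H}G(1,\dots,1)=Sp(n)/Sp(1)^n$ I would invoke Theorem \ref{adjoint}: complexifying, $\operatorname{Sym}^2$ and $\wedge^2$ of the standard representation of $Sp(n)$ restrict to $\sum_j\operatorname{ad}_j$ up to trivial summands, so $[Ad_H]\otimes\mathbb{C}$ is visibly a restriction, and the real refinement (the $Sp(n)$ clause, via the ideal $J(Sp(n),H)$) is exactly what is verified in \cite{sw,sw-err}; the space is not parallelizable because $\rank Sp(1)^n=n=\rank Sp(n)$ gives $\Span=0$ by Theorem \ref{spanvanishing}. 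This proves (i) for full flags, including the dichotomy that parallelizability occurs only for $\mathbb{F}=\mathbb{R}$.

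For flags with some $n_j>1$ I would run the reduction. Reordering so that a part $\ge 2$ is among the last two, the lemma produces a Grassmannian $\mathbb{F}G(n_a,n_b)$ with $n_a\ge 2$. For $\mathbb{F}=\mathbb{C},\mathbb{H}$ such a Grassmannian is never stably parallelizable, so $\mathbb{F}G(\mu)$ is not either; this completes (i). For $\mathbb{F}=\mathbb{R}$, if even one pair of parts yields a non-stably-parallelizable real Grassmannian we are done, and by the base cases the only stably parallelizable real Grassmannians with largest part $>1$ are $\mathbb{R}P^3=\mathbb{R}G(3,1)$ and $\mathbb{R}P^7=\mathbb{R}G(7,1)$. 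The reduction therefore disposes of every $\mu$ with $n_1>1$ except the two shapes $(3,1,\dots,1)$ and $(7,1,\dots,1)$ with $r\ge 3$, for which every pairwise Grassmannian is $\mathbb{R}P^{3}$, $\mathbb{R}P^{7}$, or $\mathbb{R}P^1=\mathbb{S}^1$ --- all stably parallelizable --- so the reduction yields no obstruction.

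The hard part is these two residual real families. Their universal covers are Stiefel manifolds --- indeed $\widetilde{\mathbb{R}G}(3,1,\dots,1)=SO(r+2)/SO(3)=V_{r+2,\,r-1}$, and similarly with $7$ in place of $3$ --- which are parallelizable for $r\ge 3$; since the deck group $(\mathbb{Z}_2)^{\,r-1}$ has order invertible in $\mathbb{Q}$, a transfer argument forces the rational Pontrjagin classes of $\mathbb{R}G(3,1,\dots,1)$ and $\mathbb{R}G(7,1,\dots,1)$ to vanish, so the only possible obstruction is a mod $2$ Stiefel--Whitney class. I would then compute the total class $w(\tau)=\prod_{i<j}w(\gamma_i\otimes\gamma_j)$ by the splitting principle inside $H^*(\mathbb{R}G(\mu);\mathbb{Z}_2)$ --- presented through the generators $w(\gamma_i)$ and the relation $\prod_i w(\gamma_i)=1$ coming from (14) --- and exhibit a nonzero $w_j$ with $j>0$, following Korba\v{s} \cite{korbas85}. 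This Stiefel--Whitney bookkeeping, together with the real-versus-complex subtlety in the quaternionic full flag (where complexification discards precisely the information that the $Sp(n)$ case needs), is where I expect the genuine difficulty to lie; everything else is formal bundle algebra over (14) and (15) resting on the Grassmannian base cases.
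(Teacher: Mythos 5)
Your proposal is correct, and its skeleton --- reduce to two-part Grassmannians, settle the full flags, then dispose of the exceptional real families --- is the same as the paper's, but the execution differs at three points. (1) Your reduction lemma is proved by restricting (15) directly; the paper obtains the identical statement by viewing $\mathbb{F}G(n_1,n_2)$ as a fibre of the bundle $\mathbb{F}G(\mu)\to\mathbb{F}G(n_1+n_2,n_3,\ldots,n_r)$, whose normal bundle is trivial. These are interchangeable. (2) The genuine divergence is in the residual families: the paper reduces them (same fibre argument) to $\mathbb{R}G(3,1,1)$ and $\mathbb{R}G(7,1,1)$ and then uses the double covering $PV_{n,2}\to\mathbb{R}G(n-2,1,1)$, $[v_1,v_2]\mapsto(\{v_1,v_2\}^{\perp},\mathbb{R}v_1,\mathbb{R}v_2)$: since $PV_{5,2}$ and $PV_{9,2}$ are not stably parallelizable by Theorem \ref{parallel-projectivestiefel}, which is already established in the paper, neither are the base spaces. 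This costs nothing beyond results on hand, whereas you defer a Korba\v{s}-style Stiefel--Whitney computation. That computation does succeed --- using the relation $w(\gamma_1)w(\gamma_2)w(\gamma_3)=1$ one finds $w_2(\mathbb{R}G(m,1,1))=w_1(\gamma_2)w_1(\gamma_3)\ne 0$ for $m=3,7$, and restriction to this sub-flag disposes of all $r\ge 3$ --- so there is no gap; but note that your guiding remark that the transfer argument leaves \emph{only} a mod $2$ obstruction is not literally true (KO-torsion can obstruct stable parallelizability when all rational Pontrjagin classes and all Stiefel--Whitney classes vanish, e.g.\ for $\mathbb{R}P^{15}$); it is the exhibited $w_2\ne 0$, not that heuristic, that closes the case. (3) Citations: you quote \cite{tz} for the $r=2$ base case, where the paper gives its own proof (restriction to $\mathbb{F}G_{n-k+1,1}$, then the order of the Hopf bundle for $\mathbb{F}=\mathbb{R}$, resp.\ $p_1$ for $\mathbb{C},\mathbb{H}$); and for $\mathbb{H}G(1,\ldots,1)$ you invoke Theorem \ref{adjoint} together with \cite{sw}, \cite{sw-err}, where the paper cites Lam \cite{lam}. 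On this last point be careful: Theorem \ref{adjoint}(i) requires $[Ad_H]$ to be the restriction of a genuine \emph{real} representation of $Sp(n)$, which your complexified identity does not supply; the argument is rescued by the inclusion $J(Sp(n),H)\subset\ker(\alpha)$, so the $J$-membership verified in \cite{sw}, \cite{sw-err} does suffice for stable parallelizability --- your own caveat about the real-versus-complex subtlety is exactly the right worry.
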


\begin{proof}
We shall first consider the case $r=2$ namely that of the Grassmann manifold.    
Since $\mathbb{F}G_{n,k}\cong \mathbb{F}G_{n,n-k},$ we may assume that  $1\le k\le n/2$, $n\ge 4$.  Since the case $k=1$ 
corresponds to the $n-1$-dimensional projective space which is well-known and classical, we shall assume 
that $k\ge 2$.  As is customary, we shall denote the canonical bundles $\gamma_1, \gamma_2=\gamma_1^\perp$ over 
$\mathbb{F}G_{n,k}$ by $\gamma_{n,k},\beta_{n,k}$ respectively.

One has an inclusion $h_j:\mathbb{F}G_{n-j,k-j}\subset \mathbb{F}G_{n,k},1<j<k$, induced by the inclusion of 
$\mathbb{F}^{n-j}\subset \mathbb{F}^n$. Explicitly, $h_j(V)=V+\mathbb{F}e_{n-j+1}+\cdots+\mathbb{F}e_n\in 
\mathbb{F}G_{n,k}, ~\forall ~V\in 
\mathbb{F}G_{n-j,k-j}$.  (Here $e_i$ denotes the standard basis element.)   Now $h_j^*(\gamma_{n,k})=\gamma_{n-j,k-j}
\oplus j\epsilon_\mathbb{F}, h_j^*(\beta_{n,k})=\beta_{n-j,k-j}$.   Therefore, we have the following $Z(\mathbb{F})$-bundle isomorphisms:
\[
\begin{array}{rcl} h_j^*(\tau \mathbb{F}G_{n,k})&\cong & h_j^*(\bar{\gamma}_{n,k})\otimes_\mathbb{F}h_j^*(\beta_{n,k})\\ 
&=& (\bar{\gamma}_{n-j,k-j}\oplus j\epsilon_\mathbb{F})\otimes \beta_{n-j,k-j} \\ 
&=& \tau \mathbb{F}G_{n-j,k-j}\oplus j\beta_{n-j,k-j}.
\end{array}\]
Put $j=k-1$ and use the isomorphism $\tau \mathbb{F}G_{n,1}\oplus \epsilon_\mathbb{F} \cong n\bar{\gamma}_{n,1}$ 
to obtain 
$h_{k-1}^*(\tau \mathbb{F}G_{n,k})\oplus \epsilon_\mathbb{F}=(n-k+1)\bar{\gamma}_{n-k+1,1}\oplus (k-1)\beta_{n-k+1,1}.$
Now use the isomorphism $\gamma_{n-k+1,1}\oplus\beta_{n-k+1,1}\cong (n-k+1)\epsilon_\mathbb{F}$ and the 
fact that, for any $\mathbb{F}$-vector bundle $\xi$, we have the isomorphism  $\bar{\xi}\cong \xi$ of real vector bundles,
we obtain, in $KO(\mathbb{F}G_{n-k+1,1})$ the following:
\[
\begin{array}{rcl} h^*([\tau \mathbb{F}G_{n,k}]) &=&(n-k+1)[\gamma_{n-k+1,1}]+(k-1)[\beta_{n-k+1,1}]\\
&=&(n-k+1)[\gamma_{n-k+1,1}+(k-1)
\epsilon_\mathbb{F}]-(k-1)[\gamma_{n-k+1}]\\
&=& (n-2k+2)[\gamma_{n-k+1,1}]+d(n-k+1)(k-1)\epsilon_{\mathbb R}\\
\end{array}\]
where $d=\dim_\mathbb{R}\mathbb{F}$.

When $\mathbb{F}=\mathbb{R}$, it follows from the known order of the Hopf bundle $\gamma_{n-k+1,1}$, that 
$(n-2k+2)[\gamma_{n-k+1,1}]\ne 0$ in $KO(\mathbb{R}P^{n-k})$ since $(n-k)\ge k\ge 2$.  When 
$\mathbb{F}=\mathbb{C}, \mathbb{H}$,     
an easy computation of the first Pontrjagin class (of the underlying real vector bundle) shows that $(n-2k+2)\gamma_{n-k+1,1}$ is not stably trivial as a real vector bundle.    (Trew and Zvengrowski \cite{tz} 
altogether avoided computation of Pontrjagin classes, but used information about $KO$-theory in the case of complex and 
quaternionic projective spaces.)
Thus in all cases,  $\mathbb{F}G_{n,k}$ is not stably parallelizable. 

It remains to consider the case $r\ge 3$.  In this case, we have a fibre inclusion  
$\mathbb{F}G(n_1,n_2)\hookrightarrow\mathbb{F}G(\mu)$ of the fibre bundle projection 
$\mathbb{F}G(\mu)\to \mathbb{F}G(n_1+n_2,\ldots, n_r)$.  
Since the normal bundle to fibre inclusion is trivial, we see that the tangent bundle of $\mathbb{F}G(\mu)$ 
restricts to the stable tangent bundle of $\mathbb{F}G(n_1,n_2)=\mathbb{F}G_{n_1+n_2,n_2}$.  Therefore 
$\mathbb{F}G(\mu)$ is not stably parallelizable except, possibly, when $\mathbb{F}G(n_1,n_2)$ is stably 
parallelizable.  Thus $\mathbb{F}G(\mu)$ is not stably parallelizable, except possibly in the following cases: 
(i) $\mathbb{F}=\mathbb R, \mathbb{C}, \mathbb{H},$ and $n_1=1$; (b) $\mathbb{F}=\mathbb{R}$ and $n_2=1, n_1=3,7$.

Case (i):  The parallelizability of $\mathbb{R}G(1,\ldots, 1)$ follows from Therem \ref{bh}.  
Note that $\mathbb{C}G(1,\ldots, 1)=U(n)/(U(1)\times \cdots\times U(1))$ is stably parallelizable by 
Theorem \ref{torus}.  The stable parallelizability of $\mathbb HG(1,\ldots, 1)$ was first proved by Lam \cite{lam}, 
making essential use of the functor $\mu^2$, which is an analogue of the second exterior power in the real and complex case.   

Case (ii):    
We need only show that $\mathbb{R}G(3,1,1)$ and $\mathbb{R}G(7,1,1)$ are not 
stably parallelizable.   Note that one has a double covering projection $PV_{n,2}\to \mathbb{R}G(n-2,1,1)$ defined 
as $[v_1, v_2]\mapsto (\{v_1,v_2\}^\perp, \mathbb{R}v_1, \mathbb{R}v_2)$.  Since $PV_{5,2}$ and $PV_{9,2}$ are 
not stably parallelizable by Theorem \ref{parallel-projectivestiefel}, it follows that $\mathbb{R}G(3,1,1), \mathbb{R}G(7,1,1)$ are also not stably parallelizable.  
\end{proof}

We shall write $G(\mu)$ to denote the real flag manifold of type $\mu$.  We assume that $n_1\ge \cdots\ge n_r$.
We observed already that the map $q: \wt{G}(\mu)\to G(\mu)$ that forgets the orientations on the flags is a 
covering projection with deck transformation group $(\mathbb{Z}_2)^{r-1}$ generated by $t_j, 1\le j<r$.  
Recall that $\wt{\gamma}_i(\mu)$ or more briefly $\wt{\gamma}_i$ is the pull-back bundle $q^*(\gamma_i(\mu))$.   
In the case of the oriented Grassmann manifolds, we have $r=2$ and we write $\wt{\gamma}_{n,k}, \wt{\beta}_{n,k}$ to 
denote $\wt{\gamma}_1, \wt{\gamma}_2$ respectively.   
From (14) we obtain that $\wt{\tau}(\mu):=\tau(\wt{G}(\mu))=\bigoplus_{1\le i<j\le r}\wt{\gamma}_i\otimes \wt{\gamma}_j$.

Next we have the following result concerning the oriented flag manifolds.
As usual, $\mu=(n_1,\ldots,n_r)$, $n=\sum_{1\le j\le r}n_j$.  

\begin{theorem} {\em (\cite{mm}, \cite{sz2}, \cite{sz4}.) }
(i) Let $2\le k\le n/2$.  The oriented Grassmann manifold $\wt{G}_{n,k}$ is stably parallelizable if and only if $(n,k)=(4,2),(6,3)$.   The manifold $\wt{G}_{6,3}$ is parallelizable but $\wt{G}_{4,2}\cong \mathbb{S}^2\times \mathbb{S}^2$ is not. \\
(ii) Let $r\ge 3$.   Then $\wt{G}(\mu)$ is stably parallelizable if and only if any one of the following holds: $\{n_1,\ldots, n_r\}$ is contained in $\{1,2\}$ or in $\{1,3\}$.
\end{theorem}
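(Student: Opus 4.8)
The plan is to reduce everything to the oriented Grassmann manifolds of part (i) and to attack those by computing rational Pontryagin classes of $\tau\widetilde{G}_{n,k}\cong\widetilde\gamma_{n,k}\otimes\widetilde\beta_{n,k}$. I would start from the standard identity, valid modulo torsion, that for real bundles $\xi,\eta$ of ranks $a,b$ one has $p_1(\xi\otimes\eta)=b\,p_1(\xi)+a\,p_1(\eta)$. Since $\widetilde\gamma\oplus\widetilde\beta$ is trivial we get $p_1(\widetilde\beta)=-p_1(\widetilde\gamma)$, whence $p_1(\widetilde{G}_{n,k})=(n-2k)\,p_1(\widetilde\gamma)$. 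Because $p_1(\widetilde\gamma)$ generates $H^4(\widetilde{G}_{n,k};\mathbb{Q})\neq 0$ whenever $k\geq 2$ and $n-k\geq 2$, this class is nonzero for every non-middle Grassmannian ($2\leq k<n/2$), so none of those is stably parallelizable.

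The middle case $n=2k$ is the crux, since there $p_1(\widetilde{G}_{2k,k})=0$ and one must pass to $p_2$. Applying the splitting principle to the Pontryagin roots of $\widetilde\gamma,\widetilde\beta$ and using $p_1(\widetilde\beta)=-p_1(\widetilde\gamma)$ together with $p_2(\widetilde\gamma)+p_2(\widetilde\beta)=p_1(\widetilde\gamma)^2$, all the root data collapse and I expect the clean identity $p_2(\widetilde{G}_{2k,k})=6\,p_1(\widetilde\gamma)^2$. Everything then reduces to deciding when $p_1(\widetilde\gamma)^2$ survives in $H^8(\widetilde{G}_{2k,k};\mathbb{Q})$. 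Here I would use the Borel presentation of $H^*(\widetilde{G}_{2k,k};\mathbb{Q})$ as a quotient of the polynomial algebra on $p_i(\gamma)$ and $e(\gamma)$ by the relations coming from $p(\beta)=p(\gamma)^{-1}$: for $k=2$ the obstruction vanishes for dimension reasons, for $k=3$ the relation $p_2(\beta)=0$ forces $p_1(\gamma)^2=0$, while for every $k\geq 4$ no relation reaches $p_1(\widetilde\gamma)^2$ in degree $8$, so $p_2\neq 0$. This isolates exactly $(n,k)=(4,2),(6,3)$.

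For the positive side of (i), $\widetilde{G}_{4,2}\cong\mathbb{S}^2\times\mathbb{S}^2$ is a product of stably parallelizable manifolds, hence stably parallelizable, but $\chi=4\neq 0$ rules out parallelizability. For $\widetilde{G}_{6,3}=SO(6)/(SO(3)\times SO(3))$ I would invoke the sufficiency half of Theorem \ref{adjoint} (which holds for any connected $G$): since $Ad_H=\mathfrak{so}(3)\oplus\mathfrak{so}(3)$ is the sum of the two standard three–dimensional representations, $[Ad_H]=\rho([\mathbb{R}^6])$ is the restriction of the standard representation of $SO(6)$, so $\widetilde{G}_{6,3}$ is stably parallelizable. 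As it is odd-dimensional with $\dim=9\notin\{1,3,7\}$, Theorem \ref{b-k} reduces parallelizability to the vanishing of $\hat\chi_2(\widetilde{G}_{6,3})$, which I would check from its mod $2$ cohomology (equivalently, via Remark \ref{lmp}, from $\kappa=0$ and $w_2w_7[\widetilde{G}_{6,3}]=0$).

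Finally, for (ii) I would argue as follows. For each pair $i<j$ the projection merging the $i$-th and $j$-th blocks exhibits $\widetilde{G}_{n_i+n_j,\min(n_i,n_j)}$ as a fibre with trivial normal bundle, so if $\widetilde{G}(\mu)$ is stably parallelizable then so is each such Grassmannian; by (i) any pair with $n_i,n_j\geq 2$ must satisfy $n_i=n_j\in\{2,3\}$, forcing the block sizes that exceed $1$ to share a common value in $\{2,3\}$ and giving the listed conditions. Conversely, when all blocks lie in $\{1,2\}$ the isotropy $H=\prod SO(n_j)$ is a torus and Proposition \ref{torus} applies; when they lie in $\{1,3\}$, $[Ad_H]=\bigoplus\mathfrak{so}(3)$ is again a restriction $\rho([\mathbb{R}^n]-b\,[\mathbb{R}])$ of a representation of $SO(n)$, so Theorem \ref{adjoint} yields stable parallelizability. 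The hard part throughout will be the middle-Grassmannian analysis: establishing the identity $p_2=6\,p_1(\widetilde\gamma)^2$ and, above all, pinning down exactly when $p_1(\widetilde\gamma)^2$ survives in $H^*(\widetilde{G}_{2k,k};\mathbb{Q})$, which rests on the precise relation structure of the cohomology ring and on the delicate small cases $k=2,3$.
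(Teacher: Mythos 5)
Your strategy is correct in substance but, on the negative side of (i), genuinely different from the paper's. The paper never computes Pontryagin classes of oriented Grassmannians: it chains the embeddings $\wt{G}_{n-2,k-1}\hookrightarrow \wt{G}_{n,k}$ (with trivial normal bundles) to reduce every case with $2\le k\le n/2$, $(n,k)\ne (4,2),(6,3)$, to $\wt{G}_{m,2}$ ($m\ge 5$) or $\wt{G}_{8,4}$, kills $\wt{G}_{5,2}$ with $w_2$, and then works in $KO(\wt{G}_{6,2})$, pulling the relevant classes back along a map $h:\mathbb{C}P^2\to \wt{G}_{6,2}$ and invoking Fujii's computation of $KO(\mathbb{C}P^2)$ \cite{fujii} to show they are nonzero. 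You instead argue uniformly with rational Pontryagin classes: $p_1(\tau)=(n-2k)\,p_1(\wt{\gamma})$ off the middle dimension, and $p_2(\tau)=6\,p_1(\wt{\gamma})^2$ on it. Your identity is in fact correct: computing with Pontryagin roots and the relation $p_2(\wt{\gamma})+p_2(\wt{\beta})=p_1(\wt{\gamma})^2$ gives exactly $6p_1(\wt{\gamma})^2$ for every $k$ (for $k=3$ one first gets $3p_1(\wt{\gamma})^2$, but rank-$3$ bundles have $p_2=0$, so the relation forces $p_1(\wt{\gamma})^2=0$ — precisely your mechanism singling out $(6,3)$). Your positive-side arguments (Proposition \ref{torus} for blocks in $\{1,2\}$; Theorem \ref{adjoint} with $[Ad_H]=\rho([\mathbb{R}^n]-b[\mathbb{R}])$ for blocks in $\{1,3\}$; Theorem \ref{b-k} plus vanishing of $\hat{\chi}_2$ for parallelizability of $\wt{G}_{6,3}$) and your reduction of (ii) to pair Grassmannians via fibrations with trivial normal bundle coincide in substance with the paper's; note that the paper's $\Lambda^2$ trick, $\Lambda^2(\wt{\gamma}_i)\cong\wt{\gamma}_i$ for oriented $3$-plane bundles, is just the bundle-level form of your $Ad_{SO(3)}\cong\mathbb{R}^3$. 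What each route buys: the paper's needs no knowledge of $H^*(\wt{G}_{n,k};\mathbb{Q})$ beyond functoriality and one external $KO$ computation; yours is more uniform and classical, but leans on the structure theory of rational cohomology of homogeneous spaces.

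Two steps need repair. First, $p_1(\wt{\gamma})$ does not in general generate $H^4(\wt{G}_{n,k};\mathbb{Q})$ (for instance $H^4(\wt{G}_{6,2};\mathbb{Q})$ is two-dimensional); what you need, and what is true, is only that it is nonzero, and this requires an argument: restrict along the composite embedding $\wt{G}_{n-k+2,2}\hookrightarrow \wt{G}_{n,k}$, which pulls $\wt{\gamma}_{n,k}$ back to $\wt{\gamma}_{n-k+2,2}\oplus (k-2)\epsilon$, and observe that on the quadric $\wt{G}_{m,2}$ one has $p_1(\wt{\gamma}_{m,2})=e(\wt{\gamma}_{m,2})^2\ne 0$. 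Second, and more seriously, your ``Borel presentation'' of $H^*(\wt{G}_{2k,k};\mathbb{Q})$ as a quotient of the characteristic-class algebra is literally valid only in the equal-rank case, i.e.\ $k$ even; for odd $k\ge 5$ the pair $(SO(2k), SO(k)\times SO(k))$ has unequal ranks, the rational cohomology acquires odd-degree generators, and concluding $p_1(\wt{\gamma})^2\ne 0$ from ``$p_1^2$ is not in the ideal'' requires knowing that the kernel of the characteristic map $H^*(B(SO(k)\times SO(k));\mathbb{Q})\to H^*(\wt{G}_{2k,k};\mathbb{Q})$ is exactly the ideal generated by the restrictions from $BSO(2k)$ — true by Cartan's theorem on the Cartan model of a homogeneous space, but not by Borel's equal-rank theorem, so it must be cited or proved. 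Alternatively you can bypass this entirely, in the spirit of the paper: restrict along $\wt{G}_{8,4}\hookrightarrow\wt{G}_{2k,k}$, under which $\wt{\gamma}_{2k,k}$ pulls back to $\wt{\gamma}_{8,4}\oplus(k-4)\epsilon$, so $p_1(\wt{\gamma}_{2k,k})^2$ pulls back to $p_1(\wt{\gamma}_{8,4})^2$; the case $k=4$ is equal-rank, where Borel's theorem applies and a direct check in degree $8$ (allowing also the Euler-class generators $e,e'$ and the relation $ee'$) shows $p_1^2$ is not in the ideal. That settles all $k\ge 4$ at once and completes your argument.
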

 
\begin{proof}  
Consider the inclusion $\wt G_{n-2,k-1}\stackrel{i}{\hookrightarrow} \wt G_{n-1,k-1}\stackrel{j}{\hookrightarrow} \wt G_{n,k}$ 
induced by the inclusion of $\mathbb R^{n-2}\hookrightarrow \mathbb R^{n-1}\hookrightarrow \mathbb R^n$.  Explicitly 
$i(V)=V$ and $j(U)=U+\mathbb Re_n$ with the direct sum orientation, got by adjoining $e_n$ to an oriented basis of $U$.
Denote by $q$ the composition $j\circ i$.  It is easily verified that the normal bundle to the embeddings $i, j, q$ are respectively 
$\wt\beta_{n-2,k-1}, \wt \gamma_{n-1,k-1}, (n-2k-2)\epsilon$ respectively.  Therefore the composition of the embeddings of 
type $q$ leads to embeddings  
$f:\wt G_{n-2k+4,2}\stackrel{q}{\to} \wt G_{n,k}$ when $ n>2k,$ and 
$g:\wt G_{8,4}\to \wt G_{n,k}$ when $n=2k\ge 8$, with  trivial normal bundle in each case.   

So we need only show that $\tau \wt G_{m,2}, m\ge 5,$ and $\tau \wt G_{8,4}$ are not stably trivial. It turns out 
$w_2(\wt G_{5,2})\ne 0$. 

Let $m\ge 6$.  
Using the embedding $i$ repeatedly, we obtain an embedding $\wt G_{6,2}\to \wt G_{m,2}$ we get $\tau \wt{G}_{m,2}
=\tau \wt G_{6,2}\oplus (m-6)\wt \gamma_{6,2}$. 
Similarly, under the embedding $\wt G_{6,2}\hookrightarrow \wt G_{7,3} \hookrightarrow \wt G_{8,4}$, $\tau \wt G_{8,4}$ restricts 
to $\tau \wt G_{6,2}\oplus 2\wt \beta_{6,2}=\wt \gamma_{6,2}\otimes \wt\beta_{6,2}\oplus 2\wt\beta_{6,2} $. 

Working in the ring $KO(\wt G_{6,2})$ we have $[\wt\beta_{6,2}]=6-[\gamma_{6,2}]$ and 
we must show that the elements $x_m:=[\wt\gamma_{6,2}](6-[\wt\gamma_{6,2}])+(m-6)[\wt\gamma_{6,2}]-2(m-2)$ and 
$y:=[\wt\gamma_{6,2}]\cdot (6-[\wt\gamma_{6,2}]+2(6-\wt\gamma_{6,2}]-16$ are not zero.  Set $z=[\wt\gamma_{6,2}]-2$. 
We have $x_m:=(z+2)(4-z)+(m-6)(z+2)-2(m-2)=-z^2+(m-4)z$ and $y=(z+2)(4-z)+2(4-z)-16=-z^2$.  

Now consider the map $h:\mathbb CP^2\to \wt G_{6,2} $ obtained by regarding any complex line $L\subset \mathbb C^3
\cong \mathbb R^{6}$ as a real vector space of dimension $2$ with its natural orientation.  Then 
$h^*(\wt \gamma_{6,2})=\mathbb C\gamma_{3,1}=:\zeta.$   The ring homomorphism $h^*:KO(\wt{G}_{6,2})\to 
KO(\mathbb CP^2)$ maps $z$ to $[\zeta]-2$.  Since $[\zeta]-2, ([\zeta]-2)^2$ generate a free abelian group of rank $2$ 
in $KO^*(\mathbb CP^2) $ by \cite{fujii}, it follows that the same is true of $z,z^2\in KO(\wt G_{6,2})$ and so we conclude 
that $x_m\ne 0~\forall m\ge 6$, and $y\ne 0$.   

The stable parallelizability of $\wt G_{4,2}=SO(4)/SO(2)\times SO(2)$ is immediate from Proposition \ref{torus}.  By the same 
proposition, if $n_1\le 2$, then $\wt {G}(\mu)=SO(n)/S$ where $S$ is a torus and hence it is stably parallelizable. 
It is parallelizable precisely if $S$ is not a maximal torus. 

Next suppose that $n_i=3, 1\le i\le s,$ and $n_j=1$ for $s<j\le r$ for some $s$.     For any oriented vector bundle 
$\xi$ of rank $m$, we have $\Lambda^p(\xi)\cong \Lambda^{m-p}(\xi)$.  Hence we have $\Lambda^2(\wt\gamma_{i})\cong 
\wt\gamma_i, 1\le i\le s$.  Also $\Lambda^2(\wt\gamma_j)=0$ for $j>s$.  
Applying $\Lambda^2$ to both sides of the isomorphism,  
isomorphism $\oplus_{1\le j\le r}\wt \gamma_j=n\epsilon$, and using (15) we obtain 
$\tau \wt G(\mu)=\oplus_{1\le i<j\le r} \wt \gamma_i\otimes \wt \gamma_j$, 
we obtain ${n\choose 2}\epsilon=\tau \wt G(\mu)\oplus (\oplus_{1\le j\le s}\Lambda ^2(\wt \gamma_j))=\tau \wt G(\mu)
\oplus(\oplus _{1\le i\le s} \wt {\gamma}_i)$.  Since $\wt \gamma_j\cong \epsilon$ as $n_j=1$ for $j>s$, we obtain 
$({n\choose 2} +(r-s))\epsilon=\tau \wt G(\mu)\oplus (\oplus_{1\le j\le r}\wt \gamma_j)=\tau \wt G(\mu)
\oplus n\epsilon$ and so $\wt G(\mu)$ is stably parallelizable.   
As for the parallelizability of $\wt G(\mu)$, we apply the Bredon-Kosi\'nski Theorem \ref{b-k}. Evidently 
$\chi(\wt G(\mu))=0$, implying the parallelizability when the dimension is even.  When the dimension is 
odd, leaving out the case $\wt G(3,1)=\mathbb S^3$ and the $7$-dimensional manifold $\wt{G}(3,1,1)$ which 
are parallelizable, 
we must show that the mod  2 Kervaire semicharacteristic $\hat\chi_2(\wt G(\mu))$ vanishes.  This follows from 
the fact that $\wt G(\mu)$ admits a fixed point free $\mathbb Z_2\times \mathbb Z_2$-action (see 
Remark \ref{lmp}).  

Finally, suppose that there exists $i, j\le r$ such that $n_i\ge 3, n_j\ge 2$ but $n_j\ne 3$, then 
the oriented Grassmann manifold $\wt G(n_i,n_j)$ is not stably parallelizable.  Since $\wt G(\mu)$ is 
fibred by $\wt G(n_i,n_j)$, it follows that $\wt G(\mu)$ is not stably parallelizable.
\end{proof}

\begin{remark}{\em 
One has the universal double cover $Spin(4)\to SO(4)$ under which the maximal torus $SO(2)\times SO(2)$ lifts to 
a maximal torus $T$.  One has an isomorphism of Lie groups $Spin(4)\cong Spin(3)\times Spin(3)$ under which $T$ corresponds 
to $\wt{T}=Spin(2)\times Spin(2)$. So 
\[
\begin{array} {rcl} \wt{G}_{4,2}&=& SO(4)/(SO(2)\times SO(2))\\ &\cong &Spin(4)/\wt{T}\\ & \cong & Spin(3)/Spin(2)\times 
Spin(3)/Spin(2) \\& =& \mathbb S^2\times \mathbb S^2.\\
\end{array}\]   
}
\end{remark}

Note that span of complex and quaternionic Grassmann 
manifolds are zero since they have non-vanishing Euler-Poincar\'e characteristic; see (2). 
We have the following 
general result concerning the span of real Grassmann manifolds.   This is essentially due to  
Leite and Miatello \cite{lm} who considered 
oriented Grassmann manifolds. The proof 
given here is due to Zvengrowski (unpublished).  

\begin{theorem} \label{span-grassmann}
When $k$ is even or $n$ is odd, $\Span (\br G_{n,k})=0$.
When $k$ is odd and $n$ even, \[\Span (\br G_{n,k})\geq \Span(\bs^{n-1})=\rho(n)-1.\eqno(16)\]
\end{theorem}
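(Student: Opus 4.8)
The plan is to prove the vanishing assertion through the rank criterion of Theorem \ref{spanvanishing}, and to establish the lower bound by an explicit Radon--Hurwitz construction of vector fields, following the idea of Zvengrowski.

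For the vanishing, I would write $\br G_{n,k}=SO(n)/H$ with $H=S(O(k)\times O(n-k))$, whose identity component is $H_0=SO(k)\times SO(n-k)$. Since $\rank SO(m)=\lfloor m/2\rfloor$, one has $\rank H_0=\lfloor k/2\rfloor+\lfloor(n-k)/2\rfloor$, and a one-line parity check shows that this equals $\lfloor n/2\rfloor=\rank SO(n)$ unless both $k$ and $n-k$ are odd—equivalently, $k$ odd and $n$ even—in which case it drops by exactly one. By Theorem \ref{spanvanishing}, equality of ranks forces $\Span(\br G_{n,k})=0$, and this covers precisely the case $k$ even or $n$ odd. (In the complementary case $\rank H_0<\rank SO(n)$, so $\Span>0$, consistent with the bound below.)

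For the lower bound, assume $k$ odd and $n$ even, so that $\rho(n)\ge 2$ and the Radon--Hurwitz transformations $\mu_2,\ldots,\mu_{\rho(n)}$ of $\br^n$ are available: skew-symmetric orthogonal maps with $\mu_i^2=-\mathrm{id}$ and $\mu_i\mu_j=-\mu_j\mu_i$ for $i\ne j$. Identifying $T_V(\br G_{n,k})\cong\hom(V,V^\perp)$ via the tangent bundle description (15) (namely $\tau\br G_{n,k}\cong\gamma_{n,k}\otimes\beta_{n,k}$), I associate to any linear map $\mu$ of $\br^n$ the assignment $\phi_\mu(V):=P_{V^\perp}\circ(\mu|_V)\in\hom(V,V^\perp)$, where $P_{V^\perp}$ denotes orthogonal projection onto $V^\perp$. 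This varies smoothly with $V$ and is linear in $\mu$, and $\phi_\mu(V)=0$ holds exactly when $\mu(V)\subseteq V$, i.e.\ when $V$ is $\mu$-invariant.

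The crux is to rule out such invariance. When $\mu^2=-\mathrm{id}$, the map $\mu$ endows $\br^n$ with a complex structure, so every $\mu$-invariant subspace is a complex subspace and hence of even real dimension; as $k$ is odd, no $V\in\br G_{n,k}$ is $\mu$-invariant and $\phi_\mu$ is nowhere zero. To get everywhere linear independence of $\phi_{\mu_2},\ldots,\phi_{\mu_{\rho(n)}}$, suppose $\sum_{i=2}^{\rho(n)}a_i\phi_{\mu_i}(V)=0$ for some $V$, with the $a_i$ normalized so that $\sum a_i^2=1$. By linearity $\phi_\mu(V)=0$ for $\mu:=\sum a_i\mu_i$, and the anticommutation relations together with $\mu_i^2=-\mathrm{id}$ collapse the off-diagonal terms to give $\mu^2=-\bigl(\sum a_i^2\bigr)\mathrm{id}=-\mathrm{id}$. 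Thus $\mu$ is again a complex structure, forcing $V$ to be $\mu$-invariant of odd dimension, a contradiction. Hence $\phi_{\mu_2},\ldots,\phi_{\mu_{\rho(n)}}$ are everywhere linearly independent, yielding $\Span(\br G_{n,k})\ge\rho(n)-1$, which equals $\Span(\bs^{n-1})$ by Adams' theorem. The one genuine obstacle is precisely this last observation—that a normalized combination of the $\mu_i$ is itself a complex structure—where the Clifford-type relations and the evenness of complex subspaces combine to push the odd-dimensional $V$ off the vanishing locus.
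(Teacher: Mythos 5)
Your proposal is correct and follows essentially the same route as the paper: the vanishing case is handled by the identical rank comparison via Theorem \ref{spanvanishing}, and the lower bound uses the same vector fields $V\mapsto P_{V^\perp}\circ(\mu_j|_V)$ built from the Radon--Hurwitz transformations, with linear independence reduced to the nonexistence of odd-dimensional invariant subspaces for a normalized combination $\mu=\sum a_i\mu_i$. The only cosmetic difference is that you verify $\mu^2=-\mathrm{id}$ directly from the Clifford relations and invoke the complex-structure argument, whereas the paper phrases the same fact by observing that $\mu$ is a skew-symmetric orthogonal transformation; these are equivalent.
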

\begin{proof}  The rank of $G:=SO(n)$ equals $\lfloor n/2\rfloor$.  Let $H=S(O(k)\times O(n-k))$.  Then $H_0=
SO(k)\times SO(n-k)$ has the same rank 
as $SO(n)$ if and only if $n$ is odd or $k$ is even.  Since $\mathbb RG_{n,k}=G/H$, it follows from 
Theorem \ref{spanvanishing} that $\Span(G_{n,k})>0$ if and only if $n$ is even and $k$ is 
odd. 

Let  $n$ be even and $k$ odd.  Let $r=\rho(n)$ and let $\mu_1=id, \mu_2,\ldots, \mu_r:\mathbb{R}^n\to\mathbb{R}^n$ be 
the Radon-Hurwitz transformations.
Thus $\mu_i\mu_j=-\mu_j\mu_i, \mu_j^2=-id, 2\le i< j\le r$.   (See \S \ref{spanofspheres}.)

Let $v_j(X)\in \hom(X,X^\perp)$ be the composition $X\hookrightarrow \mathbb{R}^n\stackrel{\mu_j}{\to}\mathbb{R}^n  
\stackrel{p}{\to} X^\perp$, $2\le j\le r$.  Here $p$ denotes the orthogonal projection.  
Then $v_j(X)\in T_X\mathbb{R}G_{n,k}, 2\le j\le r,$ and we obtain smooth vector fields $v_2,\ldots, v_r$ on 
$\mathbb{R}G_{n,k}$.  We claim that 
these are everywhere linearly independent on $\mathbb{R}G_{n,k}$.  To see this, note that if $(a_2, \ldots, a_r)\in \mathbb{S}^{r-1}$,  then $\mu:=\sum_{2\le j\le r}a_j\mu_j$ is a skew-symmetric orthogonal transformation of $\mathbb{R}^n$.  
Hence it does not have an {\it odd-dimensional} invariant subspace in $\mathbb{R}^n$.  It follows that, for any $X\in \mathbb{R}G_{n,k}$,  the composition 
$X\hookrightarrow \mathbb{R}^n\stackrel{\mu}{\to}\mathbb{R}^n\stackrel{p}{\to}X^\perp$ is non-zero.  Therefore 
$\sum_{2\le j\le r}a_jv_j(X)\ne 0.$   Thus $\Span(\mathbb{R}G_{n,k})\ge r-1=\rho(n)-1.$
\end{proof}

\begin{remark} \label{spanequalsrh}
{\em  (i) Write $n=2m+a, a=0,1$.
The Weyl group $W(SO(n),T)$ is isomorphic to the semidirect product $\mathbb Z_2^r\rtimes S_m$ where $r=\lfloor (n-1)/2\rfloor.$ The symmetric group $S_m$ acts on $\mathbb Z_2^m$ by permuting the 
factors. When $n=2m$, the group $\mathbb Z_2^{r}$ is identified with the subgroup $\{u=(u_1,\cdots, u_m)\in 
\mathbb Z_2^m\mid \sum u_j=0\}.$  Thus $|W(SO(n),T)|=2^rm!$.  See, for example, \cite[Ch. 14, \S 7]{husemoller}.
Using formula (2), we see that $\chi(\wt{G}_{n,k})
=2 {\lfloor n/2\rfloor\choose \lfloor k/2\rfloor}$ when $n$ is odd or $k$ is even.
The following formula can be obtained from Equation (2). When $n$ is odd or $k$ is 
even, $\chi(\br G_{n,k})= {\lfloor n/2\rfloor\choose \lfloor k/2\rfloor}$ and so $\Span( \br G_{n,k})$ equals zero.  When $n$ is even 
and $k$ is odd, $\chi(\wt{G}_{n,k})=0=\chi(\mathbb RG_{n,k})$ since rank $SO(n)$ exceeds the rank of $SO(k)\times SO(n-k)$. 
The same argument shows that $\chi(\wt{G}(\mu))=0=\chi(G(\mu))$ if and only if there exist $i<j\le r$ 
such that both $n_i,n_j$ are odd.   Thus $\Span(G(\mu))=\Span(\wt{G}(\mu))=0$ if and only if at most one of the numbers 
$n_1,\ldots, n_r$ is odd.  Also 
Theorem \ref{span-grassmann} implies that $\Span(G(\mu))\ge \rho(n)-1$.  In general, however, very 
little information is available concerning the span of a general real flag manifold. 
See \cite{korbas86} and \cite{ilori-ajayi} for span of $\mathbb{R}G(n-2,1,1)$ for special values of $n$. 

(ii) It is known that 
equality holds in (16) in infinitely many cases.   We point out a sample of such results obtained in 
\cite{sankaran-thesis}. 
For example, $\Span(\mathbb RG_{n,3})=3$ when $n$ is of the form $4(2^r+1)$. 
This follows by showing that $w_{d-3}(\mathbb RG_{n,k})\ne 0$ where $d=k(n-k)=\dim \mathbb RG_{n,k}$.  Also when $n\equiv 2\mod 4, k\equiv 1\mod 2$, we have $\dim \mathbb RG_{n,k}\equiv 1\mod 4$.  
Since $n$ is even and $k$ is odd, $\mathbb RG_{n,k}$ is orientable and is an unoriented boundary (see \cite{sankaran-cmb}). 
Using Remark \ref{lmp} it can be seen that the Kervaire semicharacteristic $\kappa(\mathbb RG_{n,k})$ equals $0$ or $1$ according as ${n\choose k}\equiv 0$ or $2 \mod 4$.  
So it follows from Theorem \ref{span2} that $\Span(\mathbb RG_{n,k})=1$ (resp. $\Span(\mathbb RG_{n,k})\ge 2)$ if 
${n\choose k}\equiv 2\mod 4$ (resp. ${n\choose 2}\equiv 0\mod 4$).  
}
\end{remark} 

The last remark should be compared with the following result.   

\begin{theorem} {\em (Leite and Miatello \cite{lm}.)} Let $n-k=2r+1, k=2s+1, $ where $s\ge 1,$ and $r\ge 1$ is odd.  
Suppose that $r+s$ is even so that $n\equiv 2\mod 4$.
Then:\\ $\Span(\wt{G}_{n,k})=\begin{cases}
                                            1&~\textrm{if ~} {r+s\choose r}\equiv 1\mod 2,\\
                                              \ge 2&~\textrm{if~} {r+s\choose r}\equiv 0\mod 2.\\
                                              \end{cases} $
\hfill $\Box$
\end{theorem}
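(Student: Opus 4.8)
The plan is to read off the span from the $\Span\ge 2$ criterion of Theorem \ref{span2}. Set $M=\wt G_{n,k}$ and $d=\dim M=k(n-k)=(2s+1)(2r+1)$. Since $r+s$ is even, $d=4rs+2(r+s)+1\equiv 1\bmod 4$, and as $r,s\ge 1$ we have $d\ge 9\ge 5$. Moreover $M$ is a closed orientable manifold, and since $3\le k\le n-2$ it is simply connected. Thus Theorem \ref{span2}(a) applies and says that $\Span(M)\ge 2$ if and only if both $w_{d-1}(M)=0$ and $\kappa(M)=0$. Because $d$ is odd, Corollary \ref{hopf-odd-dim} already gives $\Span(M)\ge 1$, so the whole problem reduces to deciding the vanishing of these two obstructions.

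First I would dispose of the Stiefel--Whitney obstruction. Since $M$ is simply connected, $H_1(M;\bz_2)=0$, and Poincaré duality for the closed orientable manifold $M$ gives $H^{d-1}(M;\bz_2)\cong H_1(M;\bz_2)=0$. Hence $w_{d-1}(M)$ lives in the zero group and vanishes automatically. This removes one of the two conditions entirely, so that $\Span(M)\ge 2$ becomes equivalent to $\kappa(M)=0$, while $\kappa(M)\ne 0$ forces $\Span(M)=1$.

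The heart of the argument is therefore the computation of the real Kervaire semi-characteristic $\kappa(M)$ of Definition \ref{semichar}. Here I would first simplify $\kappa$: writing $d=2t+1$, the even cohomological degrees available on $M$ are exactly $0,2,\dots,2t$, all strictly below $d$, so $\kappa(M)=\sum_{0\le j\le t}b_{2j}(M)$ counts \emph{all} even Betti numbers. Using $\chi(M)=0$ (the rank of $SO(n)$ exceeds that of $SO(k)\times SO(n-k)$) together with Poincaré duality $b_i=b_{d-i}$, one gets $\sum_{i\ \mathrm{even}}b_i(M)=\tfrac12\sum_i b_i(M)$, so that $\kappa(M)$ is congruent mod $2$ to half the total Betti number of $M$. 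It then remains to compute $\dim_\bq H^*(M;\bq)$. The rational cohomology of $\wt G_{n,k}$ with $k$ and $n-k$ both odd splits as $A\otimes\Lambda(z_{n-1})$, where the odd exterior generator $z_{n-1}$ arises from the (rationally trivial, being $2$-torsion) Euler class of the odd-rank tautological bundle, and $A$ is the subalgebra generated by the Pontrjagin classes of $\wt\gamma_{n,k}$. The relation $p(\wt\gamma)p(\wt\beta)=1$ together with the vanishing $p_i(\wt\gamma)=0$ for $i>s$ and $p_j(\wt\beta)=0$ for $j>r$ presents $A$ as a complete intersection isomorphic, after doubling degrees, to $H^*(\mathbb{C}G_{r+s,s};\bq)$, whose total dimension is $\binom{r+s}{s}=\binom{r+s}{r}$. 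Consequently $\sum_i b_i(M)=2\binom{r+s}{r}$ and $\kappa(M)\equiv\binom{r+s}{r}\bmod 2$.

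Combining the pieces finishes the proof: when $\binom{r+s}{r}$ is odd we have $\kappa(M)\ne 0$, so $\Span(M)<2$ and hence $\Span(M)=1$; when $\binom{r+s}{r}$ is even we have $\kappa(M)=0$ and $w_{d-1}(M)=0$, whence $\Span(M)\ge 2$ by Theorem \ref{span2}(a). I expect the genuinely technical step to be the rational cohomology calculation of $\wt G_{n,k}$, in particular justifying the splitting off of the single odd generator $z_{n-1}$ and identifying the Pontrjagin subalgebra with the complete intersection of dimension $\binom{r+s}{r}$. Once the total Betti number is in hand, the reduction of $\kappa$ to half of it and the automatic vanishing of $w_{d-1}$ are both essentially formal.
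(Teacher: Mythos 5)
The paper offers no proof of this statement: it is quoted from Leite and Miatello \cite{lm} and stated with a terminal $\Box$, so there is nothing internal to compare your argument against. Judged on its own, your proof is correct, and it is surely the intended route (it is the standard one for results of this type): reduce to the $\Span\ge 2$ criterion of Theorem \ref{span2}(a) and compute the real Kervaire semi-characteristic. All the reduction steps check out: $d=(2r+1)(2s+1)\equiv 1\bmod 4$ precisely because $r+s$ is even; $\wt G_{n,k}$ is closed, orientable and simply connected (here $k\ge 3$ and $n-k\ge 3$), so $w_{d-1}(M)\in H^{d-1}(M;\bz_2)\cong H_1(M;\bz_2)=0$ automatically; and $\Span(M)\ge 1$ by Corollary \ref{hopf-odd-dim}. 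The one step you leave as a sketch---$\dim_\bq H^*(\wt G_{n,k};\bq)=2{r+s\choose s}$---is a correct and known computation: rationally the universal Euler class of $BSO(n)$ restricts to $e(\wt\gamma)e(\wt\beta)=0$ (Euler classes of odd-rank bundles are $2$-torsion), hence transgresses to an exterior generator of degree $n-1$, while the Pontrjagin relations $p(\wt\gamma)p(\wt\beta)=1$, $p_i(\wt\gamma)=0$ for $i>s$, $p_j(\wt\beta)=0$ for $j>r$ cut out a complete intersection isomorphic, with degrees doubled, to $H^*(\bc G_{r+s,s};\bq)$; these relations form a regular sequence, so the Eilenberg--Moore/Koszul computation collapses to $A\otimes\Lambda(z_{n-1})$. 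Note also that since $A$ sits in degrees divisible by $4$ and $A\cdot z_{n-1}$ sits in odd degrees, you can read off $\kappa(M)\equiv\dim_\bq A={r+s\choose s}\bmod 2$ directly, without the $\chi=0$ halving step.

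One observation you should make explicit: your proof never uses the hypothesis that $r$ is odd, and in fact that hypothesis makes the first branch of the dichotomy vacuous. If $r$ is odd and $r+s$ is even, then $s$ is also odd, and Lucas' theorem then forces ${r+s\choose r}$ to be even (the last binary digit of $r$ is $1$ while that of $r+s$ is $0$). So, as literally stated, the theorem only ever asserts $\Span(\wt G_{n,k})\ge 2$. Your argument establishes the genuine dichotomy for all $r,s\ge 1$ with $r+s$ even---the case ${r+s\choose r}$ odd occurs only when $r$ and $s$ are both even---which strongly suggests the parity condition on $r$ in the statement is a misprint (presumably for ``even''), and your proof covers that corrected statement as well.
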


In general, the determination of span of real Grassmann manifolds is a wide open 
problem.   The first `non-trivial' case is that of the Grassmann manifold $\mathbb{R}G_{6,3}$.  
In this case the Radon-Hurwitz bound yields  $\Span(\mathbb{R}G_{6,3})\ge 1$. 
We have the following result:

\begin{theorem}
{\em (Korba\v{s}-Sankaran \cite{ks}.)} $\Span (\mathbb{R}G_{6,3})=7$.
\end{theorem}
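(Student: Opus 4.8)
The plan is to compute $\Span(\mathbb{R}G_{6,3})$ by first pinning down the stable span via $KO$-theory and then upgrading to the actual span through Koschorke's theorem. Write $M=\mathbb{R}G_{6,3}$, of dimension $d=k(n-k)=9$; since $n=6$ is even and the two block sizes $3,3$ have equal parity, $M$ is orientable, and $\chi(M)=0$ by Remark \ref{spanequalsrh}. Let $\gamma=\gamma_{6,3}$ and $\beta=\beta_{6,3}$ be the tautological $3$-plane bundle and its complement, so that $\gamma\oplus\beta\cong 6\epsilon$ by (14) and $\tau M\cong\gamma\otimes\beta$ by (15). Put $\lambda:=\det\gamma$; since $\det\gamma\otimes\det\beta\cong\det(6\epsilon)\cong\epsilon$ we have $\det\beta\cong\lambda$, and $w_1(\lambda)=w_1(\gamma)$ generates $H^1(M;\mathbb{Z}_2)\cong\mathbb{Z}_2$, so $\lambda$ is the nontrivial real line bundle on $M$.

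The first step is a clean description of the stable tangent bundle. Applying $\Lambda^2$ to $\gamma\oplus\beta\cong 6\epsilon$ gives $15\epsilon\cong\Lambda^2\gamma\oplus(\gamma\otimes\beta)\oplus\Lambda^2\beta$; since $\Lambda^2\gamma\cong\gamma\otimes\det\gamma$ and $\Lambda^2\beta\cong\beta\otimes\det\beta$ for rank-$3$ bundles, the outer summands combine to $(\gamma\oplus\beta)\otimes\lambda\cong 6\lambda$, whence
\[\tau M\oplus 6\lambda\cong 15\epsilon.\]
Thus in $\widetilde{KO}(M)$ the reduced class of $\tau M$ equals $-6\widetilde\lambda$, where $\widetilde\lambda:=[\lambda]-1$. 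For the upper bound I would note that $M=\mathbb{R}G(3,3)$ is not stably parallelizable (by the stable parallelizability theorem for flag manifolds above, equivalently $-6\widetilde\lambda\ne 0$), and that for an orientable odd-dimensional manifold both $\Span^0=d$ and $\Span^0=d-1$ force stable parallelizability (in the latter case the complementary line bundle has $w_1=0$, hence is trivial). Therefore $\Span(M)\le\Span^0(M)\le 7$.

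The heart of the matter is the lower bound $\Span^0(M)\ge 7$, for which I would prove that $\widetilde\lambda$ has order dividing $8$ in $\widetilde{KO}(M)$. Granting $8\widetilde\lambda=0$, the reduced class of $\tau M$ is $-6\widetilde\lambda=2\widetilde\lambda$, which is exactly the reduced class of the rank-$2$ bundle $2\lambda$; as $\tau M\oplus\epsilon$ and $2\lambda\oplus 8\epsilon$ are stably equal bundles of rank $10>\dim M$, they are isomorphic, giving $\tau M\oplus\epsilon\cong 8\epsilon\oplus 2\lambda$ and hence $\Span^0(M)\ge 7$. Combined with the upper bound this yields $\Span^0(M)=7$. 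To establish $8\widetilde\lambda=0$ I would run the Atiyah--Hirzebruch spectral sequence for $KO$, the essential input being that $w_1(\gamma)$ has height $7$: a short reduction in $H^*(M;\mathbb{Z}_2)\cong\mathbb{Z}_2[w_1,w_2,w_3]/(\bar w_4,\bar w_5,\bar w_6)$ produces the relations $w_1^6=w_2^3$, $w_3^2=w_1^4w_2$ and $w_1^2w_3=w_1^3w_2$, from which $w_1^9=w_3^3+w_3^3=0$ and therefore $w_1^8=0$. This height bound confines the line-bundle class to the part of $\widetilde{KO}(M)$ pulled back from $\mathbb{R}P^7$, where the Hopf class has order $2^{\varphi(7)}=8$.

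Finally I would pass from stable span to span. Since $d\equiv 1\bmod 4$ and $w_1(M)^2=0$, Koschorke's Theorem \ref{stabspan}(b) gives $\Span(M)=\Span^0(M)$ provided the twisted Kervaire semi-characteristic vanishes; as $M$ is orientable this is $\kappa(M)$, and $\kappa(\mathbb{R}G_{6,3})=0$ because ${6\choose 3}=20\equiv 0\bmod 4$ (Remark \ref{spanequalsrh}). Hence $\Span(\mathbb{R}G_{6,3})=\Span^0(\mathbb{R}G_{6,3})=7$. The main obstacle is the order computation $8\widetilde\lambda=0$: the height calculation $w_1^8=0$ is elementary, but turning it into the precise order of the line-bundle class in $\widetilde{KO}(M)$ requires controlling the extensions in the spectral sequence, and it is there that the argument is genuinely delicate.
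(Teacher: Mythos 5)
Your skeleton is exactly that of the paper's proof: the $\Lambda^2$ trick giving $\tau M\oplus 6\lambda\cong 15\epsilon$, the reduction of the lower bound to the statement $8\widetilde\lambda=0$ in $\widetilde{KO}(M)$ (equivalently $8\lambda\cong 8\epsilon$ stably), and the final upgrade from stable span to span via Theorem \ref{stabspan}(b) together with $\kappa(\mathbb{R}G_{6,3})=0$ all coincide with \cite{ks} as presented above; your upper bound (orientability plus non-stable-parallelizability forces $\Span^0\le d-2$) is a harmless variant of the paper's observation that $w_2(M)\ne 0$. The genuine gap is the one step you defer: proving $8\widetilde\lambda=0$. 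The route you propose---$w_1(\lambda)$ has height $7$, hence the Atiyah--Hirzebruch spectral sequence ``confines'' $\widetilde\lambda$ to classes pulled back from $\mathbb{R}P^7$---is not a proof and cannot be completed by spectral-sequence bookkeeping alone. The height of $w_1$ does not determine the order of a line-bundle class in $\widetilde{KO}$; the general theorem on this question (essentially the main result of \cite{bhlsz}, cited in this survey) says that height $h$ forces the order to be $2^{\varphi(h)}$ or $2^{\varphi(h)+1}$, and both cases occur. So $w_1^8=0$ yields at best $16\widetilde\lambda=0$; if the order were in fact $16$, then $-6\widetilde\lambda=10\widetilde\lambda$ and your geometric-dimension argument produces nothing like $\Span^0\ge 7$. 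Note also that cellular approximation only compresses the classifying map of $\lambda$ into $\mathbb{R}P^9$ (since $\dim M=9$), giving order dividing $2^{\varphi(9)}=32$; compressing further into $\mathbb{R}P^7$ is precisely the assertion that needs proof, and it does not follow from $w_1^8=0$.

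That missing compression is the actual content of the Korba\v{s}--Sankaran argument. They use Zvengrowski's $3$-fold vector product $\mu:\mathbb{R}^8\times\mathbb{R}^8\times\mathbb{R}^8\to\mathbb{R}^8$ \cite{z} to define $f:\mathbb{R}G_{8,3}\to\mathbb{R}P^7$, $V\mapsto \mathbb{R}\mu(u,v,w)$ for an orthonormal basis $u,v,w$ of $V$; one checks that $f$ induces an isomorphism on fundamental groups, whence $f^*(\gamma_{8,1})\cong\det(\gamma_{8,3})$. Restricting to $\mathbb{R}G_{6,3}\subset\mathbb{R}G_{8,3}$ exhibits $\lambda$ as a pullback of the Hopf bundle over $\mathbb{R}P^7$, whose reduced class has order $2^{\varphi(7)}=8$, and this is what gives $8\lambda\cong 8\epsilon$. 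For what it is worth, your cohomological computation is correct: the relations $w_2^3=w_1^6$, $w_3^2=w_1^4w_2$, $w_1^2w_3=w_1^3w_2$ do hold in $H^*(\mathbb{R}G_{6,3};\mathbb{Z}_2)$ and give $w_1^9=0$, hence $w_1^8=0$ by Poincar\'e duality (using $H^1(M;\mathbb{Z}_2)=\mathbb{Z}_2\langle w_1\rangle$); but this is an input to, not a substitute for, the order computation. To complete your proof you must either reproduce such an explicit factorization of the classifying map through $\mathbb{R}P^7$, or rule out order $16$ by an honest computation of $\widetilde{KO}(\mathbb{R}G_{6,3})$; the height bound by itself leaves exactly that factor of $2$ unresolved, which is the difficulty you acknowledge but do not overcome.
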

\begin{proof}
Recall that $\tau(G_{n,k})\cong \gamma_{6,3}\otimes \beta_{6,3}.$ Since $\gamma_{6,3}\oplus \beta_{6,3}=6\epsilon$, 
taking the second exterior power on both sides we obtain $\Lambda^2(\gamma_{6,3}\oplus \beta_{6,3})=15\epsilon$.  Expanding the left hand side we obtain $\Lambda^2(\gamma_{6,3}\oplus\beta_{6,3})
=\Lambda^2(\gamma_{6,3})\oplus \gamma_{6,3}\otimes \beta_{6,3}\oplus\Lambda^2(\beta_{6,3})
=\gamma_{6,3}\otimes \xi\oplus \tau\mathbb{R}G_{6,3}\oplus\beta_{6,3}\otimes \eta$, 
where in the last equality $\xi:=\det(\gamma_{6,3}),\eta:=\det(\beta_{6,3})$ and we have used  the bundle isomorphism 
$\Lambda^r(\omega)\cong \Lambda^{m-r}\omega\otimes \det(\omega)$ for any vector bundle $\omega$ of rank $m$. 
Since a real line bundle is determined by its first Stiefel-Whitney class, 
 it is readily seen that 
 $\xi\cong \eta$ and so we obtain 
 \[15\epsilon=\tau\mathbb{R}G_{6,3}\oplus (\gamma_{6,3}\oplus \beta_{6,3})\otimes \xi
 =\tau \mathbb{R}G_{6,3}\oplus 6\xi.\eqno(17)\]  
 
 It is known that there exists a $3$-fold {\it vector product} $\mu:\mathbb{R}^8\times \mathbb{R}^8\times \mathbb{R}^8\to \mathbb{R}^8$; see \cite{wh}.  An explicit formula was given by Zvengrowski \cite{z}.   
 The map $\mu$ has the following properties: (a) $\mu$ is 
 multilinear, (b) $\mu(u,v,w)=0$ if $u,v,w$ are linearly dependent, (c) $\mu(u,v,w)\in \mathbb{R}^8$ is a unit 
 vector that  
 depends only on the oriented $3$-dimensional vector space spanned by $u,v,w$ if they are pairwise 
 orthogonal.  
The map $V\mapsto \mathbb{R}\mu(u,v,w)$ is a well-defined continuous map 
 $f:\mathbb{R}G_{8,3}\to \mathbb{R}P^7$ where $u,v,w$ is any orthonormal basis $V\in \mathbb{R}G_{6,3}$.
 It is not difficult to see that $f$ induces an isomorphism of fundamental groups.  From this it follows easily that 
 $\det(\gamma_{8,3})\cong f^*(\gamma_{8,1})$.  Restricting 
 to $\mathbb{R}G_{6,3}$ and using the fact that $8\gamma_{8,1}\cong 8\epsilon$, we obtain that $8\xi\cong 8\epsilon$ 
 whence $15\epsilon=7\epsilon\oplus 8\xi$.  Therefore, using (17) we conclude that $\tau\mathbb{R}G_{6,3}$ is 
 {\it stably isomorphic} to $7\epsilon \oplus 2\xi$.   Hence $\Span^0(\mathbb{R}G_{6,3})\ge 7$.  
 By a straightforward computation we have $w_2(\mathbb RG_{6,3})\ne 0$ and so $\Span^0(\mathbb RG_{6,3})=7$.    
 
Since by Remark \ref{spanequalsrh}(ii), $R_L(\mathbb RG_{6,3})=
 \kappa(\mathbb RG_{6,3})=0$, by appealing to Theorem \ref{stabspan} we conclude that 
 $\Span(\mathbb RG_{6,3})=\Span^0(\mathbb RG_{6,3})=7$.  
 \end{proof} 

For results on the (stable) parallelizability of {\it partially oriented flag manifolds} the reader is referred to \cite{sz2}, \cite{sz3}.  
For the orientability of a generalized real flag manifolds see \cite{patrao}.
  
\section{Homogeneous spaces for non-compact Lie groups}
We now turn to the case where $G$ is a connected {\it non-compact} Lie group. 

For convenience we will assume that $G$ is a connected {\it linear} Lie group, that is, 
$G$ is a closed connected subgroup of $GL(N,\mathbb R)$ for some $N$.
Let $R=\textrm{rad}(G)$ be the {\it radical} of $G$, i.e., the maximal connected normal solvable subgroup of $G$.  
Then $\bar{G}:=G/R$ is semisimple and $R$ is a semidirect product $R_u\rtimes T$ where $R_u$ is the {\it unipotent radical} of 
$G$, namely, the maximal connected normal nilpotent subgroup of $G$, and $T$ is an abelian subgroup which is diagonalizable. 
We will consider two separate cases: (a) $G=R$ is solvable, and (b) $G$ is semisimple, i.e., $\textrm{rad}(G)$ is trivial. 
We refer the reader to \cite{raghunathan} and to \cite{auslander} for general facts concerning lattices in 
Lie groups and the structure of solvmanifolds respectively.  

\subsection{Solvmanifolds} 
First suppose that $G$ is nilpotent and $M=G/H$ where $H$ is a closed subgroup.  Such a space is called a {\it nilmanifold.}  
Then $M$ is diffeomorphic to a product $\mathbb{R}^s\times M_0$ where $M_0$ is a {\it compact} smooth manifold of the form 
$N/\Gamma$ where $N$ is a connected nilpotent Lie group and $\Gamma$ is a discrete subgroup of $N$.  
By Theorem \ref{bh}, $M_0$ is parallelizable and so $M$ itself is parallelizable. 

Suppose that $G$ is a solvable Lie group.  In this case a homogeneous space $M=G/H$ is known as a solvmanifold.  
For basic facts about the structure of solvmanifolds, some of which will be recalled below, see \cite{auslander}.  
Auslander and Tolimieri showed that $M$ is diffeomorphic to 
the total space of a vector bundle over a compact solvmanifold, as conjectured by Mostow. 
Unlike in the case of nilmanifolds, solvmanifolds are not even stably parallelizable in general.  For example
the Klein bottle and the M\"obius band are solvmanifolds.   It turns out that any solvmanifold is an Eilenberg-MacLane 
space $K(\pi,1)$ and that if it is compact, then its diffeomorphism type is determined by its fundamental group. 
Thus the span of 
a solvmanifold is an invariant of its fundamental group.    The fundamental group $\Gamma$ of a compact solvmanifold 
 is {\it strongly polycyclic}, that is, there is a filtration 
\[ \Gamma=\Gamma_0>\Gamma_1>\cdots>\Gamma_n>\Gamma_{n+1}=1 \eqno{(18)}\]
where each $\Gamma_{i+1}$ is normal in $\Gamma_i$ and $\Gamma_i/\Gamma_{i+1}\cong \mathbb Z.$ 
Choose an element $\alpha_i\in \Gamma_i$ which maps to the generator of $\Gamma_i/\Gamma_{i+1}$. 
Then $\Gamma_i\cong \Gamma_{i+1}\rtimes \mathbb Z$ where the action of $\mathbb Z$ on $\Gamma_{i+1}$ 
is given by the restriction to $\Gamma_{i+1}$ of the conjugation by $\alpha_i$.  

While any finitely generated torsionless nilpotent group is a uniform lattice in a connected 
nilpotent Lie group, the analogous statement for solvable groups is false in general. 
For example, the fundamental group $\Gamma=\langle x,y\mid xyx^{-1}y\rangle=\mathbb Z\rtimes \mathbb Z$ of the 
Klein bottle cannot be a lattice in a connected Lie group $G$. (Otherwise the Klein bottle 
would be parallelizable.)  
This makes the vector field problem for (compact) solvmanifolds nontrivial and interesting. 

The following well-known result due Auslander and Szczarba \cite{auslander-szczarba} says that 
the structure group of the tangent bundle of a $d$-dimensional solvable manifold can be reduced to 
the diagonal subgroup of the orthogonal group $O(d)$.  Thus the manifold is close to being parallelizable. 

\begin{theorem} {\em (Auslander and Szczarba \cite{auslander-szczarba}.)}
Let $M$ be a compact solvmanifold of dimension $d$.  Then there exists line bundles $\xi_1,\ldots,\xi_d,$ 
such that $\tau M\cong \xi_1\oplus \cdots\oplus \xi_d$.  In particular all Pontrjagin classes of $M$ are trivial.
\end{theorem}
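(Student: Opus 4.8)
The plan is to realize $\tau M$ as a \emph{flat} bundle and then to split it by triangularizing its holonomy. Using the structure theory of solvmanifolds recalled above (see \cite{auslander}), write the compact solvmanifold as $M=\Gamma\backslash G$, where $G$ is a simply connected solvable Lie group (so $G$ is diffeomorphic to $\br^d$) and $\Gamma\cong\pi_1(M)$ acts on $G$ through affine transformations, i.e. through a homomorphism into $\textrm{Aut}(G)\ltimes G$ whose translational part restricts to a lattice on a finite-index subgroup and whose linear part records the twisting. Trivializing $\tau G\cong G\times\mathfrak{g}$ by left-invariant vector fields as in Example \ref{basicexamples}(iii), the derivative of each deck transformation acts on this framing only through the differential at $e$ of its automorphism part. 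Hence $\tau M\cong G\times_\Gamma\mathfrak{g}$ is the flat vector bundle associated, via the universal cover $G\to M$, to a linear holonomy representation $\rho\colon\Gamma\to GL(\mathfrak g)=GL(d,\br)$. Since $M$ is a $K(\Gamma,1)$, the isomorphism type of this bundle is governed entirely by $\rho$.

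The central step is to produce a $\rho$-invariant full real flag $0=V_0\subset V_1\subset\cdots\subset V_d=\br^d$ with $\dim V_i=i$. Because $\Gamma$ is strongly polycyclic it is solvable, so $\rho(\Gamma)$ is a solvable subgroup of $GL(d,\br)$; passing to its Zariski closure and applying the Lie--Kolchin theorem already yields a complex invariant flag. To obtain a \emph{real} flag I would argue inductively along the filtration (18): at the stage $\Gamma_i\cong\Gamma_{i+1}\rtimes\bz$ one has, by the inductive hypothesis, a flag invariant under $\rho(\Gamma_{i+1})$, and it must be refined to one also stabilized by the image of the generator $\alpha_i$, analyzing the eigenvalues of $\rho(\alpha_i)$ on the invariant subquotients.

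Granting such a flag, the splitting is routine. The subspaces $V_i$ are $\Gamma$-invariant, so they determine a filtration of $\tau M$ by flat subbundles $F_1\subset\cdots\subset F_d=\tau M$ with $\rank F_i=i$ and line-bundle quotients $\ell_i:=F_i/F_{i-1}$. Choosing any Riemannian metric on $M$ and taking orthogonal complements splits the filtration, giving
\[
\tau M\cong\bigoplus_{i=1}^d\ell_i ,
\]
a Whitney sum of $d$ line bundles. For the last assertion, each real line bundle has trivial Pontrjagin class, since its first Pontrjagin class is $-c_2$ of its complexification, a complex line bundle, and hence vanishes; by the Whitney product formula for Pontrjagin classes, valid modulo $2$-torsion, $p(\tau M)=\prod_i p(\ell_i)=1$ modulo $2$-torsion, so every $p_j(M)$ is $2$-torsion and in particular rationally zero.

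The main obstacle is precisely the realness of the invariant flag, that is, triangularizability of $\rho$ over $\br$ rather than merely over $\bc$. Complex eigenvalues of the linear holonomy --- for instance a rotational monodromy coming from the top quotient $\Gamma_0/\Gamma_1\cong\bz$ in a mapping-torus presentation --- produce $2$-dimensional real-irreducible blocks, and these must be shown not to obstruct the conclusion: either the eigenvalues occurring in $\rho$ for a \emph{compact} solvmanifold are forced to be real, or else the rank-$2$ flat subbundle attached to such a block has vanishing Euler class over the aspherical base $M$ and therefore still splits topologically into two line bundles. Controlling these blocks, using the specific arithmetic of the linear holonomy of a compact solvmanifold, is where the real work lies.
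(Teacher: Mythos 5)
Your reduction is correct as far as it goes: granting an affine presentation $M=\Gamma\backslash G$, your computation that each deck transformation acts on the left-invariant framing of $\tau G\cong G\times\mathfrak{g}$ through the differential at $e$ of its automorphism part is right (it is the same mechanism as in Theorem \ref{torusbundle}), so $\tau M$ is indeed the flat bundle of a holonomy representation $\rho\colon\Gamma\to GL(d,\br)$, and an invariant full real flag would finish the proof exactly as you say, including the (correctly hedged) Pontrjagin-class conclusion. The genuine gap is the step you yourself flag as "the real work": the invariant \emph{real} flag need not exist, and neither of your two escape routes is established. Lie--Kolchin (really Mal'cev, since the Zariski closure of $\rho(\Gamma)$ need not be connected) only triangularizes over $\bc$; a solvable, even abelian, matrix group can act $\br$-irreducibly on $2$-dimensional pieces, and rotational linear holonomy genuinely arises from compact solvmanifolds --- e.g.\ the mapping torus of the order-$4$ automorphism $\left(\begin{smallmatrix}0&-1\\ 1&0\end{smallmatrix}\right)$ of the $2$-torus. (That particular manifold is parallelizable because $\Gamma$ happens to be an honest lattice in $\br^2\rtimes\br$, so Theorem \ref{bh} applies; but this shows the eigenvalue behaviour depends on the choice of presentation, and you give no argument that a presentation with $\br$-triangularizable linear part exists.) So alternative (a) is false as stated. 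Alternative (b) is both unproved and insufficient in the form given: an orientable rank-$2$ bundle splits into (necessarily isomorphic) real line bundles if and only if its integral Euler class lies in the image of the Bockstein $\beta\colon H^1(M;\bz_2)\to H^2(M;\bz)$; amenability of the solvable group $\Gamma$ at best kills the \emph{rational} Euler class, which over a $d$-dimensional aspherical base does not produce a splitting. Closing exactly this gap is the substance of the theorem.

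Two further cautions. First, the affine presentation $\Gamma\subset \textrm{Aut}(G)\ltimes G$ you start from is itself a nontrivial theorem of the structure theory (the survey only recalls that compact solvmanifolds are $K(\Gamma,1)$'s with strongly polycyclic $\Gamma$; recall its example that $\pi_1$ of the Klein bottle is \emph{not} a lattice in any connected Lie group, which is why the automorphism part cannot be dispensed with). Second, the paper states this theorem without proof, quoting \cite{auslander-szczarba}, so there is no argument in the text to compare yours against; the original proof there proceeds through the structure theory of compact solvmanifolds rather than through real triangularization of a single holonomy representation, precisely because of the obstruction you identified.
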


Using the fact that $M$ fibres over a circle, it can be seen easily that one of the line bundles, 
say $\xi_1$ may be taken to be trivial.
As an immediate corollary, one obtains the following 

\begin{theorem} 
Let $M$ be a compact solvmanifold of dimension $d$.  Then there exists a smooth covering $\wt M\to M$ 
of degree $2^k, k<d,$ such that $\wt{M} $ is parallelizable.  
\end{theorem}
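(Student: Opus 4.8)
The plan is to exploit the Auslander--Szczarba splitting $\tau M\cong \xi_1\oplus\cdots\oplus\xi_d$ into line bundles (with $\xi_1$ trivial, as noted just after the previous theorem) and to trivialize all the $\xi_i$ simultaneously by passing to the covering determined by their first Stiefel--Whitney classes. Recall that real line bundles over a paracompact space are classified by $w_1\in H^1(-;\mathbb{Z}_2)$, since $BO(1)=\mathbb{R}P^\infty=K(\mathbb{Z}_2,1)$; in particular a line bundle is trivial if and only if its $w_1$ vanishes. Moreover, since $M$ is a connected $K(\pi,1)$, one has $H^1(M;\mathbb{Z}_2)\cong\hom(\pi_1(M),\mathbb{Z}_2)$, so each $w_1(\xi_i)$ may be regarded as a homomorphism $\pi_1(M)\to\mathbb{Z}_2$.

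First I would assemble these into a single homomorphism $\phi=(w_1(\xi_1),\ldots,w_1(\xi_d))\colon \pi_1(M)\to(\mathbb{Z}_2)^d$. Because $\xi_1$ is trivial, the first coordinate of $\phi$ is identically zero, so the image $V:=\phi(\pi_1(M))$ is a $\mathbb{Z}_2$-subspace of $\{0\}\times(\mathbb{Z}_2)^{d-1}$; write $k:=\dim_{\mathbb{Z}_2}V$, so that $k\le d-1<d$. Let $\Gamma':=\ker\phi$, a normal subgroup of index $|V|=2^k$, and let $q\colon \wt{M}\to M$ be the regular covering corresponding to $\Gamma'$. By construction this covering has degree $2^k$ with $k<d$, as required.

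It remains to verify that $\wt{M}$ is parallelizable. Under $q^*\colon H^1(M;\mathbb{Z}_2)\to H^1(\wt{M};\mathbb{Z}_2)$, which corresponds to the restriction map $\hom(\pi_1(M),\mathbb{Z}_2)\to\hom(\Gamma',\mathbb{Z}_2)$, each class $w_1(\xi_i)$ is sent to $w_1(q^*\xi_i)$; since $w_1(\xi_i)$ vanishes on $\Gamma'=\ker\phi$ by the very definition of $\phi$, we get $w_1(q^*\xi_i)=0$ for every $i$. Hence each $q^*\xi_i$ is a trivial line bundle, and therefore
\[
\tau\wt{M}\cong q^*(\tau M)\cong\bigoplus_{i=1}^d q^*\xi_i\cong d\epsilon,
\]
so $\wt{M}$ is parallelizable.

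I do not expect a serious obstacle here: once the Auslander--Szczarba splitting is in hand, the argument is essentially the observation that a finite collection of real line bundles can be simultaneously trivialized on the covering cut out by the joint kernel of their $w_1$'s. The only points demanding a little care are the identification $H^1(M;\mathbb{Z}_2)\cong\hom(\pi_1(M),\mathbb{Z}_2)$ (valid since $M$ is a connected manifold, indeed aspherical) and the bookkeeping that forces $k\le d-1$, which is exactly where the triviality of $\xi_1$ is used to guarantee $k<d$.
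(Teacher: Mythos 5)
Your proof is correct and is precisely the argument the paper intends when it calls this an ``immediate corollary'' of the Auslander--Szczarba splitting: pass to the finite regular covering cut out by the joint kernel of the classes $w_1(\xi_i)\in\hom(\pi_1(M),\mathbb{Z}_2)$, where all the line bundles pull back trivially, with the triviality of $\xi_1$ giving the bound $k\le d-1<d$. Nothing to add.
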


Next we obtain a criterion for the (stable) parallelizability in a special case
and a criterion for the orientability in the general case. 

Let $A\in GL(n,\mathbb{Z})$ and let $\Gamma=\Gamma(A):=\mathbb{Z}^n\rtimes \mathbb{Z}$ be the extension of 
$\mathbb{Z}$ by $\mathbb{Z}^n$ where the $\mathbb{Z}$-action on $\mathbb{Z}^n$ is generated by $A$.   The smooth compact 
solvmanifold $M=M(A)$ with fundamental group $\Gamma$ may be described as follows: Let $\alpha:\mathbb T\to\mathbb T$ 
be the diffeomorphism of the $n$-dimensional torus $\mathbb T=\mathbb R^n/\mathbb Z^n$ defined by the linear automorphism of $\mathbb R^n$, $v\to Av$.  
The `mapping circle' $M=\mathbb T^n\times I/\!\!\sim$, where $(a,0)$ is identified to $(\alpha(a), 1)$, is a smooth manifold.  We have $T_{(a,x)}(\mathbb T\times I)
=T_a\mathbb T\times \mathbb{R}\cong \mathbb R^{n}\times \mathbb R$ for all $a\in \mathbb T, x\in I$.
The total space of the tangent bundle of $M$ has the 
following description: $TM=T(\mathbb T\times I)/\!\!\sim$ where $(a, 0;v, s)\in T_{(a, 0)} (\mathbb T\times I)$ is identified with $(\alpha(a), 1; Av, s)\in 
T_{(\alpha(a), 1)}(\mathbb T\times I)$ for $v\in T_a\mathbb T,  s\in T_0I=T_1I=\mathbb{R}$.  
The projection $\mathbb T\times I\to I$ induces a fibre bundle projection $\pi: M\to \mathbb S^1$ with fibre $\mathbb T$.  Hence we obtain 
an isomorphism $\tau M\cong \pi^*(\tau \mathbb S^1) \oplus \eta=\epsilon\oplus \eta$, where $\eta$ is the vertical bundle that 
restricts to the tangent bundle on the fibres of $\pi$.

\begin{theorem} \label{torusbundle} {\em (Sankaran, unpublished.)}
Let $A\in GL(n,\mathbb Z)$. With the above notation, 
the manifold $M=M(A)$ is parallelizable if $\det(A)=1$ and is not orientable---hence not stably parallelizable---if $\det(A)=-1$.
\end{theorem}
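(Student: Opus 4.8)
The plan is to analyze the vertical bundle $\eta$ in the splitting $\tau M\cong\epsilon\oplus\eta$ recorded just above the statement, since parallelizability and orientability of $M$ are then governed entirely by $\eta$. First I would make the structure of $\eta$ explicit. Trivialising $T\mathbb T$ by the standard left-invariant frame $\partial/\partial\theta_1,\ldots,\partial/\partial\theta_n$ coming from the group structure on $\mathbb T=\mathbb R^n/\mathbb Z^n$ (cf. Example \ref{basicexamples}(iii)), the action $v\mapsto Av$ on tangent vectors appearing in the description of $TM$ becomes, in this frame, multiplication by the \emph{constant} matrix $A$, because $\alpha$ is covered by the linear map $v\mapsto Av$ of $\mathbb R^n$. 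Hence $\eta=(\mathbb T\times I\times\mathbb R^n)/\!\sim$ with $(a,0,w)\sim(\alpha(a),1,Aw)$. A section of $\eta$ is thus the same as a continuous map $f\colon\mathbb T\times I\to\mathbb R^n$ satisfying $f(\alpha(a),1)=A\,f(a,0)$, and a trivialisation of $\eta$ is the same as such an $f$ taking values in $GL(n,\mathbb R)$, its columns forming a frame.

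Suppose $\det(A)=1$. Since $A$ and $A^{-1}$ are integral, $\det(A)=\pm1$, so here $A$ lies in the identity component $GL^+(n,\mathbb R)$, which is path-connected. Choose a path $G\colon I\to GL^+(n,\mathbb R)$ with $G(0)=I$ and $G(1)=A$, and set $F(a,x):=G(x)$. Then $F(\alpha(a),1)=G(1)=A=A\,G(0)=A\,F(a,0)$, so $F$ descends to a continuous frame of $\eta$ over $M$, whence $\eta$ is trivial. Since $\tau M\cong\epsilon\oplus\eta$, the tangent bundle of $M$ is trivial and $M$ is parallelizable.

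Now suppose $\det(A)=-1$. I would detect non-orientability through $w_1(M)=w_1(\tau M)=w_1(\eta)=w_1(\det\eta)$. Taking top exterior powers in the mapping-torus description, $\det\eta=\Lambda^n\eta$ is the real line bundle $(\mathbb T\times I\times\mathbb R)/\!\sim$ with $(a,0,t)\sim(\alpha(a),1,\det(A)\,t)=(\alpha(a),1,-t)$. Because $A\cdot0=0$, the point $0\in\mathbb T$ is fixed by $\alpha$, so $x\mapsto[(0,x)]$ is a loop $\gamma$ in $M$, and the restriction of $\det\eta$ to $\gamma$ is $\mathbb R$ glued by $-1$, i.e. the M\"obius line bundle over $\mathbb S^1$. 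Hence $\langle w_1(\det\eta),[\gamma]\rangle\neq0$, so $w_1(M)\neq0$ and $M$ is not orientable. Finally, any stably parallelizable manifold has $w_1=w_1(\tau M\oplus k\epsilon)=0$, so the non-orientable $M$ cannot be stably parallelizable.

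The only genuinely delicate point is the bookkeeping in the first paragraph: confirming that in the invariant framing the clutching of $\eta$ is by the constant matrix $A$ (equivalently, that $d\alpha$ has constant matrix $A$), which is where the hypothesis that $\alpha$ comes from a \emph{linear} automorphism is used. Once this identification is secured, both the explicit trivialisation when $\det(A)=1$ and the $w_1$-computation when $\det(A)=-1$ are short.
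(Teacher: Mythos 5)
Your proposal is correct and follows essentially the same route as the paper's proof: the same splitting $\tau M\cong\epsilon\oplus\eta$, the same trivialisation of $\eta$ via a path in $GL^+(n,\mathbb R)$ from $I_n$ to $A$ when $\det(A)=1$, and the same detection of non-orientability by restricting $\Lambda^n\eta$ to a loop around the circle direction and recognising the M\"obius line bundle when $\det(A)=-1$. The only (cosmetic) difference is that you exploit the fixed point $0\in\mathbb T$ of $\alpha$ to get the loop directly, whereas the paper takes an arbitrary point $a_0$ and a path from $a_0$ to $\alpha(a_0)$.
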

\begin{proof}  First assume that $\det(A)=1$.  We shall show that $\eta$ is trivial.  Let $\sigma:I\to GL(n,\mathbb R)$ be a smooth path such that $\sigma(0)=I_n,$ the identity 
matrix, and, $\sigma(1)=A$.  We will write $A_t$ to denote $\sigma(t)$.   We have $\tau \mathbb T=n\epsilon$ with total space 
$\mathbb T\times \mathbb R^n$. 
 The standard basis of $\mathbb R^n$ yields vector fields $X_1, \ldots, X_n$ on $\mathbb T\times I$ defined as follows: 
 $X_j(a, x)=(a,x; A_xe_j, 0)\in \mathbb T\times I\times \mathbb R^n\times \mathbb R$.  Note that $X_j(\alpha(a),1)=(\alpha(a), 1; Ae_j,  0)\sim 
 (a, 0; e_j, 0)=X_j(a,0)  $.  Hence $X_j$ descends to a well-defined smooth vector field, again denoted $X_j$ 
 on $M$.  Since $T\pi(X_j(p))=0$ for every $j$, it follows that $X_j, 1\le j\le n,$ are cross-sections of $\eta$.  
Since $A_t\in GL(n,\mathbb R)$ for all $t$, it is evident that $X_1,\ldots, X_n$ are everywhere linearly independent.
So $\eta$ is trivial, as was to be shown.  

Now suppose that $\det(A)=-1$.  Fix a point $a_0\in \mathbb T$ and choose a path $\sigma:I\to  \mathbb T$ from 
$a_0$ to $\alpha(a_0)$.  
Let $\theta:\mathbb S^1\to M$ be the embedding $\exp(2\pi it)\mapsto [\sigma(t), t]\in M, 0\le t\le 1$.  Consider the pull-back line 
bundle $\xi:=\theta^*(\Lambda^n(\eta))$ over $\mathbb S^1$.   We assert that $\xi$ is not orientable.  This readily implies 
that $\eta$ is non-orientable and hence $M$ itself is not orientable.  To prove the assertion, we need only observe that 
the total space of $\xi$ is obtained by identifying in $I\times \mathbb {R}$, the point $(0,t)\in I\times \mathbb R$ with 
$(1, (\det A) t)=(1,-t)$ for all $t\in \mathbb {R}$.  Thus $E(\xi)$ is homeomorphic to the M\"obius 
band and so $\xi$ is non-orientable.  
\end{proof} 

We remark that when $\det(A)=-1$, the double cover $\wt{M}$ of $M=M(A)$ corresponding to the 
subgroup $\mathbb Z^n\rtimes (2\mathbb Z)\subset \Gamma$ 
is just the group $\Gamma(A^2)$ and hence is parallelizable.  

One may generalize one part of the above theorem so as to obtain a criterion for the orientability of a solvmanifold.

Let $\Gamma$ be a strongly polycyclic group and write 
$\Gamma\cong \Gamma_1\rtimes \mathbb{Z}$ where $\Gamma_1$ is as in (18).  We let $A$ be the automorphism of $\Gamma_1$ that defines the action of $\mathbb Z$ on $\Gamma_1$.  Let $N:=M(\Gamma_1)$ be a compact solvmanifold 
with fundamental group $\Gamma_1$.  
Then $M=M(\Gamma)$, a solvmanifold with fundamental group $\Gamma$, may be obtained as the mapping circle of 
a diffeomorphism $\alpha:N\to N$ that induces $A$.   Now 
 $M$ fibres over 
the circle $\pi:M\to \mathbb S^1$ with fibre $N$ and we have a splitting $\tau M=\tau \mathbb S^1\oplus \eta$ where $\eta$ is the vertical bundle.   If $N$ is non-orientable, neither is $M$ since the normal bundle to the 
 fibre inclusion $N\hookrightarrow M$ is trivial.  
 
 Assume that $N$ is orientable.  If $\alpha: N\to N$ is orientable, 
 then $T\alpha: T_aN\to T_{\alpha(a)}N$ is orientation preserving.   As before, for any $a\in N$, we have $[a,0]=[a,1]$ in $M$ and 
 the tangent space $T_{[a,0]}M$ is obtained from $T(N\times I)=TN\times I\times \mathbb R$  
by identifying $(u,0;t)\in T_{(a,0)}(N\times I)$ with $(T_{a}\alpha( u), 1;t)\in T_{(\alpha(a),1)}(N\times I)$ where 
$u\in T_{a}N, t\in \mathbb {R}$.  Since $\tau M=\eta\oplus \epsilon$, the total space $E(\eta)\subset TM$ is the space of all 
vectors with vanishing last coordinate.   
Since $T_a\alpha:T_aN\to T_{\alpha(a)}N$ is orientation  preserving,  
we see that $\eta$ is orientable.  Hence $M$ is orientable.  On the other hand, if $\alpha$ is orientation reversing, 
then choosing a path $\sigma:I\to N$ from a point $a$ to $\alpha(a)$ we obtain an imbedding $\bar \sigma:\mathbb S^1\to M, 
\exp(2\pi it)\mapsto [\sigma(t), t], 0\le t\le 1$. 
The bundle $\Lambda^n(\eta)$ pulls-back via $\bar \sigma$ to a line bundle $\xi$ which is seen to be non-orientable. 
It follows that $\eta$ is non-orientable.  Hence $M$ is non-orientable.  Repeated application of this argument yields 
the following theorem.    
 
\begin{theorem}  \label{stronglypolygps} {\em (Sankaran, unpublished)}  Let $\Gamma$ be a strongly polycyclic group.  Let $\Gamma_i, 1\le i\le n+1,$ be as in (18).  Then $M(\Gamma)$ is orientable if each $M(\Gamma_i)$ is orientable and the action of $\Gamma_i/\Gamma_{i+1}\cong \mathbb Z$ on $M(\Gamma_i)$ is orientation preserving for $1\le i\le n$. 
\hfill $\Box$
\end{theorem}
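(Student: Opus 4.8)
The plan is to argue by downward induction on the index $i$ in the filtration (18), showing that each $M(\Gamma_i)$ is orientable, starting at the bottom where $\Gamma_{n+1}=1$ and working up to $\Gamma_0=\Gamma$. The single inductive step is precisely the mapping-circle criterion established in the paragraph preceding the statement. Writing $\Gamma_i\cong\Gamma_{i+1}\rtimes\mathbb{Z}$, let $\alpha_i$ denote the chosen generator of $\Gamma_i/\Gamma_{i+1}\cong\mathbb{Z}$ acting on $\Gamma_{i+1}$ by conjugation; then $M(\Gamma_i)$ is realized as the mapping circle of a diffeomorphism $\bar\alpha_i\colon M(\Gamma_{i+1})\to M(\Gamma_{i+1})$ inducing this action on $\pi_1=\Gamma_{i+1}$. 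One then has the splitting $\tau M(\Gamma_i)\cong\epsilon\oplus\eta_i$, where $\eta_i$ is the vertical bundle of the fibration $\pi_i\colon M(\Gamma_i)\to\mathbb{S}^1$ with fibre $M(\Gamma_{i+1})$, and the criterion asserts that if $M(\Gamma_{i+1})$ is orientable and $\bar\alpha_i$ is orientation preserving, then $\eta_i$, and hence $\tau M(\Gamma_i)$, is orientable.

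For the base of the induction, $\Gamma_{n+1}=1$ so $M(\Gamma_{n+1})$ is a point, and $\Gamma_n\cong\mathbb{Z}$ so $M(\Gamma_n)\cong\mathbb{S}^1$ is orientable. At the $i$-th step I would feed into the criterion the orientability of the fibre $M(\Gamma_{i+1})$, which is available either from the previous step or directly from the hypothesis, together with the orientation preservation of the monodromy $\bar\alpha_i$, which is exactly the assumption that the $\mathbb{Z}$-action of $\Gamma_i/\Gamma_{i+1}$ is orientation preserving. This yields that $M(\Gamma_i)$ is orientable, and after the $n+1$ applications for $i=n,n-1,\ldots,0$ one arrives at $M(\Gamma_0)=M(\Gamma)$. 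Note that the orientability of the intermediate $M(\Gamma_i)$ listed among the hypotheses is not wasted: it is precisely what makes the phrase ``orientation preserving'' meaningful at each stage, and the induction keeps these two conditions correctly interlocked.

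The step that requires genuine care, rather than bookkeeping, is the passage from the \emph{algebraic} datum (conjugation by $\alpha_i$ on $\Gamma_{i+1}$) to a well-defined \emph{geometric} datum (the orientation behaviour of a self-diffeomorphism of $M(\Gamma_{i+1})$). Here I would invoke the fact, recalled earlier in this section, that a compact solvmanifold is a $K(\pi,1)$ whose diffeomorphism type is determined by its fundamental group: the outer automorphism class of $\alpha_i$ is then realized by $\bar\alpha_i$ uniquely up to homotopy, and since homotopic maps of a closed manifold share the same degree, the sign $\det(T\bar\alpha_i)=\pm1$ depends only on the algebraic action. Thus ``the action of $\Gamma_i/\Gamma_{i+1}$ is orientation preserving'' is unambiguous and matches exactly the hypothesis of the mapping-circle criterion. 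I would also flag the minor indexing point that the orientation-preservation condition at stage $i$ is naturally a condition on the fibre $M(\Gamma_{i+1})$; aligning indices with the construction of the preceding paragraph removes any ambiguity. With this translation in hand, the theorem follows by the iteration described above.
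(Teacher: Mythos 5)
Your proof is correct and follows essentially the same route as the paper, which establishes the single mapping-circle step (orientable fibre plus orientation-preserving monodromy implies orientable total space) in the paragraph preceding the theorem and then simply states that repeated application of this argument yields the result. Your explicit downward induction, together with the observation that the $K(\pi,1)$ rigidity of compact solvmanifolds makes the sign of the monodromy well defined, is exactly this iteration spelled out carefully.
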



\subsection{Homogeneous spaces for non-compact semisimple Lie groups}
Suppose that $G$ is a connected {\it non-compact} semisimple Lie group with finite centre.  
Let $K$ be a maximal compact subgroup of $G$.   
Then it is a consequence of Cartan (or Iwasawa) decomposition that $X:=G/K$ is diffeomorphic to $\mathbb{R}^n$ for some $n$.  
So the vector field problem for $G/K$ 
is uninteresting.  Suppose that $H$ is 
any connected compact subgroup of $G$.  Then $H$ is contained in a maximal compact connected subgroup $K$. 
One has a smooth fibre bundle $G/H\to G/K$ with fibre $K/H$. Since $G/K\cong \mathbb{R}^n$, the bundle is trivial and 
we have $G/H\cong \mathbb{R}^n\times K/H$.  Therefore $\Span(G/H)=\Span(\mathbb{R}^n\times K/H)
=\Span^0(K/H)+n=\Span^0(G/H)$ since $n\ge 1$.   

The manifold $X=G/K$ admits a $G$-invariant metric with respect to which it becomes a globally symmetric space.   
One may express $X$ as $\bar{G}/\bar K$ where $\bar G=G/Z(G)$ since the centre $Z(G)\subset K$. Note that $\bar G$ is 
a linear Lie group (via the adjoint representation). 
Thus we may assume, to begin with, 
that $G$ itself is linear.   Also, we will 
assume that $G$ has no (nontrivial) compact connected normal subgroup $N$.  (Any compact normal group is contained in $K$ 
and so $X\cong (G/N)/(K/N)$. So there is no loss of generality in such an assumption.)  

Let $\Gamma$ be a {\it uniform} lattice in $G$, that is, $\Gamma$ is a discrete 
group subgroup of $G$ such that  
$X_\Gamma:=\Gamma\backslash G/K$ is compact.  We will assume that $\Gamma$ is torsionless, that is, no 
element other than the identity has finite order. 
Then $X_\Gamma$ is a smooth manifold and quotient map $X\to X_\Gamma$ is a covering projection. 
The space $X_\Gamma$ is called a {\it locally symmetric space}.  Note that since $G$ is connected, given any element $g\in G$ 
the left translation by $g$ on $X$ is orientation preserving.  Hence $X_\Gamma$ is orientable.  

In a more general setting, one allows $\Gamma$ to be a (torsionless) lattice in the group $I(X)$ of {\it all} isometries of $X$.  In 
this generality, a locally symmetric space $X_\Gamma=I(X)/\Gamma$ is not necessarily orientable.  
The case considered above corresponds to the case where the lattice is contained in the identity component $G=I_0(G)$
 of $I(X)$.

Before proceeding further, we pause for an example.  

Let $G=SL(2, \mathbb{R})$, we have $X=\mathcal{H}$, 
the upper half space $\{z=x+iy\in \mathbb C\mid y>0\}$ (with the Poincar\'e metric).  If $\Gamma$ is a uniform torsionless lattice in 
$G$, then $X_\Gamma$ is 
a compact Riemann surface of genus $g\ge 2$.   By the uniformization theorem every 
compact Riemann surface arises in this manner.   Although $SL(2,\mathbb Z)$ is a lattice in $SL(2,\mathbb R)$, 
no finite index subgroup of it is uniform.  Explicit construction of a {\it uniform} lattice in $SL(2,\mathbb R)$ 
requires some preparation and will take us too far afield.

Borel \cite{borel-top} has shown that every (non-compact) semisimple Lie group $G$ admits both uniform and non-uniform lattices.  If $G$ is linear, 
then any lattice in $G$ has a finite index subgroup $\Gamma$ which is torsionless so that $\Gamma\backslash G/K$ 
is a smooth manifold.

The globally symmetric space $X$ has a compact dual $X_u:=U/K$ where $U$ is a maximal compact 
subgroup of the `complexification' of $G$, denoted $G_\mathbb{C}$, that contains $K$.  
The group $G_\mathbb{C}$ is characterized 
by the requirements that its Lie algebra is the complexification $\mathfrak{g}_\mathbb{C}=\mathfrak{g}\otimes_\mathbb{R}\mathbb{C}$ 
and $G\subset G_\mathbb{C}$. Such a group $G_\mathbb{C}$ exists in view of our assumption that $G$ is linear. 
When $G=SL(n,\mathbb{R}),$ we take $K=SO(n)\subset G$.  Then $G_\mathbb{C}=SL(n,\mathbb{C})$ and $U=SU(n)$, 
the special unitary group.  Hence the compact dual of $X$ is $X_u=SU(n)/SO(n)$.   We shall refer to 
$X_u$ also as the compact dual of a locally symmetric space $X_\Gamma$.

Returning to the general case of a compact locally symmetric space $X_\Gamma$, the well-known 
Hirzebruch proportionality principle says that the Pontrjagin numbers of $X_\Gamma$ are proportional 
to the corresponding Pontrjagin numbers of the compact dual $X_u$, the proportionality constant being 
dependent only on $X_\Gamma$.  Thus vanishing of the latter 
implies the vanishing of the former.  
See \cite{hirzebruch}.    What we need is a stronger version of the converse, namely: 
the non-vansihing of a characteristic {\it class} of $X_u$ implies the non-vanishing of the corresponding 
characteristic class of $X_\Gamma$.  This has been established by T. Kobayashi and K. Ono \cite{ko} in a more general 
setting.    The following 
theorem and its proof is essentially due to Lafont and Ray \cite{lafont-ray}, 
although they stated their result for 
characteristic {\it numbers}.   
The assertion concerning the Euler characteristic is 
well-known (cf. \cite{harder}).   

\begin{theorem}  \label{hpp} {\em  (\cite{ko}, \cite[Theorem A]{lafont-ray}.)}
With the above notation, if $\Gamma\subset G$ is a uniform lattice, then:
(i) $\chi(X_\Gamma)=c \chi(X_u)$ for some $c\ne 0$.\\ (ii)  If $\rank (K)=\rank (U)$, and if some Pontrjagin class $p_i(X_u)\ne 0$, then $p_i(X_\Gamma)\ne 0$.    
\end{theorem}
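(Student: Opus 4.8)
The plan is to realise both statements through the \emph{Matsushima map} on cohomology induced by invariant differential forms, combined at the end with Poincar\'e duality. Write $\mathfrak g=\mathfrak k\oplus\mathfrak p$ for the Cartan decomposition, so that the tangent space of $X=G/K$ at the base point is $\mathfrak p$ and the $G$-invariant forms on $X$ are identified with $(\Lambda^*\mathfrak p^*)^K$. On any symmetric space the geodesic symmetry acts by $-1$ on $\mathfrak p$, hence by $(-1)^q$ on invariant $q$-forms; comparing this on $\omega$ and on $d\omega$ forces $d\omega=0$, so every invariant form is closed (on the compact dual it is in fact harmonic). Consequently these forms descend, through the covering $X\to X_\Gamma$, to closed forms on $X_\Gamma$ and define a graded ring homomorphism $j^*\colon(\Lambda^*\mathfrak p^*)^K\to H^*(X_\Gamma;\mathbb R)$. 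Via $\mathfrak p\cong i\mathfrak p\subset\mathfrak u$ the \emph{same} algebra $(\Lambda^*\mathfrak p^*)^K$ is the algebra of $U$-invariant forms on $X_u=U/K$, and Cartan's theorem for the compact symmetric space $X_u$ gives $(\Lambda^*\mathfrak p^*)^K\cong H^*(X_u;\mathbb R)$ as graded rings. Thus $j^*$ is a ring homomorphism $H^*(X_u;\mathbb R)\to H^*(X_\Gamma;\mathbb R)$.

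The two structural facts I would extract are: (a) $j^*$ is nonzero in top degree; and (b) $j^*(p_i(X_u))=p_i(X_\Gamma)$. For (a), since $K$ is connected the top exterior power $\Lambda^{2m}\mathfrak p^*$ is the trivial $K$-representation, so the top invariant form is the volume form, and because $X_\Gamma$ is closed and orientable its class satisfies $\int_{X_\Gamma}\mathrm{vol}\neq0$; this integral is exactly the proportionality constant $c=\pm\,\mathrm{vol}(X_\Gamma)/\mathrm{vol}(X_u)$, the sign $(-1)^m$ coming from the identification $\mathfrak p\cong i\mathfrak p$. Applying $j^*$ to the Gauss--Bonnet (Pfaffian) representative of the Euler class of $X_u$ then yields statement (i), $\chi(X_\Gamma)=c\,\chi(X_u)$ with $c\neq0$. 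For (b), the curvature of $X$ is the negative of that of $X_u$ under the identification of tangent spaces; since the Chern--Weil form for $p_i$ is homogeneous of degree $2i$ in the curvature, the sign $(-1)^{2i}=1$ shows the two invariant Pontrjagin forms agree, so $j^*(p_i(X_u))=p_i(X_\Gamma)$.

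Granting (a) and (b), statement (ii) is immediate. Assume $p_i(X_u)\neq0$ in $H^{4i}(X_u;\mathbb R)$. Nondegeneracy of the Poincar\'e pairing on the closed oriented manifold $X_u$ furnishes a class $\beta\in H^{2m-4i}(X_u;\mathbb R)$ with $p_i(X_u)\cup\beta=\lambda\,[\mathrm{vol}_{X_u}]$, $\lambda\neq0$. Applying the ring homomorphism $j^*$ and invoking (a) and (b),
\[
p_i(X_\Gamma)\cup j^*(\beta)=j^*\!\big(p_i(X_u)\cup\beta\big)=\lambda c\,[\mathrm{vol}_{X_\Gamma}]\neq0,
\]
whence $p_i(X_\Gamma)\neq0$. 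The hypothesis $\rank(K)=\rank(U)$ is the setting of Kobayashi--Ono \cite{ko} and of Lafont--Ray \cite{lafont-ray}: by Hopf--Samelson it makes $X_u$ an equal-rank space with $\chi(X_u)\neq0$ (so that (i) has content), and it is the regime in which the class-level correspondence of (b) is justified.

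The main obstacle is precisely establishing (b) — and more broadly that $j^*$ is a well-defined injective ring map carrying $p_i(X_u)$ to $p_i(X_\Gamma)$ at the level of \emph{classes} rather than merely characteristic \emph{numbers}. This is the upgrade of the Hirzebruch proportionality principle supplied by Kobayashi and Ono, where the symmetric-space structure (closedness of invariant forms, curvature duality, orientability of $X_\Gamma$, and the equal-rank condition) all enter; once it is in place, the Poincar\'e-duality deduction of (ii) above is routine.
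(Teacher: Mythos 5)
Your argument is correct, but it follows a genuinely different route from the paper's. The paper never touches invariant differential forms: it invokes Okun's theorem \cite{okun}, which (in the equal-rank case) supplies a finite-index sublattice $\Lambda\subset\Gamma$ and a \emph{tangential} map $f:X_\Lambda\to X_u$ of non-zero degree; naturality then gives $f^*(e(X_u))=e(X_\Lambda)$ and $f^*(p_i(X_u))=p_i(X_\Lambda)$, non-zero degree makes $f^*$ injective on rational cohomology, and the conclusions descend from $X_\Lambda$ to $X_\Gamma$ using multiplicativity of $\chi$ under coverings and the fact that $\tau X_\Gamma$ pulls back to $\tau X_\Lambda$; for part (i) the case $\chi(X_u)=0$ is handled separately, quoting \cite{ko} only for the implication $e(X_u)=0\Rightarrow e(X_\Gamma)=0$. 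You instead construct the Matsushima/Chern--Weil homomorphism $j^*$ on $X_\Gamma$ itself and finish with Poincar\'e duality. What your route buys: an explicit proportionality constant $c=(-1)^m\mathrm{vol}(X_\Gamma)/\mathrm{vol}(X_u)$, no passage to a finite cover, no case analysis in (i), and---worth noting---part (ii) without ever using the equal-rank hypothesis, since a ring map between Poincar\'e duality algebras that is non-zero in top degree is automatically injective (your pairing argument); proving more than is stated is not a defect. What it costs: the entire weight rests on your facts (a) and (b), i.e.\ the class-level (rather than number-level) proportionality, which is exactly the content of Kobayashi--Ono \cite{ko}; your sketch of it (closedness of invariant forms via the geodesic symmetry, curvature duality $R_u=-R$, and the parity count giving $+1$ on Pontrjagin forms and $(-1)^m$ on the Pfaffian) is the standard and correct argument, but it is the deep input, whereas the paper's proof is softer, resting on an off-the-shelf topological theorem plus pure naturality. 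Since the theorem is attributed to \cite{ko} and \cite{lafont-ray} anyway, leaning on that machinery is legitimate; just be aware that you are essentially reproving the cited result rather than deducing the theorem from it as a black box, which is what the paper does with Okun's theorem.
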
  
\begin{proof}
(i). We may assume that $X_\Gamma$ is even-dimensional.    If  $\chi(X_u)=0$, then the {\it Euler class} $e(X_u)$ of $X_u$ vanishes. 
Hence by \cite{ko} it follows that $e(X_\Gamma)=0$ and so $\chi(X_\Gamma)=0$.  On the other hand, suppose 
that $\chi(X_u)\ne 0$. Then $\rank(U)=\rank(K)$.  In this case, a result of Okun \cite{okun} says that there exist a finite index subgroup 
$\Lambda \subset \Gamma$ and 
{\it tangential map} $f:X_\Lambda\to X_u$ of non-zero degree.  That is, $f^*(\tau X_u)
\cong \tau X_\Lambda$ and $f_*:H_n(X_u;\mathbb R)\to H_n(X_\Lambda;\mathbb R)$ is non-zero where $n=\dim X$.  
By the naturality of the Euler class, $f^*(e(X_u))=e(X_\Lambda)$; see \cite{ms}.  Now 
$\chi(X_\Lambda)=\langle e(X_\Gamma), \mu_{X_\Lambda}\rangle =\langle f^*(e(X_{u})), \mu_{X_{\Lambda}}\rangle
=\langle e(X_u),f_*(\mu_{X_\Lambda})\rangle =\langle e(X_u), \deg (f) \mu_{X_u}\rangle =\deg(f).\chi (X_u)\ne 0$.  This shows that $\chi(X_\Lambda)\ne 0$.  Since $\Lambda$ has finite index in $\Gamma$, we have a covering projection $X_\Lambda \to X_\Gamma$ and so 
$\chi(X_\Lambda)=|\Gamma/\Lambda| \chi(X_\Gamma)$.  Therefore $\chi(X_\Gamma)=\deg(f) \chi(X_u)/|\Gamma/\Lambda|$.  This proves (i). 

(ii).  Suppose that $p_i(X_u)\ne 0$.  
We proceed as in (i) above and use the same notations.    Note that the tangent bundle of $X_\Gamma$ pulls back under the covering 
projection to that of $X_\Lambda$.   By the naturality of Pontrjagin classes, it suffices to show that $p_i(X_\Lambda)\ne 0$.  Since $f:X_\Lambda \to X_u$ 
is tangential $f^*(p_i(X_u))=p_i(X_\Lambda)$.  Since $\deg(f)\ne 0$, the induced map in rational cohomology $f^*:H^*(X_u;\mathbb Q)\to H^*(X_\Lambda;\mathbb Q)$ is a monomorphism. Hence $p_i(X_\Lambda)\ne 0$.
\end{proof}

The group $G_\mathbb{C}$ is also the complexification of $U$.  In particular $U$ is semisimple.  
The rank of $U$ is also called the (complex) rank of $G$.   However $K\subset G$ is not necessarily semisimple. 
For example, when $G=SL(2,\mathbb R)$, $K=SO(2)$ is abelian.    When the centre of $K$ is not discrete and $G$ is 
simple, the homogeneous space $X=G/K$ has the structure of a Hermitian symmetric domain.  Also the compact 
dual $X_u=U/K$ has the structure of a complex projective variety.    When $G$ is semisimple it is an almost 
direct product $G=G_1\cdots G_k$ where each $G_i$ is a simple normal subgroup of $G$.  By our assumption, 
none of the $G_i$ is compact.  Any maximal compact subgroup $K$ is likewise an almost product 
$K=K_1\cdots K_k$ where $K_i\subset G_i$ is a maximal compact subgroup of $G_i$.  Moreover 
$X=G/K$ is diffeomorphic to the Cartesian product $X_1\times \cdots \times X_k$ where $X_i=G_i/K_i$.  The 
$X_i$ are called the irreducible factors of $X$.  Correspondingly, one has a factorization $X_u$ of the compact 
dual into a product $X_{1,u}\times \cdots \times X_{k,u}$ where $X_{i,u}=U_i/K_i$, $U_i$ being the maximal 
compact subgroup of $G_{i,\mathbb C}$ that contains $K_i$.

As an application of Theorem \ref{torussimple} we obtain the following result. 

\begin{theorem} \label{locallysymmetric} {\em (Sankaran, unpublished.)}
Let $\Gamma$ be a torsionless uniform lattice in a linear connected semisimple Lie group $G$.   With the above notations, 
(i) $\Span(X_\Gamma)>0$ if and only if $\rank(G)>\rank(K)$. \\
(ii) Suppose that an irreducible factor $X_i$ of $X$ is a Hermitian 
symmetric space where $G_i$ is not locally isomorphic to $SL(2,\mathbb R)$.  Then 
$X_\Gamma$ is not stably parallelizable.\\ 
(iii)  Suppose that each simple factor of $G$ is either a complex Lie group or is locally isomorphic to $SO_0(1,k)$.   
Then there exists a finite 
index subgroup $\Lambda\subset \Gamma$ such that $X_\Lambda$ is stably parallelizable.  Such 
an $X_\Lambda$ is parallelizable if and only if $\rank(G)>\rank(K)$.
\end{theorem}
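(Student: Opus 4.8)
The plan is to reduce every assertion to a statement about the compact dual $X_u=U/K$ and its irreducible factors $X_{j,u}=U_j/K_j$, and then to transport the conclusion back to $X_\Gamma$ (or to a finite cover) using the comparison results collected in Theorem \ref{hpp}, Okun's tangential maps, and the elementary bundle theory of Hopf's Theorem \ref{hopf} and the Bredon--Kosi\'nski Theorem \ref{b-k}. For (i), since $X_\Gamma$ is compact, connected and orientable, Hopf's Theorem \ref{hopf} gives $\Span(X_\Gamma)>0$ if and only if $\chi(X_\Gamma)=0$; by Theorem \ref{hpp}(i) this is equivalent to $\chi(X_u)=0$. As $K$ is connected and $U$ is compact, formula (2) (together with the vanishing of $\chi$ of a homogeneous space of a compact group modulo a connected subgroup of smaller rank) shows $\chi(X_u)=0$ precisely when $\rank(K)<\rank(U)$. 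Since $\rank(U)=\rank(G)$ is the complex rank of $G$, this reads $\rank(G)>\rank(K)$, proving (i).

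For (ii) I would first identify the relevant dual factor. Because $X_i=G_i/K_i$ is an irreducible Hermitian symmetric space with $G_i$ simple and not locally isomorphic to $SL(2,\br)$, its compact dual $X_{i,u}=U_i/K_i$ is an irreducible compact Hermitian symmetric space with $U_i$ simple and $K_i$ connected; moreover $K_i$ has one-dimensional centre (so it is not semisimple) and, as $X_{i,u}\ne \bc P^1$, a non-trivial semisimple part (so it is not a torus). Singhof's Theorem \ref{torussimple} then yields $p_1(X_{i,u})\ne 0$, whence $p_1(X_u)\ne 0$ in $H^4(X_u;\bq)$ by the K\"unneth theorem. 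It then remains to deduce $p_1(X_\Gamma)\ne 0$, which forces $X_\Gamma$ to be not stably parallelizable.

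For (iii) each dual factor $X_{j,u}=U_j/K_j$ is either a compact Lie group (when $G_j$ is complex, so that $X_{j,u}\cong U_j^0$) or a sphere $\bs^{k_j}$ (when $G_j\cong SO_0(1,k_j)$, so that $X_{j,u}\cong SO(k_j+1)/SO(k_j)$); in both cases $X_{j,u}$ is stably parallelizable, hence so is the product $X_u$. Okun's theorem provides a finite-index $\Lambda\subset\Gamma$ and a tangential map $f:X_\Lambda\to X_u$, i.e.\ $f^*(\tau X_u)\cong\tau X_\Lambda$, so pulling back a stable trivialisation shows $X_\Lambda$ is stably parallelizable. For the last clause: if $\rank(G)=\rank(K)$ then $\chi(X_\Lambda)\ne 0$ by (i), so $X_\Lambda$ is not parallelizable; if $\rank(G)>\rank(K)$ then $\chi(X_u)=0$, so some factor is a positive-dimensional compact Lie group or an odd-dimensional sphere, and a boot-strapping argument exactly as in Theorem \ref{product-sphere} makes the whole product $X_u$ parallelizable. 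Pulling back its trivialisation along $f$ shows $\tau X_\Lambda$ is trivial.

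The main obstacle is the transfer step in (ii). Theorem \ref{hpp}(ii) is stated under the equal-rank hypothesis $\rank(K)=\rank(U)$, and this can fail when $X$ carries further non-Hermitian factors; then any tangential map $X_\Lambda\to X_u$ has degree zero and $f^*$ need not be injective on $H^4$. To obtain $p_1(X_\Gamma)\ne 0$ in this generality I would use the invariant-form mechanism underlying the Kobayashi--Ono result: the first Pontrjagin form is a $G$-invariant $4$-form which, being quadratic in the curvature, agrees with the corresponding $U$-invariant Pontrjagin form of $X_u$, and invariant forms inject into $H^*(X_\Gamma;\bc)$ by Matsushima's theorem; hence $p_1(X_u)\ne 0$ forces $p_1(X_\Gamma)\ne 0$. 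A secondary point is that (iii) invokes Okun's tangential map in the non-equal-rank setting, where it continues to exist even though its degree vanishes.
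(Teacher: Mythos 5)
Your proposal is correct, and for parts (i) and (iii) it coincides with the paper's own argument: (i) is Hopf's Theorem \ref{hopf} combined with Theorem \ref{hpp}(i) and the rank criterion for the vanishing of $\chi(U/K)$, while (iii) identifies the dual factors as compact Lie groups (for complex simple factors, since the dual of $H/L$ is $(L\times L)/\Delta L\cong L$) and spheres (for the $SO_0(1,k)$ factors), pulls back a stable trivialisation along Okun's tangential map on a finite cover, and settles parallelizability by the Staples-type product arguments (Theorems \ref{product-sphere}, \ref{product-span}), exactly as in the paper. The genuine divergence is the transfer step in (ii). Up to the non-vanishing of $p_1(X_u)$ you and the paper argue identically (Singhof's Theorem \ref{torussimple} applied to $X_{i,u}$, whose isotropy $K_i$ is neither semisimple nor a torus, followed by the K\"unneth theorem); but the paper then simply invokes Theorem \ref{hpp}(ii), whereas you observed that \ref{hpp}(ii) carries the hypothesis $\rank(K)=\rank(U)$, which the hypotheses of (ii) do not guarantee --- a non-Hermitian factor such as $SL(3,\br)/SO(3)$ destroys equal rank, and since $\Gamma$ may be an irreducible lattice one cannot project away such factors. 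So the paper's proof of (ii), read literally, has a gap at exactly this point; it is implicitly using the stronger statement actually proved by Kobayashi and Ono \cite{ko}, namely that the proportionality map $H^*(X_u;\br)\to H^*(X_\Gamma;\br)$, defined via invariant forms and injective by Matsushima/harmonicity, carries Pontrjagin classes to Pontrjagin classes with no rank restriction. Your invariant-form argument (noting that $p_1$ is quadratic in the curvature, hence insensitive to the sign change under duality) is precisely this mechanism, so your version of (ii) is the more careful one and is what the theorem in its stated generality requires. Your closing remark is likewise accurate: part (iii) uses only the existence of Okun's tangential map, which holds without equal rank; only the non-vanishing of its degree needs $\rank(K)=\rank(U)$.
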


\begin{proof} (i).  This is a direct consequence of Theorem \ref{hpp}(i), since $\chi(X_\Gamma)=0$ if and only if 
$\chi(U/K)=0$ if and only if $\rank(G)>\rank (K)$. 

(ii).  Consider the factor $X_{i,u}=U_i/K_i$.  The assumption that $G_i$ is not locally isomorphic to $SL(2,\mathbb R)$ 
implies that $K_i$ is neither semisimple nor a torus.  
Since $U_i$ is simple, by Theorem \ref{torussimple}, we see that $p_1(X_{i,u})\ne 0$.  It follows that $p_1(X_u)\ne 0$ since 
$X_{i,u}$ is a direct factor of $X_u$.  By Theorem \ref{hpp}(ii), it follows that $p_1(X_\Gamma)\ne 0$.   

(iii).  Let $H$ be a simple factor of $G$ with $L\subset H$ being a maximal compact subgroup.   Let $Y=H/L$ and $Y_u$
its compact dual.
When $H$ is a simple complex Lie group with maximal compact subgroup $L$, 
its complexification is the product $H\times H$ with maximal compact subgroup $L\times L$. Hence the compact dual of 
$Y$ is 
the homogeneous space $L\times L/L$ where $L$ is embedded diagonally.  Consequently $Y_u$ is diffeomorphic to the Lie 
group $L$ and hence is parallelizable.  When $H$ is locally isomorphic to $SO_0(1,k)$, the symmetric space $Y=H/L$ is the 
hyperbolic space $\mathcal H^k$ and its compact dual is the sphere $\mathbb S^k$.   Our hypothesis implies that $X_u$ 
is a product of spheres and a Lie group $M$ (possibly trivial). 
Thus $X_u$ is stably parallelizable.    
By Okun's theorem \cite{okun}, there 
exists a finite index subgroup $\Lambda\subset \Gamma$ such that 
there exists a tangential map $f: X_\Lambda\to X_u$; thus $f^*(\tau(X_u))\cong \tau X_\Lambda)$. 
It follows that $X_\Lambda$ is stably parallelizable and that it is parallelizable if and only if either one of the spheres 
is odd dimensional or $M$ is positive dimensional (see Theorems \ref{product-sphere}, \ref{product-span}).  The last 
condition is equivalent to $\chi(X_u)=0$, which is itself equivalent to the requirement that $\rank(G)>\rank(K)$.  
\end{proof}

\noindent
{\bf Acknowledgments.} I am grateful to Professor Peter Zvengrowski for sharing with me his insights into the vector 
field problem and for long years of collaboration. I am grateful to the referees for their very thorough reading of the 
 paper, for their comments, and for pointing out numerous errors.  One of them also pointed out that 
 Theorem \ref{product-sphere} and parts of Theorem \ref{product-span} were proved in the paper of 
 E. B. Staples \cite{staples}.   Also, I thank J\'ulius Korba\v{s}, Arghya Mondal, 
 Avijit Nath, and Peter Zvengrowski  for their comments and for pointing out errors.   I thank 
Mahender Singh for the invitation to participate in the Seventh East Asian Conference on Algebraic Topology 
held in December 2017 at IISER Mohali and for his interest in publishing these 
notes as part of the Conference proceedings.



\begin{thebibliography}{99}

\bibitem{adams} J. F. Adams, Vector fields on spheres. Ann.  Math. (2) {\bf 75} (1962), 606--632.
\bibitem{antoniano} E. Antoniano, La accion del algebra de Steenrod sobre las variedades de Stiefel
proyectivas. Bol. Soc. Mat. Mexicana 22 (1977), 41--47.
\bibitem{aguz} E. Antoniano, S. Gitler, J. Ucci, and  P. Zvengrowski, On the $K$ -theory and parallelizability of projective Stiefel manifolds. Bol. Soc. Mat. Mexicana (2) {\bf 31} (1986), 29--46.
\bibitem{agmp}  L. Astey, S. Gitler, E. Micha, G. Pastor, Parallelizability of complex projective Stiefel 
manifolds. Proc. Amer. Math. Soc. {\bf 128} (1999), 1527--1530.
\bibitem{atiyahdupont}  M. F. Atiyah and J. Dupont, Vector fields with finite singularities, Acta Math. {\bf 128} (1972), 1--40.
\bibitem{ah} M. F. Atiyah and F. Hirzebruch, Vector bundles and homogeneous spaces. Proc. Symp. Pure Math. AMS {\bf 3} (1961),  7--38.
\bibitem{auslander} L. Auslander, An exposition of the structure of solvmanifolds, I. Bull. Amer. Math. Soc. {\bf 79} (1973), 227--261.
\bibitem{auslander-szczarba} L. Auslander and R. Szczarba, 
 Characteristic classes of compact solvmanifolds. Ann.  Math. (2) {\bf 76} (1962), 1--8.

\bibitem{bako} V. Bart\'ik and J. Korba\v{s}, Stiefel-Whitney characteristic classes and parallelizability of Grassmann manifolds. {\it Proceedings of the 12th winter school on abstract analysis (Srn\'i, 1984).} Rend. Circ. Mat. Palermo (2) Suppl. {\bf 6} (1984), 19--29. 
\bibitem{baru-hac} N. Barufatti and D. Hacon, $K$-theory of projective Stiefel manifolds. 
Trans. Amer. Math. Soc. {\bf 352} (2000), 3189--3209.
\bibitem{bhlsz} N. Barufatti, D. Hacon, K.-Y. Lam, P. Sankaran, P. Zvengrowski,  The order of real line bundles. Bol. Soc. Mat. Mexicana (3) {\bf  10} (2004), 149--158. 
\bibitem{basu-subhash} S. Basu and B. Subhash, Topology of certain quotient spaces of Stiefel manifolds. 
Canad. Math. Bull. {\bf 60} (2017), no. 2, 235--245. 
\bibitem{bb} P. Baum and W. Browder, The cohomology of quotients of classical groups, Topology {\bf 3} (1965), 305--336.
\bibitem{becker} J. C. Becker, The span of spherical space forms.  Amer. Jour. Math. {\bf 94} (1972), 991--1026.
\bibitem{borel-top} A. Borel, Compact Clifford-Klein forms of symmetric spaces. Topology {\bf 2} (1963), 111--122.
\bibitem{bh} A. Borel and F. Hirzebruch, Characteristic classes and homogeneous spaces-I. Amer. Jour. Math. {\bf 80} (1958),  458--538.
\bibitem{bm} R. Bott and J. Milnor, On the parallelizability of the spheres.
Bull. Amer. Math. Soc. {\bf 64} (1958), 87--89. 
\bibitem{bredon} G. Bredon, {\it Introduction to compact transformation groups}.  Pure Appl. Math. {\bf 46}, Academic Press, New York, 1972.  
\bibitem{bk} G. Bredon and A. Kosi\'nski, Vector fields on $\pi$-manifolds.
Ann.  Math. (2) {\bf 84} (1966), 85--90. 
\bibitem{cz} D. Crowley and P.  Zvengrowski, On the non-invariance of span and immersion co-dimension for manifolds. 
Arch. Math. (Brno) 44 (2008), no. 5, 353--365. 
\bibitem{davis} D. M. Davis, Vector fields on $\mathbb{R}P^m\times \mathbb{R}P^n$. Proc. Amer. Math. Soc. {\bf 140}  (2012), no. 12, 4381--4388.
\bibitem{eagle} V. Eagle, {\it An approach to the vector field problem on manifolds.} {\it PhD Thesis}, Rutgers University, NJ, 
1978.
\bibitem{eckmann} B. Eckmann, Gruppentheoretischer Beweis des Satzes von Hurwitz-Radon \"uber die Komposition quadratischer Formen. Comment. Math. Helv. {\bf 15} (1943), 358--366. 
\bibitem{fujii} M. Fujii, $KO$-groups of projective spaces. Osaka J. Math. \textbf{4} (1967), 141--149.

\bibitem{gh} S. Gitler and D. Handel, The projective Stiefel manifolds-I.  {\it Topology}, {\bf 7} (1968), 39--46.
\bibitem{gs} S. Gondhali and P. Sankaran, Vector fields on certain quotients of complex Stiefel manifolds.  Math. Slovaca {\bf 63} (2013), 883--896.
\bibitem{gondhali-subhash} S. Gondhali and B. Subhash, Vector fields on right-generalized complex projective Stiefel manifolds. 
Proc. Roy. Soc. Edinburgh Sect. A {\bf 143} (2013), no. 6, 1237--1242. 
\bibitem{handel} D. Handel, A note on the parallelizability of real Stiefel manifolds.  Proc. Amer. Math. Soc. {\bf 16} (1965),  1012--1014.
\bibitem{harder} G. Harder, A Gauss-Bonnet formula for discrete arithmetically defined groups.
Ann. Sci. \'Ecole Norm. Sup. (4) {\bf 4} (1971), 409--455. 
\bibitem{helgason} S. Helgason, {\it Differential geometry, Lie groups, and symmetric spaces.} Academic Press 1978. Corrected
reprint: Grad. Stud. Math. {\bf 34}, Amer. Math. Soc. Providence, RI, 2001.
\bibitem{hs} H. Hiller and R. Stong, Immersion dimension for real Grassmannians.
Math. Ann. {\bf 255} (1981), 361--367. 
\bibitem{hirsch} M. Hirsch,   Differential topology. GTM-{\bf 33}, Springer-Verlag, Berlin, 1976.
\bibitem{hirzebruch} F. Hirzebruch, Automorphe Formen und der Satz von Riemann-Roch. {\it Symposium Internacional
de Topologia algebraica} (1956), 129--144.
\bibitem{hopf} H. Hopf, Vektorfelder in $n$-dimensionalen Mannigfaltigkeiten. Math. Annalen {\bf 96} (1927), 225--250.
\bibitem{husemoller} D. Husemoller,  {\it Fibre bundles}.  GTM-{\bf 20}, Springer-Verlag, Berlin, 1975. 
\bibitem{hurwitz} A. Hurwitz, \"Uber die Komposition der quadratischen Formen.  Math. Annalen {\bf 88} (1923), 1--25. 
\bibitem{jt} I. M. James and E. Thomas, An approach to the enumeration problem for non-stable vector bundles. 
J. Math. Mech. {\bf 14} (1965), 485--506.
\bibitem{ilori-ajayi}  S. Ilori and D. O. Ajayi,
Vector fields on the real flag manifolds $\mathbb RF(1,1,n-2)$. Math. Slovaca {\bf 58} (2008), no. 1, 127--129. 

\bibitem{kervaire} M. Kervaire, Non-parallelizability of the n-sphere for $n > 7$.  Proc. Nat. Acad. Sci. USA {\bf 44} 
(1958), 280--283.
\bibitem{km} M. Kervaire and J. Milnor, Groups of homotopy spheres: I.  Ann. Math. {\bf 77} (1963), 504--537. 
\bibitem{ko} Toshiyuki Kobayashi and Kaoru Ono, Notes on Hirzebruch's proportionality principle.
 J. Fac. Sci. Univ. Tokyo Sect. IA Math. {\bf 37} (1990), no. 1, 71--87. 
 \bibitem{korbas85} J. Korba\v{s}, Vector fields on real flag manifolds Ann. Global Anal. Geom. {\bf 3} (1985),
173--184.
 \bibitem{korbas86} J. Korba\v{s},  On the Stiefel-Whitney classes and the span of real Grassmannians. Czechoslovak Math. J. 
 {\bf 36(111)}  (1986), no. 4, 541--552. 
 \bibitem{korbas} J. Korba\v{s},  Distributions, vector distributions, and immersions of manifolds in
Euclidean spaces.  {\it Handbook of Global Analysis,}
(D. Krupka and D. Saunders, eds.), 665--725, Elsevier B. V. Amsterdam, 2008.
\bibitem{ks} J. Korba\v{s} and P. Sankaran, On continuous maps between Grassmann manifolds, 
Proc. Indian Acad. Sci. (Math. Sci.), {\bf 101} (1991), 111--120.
\bibitem{ksz} J. Korba\v{s}, P. Sankaran, and P. Zvengrowski, Span of projective Stiefel manifolds. Preprint \#704 (1991), 
Dept. Math. Stat., University of Calgary, Calgary.
\bibitem{kz} J. Korba\v{s} and P. Zvengrowski, The vector field problem: a survey with emphasis on specific manifolds. Exposition. Math. {\bf 12} (1994), 3--20.
\bibitem{kz96} J. Korba\v{s} and P. Zvengrowski, On sectioning tangent bundles and other vector bundles.  
Rendi. Circ. Mat. Palermo, (2) Suppl. {\bf 39} (1996), 85--104. 
\bibitem{kz11} J. Korba\v{s} and P. Zvengrowski, Vector field problem for projective Stiefel manifolds. 
Bol. Soc Mat. Mexicana.    {\bf 15} (2009), 219--234. 
\bibitem{koschorke} U. Koschorke,  {\it Vector fields and other vector bundle morphisms--a singularity approach}, 
Lect. Notes Math. {\bf 847}, Springer-Verlag, Berlin, 1981.
\bibitem{lafont-ray} J.-F.  Lafont and R. Ray, A note on the characteristic classes of non-positively
curved manifolds, Expo. Math. {\bf 25} (2007), 21--35.
\bibitem{lam72}  K. Y. Lam, Sectioning vector bundles over projective spaces, Quart. Jour. Math.  Oxford, (2)  {\bf 23} (1972), 97--106.
\bibitem{lam} K. Y. Lam, A formula for the tangent bundle of flag manifolds and related spaces. Trans. Amer. 
Math. Soc. {\bf 213} (1975), 305--314.
\bibitem{lm}  M. L. Leite and I.D. de Miatello,  Linear vector fields on $\wt{G}_k(\br^n)$. Proc. Amer. Math. Soc. {\bf 80} (1980),  673--677. 
\bibitem{lmp} G. Lusztig, J. Milnor, F. Peterson, Semi-characteristics and cobordism.
Topology {\bf 8} (1969), 357--359. 
\bibitem{mm} I. D. Miatello and R. Miatello, On stable parallelizability of $\wt{G}_k(\mathbb{R}^n)$ and related manifolds. Math. Ann. {\bf 259} (1982), 343--350.
\bibitem{milnor-mt} J. W. Milnor, {\it Morse theory}.  Ann. Math. Stud. {\bf 51}, Princeton Univ. Press, Princeton, NJ., 1963.
\bibitem{milnor-uvlect} J. W. Milnor, {\it Topology from the differentiable viewpoint}. University Press of Virginia, Charlottesville, Va., 1965. 
\bibitem{ms} J. W. Milnor and J. D. Stasheff, {\it Characteristic classes}. Ann. Math. Stud. {\bf 76}  
Princeton University Press, Princeton, NJ, 1974.
\bibitem{mondal-sankaran} A. Mondal and P. Sankaran, Degrees of maps between compact locally symmetric spaces.
Bull. Sci. Math. {\bf 140} (2016), 488--505.
\bibitem{patrao} M. Patra\~{o}, L. A. B. San Matin, L. J. dos Santos, and L. Seco, Orientability of vector bundles over 
real flag manifolds.  Topol.  Appl. {\bf 159} (2012), 2774--2786.
\bibitem{radon} J. Radon, Lineare Scharen orthogonaler Matrizen.  Abh. Math. Sem. Univ. Hamburg, {\bf 1} (1922) 1--14. 
\bibitem{okun} B. Okun, Nonzero tangential maps between dual symmetric spaces.  Algebr Geom Topol {\bf 1} (2001), 709--18.
\bibitem{raghunathan} M. S. Raghunathan, {\it Discrete subgroups of Lie groups}. Springer-Verlag, Berlin, 1973.
\bibitem{roitberg} J. Roitberg, On the PL noninvariance of the span of a smooth manifold. Proc. Amer. Math. Soc. {\bf 20} (1969), 575--579.  
\bibitem{sanderson}  B. J. Sanderson,  Immersions and embeddings of projective spaces. Proc. London Math. Soc. 
\textbf{3} (1964), 137--153.
\bibitem{sankaran-thesis} P. Sankaran, {\it Vector fields on flag manifolds}, Ph.D thesis, University of Calgary, Calgary, Alberta, Canada, 1985.
\bibitem{sankaran-cmb} P. Sankaran, Determination of Grassmann manifolds which are boundaries. Canad. Math. Bull. {\bf 34 } (1991), 119--122.

\bibitem{sz1} P. Sankaran and P. Zvengrowski, On stable parallelizability of flag manifolds. Pacific J. Math. {\bf 122} (1986), 455--458. 
\bibitem{sz2}P. Sankaran and P. Zvengrowski, Stable parallelizability of partially oriented flag manifolds. Pacific J. Math. {\bf 128} (1987),  349--359. 
\bibitem{sz3} P. Sankaran and P. Zvengrowski, Stable parallelizability of partially oriented flag manifolds. II. Canad. J. Math. {\bf 49} (1997), 1323--1339.
\bibitem{sz4}  P. Sankaran and P. Zvengrowski, $K$-theory of oriented Grassmann manifolds. Math. Slovaca {\bf 47} (1997), 319--338. 
\bibitem{sz5} P. Sankaran and P. Zvengrowski, The order of the Hopf line bundle over projective Stiefel manifolds, Fund. Math. {\bf 161} (1999), 225--233. 
 \bibitem{sz} P. Sankaran and P. Zvengrowski,  Upper bounds for the span of projective Stiefel manifolds.  {\it  Recent developments in algebraic topology, 2003,} (A. \'Adem et al. eds)  Contemp. Math. {\bf 407}, (2006), 173--181.
\bibitem{singhof} W. Singhof, Parallelizability of homogeneous spaces. I. Math. Ann. {\bf 260} (1982), no. 1, 101--116.
\bibitem{sw} W. Singhof and D. Wemmer, 
Parallelizability of homogeneous spaces. II. Math. Ann. {\bf 274} (1986), 157--176. 
\bibitem{sw-err}  W. Singhof and D. Wemmer, {\it Erratum:} Parallelizability of homogeneous manifolds. II. 
 Math. Ann. {\bf  276} (1987), no. 4, 699--700. 
\bibitem{sjerve} D. Sjerve, Vector bundles over orbit manifolds, Trans. Amer. Math. Soc.{\bf 138} (1969), 97--106. 
 \bibitem{smith} L. Smith, Some remarks on projective Stiefel manifolds, immersions of projective spaces, and 
 spheres, Proc. Amer. Math. Soc. {\bf 80} (1980), 663--669.
\bibitem{spanier} E. H. Spanier, {\it Algebraic topology}.  MacGraw Hill, New York, 1966. Springer reprint. 
\bibitem{staples} E. B. Staples, A short and elementary proof that a product of spheres is parallelizable if one of them is odd. 
Proc. Amer. Math. Soc. {\bf 18} (1967), 570--571.
\bibitem{steenrod} N. Steenrod, {\it Topology of fibre bundles}.  Princeton University Press, Princeton, NJ, 1951.
\bibitem{ws} W. A. Sutherland,  A note on the parallelizability of sphere-bundles over spheres. 
J. London Math. Soc. {\bf 39} (1964), 55--62.
\bibitem{stong-cm}  R. E. Stong, Semi-characteristics and free group actions. Comp. Math. {\bf 29} (1974) 223--248.
\bibitem{sutherland} W. A. Sutherland, Fibre homotopy equivalence and vector fields, Proc. London Math. 
Soc. {\bf 15} (1965), 543--556. 
\bibitem{thomas} E. Thomas,  Cross-sections of stably equivalent vector bundles.  
Quart. J. Math. Oxford Ser. (2) {\bf 17} (1966), 53--57. 
\bibitem{thomas3} E. Thomas, Vector fields on low dimensional manifolds. Math. Zeit. {\bf 103} (1968), 85--93.
\bibitem{thomas2} E. Thomas,Vector fields on manifolds. Bull. Amer. Math. Soc. {\bf 75} (1969), 643--€"683.   
\bibitem{tz} S. Trew and P. Zvengrowski, Nonparallelizability of Grassmann manifolds. Canad. Math. Bull. {\bf 27} (1984), 127--128. 
\bibitem{wh} G. W. Whitehead, A note on cross-sections in Stiefel manifolds. Comment. Math. Helv. {\bf 36} (1962), 239--240.
\bibitem{yoshida} T. Yoshida, Note on the span of certain manifolds, J. Sci. Hiroshima Univ. {\bf 34} Sr. A-I (1970), 13--15. 
\bibitem{y} T. Yoshida, Parallelizability of Grassmann manifolds.  Hiroshima Math. J. {\bf 5} (1975), 193--196.  
\bibitem{z} P. Zvengrowski, A $3$-fold vector product in $\mathbb{R}^8$. Comment. Math. Helv. {\bf 40} (1966), 149--152.
\bibitem{zven-eth} P. Zvengrowski, \"Uber die Parallelisierbarkeit von Stiefel-Mannigfaltigkeiten.  Forschungsinstitut f\"ur Mathematik, ETH-Zentrum, Z\"urich. Preprint, 1976.
\end{thebibliography}
\end{document}